\newcounter{scomments}
\newcounter{tcomments}
\let\@wraptoccontribs\wraptoccontribs
\newcounter{cl}
\newcounter{clno}
\newtheorem{theorem}{Theorem}[section]
\newtheorem{question}{Question}
\newtheorem{prop}[theorem]{Proposition}
\newtheorem{example}{Example}
\newtheorem*{conj}{Conjecture}
\newtheorem{cor}[theorem]{Corollary}
\newtheorem{remark}[theorem]{Remark}
\newtheorem{lemma}[theorem]{Lemma}
\newtheorem{defn}[theorem]{Definition}
\newtheorem{thmA}{Theorem}
\newtheorem*{thmscr}{Partial Resolution to Sela's Conjecture}
\newcommand{\Z}{\mathbb Z}
\newcommand{\consttwo}{E}
\newcommand{\op}{\operatorname}
\renewcommand{\~}[1]{\overline{#1}}
\renewcommand{\^}[1]{\hat{#1}}
\renewcommand{\geq}{\geqslant}
\renewcommand{\leq}{\leqslant}
\newcommand{\<}{\left\langle}
\renewcommand{\>}{\right\rangle}
\newcommand{\8}{\infty}
\renewcommand{\:}{\colon}
\renewcommand{\a}{\alpha}
\newcommand{\A}{\mathcal{A}}
\newcommand{\Aut}{\text{Aut}\,}
\renewcommand{\b}{\beta}
\newcommand{\bs}{\backslash}
\newcommand{\cs}[1]{\mathrm{set}_{#1}}
\renewcommand{\Cap}[2]{\underset{#1}{\overset{#2}{\bigcap} }}
\newcommand{\CProd}[2]{\underset{#1}{\overset{#2}{\prod} }}
\renewcommand{\Cup}[2]{\underset{#1}{\overset{#2}{\bigcup} }}
\newcommand{\e}{\epsilon}
\newcommand{\f}{\varphi}
\newcommand{\Fl}{\mathbb{F}}
\newcommand{\fix}{\mathrm{Fix}}
\newcommand{\F}{\mathbb {F}}
\newcommand{\g}{\gamma}
\newcommand{\G}{\Gamma}
\renewcommand{\H}{\mathcal{H}}
\newcommand{\Homeo}{\textnormal{Homeo}}
\newcommand{\hood}{\mathcal{N}}
\newcommand{\Inn}{\text{Inn}\,}
\renewcommand{\int}{\varint}
\newcommand{\Isom}{\mathrm{Isom}\,}
\newcommand{\J}{\mathbf{J}}
\newcommand{\K}{\mathcal{K}}
\renewcommand{\L}{\Lambda}
\newcommand{\Lim}[1]{\underset{#1}{\lim}}
\renewcommand{\max}[1]{\underset{#1}{\mathrm{max}}}
\newcommand{\N}{\mathbb{N}}
\newcommand{\norm}{\trianglelefteqslant}
\renewcommand{\O}{\mathcal{O}}
\newcommand{\Oplus}[2]{\underset{#1}{\overset{#2}{\oplus} }}
\newcommand{\Out}{\text{Out}}
\renewcommand{\P}{\mathbb{P}}
\newcommand{\Prod}[2]{\underset{#1}{\overset{#2}{\prod} }}
\newcommand{\PSL}{\textnormal{PSL}}
\newcommand{\PGL}{\textnormal{PGL}}
\newcommand{\Q}{\mathbb{Q}}
\newcommand{\R}{\mathbb{R}}
\newcommand{\set}{\mathrm{set}}
\newcommand{\SL}{\textnormal{SL}}
\newcommand{\Sqcup}[1]{\underset{#1}{\sqcup}}
\newcommand{\stab}{\mathrm{Stab}}
\newcommand{\Sum}[2]{\underset{#1}{\overset{#2}{\sum} }}
\newcommand{\Sup}[1]{\underset{#1}{\sup}\,}
\newcommand{\Sym}{\mathrm{Sym}}
\newcommand{\T}{\mathcal {T}}
\newcommand{\X}{\mathbb{X}}
\newcommand{\mfN}{N}
\newcommand{\nl}{\mathrm{(NL)}}
\title{The SemiSimple Theory of Higher Rank Acylindricity}
\author{Sahana Balasubramanya, Talia Fern\'os }
\address{Department of Mathematical Sciences, Lafayette College, Pardee Hall, USA}
\email{\url{hassanba@lafayette.edu}}
\address{Department of Mathematics, Vanderbilt University, 1326 Stevenson Center, Nashville, TN, USA}
\email{\url{Talia.Fernos@Vanderbilt.edu}}
\begin{document}

\maketitle 
\begin{center}
\emph{In honor of Bhama Srinivasan}
\end{center}

\begin{abstract}
We present a new notion of non-positively curved groups: the collection of discrete countable groups acting (AU-)acylindrically on finite products of $\delta$-hyperbolic spaces with general type factors. Inspired by the classical theory of ($S$-arithmetic) lattices and the flourishing theory of acylindrically hyperbolic groups, we show that, up to virtual isomorphism, finitely generated groups in this class enjoy a strongly canonical product decomposition. This semi-simple decomposition also descends to the outer-automorphism group, allowing us to give a partial resolution to a recent conjecture of Sela. We also develop various structure results including a free vs abelian Tits Alternative, and connections to lattice envelopes. Along the way we give  representation-theoretic proofs of various results about acylindricity -- some methods are new even in the rank-1 setting.

 The vastness of this class of groups is exhibited by recognizing that it contains, for example, $S$-arithmetic lattices with rank-1 factors, acylindrically hyperbolic groups, HHGs, groups with property (QT), and is closed under direct products,  passing to (totally general type) subgroups, and finite index over-groups. 
\end{abstract}

\begin{figure}
    \centering
    \includegraphics[width=.69\linewidth]{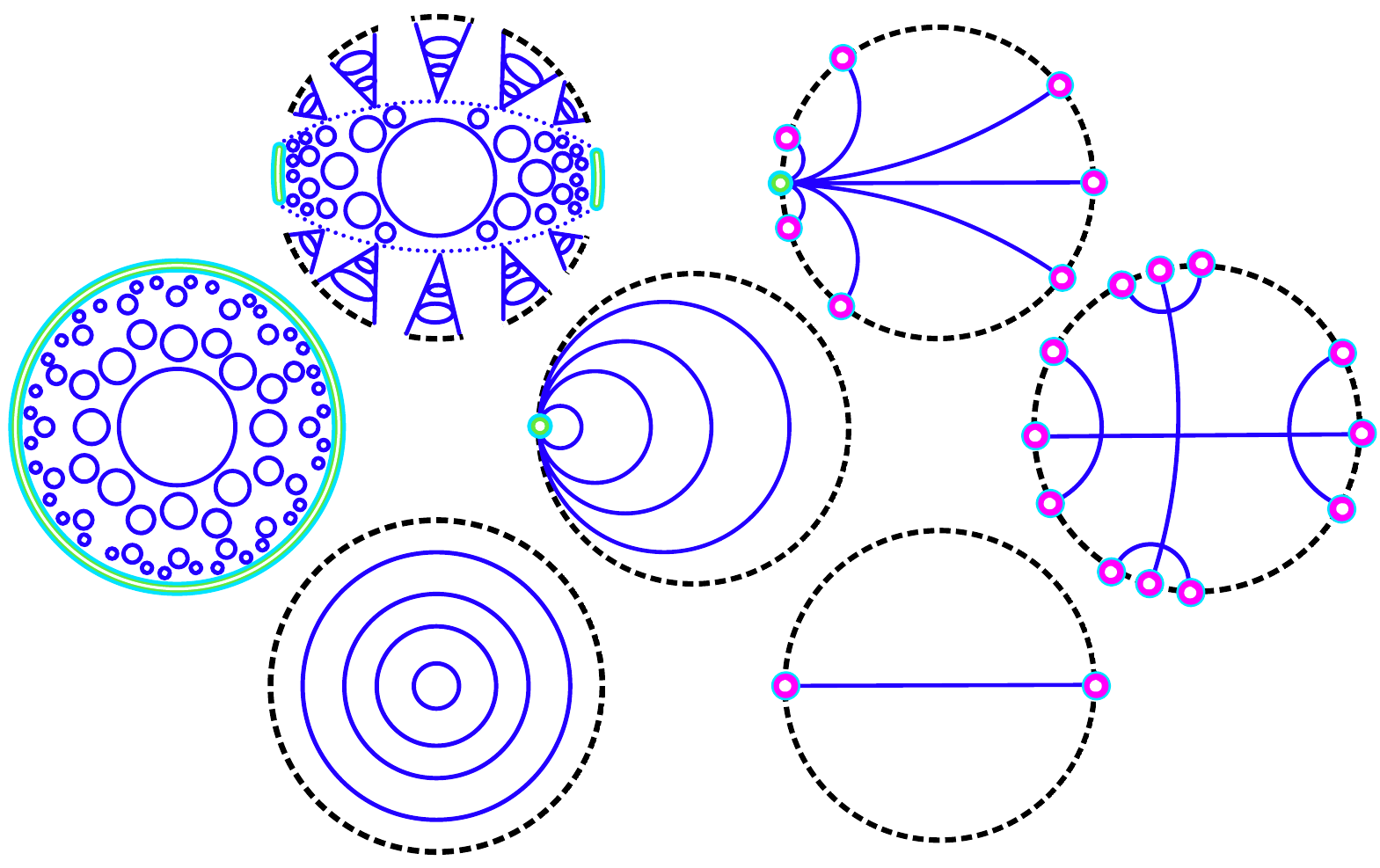}
    
    \emph{Figure showing the classification of actions on a $\delta$-hyperbolic space}
       
    Counter-clockwise from 3 o'clock: general type, quasiparabolic, rift, tremble, rotation, lineal

    Center: parabolic 
\end{figure}

\thispagestyle{empty}

\break
\noindent \textbf{Standing notation: } As this work is quite notation heavy, as a courtesy to the reader, we provide the following table. 

\begin{center}
\begin{table}[H]
\begin{tabular}{|c|c|}
 \hline 
 $\G$ & discrete countable group 
 \\ \hline 
 $K$ & normal subgroup 
 \\
  \hline 
 $H$ & arbitrary (commensurated) subgroup 
 \\
 \hline
   $X$ & complete separable $\delta$-hyperbolic   \\
   or  $X_i$ & geodesic metric space \\
 \hline
  $Y, Z$ & arbitrary metric space 
  \\
\hline
$\hood_\e(\cdot)$, $\~\hood_\e(\cdot)$  & open, closed $\e$-neighborhood   \\
\hline
$\set_\e(\cdot)$ & $\e$-coarse stabilizer
\\ 
 \hline  $R=R(\e), N=N(\e)$ & acylindricity constants, depend on $\e$
 \\\hline
 $\X= \CProd{i=1}{D}X_i$ & product  of $\delta$-hyperbolic spaces 
 \\
 \hline
 D & number of factors in product
 \\\hline 
 $\~X=\partial X\cup X$ & the Gromov bordification
 \\\hline $\~\X = \CProd{i=1}{D} \~X_i$ & bordification of $\X$\\
 \hline 
 $\partial \X = \~\X \setminus \X$ & boundary of $\X$
 \\\hline $\partial_{reg}\X = \CProd{i=1}{D} \partial X_i$ & regular points/boundary\\
 \hline 
 $\Sym_\X(D)$ & permutation group of isometric factors\\
 \hline 
 $\Aut \X =  \CProd{i=1}{D} \Isom X_i \rtimes\Sym_\X(D)$ & automorphism group
 \\
 \hline
  $\Aut_{\!0} \X = \CProd{i=1}{D} \Isom X_i$ & factor preserving automorphism group 
  \\
  \hline $\G_0\leq \G$ & the subgroup that maps to $\Aut_{\!0}\X$ \\
  \hline

\end{tabular}
\caption{Table of standing notation}
\label{table:notation}
\end{table}
\end{center}

\tableofcontents

\subsection*{A Semi-Simple Dictionary:}\label{Sec: SS Dict}
We shall establish the following dictionary between standard linear algebraic group terms and our world of AU-acylindricity in higher rank. See Section \ref{Sect:Theory is SS} for more details. 

\small
\begin{center}
\begin{table}[H]
\begin{tabular}{|c|c|}
 \hline 
  lattice & AU-acylindrical action 
 \\ \hline 
   cocompact lattice & acylindrical action 
 \\ \hline 
   nonuniform lattice & nonuniform acylindrical action 
 \\ \hline 
 semi-simple & AU-acylindrical with general type factors 
 \\ \hline
   parabolic subgroup &  $\Cap{i\in I}{}\stab_\G(\xi_i)$, for some $I\subset \{1, \dots, D\}$ \\
   & and $\xi_i\in \partial X_i$ for $i\in I$ 
   \\ 
\hline 
     Radical & Amenable radical
 \\ \hline 
   Cartan subgroup & $E_\G(\g^\pm)$, where $\g\in \G$ is regular 
   \\\hline 
   Center & Trembling radical (need not be abelian) 
   \\ \hline 
   
\end{tabular}
\caption{Semi-simple Dictionary}
\label{SS dictionary}
\end{table}
\end{center}

\normalsize
\section{Introduction}

The notion of semi-simplicity appears in a variety of contexts within group theory, and more broadly within mathematics. Conceptually, a semi-simple object is one that can be canonically decomposed into an abelian sum of objects within the same class, and where this abelian sum is well behaved with respect to projections and inclusions. Moreover, a simple object is one that cannot be non-trivially further decomposed.  In this work we present a semi-simple theory which  unites within a common framework, a plethora of classes of discrete countable (sometimes finitely generated) groups. 

In 1979 Harvey asked whether mapping class groups were arithmetic lattices \cite{Harvey}. This was answered in the negative by Ivanov \cite{Ivanov1} (see also \cite{Ivanov2}).
Kaimanovich and Masur then showed that higher rank irreducible lattices could not be realized as subgroups of mapping class groups \cite{KaimanovichMasur}. Finally, Farb and Masur showed that in fact irreducible lattices in higher rank semi-simple Lie groups admit only virtually trivial homomorphisms into mapping class groups \cite{FarbMasur}\footnote{The mapping class group of the genus-1 surface is  $\PSL_2\Z$ and of course linear. For genus 2 it is also known to be fact linear \cite{Bigelow}}. Nevertheless the philosophical parallels between mapping class groups and linear groups have been fruitful. In this work, we shall develop a   rigorous common framework to examine both $S$-arithmetic  lattices with rank-1 factors and exotic groups such as the mapping class groups.

Finitely generated mapping class groups are also the prime example of acylindrically hyperbolic groups. Taking inspiration from the work of Sela for groups acting on trees \cite{SelaAcylAcces}, Bowditch introduced the study of acylindrical actions  on general $\delta$-hyperbolic spaces with an eye towards the action of mapping class groups on their $\delta$-hyperbolic curve complexes \cite{MasurMinsky, Bowditch2008}. 

Acylindrical actions subsume (into a non-locally compact universe)  uniformly proper actions, which in turn subsume the actions of cocompact lattices on their ambient spaces (see Lemma ~\ref{Lem:acyl+loc comp implies unif proper}).   Since Osin's  introduction of the class of acylindrically hyperbolic groups (i.e. those that admit a general type acylindrical action on a $\delta$-hyperbolic space) \cite{Acylhyp}, their study and the study of acylindrical cobounded actions has seen an explosion of activity.  The class of acylindrically hyperbolic groups  is vast and includes many groups classically studied in geometric group theory; in addition to mapping class groups, one also has (irreducible) right angled Artin groups,  $\Out(F_n)$, and many Hierarchically hyperbolic groups.  Despite the largeness of the class, acylindricity has proven to be a strong enough condition to produce highly non-trivial results (see \cite{surveyosin} for a survey).

Our work here is the consequence of extending the philosophy that relates acylindricity to cocompact lattices to the higher-rank setting, namely finite products of general-type $\delta$-hyperbolic spaces. We wish to study all lattices, uniform or not and to this end, we allow for nonuniform acylindricity as well. To unite the two we introduce \emph{acylindricity of ambiguous uniformity} (which we refer to as AU-acylindricity, see Definition ~\ref{defn:typesofactions}).  We believe this philosophy has many applications, as we shall see from the work contained in this paper and forthcoming work by the authors \cite{BFApSM, BFCCC}. 

It is worth noting that, while $S$-arithmetic lattices with higher rank factors are known to admit only elliptic or parabolic actions on $\delta$-hyperbolic spaces \cite{Haettel,BaderCapraceFurmanSisto}, they do act AU-acylindrically on the corresponding product of symmetric spaces and buildings, which are non-positively curved, indeed CAT(0). The trivial action on a point is always acylindrical, and every countable group admits a proper (hence AU-acylindrical) parabolic action. For example the construction from Groves--Manning \cite{GrovesManning} can be used immediately to construct such an action for finitely generated groups. For a general countable group we may produce a locally finite graph on which the group acts either by embedding it in a 2-generator group \cite{HigmanNeumannNeumann} or use the proper metric from \cite{DranishnikovSmith} along with the Groves--Manning construction to produce said graph. We then apply the Groves--Manning construction to this locally finite graph to produce the desired hyperbolic space. Therefore, it seems unclear which groups do not admit interesting AU-acylindrical actions on some non-positively curved space. Exotic groups such as finitely generated infinite torsion groups seem to be good candidates, but even then, some are known to act properly on (infinite dimensional) CAT(0) cube complexes. However, coming back to our framework, many of these groups indeed do not act AU-acylindrically on a finite product of $\delta$-hyperbolic spaces as they have property hereditary (NL). It is also for these reasons that we consider the situation where the factor actions are all general type. 

Returning now to the mapping class groups as motivation, we see that the theory of AU-acylindricity in products of $\delta$-hyperbolic spaces should really be seen from a representation-theoretic point of view, meaning that the collection of \emph{all} such actions are interesting (potentially excluding the action on a point, according to taste: see for example \cite[Section 7]{ABO}). There is often a fixation with the notion of a ``best" action.  However, mapping class groups are both acylindrically hyperbolic and have Property (QT) i.e. act by isometries on a finite product of quasi-trees so that the orbit maps are quasi-isometric embeddings. Therefore they act AU-acylindrically on a single hyperbolic space, as well as on such associated products of quasi-trees.  We assert that both of these actions are important and bring to light different and interesting properties. Even the action on a bounded space, though inaccessible from the point of this theory, can be seen to capture various aspects of the group such as the left-regular or quasi-left-regular representations, and are hence also important.

We use the existing results of acylindrically hyperbolic groups, i.e. acylindricity in rank-1, together with the above philosophy to guide our development of the theory of (AU-)acylindricity in higher rank. Consequently, this paper also serves as a starting point for systematically studying (AU-)acylindrical actions on finite products of $\delta$-hyperbolic spaces, a study that necessitated revisiting proofs in rank-1, and a natural trifurcation of elliptic actions. Understanding the nature of the elliptic actions allows us to ``tame" the actions. This turns out to be a key step in establishing semi-simplicity in our higher rank setting -- namely establishing the existence of a strongly canonical product decomposition. In this vein, we also define and establish properties of the \emph{essential core} of a general type action, whose boundary is the limit set of the original action. 

Our semi-simplicity results in turn allows us to partially resolve Sela's conjecture (see Conjecture \ref{Conj: Sela}) concerning ``nice" HHGs. We show that, up to virtual isomorphism, the conjecture holds if and only if it holds for the irreducible factors in the canonical product decomposition. Furthermore, such HHGs are subdirect products of acylindrically hyperbolic groups, and Sela's strong acylindricty condition implies that a group with that property is a  product of hyperbolic groups up to virtual isomorphism. We also establish results pertaining to the theory of lattice envelopes: we show that the groups in our framework always possess Property (CAF) and explore which of these groups possess Property (NbC). Moreover, our groups satisfy a version of the Tits alternative. The proof of this requires an examination of stablizers of \emph{regular} points in the boundary. We also produce some obstructions to acylindriclity in higher rank which mirror known results from the rank-1 case. For instance, we show that not all factor actions can be quasi-parabolic. 

We note that we are not the first to consider variations on acylindricity. For example, there are those introduced by Hamenst\"adt \cite{Hamenstaedt}, Genevois \cite{Genevois}, Delzant \cite{Delzant} and Sela \cite{SelaHR1, SelaHR1Pub, SelaHR2}.  One can verify that our nonuniform acylindricity implies the version studied by Hamenst\"adt. Similarly, Genevois' nonuniform acylindricity is our AU-acylindricity and weak acylindricity is the standard acylindricity condition with $\e=0$.  Further, the weak acylindricity considered by Delzant is a uniform version of the WWPD property introduced by Bestvina-Bromberg-Fujiwara \cite{BestvinaBrombergFujiwara}; this notion of a weakly acylindrical action has also been considered in work by Wan-Yang \cite{WanYang} which studies proper actions on products of hyperbolic spaces with factor actions of this type. The type of acylindricities considered by Sela in his recent works will be explored more deeply in Section ~\ref{sec:ripsandsela}, particularly in the context of his recent conjecture (see Conjecture ~\ref{Conj: Sela}).

\section{Main Theorems}

Our standing notation will be to use $\X$ to denote a finite product of complete, separable, $\delta$-hyperbolic geodesic spaces. Our target group for representations is  $\Aut \X$, which consists of  permutations of isometric factors along with the product of isometries of the factors, which will often be non-elementary -- a list of standing notations used in this paper is provide in Table ~\ref{table:notation} for the convenience of the reader. One may view $\X$ as a non-elementary analogue of a vector space. The following result shows that within this context, the theory is indeed semi-simple. We note that the result is false in the elementary setting. For example, $\Z \times F_2$ does not admit a canonical (internal) product decomposition, though it does have unique factors.  This example shows the need to consider non-elementary factors. For the formal definition of AU-acylindricty, please see Definition \ref{defn:typesofactions}. Informally, AU-acylindricty generalizes the action of a lattice on its ambient space.

\begin{thmA}[Canonical product decomposition]\label{intro:proddecomp}
  Let $\G\to \Aut \X$ be a finitely generated group acting (AU-)acylindrically and with general type factors, and $\A\norm \G$ be the amenable radical (i.e. the maximal normal amenable subgroup). Then $\A$ is finite and there are  finitely many characteristic subgroups of finite index  $\G'\norm \G/\A$ such that:

  \begin{enumerate}
      \item $\G'=\G_1\cdots \G_F$ is a canonical product decomposition into strongly irreducible factors. Each $\G_i$ also admits an (AU-)acylindrical action on a product of factors associated to $\X$. 
      \item   The following are finite index inclusions $$ \Prod{i=1}{F}\Aut(\G_i)\norm \Aut(\G') \;\;\; \text{ and } \;\;\;  \Prod{i=1}{F}\Out(\G_i) \norm \Out(\G').$$
      \item $F$ is unique and $1\leq F\leq D$.
     \item The index $[\G':\G/\A]$ is minimal among all subgroups of finite index satisfying the above properties. 
  \end{enumerate}
\end{thmA}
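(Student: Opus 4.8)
The plan is to peel off the amenable radical, reduce to a faithful, factor-preserving action on a characteristic finite-index subgroup, and then read the decomposition off an equivalence relation on the set of hyperbolic factors.

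First I would prove $\A$ is finite. For each coordinate the projection $\pi_i\colon\G\to\Isom X_i$ has general type acylindrical image, so the rank-$1$ dichotomy forces a normal subgroup to be either elliptic (bounded orbits) or general type on $X_i$; being amenable, $\A$ cannot be general type, hence is elliptic on every $X_i$, hence has bounded orbits on $\X$, hence is finite by acylindricity. After passing to $\G/\A$ the amenable radical is trivial, and since the kernel of $\G/\A\to\Aut\X$ is itself a bounded-orbit normal subgroup it is finite and absorbed, so we may assume the action is faithful. As $\Sym_\X(D)$ is finite, $\G_0$ is finite index, and replacing it by the intersection of all subgroups of a fixed bounded index gives a characteristic finite-index subgroup whose action is faithful, factor-preserving, and general type in each coordinate; I continue to call it $\G$.

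Next, using faithfulness I regard $\G\le\prod_i\pi_i(\G)$ as a subdirect product of rank-$1$ general type acylindrical groups and declare $i\sim j$ whenever $\ker\pi_i$ and $\ker\pi_j$ are commensurable. Transitivity is the delicate point, and I would extract it from the rank-$1$ fact that two commuting normal subgroups cannot both be general type on a single factor, together with acylindricity. Letting $B_1,\dots,B_F$ be the resulting classes, I set $\G_k:=\bigcap_{j\notin B_k}\ker\pi_j$, the subgroup acting trivially off the block $B_k$. Each $\G_k\norm\G$ is normal, is designed to act general type on the subproduct $\prod_{i\in B_k}X_i$ (with AU-acylindricity inherited on that subproduct), and for $k\ne\ell$ the groups $\G_k,\G_\ell$ have disjoint coordinate support, so their images commute in $\prod_i\Isom X_i$; faithfulness then upgrades this to genuine commutation, making $\G_1\cdots\G_F$ an internal almost-direct product.

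The crux, and the step I expect to be the main obstacle, is that $\G':=\G_1\cdots\G_F$ has finite index. Viewing $\G$ as a subdirect product of the block projections $\pi_{B_k}(\G)$, a Goursat-type analysis identifies the failure of finite index with a nontrivial common quotient shared by two distinct blocks; such a quotient would couple factors lying in different classes, contradicting the definition of $\sim$ unless the shared quotient is finite, and acylindricity is exactly what forces the potentially-shared amenable quotients to be finite. This is where the higher-rank acylindricity hypothesis must be converted into the assertion that the group is virtually the direct product of its irreducible pieces, paralleling the irreducibility/density dichotomy for lattices in products. Finally, strong irreducibility of each $\G_k$ follows from maximality of the blocks, since a further compatible splitting would refine the partition; and because the $\G_k$ are precisely the directly-indecomposable general type normal factors of $\G$, the decomposition is intrinsic, so $\G'=\prod_k\G_k$ is characteristic and every automorphism permutes the $\G_k$. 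As the center of each $\G_k$ is a finite (hence amenable) normal subgroup, a Krull--Remak--Schmidt, centre-free argument shows $\Aut(\G')$ is a finite extension of $\prod_k\Aut(\G_k)$ by permutations of the isomorphic factors, yielding the finite-index normal inclusions in (2), with the $\Out$ statement then formal. The count $F$ is the number of classes so $1\le F\le D$, and it is unique by canonicity; and since any finite-index subgroup admitting such a decomposition must split compatibly with the canonical blocks, its index is bounded below by that of $\G'$, giving the minimality in (4).
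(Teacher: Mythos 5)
Your opening steps (finiteness of $\A$ via ellipticity of amenable normal subgroups in each factor, absorbing the finite kernel, and passing to a characteristic finite-index factor-preserving subgroup) match the paper's proof, which invokes Theorem \ref{Thm: Acyl -> CAF}, Corollaries \ref{Cor: Class closed finite kernel} and \ref{Cor: pass to irreducible tremble free core}, and the finite-generation trick for characteristic subgroups. The bound $F\leq D$ via disjoint coordinate support of commuting infinite normal subgroups is also the paper's Lemma \ref{Lem: Bound on number of inf factors}. But the core of your construction is genuinely different from the paper's, and it is the step you yourself flag as the crux that fails.

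The gap is that the subgroup $\G'=\G_1\cdots\G_F$ built from your equivalence relation on coordinates need not have finite index, and no amount of acylindricity will rescue this. Take $\phi: F_2\to \Z$ and let $\G=\{(a,b)\in F_2\times F_2 : \phi(a)=\phi(b)\}$ be the fiber product, which is finitely generated and acts freely, uniformly properly, hence acylindrically, on the product of two trees $T\times T$ with both factor actions of general type. Here $\ker\pi_1=1\times N$ and $\ker\pi_2=N\times 1$ with $N=\ker\phi$; these are not commensurable, so your relation gives two blocks, $\G_1=N\times 1$, $\G_2=1\times N$, and $\G_1\G_2=N\times N$ has \emph{infinite} index in $\G$ (the quotient is $\Z$). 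Your proposed repair -- that a shared quotient coupling two blocks must be finite by acylindricity -- is exactly what this example refutes: the shared quotient is the infinite amenable group $\Z$, and acylindricity of the ambient action places no restriction on it. (This is precisely the phenomenon behind the paper's Theorem \ref{intro:selaconj} yielding only a subdirect product, and behind Question \ref{Q: subdirect prod}, which the paper explicitly leaves open.) The correct conclusion for this $\G$ is $F=1$: it is strongly irreducible, and its canonical decomposition has a single factor supported on \emph{both} coordinates. The paper avoids your trap by never attempting to realize the group factors as coordinate subgroups: it defines $F$ as the maximal number of infinite direct factors over all internal product decompositions of all finite-index subgroups of $\G_0$, bounds $F$ by $D$ using the disjoint-support argument, and proves canonicity by projecting an arbitrary competing decomposition onto the blocks and applying strong irreducibility plus a pigeonhole count. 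Your transitivity worry for $\sim$ is thus moot, but the existence, canonicity, and minimality claims in items (1), (3), (4) all rest on the false finite-index assertion and would need to be rebuilt along the paper's lines.
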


We note that the results of Theorem \ref{intro:proddecomp} are more subtle than may seem at first glance. Indeed, a canonical product decomposition in this context is in fact strongly canonical meaning in particular that it is the finest product decomposition within its virtual isomorphism class, see Definition \ref{Defn: StrongCanonProductDecomp}. 
Further, the range of $F$ in Theorem ~\ref{intro:proddecomp} is effective, meaning that $F=1$ is achieved when the group is strongly irreducible, and $F=D$ would be the case of a product of $D$-many acylindrically hyperbolic groups.

The term \emph{strongly irreducible} here refers to the group being irreducible, even up to virtual isomorphism (i.e. finite index or taking a finite kernel-quotient). Our notion of strong irreducibility coincides with what Bader, Furman, and Sauer call Property (Irr). However, they also have a different notion of \emph{strong irreducibility}, under which they prove a similar result concerning automorphism groups. By comparison, the latter is a strictly weaker notion. The Bader--Furman--Sauer strong irreducibility for $\G$ means: If $\f: K_1\times K_2 \to \G$ is a homomorphism with finite-index image\footnote{We note that they use the term \emph{cokernel} for the image of the homomorphism, which has a different meaning coming from category theory.} then either $K_1$ or $K_2$ is in the kernel of $\f$. To substantiate that this is a strictly weaker notion, consider $(\SL_2\Z\times \SL_2\Z)/\Delta_2$, where $\Delta_2$ is the diagonal image of $\Z/2$ in the center, which is irreducible,  but has a $\Z/2$-kernel quotient to the reducible $\PSL_2\Z\times \PSL_2\Z$. Their work also proves finiteness results for the outer-automorphism group under reasonable conditions \cite[Corollary 3.25 and Section 3 respectively]{BaderFurmanSauer}. 

The hypothesis that the factor actions are of general type is of course necessary as is witnessed by the example of $\Z^2$, where $\Out(\Z)$ is finite but $\Out(\Z^2)$ is virtually $\SL_2\Z$. (See also \cite{Baumslag}.)

A point of interest to the geometric group theorist may be the fact that $\X$, under mild hypothesis on the factors, has a highly structured quasi-isometry group (e.g. the analogue to our $\Aut \X$ to the QI-setting) \cite{EskinFarb,KleinerLeeb,KapovichKleinerLeeb,BowditchProducts}. While quasi-isometry is a leading type of equivalence within the field, particularly in the context of hyperbolicity, we remark that there are many interesting and important notions that are not invariant under quasi-isometry, for example the rigid Property (T) (see \cite[Theorem 3.6.5]{BdlHV}). Relevant to our work is the fact that cocompact lattices in the same product of ambient groups are necessarily quasi-isometric, independent of whether they are reducible or irreducible. Therefore the finitely presented simple lattices in products of trees   \cite{BurgerMozes, Wise} are quasi-isometric to reducible co-compact lattices in the same, which are virtually products of free groups. 

Sela has recently worked on extending the theory of Makanin-Razborov diagrams and JSJ decompositions to the higher rank setting, which naturally includes examining acylindricity in higher-rank. The work originates in the rank-1 case, which led to some very strong results, including a description of $\Out(\G)$, in particular when $\G$ is a one-ended hyperbolic group (see Theorem ~\ref{Thm:OutHyp1End} for details) \cite{Sela1997, Levitt2005}. This work led to a conjecture about outer automorphism groups of certain colorable HHGs (see Conjecture ~\ref{Conj: Sela}). 

HHGs--which stands for \emph{Hierarchically hyperbolic groups}--are a class of groups first defined by Behrstock--Hagen--Sisto. These groups admit a Cayley graph with an associated axiomatic structure of hyperbolic spaces arranged in a hierarchy (see \cite{HHG17}). Within this hierarchy, there exist maximal domains called \emph{eyries}. We are able to give a partial resolution to Sela's conjecture as follows. (See also the discussion after Remark ~\ref{Rem: Colourable HHG}.)

\begin{thmscr}
    An HHG none of whose eyries are a quasi-line, up to virtual isomorphism satisfies Sela's conjecture if and only if the irreducible factors in its canonical product decomposition satisfy Sela's conjecture. 
\end{thmscr}

We are also able to provide additional structure to groups that satisfy Sela's acylindricity conditions. 
 
\begin{thmA}[Sela's weak acylindricity]\label{intro:selaconj}
Let $\G$ be an HHG none of whose maximal unbounded domains (a.k.a. eyries) are quasi-lines. Up to virtual isomorphism, $\G$ admits a canonical product decomposition. 

 Furthermore, if the action of $\G$ on each eyrie stabilizer is weakly acylindrical then $\G$ is virtually isomorphic to a subdirect product in $D$-many acylindrically hyperbolic groups, where $D$ is the number of eyries, and acts coboundedly on said product. 
\end{thmA}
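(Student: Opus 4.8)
The plan is to derive both assertions from the Canonical Product Decomposition (Theorem \ref{intro:proddecomp}) by producing, from the hierarchically hyperbolic structure $\mathfrak S$ on $\G$, an honest action on a finite product of $\delta$-hyperbolic spaces whose factors are of general type. First I would enumerate the eyries $U_1,\dots,U_D$ (maximal unbounded domains), which $\G$ permutes, and set $\X=\Prod{i=1}{D}\C U_i$, each $\C U_i$ being $\delta$-hyperbolic. The permutation action of $\G$ on the eyries together with the isometric action of each stabilizer $\stab_\G(U_i)$ on $\C U_i$ assembles into a homomorphism $\G\to \Aut \X=\Sym_\X(D)\ltimes \Prod{i=1}{D}\Isom \C U_i$, and the finite-index subgroup $\G_0$ fixing every eyrie lands in $\Aut_{\!0}\X$.

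The two hypotheses of Theorem \ref{intro:proddecomp} that remain are that this action is (AU-)acylindrical and that each factor is of general type. For the factors: every $\C U_i$ is unbounded by definition of an eyrie, and the HHS normalisation makes the $\stab_\G(U_i)$-action on $\C U_i$ cobounded and (AU-)acylindrical; by the trichotomy for cobounded (AU-)acylindrical actions such an action is elliptic, lineal, or of general type. Unboundedness excludes the elliptic case and the standing hypothesis that no eyrie is a quasi-line excludes the lineal case, leaving general type, as required. For acylindricity of the product action I would assemble the coordinate (AU-)acylindricity supplied by the HHS axioms using the distance formula together with the bounded geodesic image and consistency axioms, which control the coarse stabilizers of pairs simultaneously across the coordinates. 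Theorem \ref{intro:proddecomp} then yields the canonical product decomposition up to virtual isomorphism and, in particular, the finiteness of the amenable radical $\A$; this establishes the first assertion.

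For the second assertion the extra hypothesis that each eyrie action is weakly acylindrical ($\e=0$) upgrades each coordinate action from merely AU-acylindrical to acylindrical in the sense of Osin. Writing $K_i\norm \G_0$ for the kernel of the action on $\C U_i$, the quotient $Q_i=\G_0/K_i$ therefore acts by a cobounded, general type, acylindrical action on $\C U_i$, so $Q_i$ is acylindrically hyperbolic. The diagonal homomorphism $\G_0\to \Prod{i=1}{D}Q_i$ surjects onto each coordinate by construction, so its image is subdirect; its kernel is $\Cap{i=1}{D}K_i$, the elements acting trivially on every eyrie, which --- since the eyries are precisely the maximal unbounded domains --- have bounded projection to every domain, whence by the uniqueness axiom and distance formula (together with the finiteness of $\A$) this kernel is finite. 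This exhibits $\G$, up to virtual isomorphism, as a subdirect product in the $D$ acylindrically hyperbolic groups $Q_1,\dots,Q_D$. Finally, coboundedness of $\G_0$ on $\X$ follows from the HHS realisation theorem: as the eyries are pairwise orthogonal every tuple in $\Prod{i=1}{D}\C U_i$ is consistent, hence coarsely realised by a group element, so the orbit is coarsely dense.

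I expect the principal obstacle to be the passage from the factorwise acylindricity furnished by the HHS axioms to a single (AU-)acylindrical action on $\X$, since coarse stabilizers of configurations must be controlled simultaneously across distinct coordinates; intertwined with this is the identification of the stabilizer action on each non-quasi-line eyrie as genuinely of general type rather than one of the intermediate types (quasiparabolic, rift, tremble, rotation) in the classification, for which coboundedness and acylindricity of the coordinate action are essential. The faithfulness-up-to-finite of the combined coordinate action, and the coboundedness step, are the other delicate points: both rely on the maximal unbounded domains being pairwise orthogonal and on non-eyrie domains being subordinate to them, so if transverse maximal domains occur one must first pass to a virtually isomorphic structure in which they do not.
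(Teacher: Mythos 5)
Your overall architecture --- product of the eyries, acylindricity and coboundedness of the diagonal action, general type from the no-quasi-line hypothesis, Theorem \ref{intro:proddecomp} for the first assertion, and coordinate projections for the second --- matches the paper's route (Remark \ref{Rem:Eyries}, Theorem \ref{Thm: Weak Acyl is Product}, Corollary \ref{Cor: Weak Acyl HHG Product}). One remark on the first assertion: the paper does not reprove acylindricity of the action on the product of eyries from the HHS axioms, as you propose; it cites \cite[Remark 4.9]{PetytSpriano} (tracing back to the proof of Theorem K in \cite{BHS1}). So the step you flag as ``the principal obstacle'' is outsourced rather than solved, and your sketch of it (``distance formula together with bounded geodesic image and consistency'') is not a proof of it.

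The genuine gap is in the second assertion, and it comes from your parenthetical ``weakly acylindrical ($\e=0$)''. The weak acylindricity in Sela's conjecture is not the $\e=0$ specialization of Osin's definition (that is Genevois's notion); it is Definition \ref{Def: weak acyl}, which controls coarse stabilizers of far-apart pairs only \emph{up to multiplication by uniformly-bounded-displacement elements}, i.e.\ (Corollary \ref{Cor: Sela weakly acyl iff acyl mod trembles}) it is acylindricity modulo the trembling radical on the essential core. Your claim that the hypothesis ``upgrades each coordinate action \dots to acylindrical in the sense of Osin'' therefore fails on two counts: even under your reading, an $\e=0$ condition does not yield the condition for all $\e>0$; and under the correct reading, the coordinate image $\G_i$ need not act acylindrically on $\C U_i$ at all --- only modulo a possibly infinite tremble subgroup. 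The paper's proof of Theorem \ref{Thm: Weak Acyl is Product} first replaces $\X$ by the tremble-free essential core $\X'$ via Corollary \ref{Cor: pass to irreducible tremble free core} (which also disposes of the kernel, with no need for the uniqueness axiom), and only then invokes Corollary \ref{Cor: Sela weakly acyl iff acyl mod trembles} to conclude that each projection acts honestly acylindrically, of general type, and is hence acylindrically hyperbolic. Without this reduction your quotients $Q_i$ are not known to be acylindrically hyperbolic and the second assertion does not follow. A smaller unsupported premise in the same step: acylindricity of the product action does not by itself make each coordinate action AU-acylindrical, which you take as the starting point of the ``upgrade''.
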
 

A subgroup $\G\leq \Prod{i=1}{D}\G_i$ is a \emph{subdirect product} if it is surjective onto each factor, and \emph{full} if it has non-trivial intersection with each direct factor inside $\Prod{i=1}{D}\G_i$. The hypothesis of Sela's Conjecture of weak-acylindricity applied to the eyries, naturally puts the group in question, up to virtual isomorphism, in the setting of full subdirect products. The reader may wonder to what extent the unused hypotheses of Sela's conjecture (mainly colorability) can be used to deduce that the subdirect product is in fact of finite index so that the canonical product decomposition for $\G$, as is guaranteed by Theorem ~\ref{intro:proddecomp} is the same as those appearing in the subdirect product framework as in Theorem ~\ref{intro:selaconj}. 

This question has been answered for certain types of limit groups and in those cases being of finite index is equivalent to or inferred by the finite dimensionality of $H_n(\G_0, \Q)$ for $n=1, \dots, D$ and for every finite index subgroup $\G_0\leq \G$, in particular if $\G$ is of type $FP_D(\Q)$\cite{BridsonHowieMillerShort, KochloukovaLopezdeGamizZearra}. Limit groups are relatively hyperbolic with respect to their maximal abelian subgroups \cite{Alibegovic, Dahmani}. 

This brings us to back to our guiding philosophy that acylindricity generalizes the notion of a cocompact lattice. While this analogy has been quite useful, these homological considerations create a contrast between cocompact lattices in nonpositive curvature (which have finite dimensional homology groups and in particular are of type $\mathrm{FP}_\8$ \cite[Proposition II.5.13]{BridsonHaefliger}) and their subgroups (that generally do not enjoy such finiteness conditions). Therefore, perhaps insisting that a group be of type $\mathrm{FP}_D(\Q)$ is a reasonable addition to the existence of an acylindrical action of our group on a product of $D$-many $\delta$-hyperbolic spaces.

It turns out that under stronger acylindricity conditions, the structure of $\G$ can be determined even more precisely.

\begin{thmA}[Sela's strong acylindricity]\label{intro:strongacyl} Let $\G\to \Aut \X$ be strongly acylindrical with general type factors. Up to  virtual isomorphism, $\G=\CProd{i=1}{D}{}\G_i$ is the direct product of non-elementary hyperbolic groups, where $D$ is the number of factors of $\X$.  
\end{thmA}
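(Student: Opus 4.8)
The plan is to feed a strongly acylindrical action into the Canonical Product Decomposition (Theorem \ref{intro:proddecomp}) and then use the extra strength of strong acylindricity to close the two gaps between Theorem \ref{intro:proddecomp} and the desired conclusion: that several $\delta$-hyperbolic factors may a priori be controlled by a single strongly irreducible group factor, and that the factors are only known to be acylindrically hyperbolic rather than hyperbolic. First I would apply Theorem \ref{intro:proddecomp}, which is available since a strongly acylindrical action is in particular (AU-)acylindrical with general type factors: the amenable radical $\A$ is finite and, up to virtual isomorphism, $\G$ has a finite-index characteristic subgroup $\G'=\G_1\cdots\G_F$ realized as a canonical \emph{direct} product of strongly irreducible factors, with $1\le F\le D$. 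Here each $\G_i$ carries an acylindrical general-type action on a sub-product $\X_i=\prod_{j}X_{i,j}$ consisting of $d_i$ of the original factors, with $\X=\prod_{i=1}^{F}\X_i$ and $\sum_{i}d_i=D$. That the internal decomposition is genuinely direct (and not merely subdirect, as in the weakly acylindrical Theorem \ref{intro:selaconj}) is already contained in Theorem \ref{intro:proddecomp}: after replacing $\G$ by the preimage $\G_0$ of $\Aut_{\!0}\X$, distinct factors act on disjoint coordinate blocks and hence commute inside $\prod_i\Isom X_i$, while any pairwise intersection acts trivially on all of $\X$ and therefore lies in the finite amenable radical.

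The heart of the matter is to show that strong acylindricity forces $d_i=1$ for every $i$, and hence $F=D$. I would first record that strong acylindricity is inherited by each factor action $\G_i\curvearrowright\X_i$, and that (a short lemma, or indeed the very definition of strong acylindricity, should give this) it strengthens the two-point acylindrical finiteness into genuine metric properness of the orbit maps, the coarse stabilizers $\set_\e(\cdot)$ becoming finite. Granting coboundedness (compare the cobounded conclusion of Theorem \ref{intro:selaconj}, or pass to the quasi-convex hull of an orbit), the Milnor--\v{S}varc lemma identifies $\G_i$ quasi-isometrically with $\X_i$. If $d_i\ge 2$, then $\X_i$ is a nontrivial product of non-elementary $\delta$-hyperbolic spaces, and product quasi-isometry rigidity for such targets \cite{KapovichKleinerLeeb, BowditchProducts} forces a nontrivial virtual splitting of $\G_i$, contradicting strong irreducibility (Property (Irr)); hence $d_i=1$. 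I expect this rigidity step to be the main obstacle. It is exactly the higher-rank phenomenon flagged in the introduction, whereby $S$-arithmetic lattices with higher-rank factors act AU-acylindrically but never strongly acylindrically on their products, and making it rigorous for the possibly non-proper $\delta$-hyperbolic spaces at hand (where off-the-shelf quasi-isometry rigidity need not apply) will likely require an internal argument using strong acylindricity together with the centralizers of factor-supported loxodromics in place of the external rigidity theorem.

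With $d_i=1$ for all $i$, each $\G_i$ acts metrically properly, coboundedly, and with general type on the single geodesic $\delta$-hyperbolic space $X_i$; this is a geometric action, so the Milnor--\v{S}varc lemma (in the spirit of Lemma \ref{Lem:acyl+loc comp implies unif proper}) shows that $\G_i$ is quasi-isometric to $X_i$ and is therefore a (Gromov-)hyperbolic group. Since $\sum_i d_i=D$ and every $d_i=1$ we obtain $F=D$, so that, up to virtual isomorphism, $\G$ is the internal direct product $\G_1\times\cdots\times\G_D$ of $D$ hyperbolic groups. The residual bookkeeping, namely that the finite amenable radical $\A$ and the finite-index passage to $\G'$ are absorbed into the allowed virtual isomorphism, is immediate, completing the argument.
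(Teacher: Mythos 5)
Your proposal has a genuine gap at its central step, and it also misses the mechanism the paper actually uses. The claim that ``product quasi-isometry rigidity \dots forces a nontrivial virtual splitting of $\G_i$, contradicting strong irreducibility'' is false: being quasi-isometric to a nontrivial product of non-elementary hyperbolic spaces does not force a group to virtually split. The paper itself flags the counterexample in the introduction --- the finitely presented simple (hence strongly irreducible) lattices of Burger--Mozes and Wise act properly cocompactly on a product of two trees and are quasi-isometric to reducible cocompact lattices, i.e.\ to virtual products of free groups, yet do not split. What quasi-isometric rigidity of products gives is control of the QI group of the space, not a direct-product decomposition of a group acting on it. So the step $d_i=1$ cannot be extracted this way, and no amount of ``internal argument'' will rescue it without invoking the hypothesis you have left unused: that each factor action of $\G_0$ on $X_i$ is \emph{weakly acylindrical} (Definition \ref{def:Sela Strong Acyl}). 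That hypothesis is precisely what fails for the Burger--Mozes examples, since their projections to a single tree are non-discrete.

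The paper's proof (Theorem \ref{Thm: Strong acyl HHG product}) does not route through Theorem \ref{intro:proddecomp} at all. Instead: properness and cocompactness of $\G$ on $\X$ force each $X_i$ to be locally compact; weak acylindricity of the factor action means (Corollary \ref{Cor: Sela weakly acyl iff acyl mod trembles}) that after taming trembles the projection image $\G_i\leq \Isom X_i$ acts acylindrically, hence by Lemma \ref{Lem:acyl+loc comp implies unif proper} uniformly properly, and coboundedly --- so each $\G_i$ is a hyperbolic group acting geometrically on $X_i$. Then $\G\leq \CProd{i=1}{D}{}\G_i$ is of finite index simply because both groups act properly cocompactly on the same space $\X$. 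In particular the $\G_i$ in the statement are the coordinate projections, one per hyperbolic factor, so the question of whether one irreducible group factor could ``absorb'' several space factors never arises. Your observation that properness upgrades the factor projections beyond mere acylindrical hyperbolicity is in the right spirit, but the upgrade comes from local compactness of the $X_i$ plus per-factor weak acylindricity, not from a rigidity theorem.
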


Aligned with the philosophy of viewing acylindricity as a generalisation of co-compact lattices, we examine the lattice envelope classification from the work of Bader--Furman--Sauer in \cite{BaderFurmanSauer}. They  establish a short list of properties so that when a group $\G$ satisfies these conditions, it restricts the class of lattice envelopes $\G$ may admit. We prove that these properties are satisfied by the groups that admit higher rank AU-acylindrical actions, under an additional technical condition we call \emph{(wNbC)-essential-rift-free} factors (see Definition ~\ref{Def: (wNbC)-essential-rift-free} and the discussion within Section ~\ref{subsec:nbc}). As discussed below, a \emph{rift} is a type of elliptic action (see also Theorem ~\ref{thm:actionsclassrk1}).  

In the statement of the following theorem, (CAF) stands for ``Commensurated and Amenable implies Finite" and (NbC) stands for ``Normal by Commensurated". We refer the reader to Definition \ref{defn:BFSprops} for details.

\begin{thmA}[Lattice envelopes]\label{intro:lattice}
    Let $\G\to \Aut \X$ be AU-acylindrical with general type factors. Then $\G$ has Property (CAF), and in particular has finite amenable radical. If in addition, the factors of $\X$ are (wNbC)-essential-rift-free, then $\G$ has Property (NbC). 
    
    Therefore if $\G$ is strongly irreducible and satisfies the above hypotheses, then any non-discrete lattice envelope of $\G$ is as in Theorem ~\ref{BFS}.
\end{thmA}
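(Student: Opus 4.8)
The plan is to verify that $\G$ satisfies the two hypotheses of the Bader--Furman--Sauer lattice envelope theorem (Theorem \ref{BFS}) -- Properties (CAF) and (NbC) -- and then invoke that theorem verbatim. The unifying mechanism is that a \emph{commensurated} subgroup $H\leq\G$ has, in each factor, a limit set $\Lambda_i(H)\subseteq\partial X_i$ that is genuinely $\G$-invariant: since $gHg^{-1}$ is commensurable with $H$ for every $g$, and commensurable subgroups have equal limit sets, we get $g\Lambda_i(H)=\Lambda_i(gHg^{-1})=\Lambda_i(H)$. Because each factor action is of general type, there is no nonempty finite $\G$-invariant subset of $\partial X_i$; hence the acylindrical classification of the induced $H$-action on $X_i$ collapses, as the lineal, focal/quasiparabolic, and horocyclic cases each yield a $\G$-invariant set of one or two boundary points and are therefore impossible. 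Thus, for a commensurated $H$, each factor action is \emph{either elliptic or of general type}.

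The first step is a lemma: an infinite commensurated $H$ cannot be elliptic on every factor. Indeed, were it so, $H$ would have bounded orbits in $\X$; fixing a basepoint $x_0$ and choosing $g\in\G$ loxodromic on some factor (possible by general type), the displacement $d_\X(x_0,g^nx_0)$ grows without bound. The subgroup $H'=H\cap g^nHg^{-n}$ has finite index in $H$ and lies in $g^nHg^{-n}$, so it moves both $x_0$ and $g^nx_0$ by a uniformly bounded amount $\e$, i.e. $H'\subseteq\set_\e(x_0,g^nx_0)$. Taking $n$ large enough that $d_\X(x_0,g^nx_0)\geq R(\e)$, AU-acylindricity of the action on $\X$ forces $|H'|\leq N(\e)$, so $H$ is finite. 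Property (CAF) then follows quickly: an amenable commensurated $H$ contains no free subgroup, so its limit set on each factor is finite (being elliptic, horocyclic, or lineal), the finite nonempty cases are excluded by the invariance above, and the lemma makes the remaining elliptic-on-all-factors case finite. In particular the amenable radical $\A$, being normal and hence commensurated, is finite.

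For Property (NbC) I would take an infinite commensurated $H$ and, by the lemma, fix a factor on which $H$ is of general type; the goal is to upgrade this to the assertion that $H$ has finite index. Factors on which $H$ is general type present no difficulty, as there $H$ inherits an acylindrical general type action and is coarsely as large as $\G$. The content lies in the factors on which $H$ is elliptic while $\G$ is not: there a commensurated $H$ produces a coarsely $\G$-invariant bounded region, which is exactly the configuration of a \emph{rift}. The hypothesis that the factors are (wNbC)-essential-rift-free (Definition \ref{Def: (wNbC)-essential-rift-free}) is precisely what excludes such essential rifts, forcing $H$ to be of general type on every factor; a commensurated subgroup of general type on all factors is then of finite index via the cobounded acylindrical structure underlying the canonical product decomposition of Theorem \ref{intro:proddecomp}. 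This gives (NbC), namely that every commensurated subgroup is finite or of finite index.

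With (CAF) and (NbC) established and $\G$ strongly irreducible, these are exactly the hypotheses of Theorem \ref{BFS}, so any non-discrete lattice envelope of $\G$ is of the form listed there. The hard part will be the elliptic-factor analysis for (NbC): identifying the coarsely $\G$-invariant bounded configuration of an elliptic commensurated subgroup with a rift, and showing that (wNbC)-essential-rift-freeness rules it out, is the delicate, paper-specific step. By comparison, the limit-set rigidity and the (CAF) argument via acylindricity are comparatively soft, and I expect the main genuine work -- beyond bookkeeping -- to be concentrated there together with the passage from ``general type on all factors'' to finite index.
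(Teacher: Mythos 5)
Your treatment of Property (CAF) is essentially the paper's argument: the dichotomy ``elliptic or general type in each factor'' for commensurated subgroups is Lemma \ref{Lem: normal in total gen type is tremb/gen type} (proved there via conjugated loxodromics and fixed boundary points rather than limit-set rigidity, but to the same effect), and the finiteness of an everywhere-elliptic commensurated subgroup via a far-apart pair of coarsely stabilized points is exactly the proof of Theorem \ref{Thm: Acyl -> CAF}; the paper uses $\O^L(H)$ and a regular element where you use a bounded orbit and a single weakly loxodromic element, an immaterial difference.

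The (NbC) half, however, targets the wrong statement. You summarize the goal as ``every commensurated subgroup is finite or of finite index,'' but Property (wNbC) (Definition \ref{defn:BFSprops}) concerns a \emph{pair}: a normal subgroup $K\norm\G$ and a commensurated $H<\G$ with $K\cap H=\{1\}$, and the conclusion is that a finite-index subgroup of $H$ commutes with $K$; Property (NbC) asks this of every quotient by a finite normal subgroup. Your version is not equivalent and is in fact false for the groups at hand: the class is closed under direct products, and in $\G_1\times\G_2$ the factor $\G_1$ is normal, hence commensurated, yet is neither finite nor of finite index. The normal subgroup $K$ never appears in your argument, but it is essential. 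The paper's proof (Lemma \ref{Lem:norm com} together with the lemma following Definition \ref{Def: (wNbC)-essential-rift-free}, culminating in Corollary \ref{cor:nbc}) partitions the factors into those where $K$ is of general type (and $H$ elliptic) and those where $H$ is of general type (and $K$ a tremble, hence trivial on the tremble-free core), and must then show that on the first set of factors $H$ is not merely elliptic but a \emph{tremble}, so that after passing to the core $H$ and $K$ are supported on complementary sets of factors and therefore commute. This is precisely where (wNbC)-essential-rift-freeness enters: a loxodromic $g\in K$ commutes with $H'=gHg^{-1}\cap H$, which forces $\O^L(H')$ to be unbounded along the axis of $g$ (via Lemma \ref{Lem: Contracting QGeod}), and rift-freeness then upgrades the elliptic action from ``unbounded fixed-up-to-$L$ set'' to a genuine tremble. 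Your reading --- that the hypothesis ``excludes essential rifts, forcing $H$ to be of general type on every factor,'' after which one deduces finite index --- reverses the intended conclusion, and the asserted passage from ``general type on all factors'' to ``finite index'' is neither proved nor needed for (wNbC).
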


It is worth noting that linear groups satisfy property (NbC) and those with finite amenable radical also satisfy property (CAF). Linear groups were also the setting for the celebrated Tits alternative from 1972 \cite{Tits}. Since then, there have been many others, including but certainly not limited to results in \cite{BestvinaFeighnHandel,SageevWise, CapraceSageev, FernosFPCCC, 
 Fioravanti2018, OsajdaPrzytycki, GuptaJankiewiczNg, ArzhantsevZaidenberg, LeConte}. It can be thought of as an amenable versus non-amenable dichotomy, and was Tits' response to von Neumann's (false in general) conjecture that non-amenability is equivalent to containing a free group on 2 generators.

Groups that admit a non-elliptic acylindrical action in rank-1 also satisfy a Tits Alternative: they are either virtually isomorphic to  $\Z$ or contain non-elementary free subgroups \cite[Theorem 1.1]{Acylhyp}. We contribute to the list with the following:

\begin{thmA}[Tits alternative]\label{intro:titsalt}
    Let $\G\to \Aut\X$ be acylindrical and not elliptic. Either $\G$ contains a free group of rank 2 or $\G$ is virtually isomorphic to $\Z^k$, where $1\leq k\leq D$.
\end{thmA}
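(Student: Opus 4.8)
The plan is to reduce to the rank-one Tits' alternative for acylindrically hyperbolic groups quoted above, applied one factor at a time, and then to use the \emph{global} acylindricity of the product to assemble the factor data into a free abelian group of the correct rank. Throughout I work up to virtual isomorphism, so I first pass to the finite-index subgroup $\G_0\leq \G$ mapping into $\Aut_{\!0}\X=\CProd{i=1}{D}\Isom X_i$; this kills the finite permutation group $\Sym_\X(D)$ and yields factor projections $\pi_i\:\G_0\to \Isom X_i$. Since an orbit in $\X$ is bounded exactly when each coordinate orbit is bounded, the hypothesis that the action is not elliptic means some coordinate action $\pi_i(\G_0)\curvearrowright X_i$ is unbounded.

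The dichotomy is driven by whether some factor action is of general type. If for some $i$ the image $\pi_i(\G_0)$ contains two independent loxodromic isometries of the hyperbolic space $X_i$, then a ping-pong argument on $\partial X_i$ produces a free subgroup of rank $2$ in $\pi_i(\G_0)$; lifting the two generators and using that any group surjecting onto a free group must contain a free subgroup of the same rank, we obtain $F_2\leq \G_0\leq \G$, and we are done. I stress that this half uses only hyperbolicity of the factor, not acylindricity of the factor action --- which matters, since the coordinate actions of an acylindrical product action need not themselves be acylindrical (e.g. $F_2\times F_2$ acting properly on the product of its two trees has non-acylindrical coordinate actions).

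So assume that no factor action has two independent loxodromics, i.e. each $\pi_i(\G_0)\curvearrowright X_i$ is elementary. I claim that the global acylindricity forces every \emph{unbounded} factor to be lineal, ruling out parabolic and focal (quasiparabolic) factors. Granting this, on each lineal factor I pass to the index-$\leq 2$ subgroup fixing the two boundary points and take the signed translation length along the associated quasi-axis; this is a homomorphism, and elliptic factors contribute nothing. Intersecting these finite-index subgroups gives $\G^+$ and a homomorphism $\Phi\:\G^+\to \R^{\ell}$, where $\ell\geq 1$ is the number of lineal factors (at least one, as the action is not elliptic). The key point is that, with respect to the $\ell^1$ product metric, $\|\Phi(g)\|_1=\sum_i \tau_i(g)=\tau_\X(g)$, since the non-lineal factors are elliptic and carry zero translation length. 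Because an acylindrical action has a uniform positive lower bound $\e_0$ on the translation length of any element that is loxodromic on $\X$, every $g\notin\ker\Phi$ satisfies $\|\Phi(g)\|_1\geq \e_0$; hence $\Phi(\G^+)$ is a discrete subgroup of $\R^{\ell}$ and is therefore free abelian of some rank $k\leq \ell\leq D$. Finally $\ker\Phi$ acts elliptically on every factor, so elliptically on $\X$; as it is normal, conjugating any of its elements by powers of a loxodromic $g\in\G^+$ keeps it coarsely fixing both a basepoint $p$ and its far-off translate $g^n p$ (using $g^{-n}\ker\Phi\, g^n=\ker\Phi$), and acylindricity then bounds its cardinality, so $\ker\Phi$ is finite (consistent with the finiteness of the amenable radical in Theorem \ref{intro:proddecomp}). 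Thus $\G^+$, and hence $\G$, is virtually isomorphic to $\Z^k$ with $1\leq k\leq D$.

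The main obstacle is precisely the claim that global acylindricity rules out parabolic and focal factors: because the coordinate actions need not be acylindrical, this cannot simply be quoted from the rank-one classification \cite{Acylhyp}. The plan for it is a confinement argument run inside the product. From a focal or parabolic factor one extracts, by conjugating a horospherical element by powers of a loxodromic, infinitely many distinct elements that coarsely fix an arbitrarily long segment in that factor; choosing a basepoint whose coordinates in the remaining factors have bounded orbit under this family, these elements coarsely fix an arbitrarily far-apart pair in $\X$, contradicting acylindricity of the product action. The delicate step is to control this family on all the other factors simultaneously, and it is exactly here that the acylindricity obstruction must be transferred from the product to a single factor; once the unbounded-implies-lineal dichotomy is secured, the remaining assembly is the routine linear algebra described above.
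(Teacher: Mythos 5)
Your skeleton matches the paper's: ping-pong in a general type factor gives $F_2$, and otherwise one must (i) show that under global acylindricity every non-elliptic factor is (oriented) lineal and (ii) assemble the lineal data into a discrete subgroup of $\R^\ell$. Both halves of the elementary case have genuine gaps as written. For (i), your confinement argument fails at the start for a parabolic factor: a parabolic action has \emph{no} loxodromic elements, so there is nothing to conjugate the horospherical element by inside that factor, and a loxodromic living in another factor gives no control over the parabolic coordinate. The paper handles this quite differently: Lemma \ref{lemmaelimpara} shows that for a parabolic factor any finite subset of $\G$ almost-fixes a point far enough along a quasiray toward the fixed boundary point, which feeds into Proposition \ref{prop:removepara} (parabolic factors can be deleted while preserving acylindricity) and Corollary \ref{cor:elimpapra} (not all factors can be parabolic); the quasiparabolic case is excluded by Lemma \ref{Lem:noqpinacyl}, where acylindricity forces the Busemann level sets $F_n=\{g:\|\b(g)\|_\8\leq n\}$ to be finite, so finitely generated subsemigroups grow too slowly to contain the free subsemigroup that quasiparabolicity produces. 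You correctly flag this step as the main obstacle, but the sketch offered is not a proof and, in the parabolic case, starts from a false premise.

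For (ii), the more serious gap is the assertion that the signed translation lengths assemble into a \emph{homomorphism} $\Phi\:\G^+\to\R^\ell$. For a lineal isometric action on a quasi-line the Busemann function is only a homogeneous quasimorphism; it is a homomorphism when the acting group is amenable (or the space is proper), and unbounded quasimorphisms of $F_2$ that are not homomorphisms do give rise to lineal actions whose Busemann quasimorphism is not additive. The paper therefore first proves Theorem \ref{theorem: elementary acyl is amenable} --- acylindricity of the product action forces the relevant stabilizer to have uniformly polynomial growth, hence be amenable --- before the Busemann maps can be treated as homomorphisms, and only then descends the action to an AU-acylindrical (hence proper, hence discrete) action on $\R^D$ via Proposition \ref{acyldescends} and Proposition \ref{prop:elemsubvirtab}. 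You skip amenability entirely, and you also invoke a uniform positive lower bound on translation lengths of loxodromics as if it were available for the non-hyperbolic product $\X$; that is Bowditch's rank-one fact, and in higher rank the discreteness of the image must instead be extracted from acylindricity along the quasi-flat exactly as in Proposition \ref{acyldescends}. Your finiteness argument for $\ker\Phi$ is fine and mirrors the paper's Theorem \ref{Thm: Acyl -> CAF}, but without the amenability input and without a correct exclusion of parabolic and quasiparabolic factors the proof does not close.
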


Closely linked to the proof of the above theorem is the structure of point stabilizers on the regular boundary of $\X$, i.e. minimal parabolics in the terms of the semi-simple dictionary. In this vein, we prove the following results. Recall that $\partial_{reg} \X = \Prod{i=1}{D} \partial X_i$ is the set of regular points on the boundary and that $\Aut_{\!0}\X$ is the factor preserving automorphism group of $\X$.

\begin{thmA}\label{intro:stabs}
Suppose that $\xi \in \partial_{reg}  \X$, where $\G \to \Aut_{\!0}\X$ is acylindrical. If $\stab (\xi)$ is infinite then there exists an $\xi' \in \partial_{reg} \X$ such that $\stab_\G(\xi) = \fix_\G\{\xi, \xi'\}$. Moreover, if $D'= |\{i: \xi_i\neq \xi_i'\}|$ then the stabilizers are virtually $\Z^k$ for some $1 \leq k \leq D'$.
\end{thmA}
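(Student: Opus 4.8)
The plan is to build $\xi'$ one coordinate at a time and then read off the structure of $H:=\stab_\G(\xi)$ from the translation data in the factors where $H$ is genuinely hyperbolic. Write $\xi=(\xi_1,\dots,\xi_D)$ and call a factor $i$ \emph{loxodromic} if some element of $H$ acts loxodromically on $X_i$; since such an element fixes $\xi_i\in\partial X_i$, the point $\xi_i$ is one endpoint of its axis, and I set $\xi_i'$ to be the opposite endpoint. On every non-loxodromic factor I put $\xi_i'=\xi_i$. This produces $\xi'=(\xi_1',\dots,\xi_D')\in\partial_{reg}\X$ with $D'=|\{i:\xi_i\ne\xi_i'\}|$ equal to the number of loxodromic factors. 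Well-definedness of $\xi_i'$ (independence of the chosen loxodromic) is the standard acylindrical fact that two loxodromics sharing one boundary fixed point share both; applied factorwise it also gives the equality $\stab_\G(\xi)=\fix_\G\{\xi,\xi'\}$: the inclusion $\supseteq$ is immediate, and for $h\in H$ and a witness $g\in H$ loxodromic on a loxodromic factor $i$, the conjugate $hgh^{-1}\in H$ is loxodromic on $X_i$ with one endpoint $h\xi_i=\xi_i$, hence with both endpoints equal to those of $g$, so $h\xi_i'=\xi_i'$; on the remaining factors $h$ fixes $\xi_i=\xi_i'$ for free.

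To extract the abelian structure I would fold the loxodromic factors into one homomorphism. On each loxodromic factor the fixed pair $\{\xi_i,\xi_i'\}$ spans a quasi-axis, along which the signed asymptotic translation defines a homomorphism $\beta_i\colon H\to\R$; since every element of $H$ fixes the two distinct points $\xi_i,\xi_i'$, no element is parabolic on $X_i$, so $\ker\beta_i$ is exactly the elliptic-on-$X_i$ part of $H$. Collecting these over the $D'$ loxodromic factors gives $\beta=(\beta_i)\colon H\to\R^{D'}$. I claim its image is a discrete subgroup, so that $\beta(H)\cong\Z^k$ with $k\leq D'$, and that $\ker\beta$ is finite; granting both, $H$ is virtually $\Z^k$, and $k\geq1$ because $H$ is infinite. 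Both claims I would derive from one estimate, stated next: an element whose coordinates $\beta_i(h)$ are all small necessarily moves points lying deep in the $\xi$-direction by a correspondingly small amount. A non-discrete image, or an infinite kernel, would then produce arbitrarily many elements coarsely fixing a single far-apart pair of deep points, violating acylindricity.

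The hard part is this displacement estimate together with a supporting finiteness lemma: \emph{no infinite subgroup of $\G$ fixing a point of $\partial_{reg}\X$ can act elliptically on every factor}. I would first use acylindricity of the product to eliminate parabolic isometries of $\X$ (an element with zero translation on each factor but unbounded orbit would be parabolic), so that $\ker\beta$, and in the degenerate case $D'=0$ all of $H$, is a group of exactly the forbidden elliptic-on-every-factor type; the lemma then yields $\ker\beta$ finite and $D'\geq1$ at once. To prove the lemma, fix geodesic rays $\rho_i$ to $\xi_i$ and set $z(t)=(\rho_1(t),\dots,\rho_D(t))$, which escapes to infinity in every coordinate precisely because $\xi$ is regular. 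The essential point is that $\limsup_{t}d_\X(z(t),h\,z(t))$ is bounded \emph{uniformly} over such $h$: on each factor this displacement is governed by the Busemann cocycle at $\xi_i$, a quasimorphism whose homogenisation is the signed translation length and hence vanishes identically here (every element being elliptic), and a quasimorphism with vanishing homogenisation is bounded by its defect. Given a uniform bound $\e_0$, each $h$ satisfies $d_\X(z(t),h\,z(t))\leq\e_0+1$ for all $t$ past some height $T_h$; as the sets $\{h:T_h\leq s\}$ exhaust the group, an infinite group would let me fix $s$ admitting more than $N(\e_0+1)$ such elements and then $t>s$ with $d_\X(z(s),z(t))\geq R(\e_0+1)$, contradicting acylindricity. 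The same deep-point estimate, applied instead to a hypothetical sequence with $0<|\beta(h_n)|\to0$, forces the small-displacement collision that proves $\beta(H)$ discrete. This is the delicate step, since it is exactly where mere ellipticity of every element is upgraded to genuine finiteness and where regularity of $\xi$ is indispensable.
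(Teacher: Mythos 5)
Your architecture matches the paper's (build $\xi'$ factorwise, then read off the structure of $H=\stab_\G(\xi)$ from a Busemann homomorphism to $\R^{D'}$ with finite kernel and discrete image), and your finiteness lemma and discreteness step --- uniform displacement bounds for deep points on the diagonal ray to $\xi$, fed into acylindricity --- are essentially the mechanisms the paper itself uses. But there is a genuine gap at the very first step, and it propagates. You justify the well-definedness of $\xi_i'$ and the inclusion $\stab_\G(\xi)\subseteq\fix_\G\{\xi,\xi'\}$ by ``the standard acylindrical fact that two loxodromics sharing one boundary fixed point share both,'' applied factorwise. That fact is a consequence of Osin's theorem that an acylindrical action on a single hyperbolic space is never quasiparabolic; but only the action on the product $\X$ is assumed acylindrical, and acylindricity does \emph{not} pass to the factor projections (a recurring point in this paper, cf.\ $\SL_2\Z[\frac{1}{p}]$ acting on $\mathbb H^2\times T_{p+1}$). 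What you actually need is that the projection of $H$ to $\Isom X_i$ is never quasiparabolic, and that is precisely the hard content here: since $H$ fixes the regular point $\xi$, each element has a Busemann vector, elements with $\|\b(h)\|_\8\leq n$ coarsely stabilize two far-apart points on the diagonal quasi-ray to $\xi$ (regularity is what makes that ray a genuine quasigeodesic in $\X$), so acylindricity makes these ``Busemann balls'' finite; hence every finitely generated subsemigroup of $H$ has polynomial growth, whereas a quasiparabolic factor would yield a free subsemigroup by ping-pong. This is the adaptation of Lemma \ref{Lem:noqpinacyl} carried out in the proof of Proposition \ref{prop: reg fixed implies reg pair}; without it, $\xi_i'$ need not be well defined and your conjugation argument has nothing to act on.

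A second, related gap: you assert that the signed asymptotic translation $\beta_i$ along the fixed pair is a homomorphism. Even for an oriented lineal action the Busemann quasimorphism is in general only a homogeneous quasimorphism, not additive; if $\beta$ is merely a quasimorphism then $\ker\beta$ is not a subgroup and $\beta(H)$ is not a group, so the finite-kernel/discrete-image endgame does not parse. The paper repairs this by first proving $H$ is amenable (Theorem \ref{theorem: elementary acyl is amenable}, again via the polynomial-growth estimate), after which homogeneous quasimorphisms are homomorphisms and the induced translation action on $\R^{D'}$ is AU-acylindrical, hence proper and discrete (Proposition \ref{acyldescends}). In short, the one missing idea is the uniform polynomial-growth bound extracted from acylindricity along the regular ray: your displacement estimates are exactly the right raw material for it, but you deploy them only for the kernel and the image, not at the two places (no quasiparabolic factors; $\beta_i$ additive) where the argument actually hinges on them.
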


Another well known result in rank-1 is that pertaining to the classification of isometries and actions (see Section ~\ref{sec:isomrank1}). In this paper, we extend the classification of isometries and actions in rank-1 to consider the different possibilities for elliptic elements and actions. It turns out that these have their own internal trichotomy, manifesting themselves as \emph{trembles, rotations or rifts} (see Definition ~\ref{defn:elementsclassextn}, and the left-most three in the figure on the title page). Of note, the trembles shall in a sense take on the role of the center in a semi-simple lattice. This analogy is further extended to a  semi-simple dictionary in Section ~\ref{Sec: SS Dict}. This approach will be helpful in dealing with results and proofs in the section on lattice envelopes.

 Osin showed that acylindrical actions on hyperbolic spaces are even more restricted -- they are either elliptic, lineal or general type. i.e. horocyclic and quasiparabolic acylindrical actions do not exist (see \cite[Theorem 1]{Acylhyp}). In particular, parabolic isometries are not contained in acylindrical actions, furthering the analogy with cocompact lattices. We extend this result to the higher rank setting by proving the following. For the classification of hyperbolic actions, we refer the reader to Theorem \ref{thm:actionsclassrk1}.

\begin{thmA}[Obstructions to acylindricity in higher rank]\label{intro:acylprob}Let $\G \to \Aut_{\!0}\X$  such that the projections $\G\to \Isom X_i$ are all either elliptic, parabolic or quasiparabolic (with at least one factor being parabolic or quasiparabolic). Then $\G \to \Aut \X$ is not acylindrical.

In particular, if $\G \to \Aut_{\!0}(\X)$ is acylindrical, then every element of $\G$ is either an elliptic isometry or contains a loxodromic factor.
\end{thmA}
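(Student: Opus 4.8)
The plan is to deduce the ``in particular'' clause from the main statement and then prove the latter by contradiction. For the clause, suppose $g\in\G$ has no loxodromic factor but at least one parabolic factor. Then every factor of the cyclic group $\langle g\rangle$ is elliptic or parabolic (a single non-elliptic, non-loxodromic isometry generates a parabolic, hence horocyclic, action, never a lineal or focal one), so $\langle g\rangle\to\Aut_{\!0}\X$ satisfies the hypotheses of the first assertion. Since acylindricity is inherited by subgroups, if $\G$ were acylindrical so would be $\langle g\rangle$, contradicting the first assertion; thus $g$ is elliptic or has a loxodromic factor.

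For the main assertion I assume $\G\to\Aut_{\!0}\X$ is acylindrical and seek a contradiction. The standing hypothesis forbids lineal and general type factors, so $\G$ fixes a boundary point $\xi_i\in\partial X_i$ of each non-elliptic factor and has bounded orbit in each elliptic factor. For every focal factor the Busemann cocycle at $\xi_i$ descends to a homomorphism $\beta_i\colon\G\to\R$ whose kernel is the horocyclic part; set $\beta_i\equiv 0$ on parabolic and elliptic factors. Let $\G^\flat=\bigcap_i\ker\beta_i$ be the subgroup of elements with zero Busemann drift in every factor; being a subgroup it is again acylindrical on $\X$, and by construction it is elliptic or parabolic on each factor (in particular it contains no factorwise loxodromic, since any loxodromic of $\G$ fixes the relevant $\xi_i$ and hence drifts).

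The crux is to exhibit one factor $X_j$ on which $\G^\flat$ has unbounded orbit; there it then acts \emph{horocyclically} (unbounded, fixing $\xi_j$, no loxodromic), which in rank one is non-acylindrical. If no factor is focal then $\G^\flat=\G$ and any parabolic factor serves as $X_j$. If some factor $X_j$ is focal, I use that $[\G,\G]\subseteq\G^\flat$ (commutators die in the abelian quotient $\prod_i\beta_i$) while the image of $\G$ in $\Isom X_j$, being focal, has unbounded derived subgroup: conjugating a parabolic $b$ fixing $\xi_j$ by the powers of a loxodromic $a$ fixing $\xi_j$ yields commutators $[a^n,b]\in\G^\flat$ whose displacement on $X_j$ grows without bound. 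I expect this step to be the main obstacle, since it is exactly where the coupling between the focal factors must be severed; the point is the classical fact that a loxodromic and a parabolic sharing a fixed point cannot commute, and in fact generate a subgroup with unbounded horocyclic part, which requires a horoball-expansion estimate to make quantitative.

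It remains to run a deep point argument using horoball contraction. Since $\G^\flat$ is horocyclic, hence non-acylindrical, on $X_j$, the failure of rank-one acylindricity at some fixed parameter $\e_0$ provides, for the constants $R(\e),N(\e)$ fixed below, a pair $x_j,y_j\in X_j$ with $d_j(x_j,y_j)\geq R(\e)$ that is coarsely fixed within $\e_0$ by more than $N(\e)$ elements $h^{(1)},\dots,h^{(N(\e)+1)}\in\G^\flat$. I complete these to points of $\X$ by placing each remaining coordinate deep along a geodesic ray toward $\xi_i$ (parabolic factors) or at a quasicenter of the bounded orbit (elliptic factors). Because every $h^{(k)}$ has zero Busemann drift, two geodesic rays to $\xi_i$ at equal height fellow-travel within a constant $C_i$ depending only on the hyperbolicity constant of $X_i$, so after pushing the auxiliary coordinates deep enough for the finitely many $h^{(k)}$, each $h^{(k)}$ moves the completed points by at most $C:=\sum_{i\neq j}C_i$ in the auxiliary factors. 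Setting $\e:=\e_0+C$, the $N(\e)+1$ elements then coarsely fix, within $\e$, a pair at distance $\geq R(\e)$, contradicting acylindricity; here $C$ depends only on $\X$, so there is no circularity in fixing $\e$ first.
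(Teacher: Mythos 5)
Your reduction of the ``in particular'' clause to the main statement, and your deep-point argument for transferring non-acylindricity from one factor to the product (placing the auxiliary coordinates far along rays to the fixed points and using zero Busemann drift to get uniform fellow-travelling), are sound and essentially reproduce the paper's Lemma \ref{Lem:elimellfactors} and Proposition \ref{prop:removepara}. The gap is in your treatment of the quasiparabolic factors. For a quasiparabolic action on a general (non-proper) $\delta$-hyperbolic space the Busemann function is only a homogeneous \emph{quasimorphism}, not a homomorphism -- the paper is explicit that $\b$ is a homomorphism only when the group is amenable or the space is proper, neither of which you may assume here. Consequently $\G^\flat=\bigcap_i\ker\beta_i$ need not be a subgroup, and the inclusion $[\G,\G]\subseteq\G^\flat$ fails: homogeneous quasimorphisms are in general nonzero on commutators (their failure to vanish on $[\G,\G]$ is precisely what makes them nontrivial in bounded cohomology), so your commutators $[a^n,b]$ need not have zero drift. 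A secondary problem is that a focal factor need not contain any parabolic element $b$ at all (e.g. $BS(1,2)$ acting on its Bass--Serre tree is quasiparabolic with every element elliptic or loxodromic), so the specific conjugation construction is not available.

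The paper circumvents exactly this point in Lemma \ref{Lem:noqpinacyl} by working with the quasimorphism directly rather than with its kernel: acylindricity forces the level sets $F_n=\{g:\|\b(g)\|_\8\leq n\}$ to have uniformly bounded cardinality, whence every finitely generated subsemigroup of $\G$ has polynomial growth; this contradicts the free subsemigroup that Ping-Pong produces from any quasiparabolic factor. If you want to rescue your route, you would need either to restrict to settings where $\b_i$ is genuinely a homomorphism, or to replace the ``kernel'' argument by a quantitative statement of this type about the sets $\{g:|\b_i(g)|\leq n\}$.
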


As a consequence of this obstruction, and the superrigidity results from \cite{BaderCapraceFurmanSisto}, we are able to deduce that $\PSL_2\Z[\frac{1}{p}]$ (and other nonuniform $S$-arithmetic lattices with rank-1 factors) acts non-uniformly acylindrically on the product of the hyperbolic plane with the $(p+1)$-regular tree, but does not act acylindrically on any $\X$ (see Section ~\ref{Sect: S-arith}).

We also consider the higher rank analogue of loxodromic elements, which are elements that act as loxodromics in each factor; we call these \emph{regular} elements. Maher and Tiozzo used random walks to show that loxodromic elements in a general type action can be encountered asymptotically with high probability  \cite{MaherTiozzo}. We use their result to easily conclude the following. 

\begin{thmA}[Regular elements]\label{intro:regelts} Let $\G\to \Aut \X$ be an action with general type factors. Then there exists $\g\in \G$ that preserves factors and acts as a loxodromic in each factor, i.e. regular elements exist. 
\end{thmA}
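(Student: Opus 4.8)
The plan is to run a single random walk on $\G$ and apply the Maher--Tiozzo genericity theorem to all factors at once, exploiting the fact that there are only finitely many factors so that a union bound is available.

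First I would reduce to the factor-preserving subgroup $\G_0\leq\G$, i.e.\ the preimage of $\Aut_{\!0}\X$ (equivalently the kernel of the permutation action $\G\to\Sym_\X(D)$). Since $\Sym_\X(D)$ is finite, $\G_0$ has finite index in $\G$, and by construction every element of $\G_0$ preserves the factors of $\X$. Moreover, each projection $p_i\colon\G_0\to\Isom X_i$ is still of general type: a general type action contains a pair of independent loxodromics, suitable powers of which lie in the finite-index subgroup $\G_0$ and remain independent loxodromics. Hence it suffices to produce the desired element inside $\G_0$.

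Then I would set up the walk. As $\G_0$ is countable, I fix a probability measure $\mu$ on $\G_0$ of full support and let $(w_n)$ be the right random walk $w_n=g_1\cdots g_n$ with $g_j$ i.i.d.\ of law $\mu$. For each fixed factor $i$ the pushforward $\mu_i=(p_i)_*\mu$ is supported on all of $p_i(\G_0)$; since this action is general type, $\mu_i$ is non-elementary, and $p_i(w_n)=p_i(g_1)\cdots p_i(g_n)$ is precisely the $\mu_i$-random walk on $\Isom X_i$. Because $X_i$ is complete and separable (Table~\ref{table:notation}), the Maher--Tiozzo theorem \cite{MaherTiozzo} applies and gives, writing $L_i^{(n)}$ for the event that $p_i(w_n)$ is loxodromic on $X_i$,
\begin{equation*}
\Prob\bigl(L_i^{(n)}\bigr)\longrightarrow 1\qquad\text{as } n\to\8.
\end{equation*}

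Finally I would combine the factors by a union bound: since $D$ is finite,
\begin{equation*}
\Prob\Bigl(\,\bigcap_{i=1}^{D}L_i^{(n)}\Bigr)\;\geq\;1-\sum_{i=1}^{D}\Prob\bigl((L_i^{(n)})^{c}\bigr)\;\longrightarrow\;1\qquad\text{as } n\to\8 ,
\end{equation*}
so for all large $n$ this probability is strictly positive. Choosing such an $n$ and any realization $\g:=w_n$ in the event $\bigcap_{i}L_i^{(n)}$ yields an element of $\G_0$ that preserves factors and acts loxodromically on every $X_i$, i.e.\ a regular element. The argument is short once Maher--Tiozzo is granted; the only delicate points are the passage to $\G_0$ (needed so the output does not permute factors) and the verification that each pushforward $\mu_i$ is non-elementary, so that the genericity theorem is genuinely applicable factor by factor.
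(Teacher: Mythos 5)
Your proposal is correct and follows essentially the same route as the paper's proof of Proposition \ref{Prop:Regular Exist}: pass to the factor-preserving finite-index subgroup, run a random walk whose projections to each $\Isom X_i$ are non-elementary, invoke Maher--Tiozzo in each factor, and conclude by a union bound over the finitely many factors (the paper's Lemma \ref{Lem:prob}). The only cosmetic difference is that the paper phrases the Maher--Tiozzo input as linear growth of translation length rather than asymptotic loxodromicity, and you are slightly more explicit about why the projections of $\G_0$ remain of general type.
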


It is often desirable to gain control of the space on which a group is acting by removing what can seem like extraneous or possibly uncontrollable pieces of the space, while retaining nice-properties of the action. In our context, what is possibly extraneous and uncontrollable are the parts of the $\delta$-hyperbolic space that are not traversed by loxodromic elements. 
As a first step, we show that $\mathcal{L}_\rho(X)$, a well chosen neighborhood of the union of quasigeodesic axes of the loxodromic elements, is an \emph{essential core} whose boundary is the limit set. Essential means that every $\G$ invariant, quasiconvex subset is coarsely dense. 

\begin{thmA}[The essential core]\label{intro:essentialcore}
Let $\rho:\G\to \Isom X$ be of general type. Then $\mathcal{L}_\rho(X)$ is a quasiconvex, $\G$-invariant subspace, on which  the $\G$-action is essential such that   $$\partial \mathcal{L}_\rho(X) = \L(\rho(\G)).$$

Moreover, setting $\T:=\ker(\G\to \Homeo(\L(\rho(\G))))$ there is a $\delta'$-hyperbolic geodesic space $X'$ equipped with an injective homomorphism $\rho':\G/\T\hookrightarrow \Isom X'$ whose associated action is essential and general type, and a quasi-isometry $\f: \mathcal{L}_\rho(X) \to X'$ that is $\G$-equivariant with respect to the restriction $\rho(\G)|_{\mathcal{L}_\rho(X)}$ and the image $\rho'(\G/\T)$.
\end{thmA}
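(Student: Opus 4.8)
The plan is to identify $\mathcal{L}_\rho(X)$, up to finite Hausdorff distance, with the weak convex hull of the limit set, and then read off every asserted property from standard hyperbolic geometry together with the non-elementarity of the action. Write $\L:=\L(\rho(\G))$ and let $\op{Hull}(\L):=\bigcup_{\xi\neq\eta\in\L}[\xi,\eta]$ be the union of all bi-infinite geodesics with endpoints in $\L$. First I would recall that in a general type action the ordered pairs $(g^-,g^+)$ of fixed points of loxodromic elements $g$ are dense in $\L\times\L\setminus\Delta$, where $\Delta$ is the diagonal; combined with the Morse lemma (absorbing the fellow-travelling constants into the ``well-chosen neighborhood'' used to define $\mathcal{L}_\rho(X)$) this shows that the union of quasi-axes, and hence $\mathcal{L}_\rho(X)$, is within finite Hausdorff distance of $\op{Hull}(\L)$. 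Since $\op{Hull}(\L)$ is $2\delta$-quasiconvex with $\partial\op{Hull}(\L)=\L$, this immediately gives that $\mathcal{L}_\rho(X)$ is quasiconvex and that $\partial\mathcal{L}_\rho(X)=\L$. Invariance is automatic: $g\cdot\text{axis}(h)$ is a quasi-axis of $ghg^{-1}$, so $\G$ permutes the quasi-axes and preserves their neighborhood.

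For essentiality, let $Z\subseteq\mathcal{L}_\rho(X)$ be a non-empty $\G$-invariant quasiconvex subset. As the action is general type it has no bounded orbit, so $Z$ is unbounded and $\partial Z$ is a non-empty closed $\G$-invariant subset of $\partial\mathcal{L}_\rho(X)=\L$. Since $\G$ acts minimally on its limit set (a standard consequence of non-elementarity, via density of loxodromic fixed points), $\partial Z=\L$. A quasiconvex set whose boundary is $\L$ coarsely contains every geodesic $[\xi,\eta]$ with $\xi,\eta\in\L$, hence coarsely contains $\op{Hull}(\L)$ and therefore $\mathcal{L}_\rho(X)$; thus $Z$ is coarsely dense, as required.

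For the quotient statement, the crucial point is that $\T=\ker(\G\to\Homeo(\L))$ acts on $\mathcal{L}_\rho(X)$ with uniformly bounded orbits. Indeed, fix $t\in\T$ and a geodesic $[\xi,\eta]$ with $\xi,\eta\in\L$; since $\L$ is infinite we may choose a third fixed point $\zeta\in\L$, and $t$ must coarsely fix the nearest-point projection of $\zeta$ onto $[\xi,\eta]$. As $t$ also fixes the endpoints $\xi,\eta$, this forces the translation length of $t$ along $[\xi,\eta]$ to be bounded by a constant depending only on $\delta$; consequently $d(x,tx)\leq C$ for every $x\in\op{Hull}(\L)$ and every $t\in\T$, with $C=C(\delta)$ uniform. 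I would then pass to the quotient pseudometric space $\mathcal{L}_\rho(X)/\T$, which by uniform boundedness of orbits is $\G/\T$-equivariantly quasi-isometric to $\mathcal{L}_\rho(X)$, and realize it by a $\G/\T$-invariant approximation graph $X'$ built on a separated net: $X'$ is geodesic, quasi-isometric to $\mathcal{L}_\rho(X)/\T$, hence $\delta'$-hyperbolic, and carries an isometric $\G/\T$-action. The induced $\rho'$ is injective since $\partial X'=\L$ and any element acting trivially on $X'$ fixes $\L$ pointwise, i.e. lies in $\T$; the action is general type and essential because these properties transfer across the $\G$-equivariant quasi-isometry $\f\colon\mathcal{L}_\rho(X)\to X'$ (the composite of the quotient map with the net inclusion), using that images of independent loxodromics remain independent loxodromics and that invariant quasiconvex coarsely-dense subsets are preserved.

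The main obstacle I anticipate is the construction and verification of $X'$: establishing the uniform bound on $\T$-orbits through the projection-of-the-third-fixed-point argument, and then producing a genuinely geodesic hyperbolic space realizing the bounded-orbit quotient equivariantly while simultaneously certifying injectivity of $\rho'$, the general type property, and essentiality. The quasiconvexity step also conceals real content, since it requires promoting the density of loxodromic axis endpoints in $\L\times\L\setminus\Delta$ to a uniform fellow-travelling statement identifying $\mathcal{L}_\rho(X)$ with $\op{Hull}(\L)$ up to finite Hausdorff distance.
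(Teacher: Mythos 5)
Your proposal is correct and reaches all of the claimed conclusions, but it is organized differently from the paper's proof in two respects. For the first part (quasiconvexity, invariance, $\partial \mathcal{L}_\rho(X)=\L(\rho(\G))$, essentiality), you and the paper ultimately rely on the same two inputs -- Hamann's bilateral density of loxodromic fixed-point pairs in $\L\times\L$ minus the diagonal, upgraded to synchronous fellow-travelling via the Morse lemma -- but you package them by first identifying $\mathcal{L}_\rho(X)$ with the weak convex hull $\op{Hull}(\L)$ up to finite Hausdorff distance and then quoting hull properties, whereas the paper argues directly on the union of quasi-axes using quasitree approximations of ideal $4$-gons (its Lemmas \ref{Lem: N-gons quasitrees}, \ref{Lem:Geods in Tree}, \ref{Lem: Convergence} and Theorem \ref{thm: lox lim set is dense}); your route is cleaner to state, but be aware that the inclusion $\partial\op{Hull}(\L)\subseteq\L$ is not purely formal in a non-proper space and is exactly the content of the paper's Lemma \ref{Lem: Convergence}, and that your appeal to minimality of the boundary action is unnecessary since $\partial Z\supseteq\L$ already follows from $Z$ containing an orbit (Lemma \ref{Lem: Limit set minimal}). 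For the second part, your argument that $\T$ has uniformly bounded orbits is genuinely different: you bound $d(x,tx)$ directly for $x$ on any geodesic $[\xi,\eta]$ by observing that $t$ coarsely fixes the projection of a third limit point and then propagating this along the geodesic by synchronous fellow-travelling, giving an explicit constant $C(\delta)$; the paper instead deduces that $\T$ is a tremble from its structural Lemma \ref{Lem: normal in total gen type is tremb/gen type} (a normal subgroup of an essential general type action with no loxodromics is elliptic, and a normal elliptic subgroup of an essential action is a tremble), which requires the essentiality established in the first part. Your argument is more self-contained and quantitative; the paper's buys a reusable classification of commensurated subgroups that it needs elsewhere. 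The remaining step -- realizing the bounded-orbit quotient by a genuine geodesic hyperbolic space with an isometric $\G/\T$-action -- is the same in both treatments and is outsourced by the paper to \cite[Lemma 4.10]{PropNL}; your sketch of it is consistent with that construction, and your verification of injectivity of $\rho'$ and of the transfer of essentiality and general type across the equivariant quasi-isometry matches the paper's.
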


The notation $\T$ used above is to indicate the term ``trembling radical", which after passing to the essential core is the largest normal elliptic subgroup. It turns out, in the acylindrical higher rank setting, that this essential core retains all the necessary information about the original action, and can be made \emph{tremble-free} as well by utilizing $X'$ as above. This furthers the semi-simple analogy coming from the world of linear groups. The possibility for the theory to be semi-simple stems from the following theorem, in particular the structure of the kernel.

\begin{thmA}[The tremble-free essential core]\label{intro:tremblefree}
 Let $\rho:\G \to \Aut \X$ be (AU-)acylindrical  with general type factors. Then, there exists $\X':=\CProd{i=1}{D}X_i'$ a product of $D$-many  $\delta'$-hyperbolic spaces such that:
    \begin{enumerate}
        \item there is a homomorphism $\rho':\G\to \Aut \X'$ that factors through $\rho$  such that the action is (AU-)acylindrical;
        \item the action on each factor is essential, of general type, and tremble-free;
        \item if $\L_i$  denotes the limit set in the $i^{\text{th}}$ coordinate then the following kernels are finite. $$\ker(\rho') = \ker(\G \to \Homeo(\CProd{i=1}{D}\L_i));$$ 
        \item if a factor of $\X$ is locally compact, then so is the associated factor of $\X'$.
    \end{enumerate}
\end{thmA}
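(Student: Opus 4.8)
The plan is to build the tremble-free essential core factor-by-factor, using Theorem~\ref{intro:essentialcore} on each factor of $\X$ and then reassembling the pieces into a product while carefully tracking which kernels become finite and why acylindricity is preserved. First I would invoke the essential-core construction (Theorem~\ref{intro:essentialcore}) in each coordinate $i=1,\dots,D$. Since $\rho$ has general type factors, the projection $\rho_i\colon \G_0 \to \Isom X_i$ (restricting to the finite-index subgroup $\G_0$ that preserves factors) is of general type, so we obtain a $\delta'$-hyperbolic space $X_i'$, an essential general-type action, and a $\G$-equivariant quasi-isometry $\f_i\colon \mcL_{\rho_i}(X_i)\to X_i'$. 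I would set $\X'=\CProd{i=1}{D}X_i'$ and define $\rho'\colon \G\to \Aut\X'$ using the product of the $\f_i$-conjugated actions on the factor-preserving subgroup $\G_0$, then extend across the finite permutation group $\Sym_\X(D)$ (this is where one must check that isometric factors remain isometric after passing to cores, so that the permutation action descends).

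Next I would address the kernel computation, item~(3), which I expect to be the technical heart of the argument. By construction $\rho'$ factors through the action on $\CProd{i=1}{D}\L_i$, so $\ker(\rho')=\ker(\G\to\Homeo(\CProd{i=1}{D}\L_i))$; the content is that this kernel is \emph{finite}. Here I would use the acylindricity hypothesis crucially: an element acting trivially on every limit set $\L_i$ must coarsely fix the quasigeodesic axes of every regular element (which exist by Theorem~\ref{intro:regelts}), and since regular elements are loxodromic in each factor, coarsely stabilizing long segments of such axes in all coordinates simultaneously forces membership in an $\e$-coarse stabilizer $\set_\e$ of boundedly many points. Acylindricity then bounds the cardinality of such a set, giving finiteness of $\ker(\rho')$. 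I would need the trembling radical $\T$ to be absorbed here: after passing to the core, the tremble-free property (item~(2)) says precisely that no nontrivial element acts as a tremble, so the elliptic normal kernel collapses into this finite kernel.

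The tremble-free and essential properties (item~(2)) would follow directly from the corresponding conclusions of Theorem~\ref{intro:essentialcore} applied factorwise, since that theorem already produces an essential general-type action on $X_i'$ and the passage to $X'$ is designed to quotient out the trembling radical $\T_i$ in each factor; tremble-freeness is then the statement that $X_i'$ carries no normal elliptic subgroup acting nontrivially, which is exactly what killing $\T_i$ achieves. For item~(4), local compactness transfers because the quasi-isometry $\f_i$ is built within the original locally compact $X_i$ as a neighborhood of axes, and the construction of $X_i'$ in Theorem~\ref{intro:essentialcore} can be arranged to preserve local compactness (one takes $X_i'$ to be a suitable graph or the core itself up to quasi-isometry).

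\emph{The hard part will be} item~(1): verifying that the reassembled product action $\rho'\colon\G\to\Aut\X'$ is genuinely (AU-)acylindrical, not merely acylindrical factor-by-factor. Acylindricity in a product is a joint condition on the coarse stabilizers $\set_\e$ of pairs of points in $\X'$, and a priori the quasi-isometries $\f_i$ distort the acylindricity constants $R(\e),N(\e)$ in each coordinate. The key obstacle is controlling these constants uniformly across factors and ensuring the product coarse stabilizer remains finite (in the acylindrical case) or bounded in the AU-sense; I would handle this by noting that each $\f_i$ is a $\G$-equivariant quasi-isometry onto an invariant quasiconvex subspace, so coarse stabilizers of points and pairs correspond under $\f_i$ up to the quasi-isometry data, allowing the acylindricity constants of $\rho$ to be transported to $\rho'$ with a controlled (uniform in $\e$, up to the finitely many quasi-isometry constants) deformation. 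Packaging this transport of acylindricity along a $\G$-equivariant quasi-isometry into a clean lemma — and checking it respects the AU (ambiguous-uniformity) distinction — is where the real work lies.
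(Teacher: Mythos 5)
Your overall architecture matches the paper's proof of this statement (Corollary \ref{Cor: pass to irreducible tremble free core}): apply the rank-1 tremble-free essential core (Lemma \ref{lem:tremblefreerk1}) factor by factor on the factor-preserving subgroup $\G_0$, observe that permuted factors carry conjugate actions and hence conjugate cores so the permutation part of $\G$ extends, and transport (AU-)acylindricity first by restricting to the invariant subspace $\Prod{i=1}{D}\mathcal{L}_\rho(X_i)$ --- where nothing needs to be checked, since the $\e$-coarse stabilizer of a pair of points in an invariant subspace is literally the same set as in the ambient space --- and then along the $\G_0$-equivariant quasi-isometry to $\X'$; the ``transport'' lemma you worry about is routine and the paper does not even isolate it. One step is softer than it needs to be: for item (3), your regular-element argument gives, for each $g$ in the kernel of the boundary action, an $\e_g$-coarse stabilization of arbitrarily distant pairs, but acylindricity requires a single $\e$ for the whole kernel. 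The uniformity comes from the kernel being normal and elliptic, hence a tremble by Lemma \ref{Lem: normal in total gen type is tremb/gen type}; more economically, once item (1) is in hand, finiteness of $\ker(\rho')$ is immediate from Lemma \ref{Lem:AU-acyl has finite kernel}, which is the route the paper takes.

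The genuine gap is item (4). You propose to obtain local compactness of $X_i'$ by taking ``a suitable graph or the core itself up to quasi-isometry,'' but neither works: quasi-isometry does not preserve local compactness, the graph produced by Lemma \ref{Lem: Kill Normal Ell} and Corollary \ref{Cor:Non-LC tremble tame} is in general not locally finite, and the un-quotiented core $\mathcal{L}_\rho(X_i)$ is not tremble-free, so it cannot serve as $X_i'$. What is actually required is the metric quotient of the core by (the closure of) the trembling radical, which is a compact subgroup of $\Isom X_i$ when $X_i$ is locally compact and complete; Proposition \ref{Prop: loc compact tremb quot} is precisely the statement that this quotient is again a complete, locally compact, geodesic hyperbolic space equipped with an equivariant quasi-isometry from the core. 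Without that quotient construction, item (4) does not follow from your argument.
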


We end this introduction by posing the following questions:
\begin{question}
    Given the vastness of the examples in Section ~\ref{sec:examples}, which groups act in an interesting way on a non-positively curved space and yet do not admit an AU-acylindrical action on any non-positively curved space? 
\end{question}

\begin{question}
    Are there examples of groups acting (AU-)acylindrically on $\X$ which satisfy (wNbC) but do not have (wNbC)-essential-rift-free factors?
\end{question}

\begin{question}\label{Q: subdirect prod}
    Under what conditions is an HHG, as in Sela's Conjecture ~\ref{Conj: Sela}, of finite index as a subdirect product as in Theorem ~\ref{intro:selaconj}? Is having finite dimensional homology $H_n(\G', \Q)$ for each $n=1, \dots, D$, and each $\G'\leq \G$ of finite index sufficient as it is in some cases \cite[Theorem C]{BridsonHowieMillerShort}\cite[Theorem 1.1]{KochloukovaLopezdeGamizZearra}?
\end{question}

\medskip 
\noindent
\textbf{Acknowledgements: } The authors would like to thank Uri Bader, Jennifer Beck, Yves Benoist, Corey Bregman, Montserrat Casals-Ruiz, Indira Chatterji, R\'emi Coulon, Tullia Dymarz, David Fisher, Alex Furman, Daniel Groves, Thomas Haettel, Jingyin Huang, Michael Hull, Kasia Jankiewicz, Fra\c cois Labourie, Marco Linton, Robbie Lyman, Jason Manning, Mahan Mj, Shahar Mozes, Jacob Russell, Michah Sageev, Zlil Sela, Alessandro Sisto, Davide Spriano,  Daniel Woodhouse, Dan Margalit and Abdul Zalloum for useful conversations. We especially thank Joseph Maher for the idea behind Lemma ~\ref{Lem: Convergence}, Denis Osin for providing us with Example ~\ref{Ex:Denis}, Harry Petyt for discussions concerning the content of Remarks ~\ref{Rem: Colourable HHG}   and  ~\ref{Rem:Eyries}, Henry Wilton for asking us a question that led to the work contained in Section ~\ref{sec:ripsandsela}. The second named author would also like to thank  the American Institute of Mathematics, the Fields Institute, the Insitut Henri Poincar\'e, and the NSF for the support through NSF grant DMS–2005640.

\medskip 
\noindent \textbf{Structure of the paper: } Section ~\ref{sec:basics} deals with some basic notation, definitions and results that will be used in this paper. These are divided into 4 subsections, on products, hyperbolicity, the bordification of a hyperbolic space and the Morse Lemma. Sections ~\ref{sec:isomrank1} and ~\ref{sec:isomhigherrank} address the classification of actions and isometries in rank-1 and higher rank respectively; Theorem ~\ref{intro:regelts} is Proposition ~\ref{Prop:Regular Exist}.  Examples and non-examples are discussed in detail in Section ~\ref{sec:examples}. Sections ~\ref{sec:elemsubgrp} establishes the structure of elementary subgroups and the Tits Alternative Theorem ~\ref{intro:titsalt}. Theorem ~\ref{intro:stabs} is also proved therein as Propositions ~\ref{prop:elemsubvirtab} and ~\ref{prop: reg fixed implies reg pair}. Section ~\ref{sec:latticeenvs} discusses connections to lattice envelopes and contains the proof of Theorem ~\ref{intro:lattice}; see Theorem ~\ref{Thm: Acyl -> CAF} and Corollary ~\ref{cor:nbc}. In Section ~\ref{Sect:Theory is SS}, Theorem ~\ref{intro:proddecomp} is Theorem ~\ref{Thm: CanProdDecomp} and we establish a semi-simple dictionary. Lastly, Section ~\ref{sec:ripsandsela} studies recent types of acylindricities introduced by Sela; Theorems ~\ref{intro:selaconj} and ~\ref{intro:strongacyl} are proved as Theorems ~\ref{Thm: Weak Acyl is Product} and ~\ref{Thm: Strong acyl HHG product} respectively. The partial resolution to Sela's conjecture is Corollary ~\ref{Cor: Partial Resol Sela Conj}.

\section{Metric Basics}\label{sec:basics}

We begin with notions that do not require any special properties about the metric space, before progressing to complete separable $\delta$-hyperbolic geodesic spaces and their finite products.

\subsection{General Metric Spaces}
We collect some facts that we will need in the sequel. Since there are no geometric requirements on these results we state them for  general metric spaces.

\subsubsection{The Polish Topology on $\Isom Y$}\label{Sect: top on isom group} Recall that a group is said to be \emph{Polish} if it is separable and admits a  complete metric with respect to which the group operations are continuous maps. Polish groups fit in a hierarchy as in Figure ~\ref{fig:discrete to polish}.

\begin{figure}
    \centering
    \includegraphics[width=0.55\linewidth]{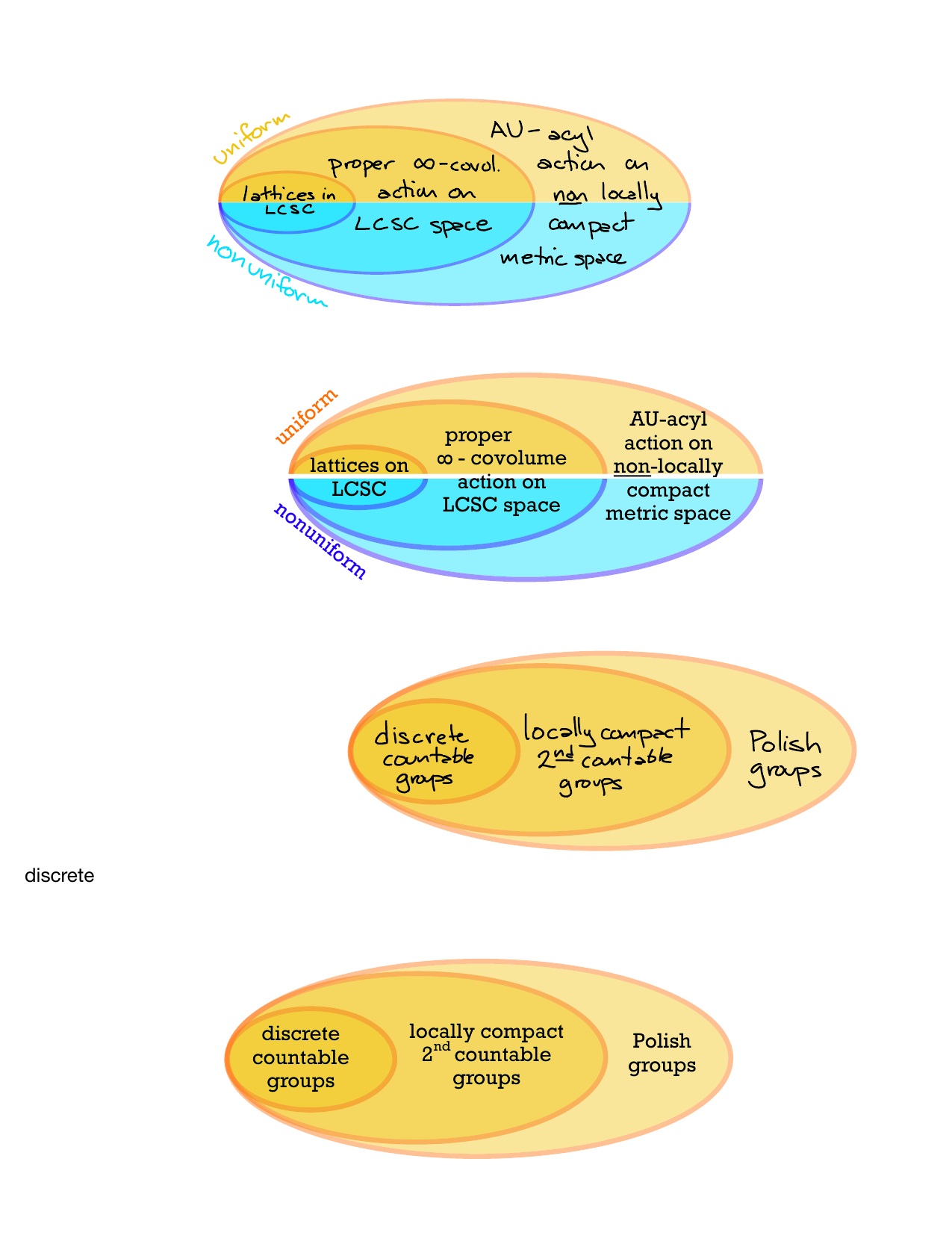}
    \caption{From discrete to Polish}
    \label{fig:discrete to polish}
\end{figure}

If $Y$ is a complete separable metric space then  $\Isom Y$ is a Polish group with respect to the topology of point-wise convergence.

More specifically, if $\{x_n\}\subset Y$ is a countable dense set and $g,h\in \Isom Y$ then the following metric is a complete  metric on $\Isom Y$ that generates the topology of point-wise convergence \cite[Example 9.B(9)]{Kechris}:

\begin{equation}\label{Polish Metric}
    d(g,h) = \Sum{k=0}{\8} \frac{1}{2^{n+1}}\left(\frac{d(g(x_k), h(x_k))}{1+d(g(x_k), h(x_k))} + \frac{d(g^{-1}(x_k), h^{-1}(x_k))}{1+d(g^{-1}(x_k), h^{-1}(x_k))}\right).
\end{equation}

Suppose now that $Y$ is locally compact, separable, connected (which is the case when $Y$ is geodesic), and complete. By \cite{ManoussosStrantzalos} we have that $\Isom Y$ is locally compact with respect to the above metric and acts properly on $Y$. Moreover, if in addition $Y$ is geodesic then every closed metric ball is compact by the Hopf-Rinow Theorem \cite[Proposition I.3.7]{BridsonHaefliger}. It is for these reasons that we will have the standing assumption that our metric spaces are separable and complete.

\subsubsection{Actions and AU-acylindricity}

Let $S$ be a set on which the group $\G$ acts, $A\subset S$. We shall denote by   $\stab_\G(A)$ the set of $\g\in \G$ such that  $\g A=A$, and $\fix(A) = \Cap{a\in A}{}\stab(a)$.

\begin{defn}\label{def: coarse stab}
    For an action $\G\to \Isom Y$, $x\in Y$, and $\e>0$, the $\e$\emph{-coarse stabilizer} of $x$ is $\cs{\e}(x):=\{g\in \G: d(gx,x)\leq \e\}$. If $S\subset Y$ then we denote by $\cs{\e}(S)= \Cap{x\in S}{} \cs{\e}(x)$.
\end{defn}

\begin{defn}\label{defn:typesofactions} An action  $\G\to \Isom Y$ is said to be 
\begin{itemize}
\item \emph{cobounded} if for all $x\in Y$ there is an $R>0$ such that $\G\cdot\hood_R(x)=Y$;
\item \emph{cocompact} if there exists $C\subset Y$ compact such that $\G\cdot C= Y$;
 \item \emph{proper} if for every $\e>0$ and every $x\in Y$ we have $|\cs{\e}(x)|<\8$;
 \item \emph{uniformly proper} if for every $\e>0$ there is an $N>0$ so that for every $x\in Y$ we have $|\cs{\e}(x)|\leq N$;
    \item \emph{acylindrical}, if for every $\e>0$, there exist nonnegative constants $R=R(\e) $ and $ N=N(\e)$ such that for any points $x,y \in X$ with $d(x,y) \geq R$, we have $$| \cs{\e}(x,y) |\leq N;$$ 
    \item \emph{AU-acylindrical}, or \emph{acylindrical of ambiguous uniformity} if for every $\e>0$, there exists  $R\geq 0$ such that for any points $x,y \in Y$ with $d(x,y) \geq R$, we have $| \cs{\e}(x,y)| <\8;$
    \item \emph{nonuniformly acylindrical}, if it is AU-acylindrical but not acylindrical. 
\end{itemize}

\end{defn}

    The reader may think of AU-acylindricity as a  weakening of a proper action on $Y \times Y$ minus a ``thick diagonal". AU-acylindricity may also be seen as a generalization of acting properly or uniformly properly, which may in turn be thought of as generalizing the type of action a lattice enjoys on its ambient space. In this case, acylindricity is the generalization for those that are cocompact, and tautologically, nonuniform acylindricity for those that are not uniform. 

\begin{lemma}\label{Lem:prop cobound is acyl}
    If an action  $\G\to \Isom Y$ is  proper and cobounded then it is acylindrical. 
\end{lemma}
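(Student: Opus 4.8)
The plan is to prove the stronger statement that a proper cobounded action is in fact \emph{uniformly proper}, from which acylindricity follows immediately with vanishing displacement constant $R(\e)=0$. Indeed, since $\cs{\e}(x,y)=\cs{\e}(x)\cap\cs{\e}(y)\subseteq \cs{\e}(x)$, any uniform bound $N=N(\e)$ on the cardinalities $|\cs{\e}(x)|$ that is valid for every $x\in Y$ will serve as the acylindricity constant, with $R(\e)$ taken to be $0$. So the entire content is to upgrade the pointwise finiteness guaranteed by properness to a single bound independent of the point.

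Fix $\e>0$; the task is to produce one $N=N(\e)$ with $|\cs{\e}(x)|\leq N$ for all $x\in Y$. First I would fix a basepoint $x_0\in Y$ and invoke coboundedness to obtain $R_0>0$ with $\G\.\hood_{R_0}(x_0)=Y$. The key reduction is that the function $x\mapsto |\cs{\e}(x)|$ is $\G$-invariant: for $g\in\G$ the map $h\mapsto g^{-1}hg$ restricts to a bijection $\cs{\e}(x)\to \cs{\e}(g^{-1}x)$, since $(g^{-1}hg)(g^{-1}x)=g^{-1}hx$ and $d(g^{-1}hx,\,g^{-1}x)=d(hx,x)$ because $g$ is an isometry. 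Coboundedness lets me write any $x\in Y$ as $x=g\.x'$ with $d(x',x_0)\leq R_0$, whence $|\cs{\e}(x)|=|\cs{\e}(x')|$; thus it suffices to bound $|\cs{\e}(x')|$ uniformly over the bounded locus $\{x':d(x',x_0)\leq R_0\}$.

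For such an $x'$ and any $h\in\cs{\e}(x')$, the triangle inequality gives $d(hx_0,x_0)\leq d(hx_0,hx')+d(hx',x')+d(x',x_0)\leq R_0+\e+R_0$, so that $\cs{\e}(x')\subseteq \cs{\e+2R_0}(x_0)$. Hence $N:=|\cs{\e+2R_0}(x_0)|$, which is finite by properness applied at the single point $x_0$ with the enlarged parameter $\e+2R_0$, bounds $|\cs{\e}(x')|$ and therefore $|\cs{\e}(x)|$ for every $x\in Y$. This establishes uniform properness, and with it acylindricity by the observation in the first paragraph.

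The argument has no genuine obstacle; the only point needing care is that properness delivers finiteness merely pointwise, with a bound that a priori depends on the point, so the work lies entirely in manufacturing uniformity. Coboundedness supplies exactly this: it transports the finiteness at the single basepoint $x_0$ (at the cost of enlarging the displacement allowance from $\e$ to $\e+2R_0$) to all of $Y$ via the conjugation-invariance of coarse stabilizers.
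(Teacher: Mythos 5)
Your argument is correct: the conjugation bijection $\cs{\e}(x)\to\cs{\e}(g^{-1}x)$ together with coboundedness and the containment $\cs{\e}(x')\subseteq\cs{\e+2R_0}(x_0)$ cleanly upgrades pointwise properness to uniform properness, and acylindricity then follows with $R(\e)=0$ since $\cs{\e}(x,y)\subseteq\cs{\e}(x)$. The paper states this lemma without proof, evidently regarding it as immediate, and your write-up is exactly the standard argument the authors have in mind.
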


Since elements that map to the identity must fix everything, we immediately have the following: 

\begin{lemma}\label{Lem:AU-acyl has finite kernel}
    If $\theta:\G \to \Isom Y$ is AU-acylindrical and $Y$ is unbounded then the kernel $\ker(\theta)= \{g\in \G: \theta(g) = \mathrm{id}\}$ is finite.
\end{lemma}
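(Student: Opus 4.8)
The plan is to exploit the observation already flagged in the sentence preceding the statement: every element of the kernel fixes all of $Y$, and therefore lies inside every coarse stabilizer. Combined with AU-acylindricity, a single well-chosen pair of distant points will trap the entire kernel inside a finite set.

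Concretely, I would first fix a value of $\e$, say $\e = 1$, and let $R = R(\e) \geq 0$ be the AU-acylindricity constant supplied by Definition \ref{defn:typesofactions}. Since $Y$ is assumed unbounded, its diameter is infinite, so I can select two points $x, y \in Y$ with $d(x,y) \geq R$. This is the only place where the unboundedness hypothesis is used.

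Next I would observe that any $g \in \ker(\theta)$ satisfies $\theta(g) = \mathrm{id}$, so $gz = z$ for every $z \in Y$. In particular $d(gx,x) = d(gy,y) = 0 \leq \e$, which by Definition \ref{def: coarse stab} places $g$ in both $\cs{\e}(x)$ and $\cs{\e}(y)$, hence in $\cs{\e}(x,y) = \cs{\e}(x) \cap \cs{\e}(y)$. Therefore $\ker(\theta) \subseteq \cs{\e}(x,y)$.

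Finally, AU-acylindricity applied to the pair $(x,y)$ at distance $\geq R$ yields $|\cs{\e}(x,y)| < \8$, and since the kernel is contained in this finite set it too is finite. There is no genuine obstacle here: the content is entirely the containment of the kernel in a coarse stabilizer, and the sole role of unboundedness is to guarantee the existence of a pair of points far enough apart to invoke the acylindricity constant. Were $Y$ bounded, no such pair need exist and the conclusion could genuinely fail, which is why the hypothesis appears in the statement.
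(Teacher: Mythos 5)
Your proposal is correct and is exactly the argument the paper has in mind: the paper states the lemma as an immediate consequence of the remark that kernel elements fix everything, and your write-up simply fills in the routine details (choosing $\e$, using unboundedness to find a pair of points at distance at least $R$, and applying the finiteness of the coarse stabilizer $\cs{\e}(x,y)$).
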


We note that if a metric space has cocompact isometry group then it is uniformly locally compact. 

\begin{defn}
    $Y$ is said to be uniformly locally compact if for every $\rho, \tau>0$ there is a $C(\rho, \tau)> 0$ such that for every $x\in Y$ and every open cover of the closed ball $\~\hood_\rho(x)$ by open $\tau$ balls, there is a subcover of cardinality at most $C(\rho, \tau)$.
\end{defn}

We now prove that the notion of AU-acylindricity coincides with the notion of properness for actions on locally compact spaces (though the two notions differ in general for non-locally compact spaces). Figure ~\ref{fig: LCSC to acyl}  summarizes some of these relationships.

\begin{lemma}\label{Lem:acyl+loc comp implies unif proper} Let $Y$ be an unbounded locally compact metric space.  The following are true:
\begin{enumerate}
    \item If $\G\to \Isom Y$ is AU-acylindrical then it is proper. 
    \item If $\G\to \Isom (Y)$ is uniformly proper then it is acylindrical. 
    \item Assume $Y$ is uniformly locally compact and that there is some $T>0$ so that for every  $x\in Y$ and $n\in \N$  there is an $y\in Y$ with $d(x,y)\in[nT, (n+1)T]$. If $\G\to \Isom Y$  is acylindrical then it is uniformly proper.
\end{enumerate}
 \end{lemma}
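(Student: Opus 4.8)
The plan is to prove each of the three implications separately, working with the $\e$-coarse stabilizers $\cs{\e}(\cdot)$ and using the Polish-group properness facts collected in Section~\ref{Sect: top on isom group}. Throughout I would exploit that, on a complete separable geodesic space, $\Isom Y$ is locally compact and acts properly, and that closed balls are compact by Hopf--Rinow.

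\begin{proof}[Proof sketch]
\textbf{(1).} Suppose $\G \to \Isom Y$ is AU-acylindrical with $Y$ unbounded and locally compact; I want properness, i.e. $|\cs{\e}(x)| < \8$ for every $x$ and $\e$. Fix $x$ and $\e$, and let $R = R(\e)$ be the AU-acylindricity constant. Since $Y$ is unbounded (and locally compact, hence one expects to find points at controlled distance), choose $y$ with $d(x,y) \geq R$; then $\cs{\e}(x,y)$ is finite by hypothesis. The issue is that $\cs{\e}(x)$ may be strictly larger than $\cs{\e}(x,y)$. The fix is to cover a large ball: elements of $\cs{\e}(x)$ move $x$ by at most $\e$, and I would argue that any $g \in \cs{\e}(x)$ either lies in one of finitely many sets $\cs{\e}(x, y_j)$ for a finite collection of points $y_j$ at distance $\geq R$ from $x$, or else moves some far point very little in a way forcing it into a coarse stabilizer of a pair. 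Concretely, the set $\{g : d(gx,x)\le \e\}$ sits inside the compact set $\{\phi \in \Isom Y : d(\phi x, x)\le \e\}$ (compact because $\Isom Y$ is locally compact and acts properly, using that the closed $\e$-ball around $x$ is compact); a discrete subset of a compact set is finite, so I must also show $\rho(\cs{\e}(x))$ is discrete, which follows because an accumulation point would be an isometry and the relevant orbit map is proper. I expect the clean argument is: properness of the $\Isom Y$-action (from \cite{ManoussosStrantzalos}) gives that $\{\phi : d(\phi x,x)\le \e\}$ has compact closure, AU-acylindricity forces the image of $\cs\e(x)$ to be discrete in $\Isom Y$, and compact-plus-discrete is finite.

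\textbf{(2).} Suppose $\G \to \Isom Y$ is uniformly proper; I want acylindricity. This is the easy direction: given $\e$, uniform properness yields $N$ with $|\cs{\e}(x)| \leq N$ for all $x$. Then for any pair $x,y$ we have $\cs{\e}(x,y) = \cs{\e}(x)\cap\cs{\e}(y) \subseteq \cs{\e}(x)$, so $|\cs{\e}(x,y)| \leq N$ regardless of $d(x,y)$. Thus acylindricity holds with $R(\e) = 0$ and $N(\e) = N$; local compactness is not even needed here.

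\textbf{(3).} Assume $Y$ uniformly locally compact, the distance-filling hypothesis with constant $T$, and that $\G \to \Isom Y$ is acylindrical; I want uniform properness. Fix $\e$ and let $R=R(\e)$, $N=N(\e)$ be the acylindricity constants. Given any $x$, I want a uniform bound on $|\cs{\e}(x)|$. The strategy is to decompose $\cs{\e}(x)$ according to where its elements send a fixed finite ``net'' of reference points surrounding $x$. Using the distance-filling hypothesis, choose points $y_1, \dots, y_m$ with $d(x, y_j) \in [R, R+T]$ realizing a $\tau$-net of the sphere at radius $\approx R$; uniform local compactness bounds $m \leq C(R+T, \tau)$ independently of $x$. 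Any $g \in \cs{\e}(x)$ moves $x$ by $\le \e$, hence moves each $y_j$ by a bounded amount, so $gy_j$ lands within a controlled ball, and partitioning $\cs{\e}(x)$ by which net-cell each $gy_j$ falls into, the elements within a single cell coarsely stabilize a pair $(x, y_j)$ at distance $\geq R$, giving at most $N$ per cell. Hence $|\cs{\e}(x)| \leq m\cdot N \leq C(R+T,\tau)\cdot N$, a bound independent of $x$.

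\textbf{Main obstacle.} The genuinely delicate step is part (3): converting ``$g$ coarsely fixes $x$'' into ``$g$ coarsely fixes a pair $(x,y)$ at distance $\ge R$'' uniformly over all $x$. The distance-filling and uniform-local-compactness hypotheses are there precisely to produce, around \emph{every} $x$, a uniformly bounded number of witness points at distance in $[R,R+T]$ covering all possible images; making the counting argument yield a bound independent of $x$ (rather than merely finite for each $x$) is where the uniform hypotheses must be used carefully. Part~(1) has a subtler-than-it-looks point too, namely ensuring discreteness of the image so that the compactness argument closes.
\end{proof}
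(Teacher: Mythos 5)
Your parts (2) and (3) are essentially the paper's own argument. Part (2) is, as you say, immediate: $\cs{\e}(x,y)\subseteq\cs{\e}(x)$, so uniform properness gives acylindricity with $R=0$. For part (3) the paper does exactly what you propose, except more economically: it uses the distance-filling hypothesis to produce a \emph{single} witness point $y$ with $d(x,y)\in[n_{R}T,(n_{R}+1)T]$ and $d(x,y)\geq R(2\e)$, covers the closed ball $\~\hood_{\rho}(x)$ with $\rho=\e+(n_R+1)T$ (which contains $\cs{\e}(x)\cdot y$) by at most $C(\rho,\e)$ balls of radius $\e$ using uniform local compactness, and bounds each cell of the induced partition. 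Two corrections to your sketch of this step: no net of a sphere is needed, and the elements of a cell do not themselves coarsely stabilize the pair $(x,y_j)$ --- rather, if $gy$ and $hy$ land in the same $\e$-ball then $h^{-1}g\in\cs{2\e}(x,y)$, so each cell is contained in a coset $g_j\cdot\cs{2\e}(x,y)$; consequently the acylindricity constants must be invoked at $2\e$ (or $\e$ plus the cell diameter), not at $\e$ as you wrote. The count is then $C(\rho,\e)\cdot N(2\e)$.

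Where you genuinely diverge is part (1). The paper runs the same covering argument ``similarly and more simply'': one witness point at distance at least $R$, whose $\cs{\e}(x)$-orbit lies in a totally bounded set, so that $\cs{\e}(x)$ is covered by finitely many cosets of the finite set $\cs{2\e}(x,y)$. You instead invoke the Manoussos--Strantzalos properness of $\Isom Y$ to get relative compactness of $\{\phi: d(\phi x,x)\leq\e\}$ and then extract discreteness of the image of $\cs{\e}(x)$ from AU-acylindricity. That route does close: a convergent sequence $\rho(g_n)$ in the image eventually satisfies $\rho(g)^{-1}\rho(g_n)\in\cs{\e'}(x,y)$ for a fixed pair with $d(x,y)\geq R(\e')$, which is finite, and fibers of $g\mapsto\rho(g)$ over $\cs{\e}(x)$ are controlled by $\cs{0}(x,y)$. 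But it buys compactness at the price of importing the hypotheses of the cited result ($Y$ separable, connected, complete, and in effect compactness of small closed balls), which hold in the paper's standing setting but exceed the lemma's stated hypothesis of an unbounded locally compact metric space. The paper's covering argument is the more elementary and self-contained of the two; your argument generalizes less but makes the link to properness of the full isometry group explicit.
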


\begin{proof}  The proof of the second statement follows directly from the definitions.The proof of first statement is similar and simpler to that of the third, which we now prove. Let $\e>0$ and let $N(2\e), R(2\e)$ be the associated acylindricity constants for $2\e$. Fix $x\in Y$. Let $n_{R(2\e)} = \min\{n\in \N: nT\geq R(2\e)\}$. Then, there is a $y\in Y$ with $(n_{R(2\e)}+1)T\geq d(x,y)\geq n_{R(2\e)} T\geq R$. 

Set $\rho=\e +(n_{R(2\e)}+1)T$. Since $Y$ is uniformly locally compact, there exist  $C(\rho, \e)\in \N$ and $z_1, \dots, z_m \in \~\hood_\rho(x)$ with  $m\leq C(\rho,\e )$,  forming a finite open cover $\Cup{i = 1}{m}\hood_\e(z_i)\supset \~\hood_\rho(x)$. 

 Let $g, h\in  \cs{\e}(x)$ we note that $g\in h\cdot\cs{2\e}(x)$ i.e. $d(h^{-1}gx,x) \leq 2\e$.
Furthermore, $gy\in \~\hood_{\rho}(x)$ since

$$d(x, gy) \leq d(x, gx) + d(gx, gy) \leq \e + d(x,y) \leq  \rho.$$
This means that $gy\in \hood_\e(z_i)$ for some $i\in \{1, \dots, m\}$. If also  $hy\in \hood_\e(z_i)$ then $d(h^{-1}gy,y) \leq 2\e$ or equivalently $g\in h\cdot\cs{2\e}(y)$. 

Let $I\subset \{1, \dots, m\}$ be the set of indices such that $j\in I$ if and only if $\hood_\e(z_j) \cap \left(\cs{\e}(x)\cdot y\right)\neq \varnothing$. We have established that $I\neq \varnothing$. For each $j\in I$ fix $g_j$ with $g_jy \in \hood_\e(z_j)$. Then by the above, we have 
 $\cs\e (x)\subset \Cup{j\in I}{}\, g_j\cdot \cs{2\e}(x,y)$ and therefore has cardinality at most $C(\rho, \e)\cdot N(2\e)$.
\end{proof}

\begin{figure}
    \centering
    \includegraphics[width=0.55\linewidth]{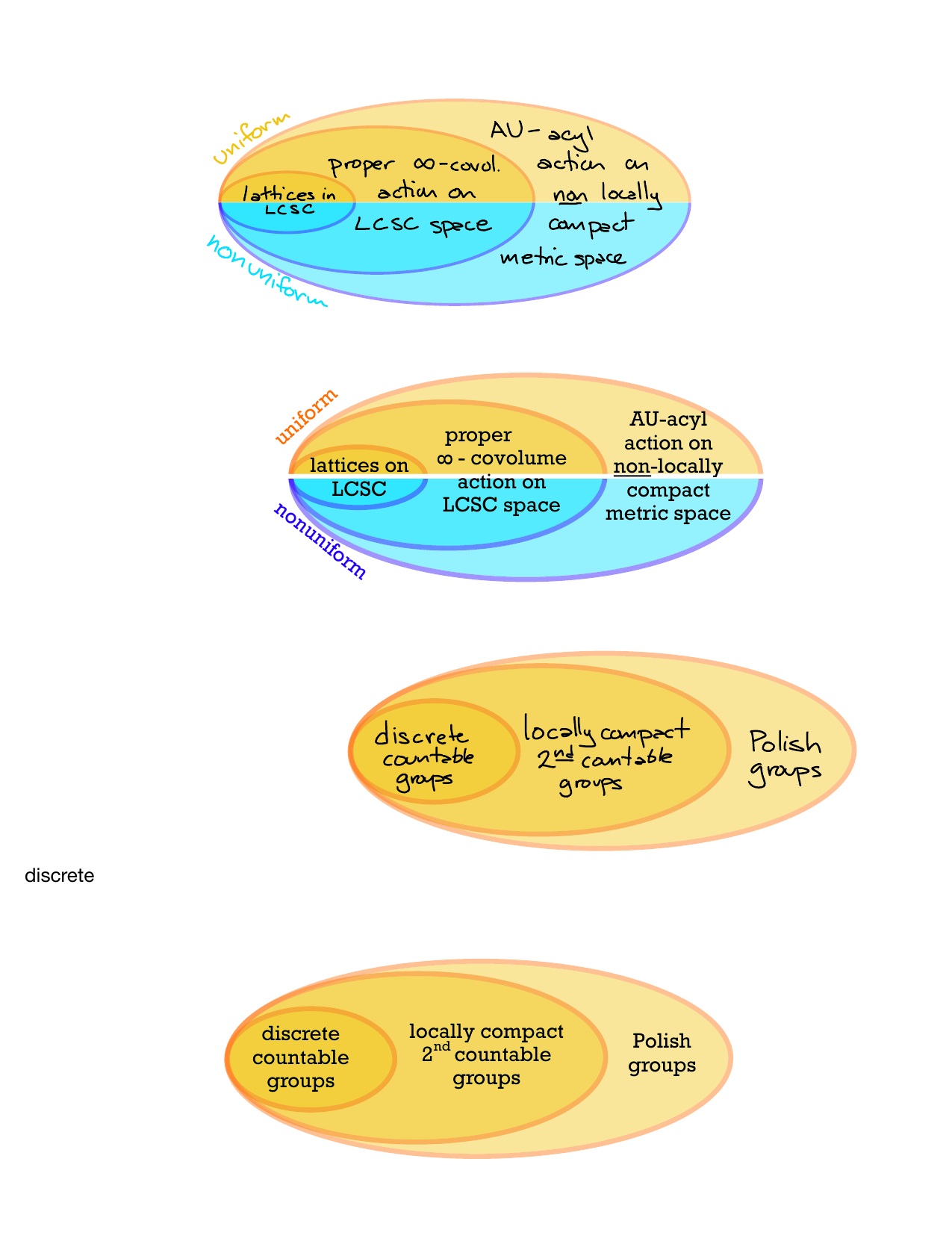}
    \caption{From lattices to AU-acylindrical actions}
    \label{fig: LCSC to acyl}
\end{figure}

\begin{defn}
    Let $C\geq 1$ and $\lambda\geq 0$. A map $\f: Y\to Z$ between metric spaces is a $(C, \lambda)$ \emph{quasi-isometric embedding} if it is coarsely bi-Lipschitz i.e. for all $y,y' \in Y$ we have
    $$\frac{1}{C}d(y,y')-\lambda\leq d(\f(y),\f(y'))\leq Cd(y,y') + \lambda.$$
    If in addition, $\f$ is \emph{coarsely surjective}, i.e. for every $z\in Z$ there is a $x\in Y$ such that $d(\f(x),z)\leq \lambda$ we  say it is a quasi-isometry.

    Moreover, if $Y= \Z$ then the image of $\f$ is called a \emph{quasigeodesic}.
\end{defn}

\subsection{Hyperbolic Geodesic Metric Spaces}

The Gromov product of $x,y\in Y$ with respect to the base-point $o\in Y$ is defined as $$\<x,y\>_o=\frac{1}{2}(d(x,o)+d(o,y)-d(x,y))\geq 0.$$ It is a measure of how far the triple is from achieving the triangle equality.

Assume $Y$ is geodesic and consider $a,b,c\in Y$ with choice of geodesics, as in Figure ~\ref{Fig: Triangle}. The points $s$ and $t$ are marked on the geodesic $[c,a]$ so that $d(c,b) = d(c,t)$ and $d(a,b)= d(a,s)$. The point $m$ is the midpoint between $s$ and $t$. The points $m'$ and $m''$ are then the corresponding points on the geodesics $[a,b]$ and $[b,c]$ respectively so that 

\begin{eqnarray*}
d(a,m)=d(a,m')\\
d(b,m')= d(b,m'')\\
d(c,m'')= d(c,m)
\end{eqnarray*}

This shows that $d(s,m) = d(m,t) = \<c,a\>_b$ and with the previous observations, we see that
$$d(b,m') = d(b,m'') = \<c,a\>_b.$$

Combining these, we obtain the comparison tripod on the right in Figure ~\ref{Fig: Triangle}, which is uniquely determined by  $d(A,M) = d(a,m)$, $d(B,M) = d(b,m')$ and $d(C,M) = d(c,m)$. We note that this yields a natural projection

$$\pi: [a,b]\cup[b,c]\cup[c,a]\to[A,B]\cup[B,C]\cup[C,A]$$

\begin{center}

\end{center}

\begin{figure}
 \includegraphics[width=3.2in]{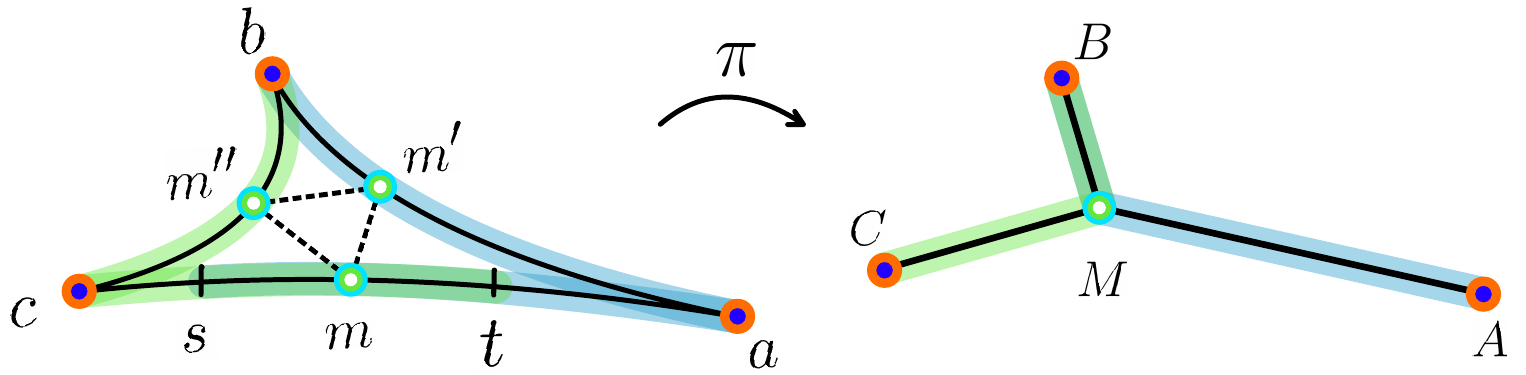}
 \caption{Gromov Product with Comparison Tripod for general triangle}
\label{Fig: Triangle}
\end{figure}

\begin{defn}
A geodesic metric space $X$ is said to be \emph{$\delta$-hyperbolic} if for every $a,b,c\in X$ with comparison tripod as above, the diameter of point-preimages under $\pi$ is uniformly bounded by $\delta$. 
\end{defn}

\begin{remark}\label{Rem:center of triangle}
For a geodesic triangle as in Figure 1, we shall refer to its \emph{center} as the triple $\{m, m', m''\}= \pi^{-1}\{M\}$. We note that if $X$ is $\delta$-hyperbolic then the center of any geodesic triangle has diameter bounded by $\delta$. There are other (equivalent) formulations of $\delta$-hyperbolicity -- such as the so-called ```slim-triangles" condition -- with additional details can be found in \cite[Chapter III.H]{BridsonHaefliger}.
\end{remark}

 \begin{cor}\label{Cor: Gromov Prod vs projection}
     Let $X$ be a $\delta$-hyperbolic geodesic metric space. Fix $a,b,c\in X$ and $[a,c]$ a geodesic between $a$ and $c$. If $p\in [a,c]$ is a point closest to $b$ then $\<a,c\>_b\leq d(p,b) \leq \<a,c\>_b + \delta$. 
 \end{cor}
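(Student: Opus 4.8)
The plan is to prove the two inequalities separately, with the lower bound being elementary and the upper bound relying on the comparison tripod of Figure \ref{Fig: Triangle}.

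For the lower bound I would not use hyperbolicity at all. For an arbitrary point $q\in[a,c]$, the triangle inequality gives $d(a,b)\leq d(a,q)+d(q,b)$ and $d(c,b)\leq d(c,q)+d(q,b)$. Adding these and using that $q$ lies on the geodesic, so that $d(a,q)+d(q,c)=d(a,c)$, yields $d(a,b)+d(c,b)-d(a,c)\leq 2\,d(q,b)$, i.e. $\<a,c\>_b\leq d(q,b)$. Applying this to $q=p$ gives the left-hand inequality (and in fact shows that \emph{every} point of $[a,c]$ is at distance at least $\<a,c\>_b$ from $b$, so the minimizer in particular satisfies it).

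For the upper bound I would invoke the center $\{m,m',m''\}$ of the geodesic triangle on $a,b,c$ from Remark \ref{Rem:center of triangle}, where $m\in[a,c]$ and $m'\in[a,b]$. The discussion preceding the corollary already records that $d(b,m')=\<a,c\>_b$, while $\delta$-hyperbolicity bounds the diameter of the center by $\delta$, so that $d(m,m')\leq\delta$. Hence $d(b,m)\leq d(b,m')+d(m',m)\leq\<a,c\>_b+\delta$. Since $p$ is a closest point of $[a,c]$ to $b$, we have $d(p,b)\leq d(m,b)\leq\<a,c\>_b+\delta$, which is the right-hand inequality.

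The only real subtlety---hardly an obstacle---is bookkeeping the center correctly: one must confirm that the point $m$ supplied by the tripod indeed lies on $[a,c]$ and that $m'$ (on $[a,b]$) is the point at distance exactly $\<a,c\>_b$ from $b$, both of which are already established in the construction surrounding Figure \ref{Fig: Triangle}. Given that, the corollary follows immediately, and I expect no genuine difficulty.
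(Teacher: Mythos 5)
Your proof is correct and follows exactly the route the paper intends: the lower bound is the triangle inequality applied to an arbitrary point of $[a,c]$, and the upper bound uses the center point $m\in[a,c]$ from the tripod construction together with $d(b,m')=\<a,c\>_b$ and the $\delta$-bound on the diameter of the center from Remark \ref{Rem:center of triangle}. The paper states this as an immediate corollary of that discussion, so there is nothing to add.
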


The following is immediate from the definitions, see Figure ~\ref{Fig: parallelogram}. 

 \begin{lemma}\label{Lem: thin quad} 
 Let $X$ be a $\delta$-hyperbolic geodesic space and $x,y,x',y'\in X$ with $d(x,y) = d(x',y')$. Suppose also that $d(x,x'),d(y,y')\leq L$. If $q,q'$ are geodesics connecting $x$ to $y$ and $x'$ to $y'$ respectively then for all $t\in [0,d(x,y)]$ we have that $d(q(t),q'(t))\leq \max{}\{2\delta, 3L\}$. Furthermore, if $t\in[L, d(x,y)-L]$ then $d(q(t), q'(t))\leq 2\delta$.
\end{lemma}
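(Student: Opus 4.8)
The plan is to regard $x,y,y',x'$ as the vertices of a geodesic quadrilateral (as in Figure~\ref{Fig: parallelogram}) and to bound $d(q(t),q'(t))$ by transporting the thinness of two triangles across a diagonal. Write $T:=d(x,y)=d(x',y')$, so that both $q$ and $q'$ are parametrised on $[0,T]$. I would first dispose of the two end regions directly. For $t\in[0,L]$ the triangle inequality gives
$$d(q(t),q'(t))\leq d(q(t),x)+d(x,x')+d(x',q'(t))=2t+d(x,x')\leq 3L,$$
and symmetrically, for $t\in[T-L,T]$, using $d(q(t),y)=d(q'(t),y')=T-t\leq L$ together with $d(y,y')\leq L$, one gets $d(q(t),q'(t))\leq 3L$. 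Since $3L\leq \max\{2\delta,3L\}$, this settles every $t\notin[L,T-L]$ (note this window is empty unless $T>2L$), so it remains to treat the middle.

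For the middle window I would split the quadrilateral along the diagonal $[x,y']$ into the triangles $(x,y,y')$ and $(x,x',y')$ and apply the tripod form of $\delta$-hyperbolicity (Remark~\ref{Rem:center of triangle}) to each. From $d(x,y')\in[T-L,T+L]$ (triangle inequality, using $d(x,x'),d(y,y')\leq L$) one computes $\<y,y'\>_x\geq T-L$ in the triangle $(x,y,y')$; hence for $t\leq T-L$ the point $q(t)$ lies on the tripod leg shared by the side $[x,y]$ and the diagonal, so $q(t)$ is within $\delta$ of the point $c_1\in[x,y']$ at distance $t$ from $x$. Symmetrically, in $(x,x',y')$ one has $\<x,x'\>_{y'}\geq T-L$, so for $t\geq L$ the point $q'(t)$ is within $\delta$ of the point $c_2\in[x,y']$ at distance $T-t$ from $y'$, that is, at distance $d(x,y')-(T-t)$ from $x$.

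Combining the two estimates, for $t\in[L,T-L]$ we obtain $d(q(t),q'(t))\leq 2\delta+d(c_1,c_2)$, where $d(c_1,c_2)=|d(x,y')-T|\leq L$ since $c_1,c_2$ both lie on $[x,y']$. The step I expect to be the main obstacle is precisely this parameter matching: the two tripods identify $q(t)$ and $q'(t)$ with points of the common diagonal that are offset by the endpoint-length discrepancy $|d(x,y')-T|$, so the bound that naturally drops out is $2\delta+L$ rather than a clean $2\delta$. This offset vanishes in the balanced ``parallelogram'' configuration of the figure, where $d(x,y')=T$, and in general the honest reading is that the same-parameter points $2\delta$-fellow-travel up to a parametrisation defect of at most $L$; one recovers the literal statement once this $\leq L$ defect is absorbed. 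Together with the end-region bound $3L$, the middle estimate then yields the global bound $\max\{2\delta,3L\}$.
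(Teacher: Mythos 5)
The paper offers no argument for this lemma at all: it is asserted to be ``immediate from the definitions'' with a pointer to Figure~\ref{Fig: parallelogram}, so there is no proof to compare yours against. Your decomposition (triangle inequality on the two end windows, then splitting the quadrilateral along the diagonal $[x,y']$ and using the tripod/insize form of hyperbolicity on the two resulting triangles) is exactly the natural argument, and your computations of the Gromov products $\<y,y'\>_x,\<x,x'\>_{y'}\geq T-L$ and of the resulting bound $2\delta+d(c_1,c_2)\leq 2\delta+L$ on the middle window are correct.

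The obstacle you flag is not a defect of your proof but of the statement: the literal bound $2\delta$ on $[L,T-L]$ is false, and the additive $L$ you identify cannot be removed. Take $X=\R$ (which is $0$-hyperbolic), $x=0$, $y=T$, $x'=L$, $y'=T+L$; then $d(x,y)=d(x',y')=T$, $d(x,x')=d(y,y')=L$, but $d(q(t),q'(t))=L$ for every $t$, so the claimed bound $2\delta=0$ fails on the entire middle window and your $2\delta+L$ is sharp. (The same offset shows the global bound should be read as $\max\{2\delta+L,3L\}$ rather than $\max\{2\delta,3L\}$ when $L<2\delta/3$.) The correct synchronous statement is the one you prove: $d(q(t),q'(t))\leq 3L$ near the ends and $\leq 2\delta+L$ on $[L,T-L]$, with the parametrisation defect $|d(x,y')-T|\leq L$ vanishing only in the balanced configuration of the figure. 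You should note that the one place the sharper constant is invoked, Lemma~\ref{Lem: elliptic orbits are quasiconvex}, assumes $L\geq 2\delta$ and only needs the geodesic to lie in a bounded neighborhood of $\O^L(\G)$, so it survives with the corrected constant; but the $2\delta$ in the ``furthermore'' clause should be replaced by $2\delta+L$.
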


\begin{figure}
 \includegraphics[width=2.2in]{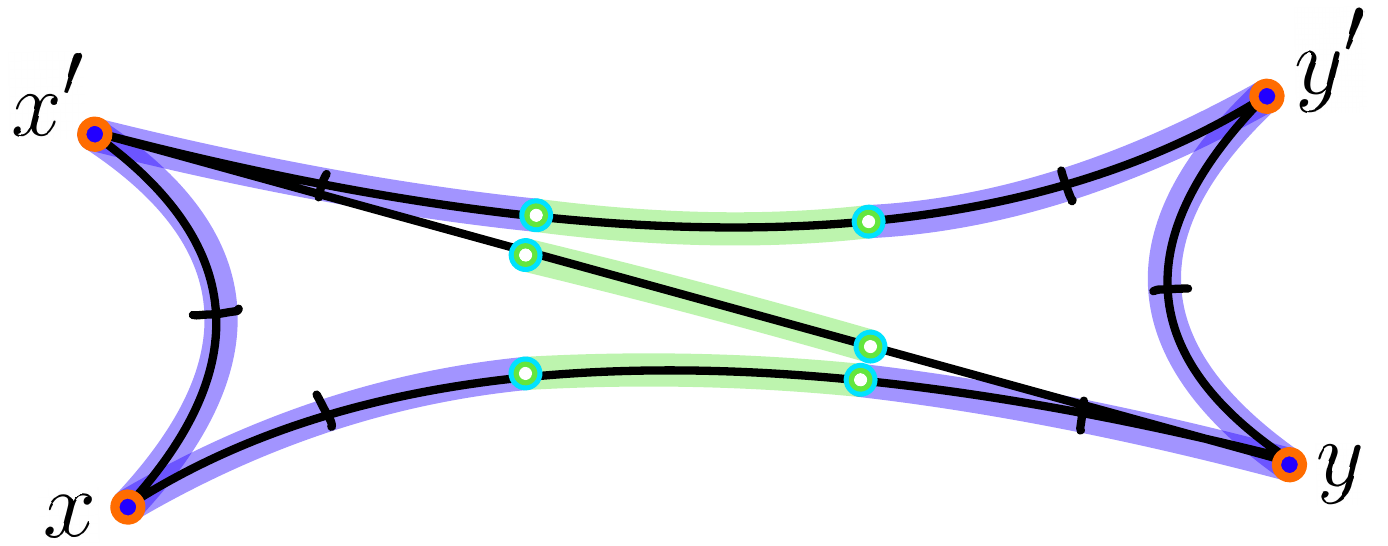}
 \caption{A thin parallelogram as in Lemma ~\ref{Lem: thin quad}}
\label{Fig: parallelogram}
\end{figure}

\subsubsection{Gromov Bordification}

Let $X$ be a $\delta$-hyperbolic geodesic metric space. A sequence $x(n)\in X$, $n\in \N$ is said to be a Gromov sequence if $\Lim{n,m\to\8}\<x(n),x(m)\>_o =\8$.  Recall that if $a(n,m)\in \R$ is a double indexed sequence then $\Lim{n,m\to\8}\, a(n,m) = \8$ if for every $L>0$ there is an $N>0$ so that if $n,m\geq N$ then $a(n,m)\geq L$. Two Gromov sequences $x(n), y(n), n\in \N$ are said to be equivalent if $\Lim{n,m\to\8}\<x(n),y(m)\>_o =\8$. We note that these conditions are independent of the choice of $o\in X$.

The Gromov boundary as a set is the collection of equivalence classes of Gromov sequences converging to infinity and is denoted $\partial X$. If $\xi, \eta\in \partial X$ then we may extend the Gromov product by taking the following infimum ranging over equivalence class representatives:

$$\<\xi,\eta\>_o= \inf \left(\Lim{n,m\to \8}\<x(n),y(m)\>_o\right)$$

\begin{remark}\label{Rem:BenakliKapovich}
By Remark 2.16 of \cite{BenakliKapovich} or \cite[Proposition 5.2]{BonkSchramm}, given $x\in X$, and $\xi, \eta \in \partial X$ there exists a $(1, 10\delta)$ quasigeodesic from $x$ to $\xi$, and a $(1, 20\delta)$ quasigeodesic between $\xi$ and $\eta$. 
\end{remark} 

We topologize $\partial X$ by saying that two classes are ``nearby" if they have ``large" Gromov product, or equivalently the corresponding quasigeodesic rays  fellow travel for a ``long'' time (see Corollary ~\ref{Cor: Boundary close means fellow travel}).

We shall denote by $\~ X = X\cup\partial X$ the Gromov bordification of $X$ and  note that it may not be compact, and $\partial X$ not closed. (It is for this reason that we use the term bordification.) 
We note that isometries of $X$ act by homeomorphisms of $\partial X$ \cite[Theorem 5.3]{Vaisala} (see also \cite{Hamann}).

\subsubsection{The Morse Property}

An important property of hyperbolic metric spaces is the Morse property for stability of quasigeodesics. 

\begin{prop}[Finite Morse Property]\cite[Theorem III.H.1.7]{BridsonHaefliger}\label{lem:qg triangles slim hyp} Let $X$ be a $\delta$-hyperbolic space, and $\lambda \geq 1, C\geq 0$ be two constants. Then there is a constant $M = M(\delta, \lambda, C)$ such that if $q$ is a $(\lambda, C)$-quasigeodesic in $X$ and $[x,y]$ is a geodesic segment between the end points of $q$, then the Hausdorff distance between $q$ and $[x,y]$ is at most $M$.
\end{prop}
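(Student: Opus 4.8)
The plan is to follow the classical stability argument, in three movements: tame the quasigeodesic, prove a logarithmic divergence estimate, and then close two inclusions. First I would reduce to the case of a \emph{continuous, rectifiable} quasigeodesic. Given a $(\lambda,C)$-quasigeodesic $q\colon [a,b]\to X$, I would replace it by the path $\^q$ that agrees with $q$ on $[a,b]\cap\Z$ and interpolates by geodesic segments on each unit interval. A routine check shows $\^q$ is a $(\lambda',C')$-quasigeodesic with $\lambda',C'$ depending only on $\lambda,C$, lies at Hausdorff distance at most $\lambda+C$ from $q$, and — crucially — is \emph{tame}: the length of any subarc satisfies $\ell(\^q|_{[s,t]})\leq k_1\,d(\^q(s),\^q(t))+k_2$ for constants $k_1,k_2=k_1,k_2(\lambda,C)$ (each unit piece has length $\leq \lambda+C$, and the number of pieces between $s$ and $t$ is controlled linearly in $d(\^q(s),\^q(t))$ by the lower quasigeodesic bound). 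Since Hausdorff distance is stable under this replacement, it suffices to bound the Hausdorff distance between $\^q$ and a geodesic $[x,y]$ joining its endpoints.

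The heart of the matter is a logarithmic divergence estimate: if $c$ is any continuous rectifiable path with endpoints $x,y$ and $[x,y]$ is a geodesic between them, then every $p\in[x,y]$ satisfies $d(p,\op{im}c)\leq \delta\,|\log_2\ell(c)|+1$ (interpreting the bound as $1$ when $\ell(c)\leq 1$). I would prove this by dyadic bisection: split $c$ at its arclength-midpoint into $c_1,c_2$, form the geodesic triangle on $x$, $y$, and the midpoint, and use the slim-triangles formulation of $\delta$-hyperbolicity — equivalently the bounded-center property of Remark \ref{Rem:center of triangle} — to move $p$ a distance $\leq\delta$ onto one of the two shorter sides. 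Iterating $\lceil\log_2\ell(c)\rceil$ times lands within $1$ of a point of $\op{im}c$, with total displacement $\leq\delta|\log_2\ell(c)|+1$.

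With these ingredients I would establish the two inclusions. For $[x,y]\subseteq\hood_M(\op{im}\^q)$: let $D=\sup_{p\in[x,y]}d(p,\op{im}\^q)$ and pick $x_0$ nearly attaining it. Taking points $u,v\in[x,y]$ at distance $\approx 2D$ from $x_0$ on either side and their nearest points $u',v'$ on $\op{im}\^q$, I would form the concatenation $c'$ running $u\to u'$ (geodesic), along $\^q$ from $u'$ to $v'$, then $v'\to v$ (geodesic). Tameness bounds $\ell(c')$ linearly in $D$, while by construction every point of $\op{im}c'$ lies at distance $\gtrsim D$ from $x_0$; the divergence estimate then yields $D-\op{const}\leq\delta\log_2(\op{const}\cdot D)+\op{const}$, and since $\log$ grows sublinearly this forces $D\leq M_0(\delta,\lambda,C)$. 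For the reverse inclusion $\op{im}\^q\subseteq\hood_M([x,y])$ — the more elementary direction — I would take a maximal subinterval on which $\^q$ leaves $\hood_{M_0}([x,y])$. Its endpoints project to points $u,v\in[x,y]$, and any witness covering $[u,v]$ to within $M_0$ must lie on the \emph{complementary} part of $\^q$ (points on the excluded arc are by definition too far from $[x,y]$); this pins down $d(\^q(a'),\^q(b'))$, and tameness then bounds the diameter of the excluded arc.

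The main obstacle is the logarithmic divergence lemma together with closing the loop in the first inclusion. The subtlety is that the naive bound on $d(p,\op{im}c)$ involves $\ell(c)$, which could a priori be enormous, so a uniform Hausdorff bound is not automatic: it is precisely the self-referential inequality $D\lesssim\delta\log_2 D+\op{const}$ — made possible by localizing to the auxiliary path $c'$ whose length is controlled \emph{linearly} in $D$ via tameness — that converts a merely logarithmic estimate into the absolute bound $M(\delta,\lambda,C)$. Getting the additive-constant bookkeeping right in this step, and verifying tameness of $\^q$, are where the real care is needed; everything else is a routine application of the bounded-center condition already recorded in Remark \ref{Rem:center of triangle}.
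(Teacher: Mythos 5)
Your proposal is correct: it is precisely the classical stability argument of Bridson--Haefliger (taming to a continuous rectifiable quasigeodesic, the logarithmic divergence estimate via dyadic bisection and the bounded-center property, and the two inclusions closed by the self-referential inequality $D\lesssim\delta\log_2 D+\mathrm{const}$). The paper does not prove this statement at all but simply cites \cite[Theorem III.H.1.7]{BridsonHaefliger}, and your argument is the proof given there, so there is nothing to reconcile.
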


The following may be found in \cite[Theorem 6.32]{Vaisala}.

\begin{theorem}[Infinite Morse Lemma]\label{Slim Q-Bigons infinite}
Let $X$ be $\delta$-hyperbolic, and $\lambda \geq 0$. There exists $M= M(\delta,\lambda)$ so that if   $c, q$ are either two $(1, \lambda)$ quasigeodesic rays  starting from the same point, and converging to the same point in $\partial X$ or are two bi-infinite $(1, \lambda)$ quasigeodesics with the same endpoints on $\partial X$, then $d_{Haus}(im(c), im(q))\leq M$. 
\end{theorem}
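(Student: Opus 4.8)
The plan is to reduce both cases to the Finite Morse Property (Proposition \ref{lem:qg triangles slim hyp}) by truncating the infinite quasigeodesics to finite sub-segments and then comparing the resulting geodesics inside $\delta$-thin triangles and quadrilaterals. Write $M_0 = M(\delta, 1, \lambda)$ for the finite Morse constant, so that every $(1,\lambda)$-quasigeodesic segment lies within Hausdorff distance $M_0$ of the geodesic joining its endpoints. The output constant will be $M = 2M_0 + 2\delta$ (with no attempt at optimality), and since the hypotheses are symmetric in $c$ and $q$, it suffices to bound, for an arbitrary point of $c$, its distance to $q$.

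\textbf{Rays case.} Suppose $c, q$ are $(1,\lambda)$-quasigeodesic rays from a common point $p$, both converging to $\xi \in \partial X$. Fix $x = c(t)$. Since $c$ and $q$ represent the same class in $\partial X$, the corresponding Gromov sequences are equivalent, so $\<c(T), q(S)\>_p \to \8$; choose $T, S$ with $\<c(T), q(S)\>_p \geq t + \lambda + M_0$. Finite Morse produces a point $x'$ on $[p, c(T)]$ with $d(x, x') \leq M_0$, hence $d(p, x') \leq t + \lambda + M_0 \leq \<c(T), q(S)\>_p$. In the triangle with vertices $p, c(T), q(S)$, the leg from $p$ to the center has length exactly $\<c(T), q(S)\>_p$ (the tripod computation preceding Remark \ref{Rem:center of triangle}), which is at least $d(p, x')$; hence the point $x''$ on $[p, q(S)]$ at distance $d(p,x')$ from $p$ has the same image as $x'$ under the tripod projection, so $d(x', x'') \leq \delta$ by Remark \ref{Rem:center of triangle}. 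Finally $x''$ lies within $M_0$ of $q$ by Finite Morse, giving $d(x, q) \leq 2M_0 + \delta$.

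\textbf{Bi-infinite case.} Now let $c, q$ be bi-infinite $(1,\lambda)$-quasigeodesics with $c(+\8) = q(+\8) = \xi$ and $c(-\8) = q(-\8) = \eta$, and fix $x = c(t)$. Choose truncation parameters $T_- < t < T_+$ on $c$ and $S_- < S_+$ on $q$, all large in absolute value, so that, using equivalence of the relevant Gromov sequences, both $\<c(T_+), q(S_+)\>_x$ and $\<c(T_-), q(S_-)\>_x$ exceed $2\delta + M_0$. Finite Morse places $x$ within $M_0$ of a point $x'$ on the geodesic $[c(T_-), c(T_+)]$. Consider the geodesic quadrilateral with vertices $c(T_+), q(S_+), q(S_-), c(T_-)$; splitting it along a diagonal into two $\delta$-slim triangles shows it is $2\delta$-slim, so $x'$ lies within $2\delta$ of one of the three sides other than $[c(T_-), c(T_+)]$. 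The two ``end'' sides are excluded: by Corollary \ref{Cor: Gromov Prod vs projection}, every point $w$ of $[c(T_+), q(S_+)]$ satisfies $d(x, w) \geq \<c(T_+), q(S_+)\>_x$, whence $d(x', w) \geq \<c(T_+), q(S_+)\>_x - M_0 > 2\delta$, and symmetrically for $[c(T_-), q(S_-)]$. Hence $x'$ is within $2\delta$ of $[q(S_-), q(S_+)]$, which is within $M_0$ of $q$, giving $d(x, q) \leq 2M_0 + 2\delta$.

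\textbf{Main obstacle.} The rays case is essentially immediate once one observes that fellow-traveling from a common basepoint is governed by the Gromov product. The real work lies in the bi-infinite case, where truncating $c$ yields a geodesic whose endpoints are close to $q$ only at infinity, not in $X$; the crux is therefore the exclusion of the two end-sides of the comparison quadrilateral, which rests on forcing the Gromov products $\<c(T_\pm), q(S_\pm)\>_x$ to be large and converting this into genuine distance lower bounds via Corollary \ref{Cor: Gromov Prod vs projection}. A secondary bookkeeping point worth stating carefully is that equivalence of Gromov sequences lets these products be made large simultaneously while still keeping the fixed point $x$ strictly between the chosen truncation points.
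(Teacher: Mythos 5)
Your proof is correct. Note first that the paper does not actually prove this statement: it is quoted directly from V\"ais\"al\"a \cite[Theorem 6.32]{Vaisala}, so there is no internal argument to compare against. What you have written is a correct self-contained derivation from the Finite Morse Property (Proposition \ref{lem:qg triangles slim hyp}) together with the tripod/thin-triangle formulation of hyperbolicity, which is the standard textbook route for upgrading the finite Morse lemma to the ideal setting. The two halves both check out: in the rays case, the inequality $\<c(T),q(S)\>_p \leq d(p,c(T)) \leq T+\lambda$ automatically forces $T \geq t$, so the truncation is legitimate, and the tripod computation correctly identifies matching points $x'$, $x''$ on $[p,c(T)]$ and $[p,q(S)]$ within the initial segment of length $\<c(T),q(S)\>_p$, where the point-preimage bound gives $d(x',x'')\leq \delta$. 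In the bi-infinite case, the key step --- ruling out the two end-sides of the comparison quadrilateral by converting large Gromov products at the base point $x$ into distance lower bounds via Corollary \ref{Cor: Gromov Prod vs projection} --- is exactly what is needed, and the base-point independence of Gromov-sequence equivalence (up to a bounded error) justifies making $\<c(T_\pm),q(S_\pm)\>_x$ simultaneously large. The symmetry remark disposes of the other half of the Hausdorff bound. The resulting constant $M = 2M(\delta,1,\lambda)+2\delta$ depends only on $\delta$ and $\lambda$, as required. The one thing your write-up buys over the paper's citation is self-containment; the cost is a slightly different (but harmless) constant from the one V\"ais\"al\"a would produce.
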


A consequence of the Morse Lemma is that the ``slim-triangles" formulation of hyperbolicity may be extended to quasigeodesic triangles as well. 

\begin{lemma}\cite[Corollary III.H.1.8]{BridsonHaefliger} A geodesic metric space $X$ is $\delta$-hyperbolic if and only if for every $\lambda \geq 1, C\geq 0$, there is a constant $M = M(\delta, \lambda, C)$ such that every quasigeodesic triangle in $X$ is $M$-slim. i.e. if $[x,y] \cup [y,z] \cup [z,x]$ is a triangle with geodesic sides in $X$, then for any point $p \in [x,y]$, $d(p, [y,z] \cup [z,x]) \leq M$. 
\end{lemma}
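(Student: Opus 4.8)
The plan is to prove the two implications separately, using the Finite Morse Property (Proposition \ref{lem:qg triangles slim hyp}) as the bridge between the geodesic and quasigeodesic settings. As a preliminary observation I would first record that in a $\delta$-hyperbolic space every \emph{geodesic} triangle is $\delta$-slim. This is immediate from the definition: given a geodesic triangle on $a,b,c$ with comparison projection $\pi$, and a point $p\in[a,b]$, the image $\pi(p)$ lies on one of the two legs of the tripod meeting at $M$, say the leg towards $[c,a]$. The preimage $\pi^{-1}(\pi(p))$ then meets $[c,a]$ in some point $p'$, and since every point-preimage under $\pi$ has diameter at most $\delta$, we get $d(p,p')\leq\delta$. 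Hence $p$ lies within $\delta$ of $[b,c]\cup[c,a]$.

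For the forward direction, assume $X$ is $\delta$-hyperbolic and fix $\lambda\geq 1$, $C\geq 0$. Let $q_1,q_2,q_3$ be $(\lambda,C)$-quasigeodesic sides of a triangle on vertices $x,y,z$, and let $g_1,g_2,g_3$ be geodesics with the same respective endpoints. By Proposition \ref{lem:qg triangles slim hyp} there is a constant $M_0=M_0(\delta,\lambda,C)$ with $d_{Haus}(q_i,g_i)\leq M_0$ for each $i$. Now given $p\in q_1$, I would choose $p'\in g_1$ with $d(p,p')\leq M_0$; by the preliminary step $p'$ lies within $\delta$ of $g_2\cup g_3$; and finally each $g_j$ lies within $M_0$ of $q_j$. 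Chaining these three estimates yields $d(p,\,q_2\cup q_3)\leq 2M_0+\delta$, so the quasigeodesic triangle is $M$-slim with $M:=2M_0+\delta$, a constant depending only on $\delta,\lambda,C$.

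For the backward direction, suppose the slimness condition holds for all $\lambda\geq 1$, $C\geq 0$. Specializing to $\lambda=1$, $C=0$ (so that honest geodesics are admissible sides) gives that geodesic triangles in $X$ are uniformly $M(\delta,1,0)$-slim. This slim-triangles condition is one of the standard equivalent formulations of hyperbolicity, and recovers the tripod definition with a hyperbolicity constant depending only on $M(\delta,1,0)$; for this classical equivalence I would simply cite \cite[Chapter III.H]{BridsonHaefliger} rather than reprove it, noting as usual that the precise constant changes when passing between the two formulations.

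The only genuinely substantive content lies in the forward direction, and even there the single real idea is the Morse replacement of each quasigeodesic side by a geodesic with the same endpoints; everything else is constant bookkeeping. The backward direction is essentially a tautological specialization once the classical equivalence of the slim-triangles and tripod definitions of hyperbolicity is granted. Thus the only ``obstacle'' is the reliance on that standard equivalence, which the cited reference supplies.
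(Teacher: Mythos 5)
Your proposal is correct and follows exactly the standard argument (Morse replacement of each quasigeodesic side by a geodesic with the same endpoints, plus $\delta$-slimness of geodesic triangles from the tripod definition, with the converse reducing to the classical equivalence of the slim-triangles and tripod formulations). The paper offers no proof of its own here --- it simply cites \cite[Corollary III.H.1.8]{BridsonHaefliger}, whose proof is the one you give --- so there is nothing to compare beyond noting that your reading of the ``i.e.''\ clause as referring to quasigeodesic sides (despite the statement's slip in saying ``geodesic sides'') is the intended one.
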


\begin{lemma}\label{lemma: fellow travel q-rays}
  Let $c, q: \N\to X$ be $(1,\lambda)$ quasigeodesic rays with $c(0)=q(0)$ and $c(n), q(n) \to \xi \in \partial X$. If $M=M(\lambda)$ is the associated Morse constant then $d(c(n),q(n))\leq 2M + 3\lambda$ for all $n\in \N$. 
\end{lemma}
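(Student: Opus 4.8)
The plan is to upgrade the Hausdorff-distance bound furnished by the Infinite Morse Lemma into a synchronized, pointwise bound. Since $c$ and $q$ are $(1,\lambda)$ quasigeodesic rays issuing from the common point $o := c(0) = q(0)$ and both converging to $\xi$, Theorem \ref{Slim Q-Bigons infinite} applies directly and yields $d_{Haus}(\mathrm{im}(c), \mathrm{im}(q)) \leq M$. In particular, for each fixed $n \in \N$ there is some $m \in \N$ with $d(c(n), q(m)) \leq M$.

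The crux is to show that this index $m$ cannot be far from $n$, so that $q(m)$ is in turn close to $q(n)$. First I would record the one-variable quasi-isometry estimates: since both rays start at $o$ and are $(1,\lambda)$ quasigeodesics, we have $n - \lambda \leq d(o, c(n)) \leq n + \lambda$ and $m - \lambda \leq d(o, q(m)) \leq m + \lambda$. Combining $d(c(n), q(m)) \leq M$ with the reverse triangle inequality gives $|d(o, c(n)) - d(o, q(m))| \leq M$, and feeding in the four inequalities above — separating the two directions $n-m$ and $m-n$ — yields $|n - m| \leq M + 2\lambda$.

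With the parameter discrepancy controlled, I would finish by a single application of the triangle inequality: $d(q(n), q(m)) \leq |n - m| + \lambda \leq M + 3\lambda$ (again using that $q$ is a $(1,\lambda)$ quasigeodesic), whence $d(c(n), q(n)) \leq d(c(n), q(m)) + d(q(m), q(n)) \leq 2M + 3\lambda$, which is exactly the claimed bound.

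The only genuine subtlety — and the step I expect to require the most care — is the synchronization in the second paragraph: the Morse Lemma guarantees geometric closeness of the two images but says nothing a priori about matching the parameters $n$ and $m$, and it is precisely the common basepoint $o$ together with the $(1,\lambda)$ control on arclength that forces $|n-m|$ to stay bounded. Everything else is a routine assembly of triangle inequalities.
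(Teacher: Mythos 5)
Your argument is correct and is essentially identical to the paper's own proof: both invoke the Infinite Morse Lemma to find $m$ with $d(c(n),q(m))\leq M$, use the common basepoint and the reverse triangle inequality to get $|n-m|\leq M+2\lambda$, and conclude by the triangle inequality. No further comment is needed.
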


\begin{proof}
 Fix $n\in \N$. By Theorem ~\ref{Slim Q-Bigons infinite} there is an $m_n\in \N$ so that $d(c(n),q(m_n))\leq M$. Since $c$ and $q$ are quasigeodesics with the same base point, using the reverse triangle inequality we have that
 $$|n-m_n| -2\lambda \leq |d(c(n),c(0)) -d(q(m_n),q(0))| \leq d(c(n),q(m_n))\leq M.$$
 Therefore
\begin{eqnarray*}
d(c(n),q(n))&\leq& d(c(n),q(m_n))+d(q(m_n),q(n))\\
&\leq& M + |m_n-n| +\lambda \\
&\leq& 2M + 3\lambda.
\end{eqnarray*}
\end{proof}

\begin{cor}\label{cor: (1,mu)-fellow travel}
 Let $c, q: \Z\to X$ be $(1,\lambda)$ quasigeodesics  converging to the same points in $\partial X$.  Then there exists an $M'>0$ such that for every $n\in \Z$ we have that $$d(c(n), q(n))\leq 2M' + 3\lambda + 3d(c(0),q(0)).$$
\end{cor}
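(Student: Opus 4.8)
The plan is to deduce this from the ray version, Lemma \ref{lemma: fellow travel q-rays}, by splitting each bi-infinite quasigeodesic into a forward and a backward ray at the index $0$. The only real obstruction to applying that lemma verbatim is that it requires the two rays to share a basepoint, whereas here $c(0)$ and $q(0)$ are distinct in general; reconciling this basepoint mismatch is the main difficulty, and it is precisely what will produce the additive term $3d(c(0),q(0))$. I also note at the outset that one must assume the two parametrizations are compatibly oriented (same forward endpoint), which is implicit in comparing $c(n)$ with $q(n)$ at equal indices.

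First I would reduce to the common-basepoint situation by relocating a single point of $q$. Writing $L := d(c(0),q(0))$, define $\hat q \colon \Z \to X$ by $\hat q(0) := c(0)$ and $\hat q(n) := q(n)$ for $n \neq 0$. For $n \neq 0$ we have $d(q(0),q(n)) \in [\,|n|-\lambda,\ |n|+\lambda\,]$ since $q$ is a $(1,\lambda)$-quasigeodesic, and $d(c(0),q(n))$ differs from $d(q(0),q(n))$ by at most $L$ by the triangle inequality; hence $\big|\,d(\hat q(0),\hat q(n)) - |n|\,\big| \le \lambda + L$, while all distances among points of nonzero index are unchanged. Thus $\hat q$ is a $(1,\lambda+L)$-quasigeodesic which still converges to the same endpoints as $q$ at $\pm\infty$, and now $\hat q(0) = c(0)$.

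Next I would apply Lemma \ref{lemma: fellow travel q-rays} twice. Viewing $c$ as a $(1,\lambda+L)$-quasigeodesic as well, the forward rays $n \mapsto c(n)$ and $n \mapsto \hat q(n)$ (for $n \ge 0$) share the basepoint $c(0)=\hat q(0)$ and both converge to the common forward endpoint; likewise the backward rays $n \mapsto c(-n)$ and $n \mapsto \hat q(-n)$ share that basepoint and converge to the common backward endpoint. Letting $M' := M(\delta,\lambda+L)$ be the Morse constant for $(1,\lambda+L)$-quasigeodesics, the lemma gives $d(c(n),\hat q(n)) \le 2M' + 3(\lambda+L)$ for every $n \in \Z$.

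Finally, since $\hat q(n) = q(n)$ for all $n \neq 0$, this reads $d(c(n),q(n)) \le 2M' + 3\lambda + 3L$ for $n \neq 0$, and the case $n=0$ holds trivially because $d(c(0),q(0)) = L \le 2M' + 3\lambda + 3L$. This is exactly the claimed inequality with $M' = M(\delta,\lambda + d(c(0),q(0)))$. To summarize, the two-ray decomposition disposes of any orientation/sign bookkeeping automatically, and the entire cost of the differing basepoints is absorbed into the controlled loss of the quasigeodesic constant from $\lambda$ to $\lambda+L$, which the $3\lambda$ term of the ray lemma turns into the additive $3L$.
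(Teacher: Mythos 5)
Your proposal is correct and is essentially the paper's own argument: the paper likewise reduces to Lemma \ref{lemma: fellow travel q-rays} by redefining one quasigeodesic at index $0$ so the basepoints coincide (the paper moves $c(0)$ to $q(0)$ rather than the reverse), absorbing $d(c(0),q(0))$ into the quasigeodesic constant $\lambda'=\lambda+d(c(0),q(0))$ and applying the ray lemma separately for $n>0$ and $n<0$. The only cosmetic difference is which curve you perturb; the bookkeeping and the resulting constant are identical.
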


\begin{proof}
Let $c'(0) = q(0)$ and $c'(n) = c(n)$ if $n>0$. Then $c'$, and $q$ are $(1, \lambda')$ quasigeodesic rays for $\lambda' = \lambda  + d(c(0),q(0))$. Let $M'$ be the Morse constant associated to $\lambda'$. Therefore, by Lemma ~\ref{lemma: fellow travel q-rays} we have that for all $n\in \N$ 
$$d(c'(n), q(n))\leq 2M' + 3\lambda' \leq 
2M' + 3\lambda + 3d(c(0),q(0)).$$
The same argument holds for $n<0$ and therefore for all $n \in \Z$. \end{proof}

\begin{remark}
 We note that Lemma ~\ref{lemma: fellow travel q-rays} is not true for $(C,\lambda)$ quasigeodesics if $C \neq 1$. Indeed, let $c(n) = n\in \R$ and $q(n) = C n$. Then $d(c(n),q(n)) = (C-1) n$ which is unbounded if $\lambda \neq 1$.
\end{remark}
 
 Observe that the proofs of Lemma ~\ref{lemma: fellow travel q-rays} and Corollary ~\ref{cor: (1,mu)-fellow travel} also hold for $(1, \lambda)$ quasigeodesic segments by applying the finite version of the Morse Lemma \cite{BridsonHaefliger}. Furthermore, if we know the quasigeodesics in question are at bounded Hausdorff distance, then hyperbolicity is unnecessary and we deduce the following which holds for arbitrary metric spaces. Recall that we use $Y$ to denote a general metric space.

\begin{cor}\label{cor: Finite Morse}
 Let $c,q: [0,T]\to Y$ be $(1, \lambda)$ quasigeodesics with $c(0) = q(0)$ at Hausdorff distance bounded by $M>0$. Then for all $t\in [0,T]$ we have $d(c(t),q(t))\leq 2M + 3\lambda$. 
\end{cor}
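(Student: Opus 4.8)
The plan is to follow the proof of Lemma~\ref{lemma: fellow travel q-rays} almost verbatim, observing that the only place hyperbolicity entered there was through the Morse Lemma (Theorem~\ref{Slim Q-Bigons infinite}), which was invoked solely to produce, for a given point on $c$, a point on $q$ lying within $M$ of it. Here that conclusion is handed to us directly as the hypothesis that the Hausdorff distance between $\mathrm{im}(c)$ and $\mathrm{im}(q)$ is at most $M$. Consequently no curvature assumption is needed, and the argument goes through for an arbitrary metric space $Y$, as the statement claims.

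Concretely, I would fix $t \in [0,T]$ and use the Hausdorff bound to choose $s \in [0,T]$ with $d(c(t), q(s)) \leq M$. The first goal is to show that $s$ cannot be far from $t$. Since $c$ and $q$ are $(1,\lambda)$-quasigeodesics sharing the base point $c(0) = q(0)$, the quantities $d(c(t), c(0))$ and $d(q(s), q(0))$ are within $\lambda$ of $t$ and $s$ respectively, whence $|d(c(t), c(0)) - d(q(s), q(0))| \geq |t - s| - 2\lambda$. Combining this with the reverse triangle inequality $|d(c(t), q(0)) - d(q(s), q(0))| \leq d(c(t), q(s)) \leq M$ (using $c(0) = q(0)$) yields the parameter estimate $|t - s| \leq M + 2\lambda$.

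With this estimate in hand, I would finish by a single triangle inequality: $d(c(t), q(t)) \leq d(c(t), q(s)) + d(q(s), q(t))$. The first term is at most $M$ by the choice of $s$, and the second is at most $|s - t| + \lambda \leq M + 3\lambda$ since $q$ is a $(1,\lambda)$-quasigeodesic. Adding these gives $d(c(t), q(t)) \leq 2M + 3\lambda$, matching the asserted constant exactly.

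There is essentially no obstacle here; the real content of the statement is the recognition that the hyperbolicity in Lemma~\ref{lemma: fellow travel q-rays} was inessential once the fellow-traveling (Hausdorff) bound is assumed outright. The only bookkeeping to get right is the direction of the reverse triangle inequality and verifying that the constants accumulate to precisely $2M + 3\lambda$ rather than something larger.
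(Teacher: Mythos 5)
Your proof is correct and is essentially the paper's own argument: the paper deduces this corollary by noting that the proof of Lemma \ref{lemma: fellow travel q-rays} goes through verbatim once the Morse Lemma is replaced by the assumed Hausdorff bound, which is exactly what you do, with the same parameter estimate $|t-s|\leq M+2\lambda$ and the same accumulation of constants to $2M+3\lambda$.
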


Lastly, we state the following result, which is a direct consequence of Lemma ~\ref{lem:qg triangles slim hyp}  and \cite[Lemma 1.2.3]{BuyaloSchroeder} and may be thought of as a version of the classical result of Gromov that $n$-many points in a hyperbolic space may be $(1, \lambda(n))$-uniformly approximated by a simplicial tree (see also \cite{Kerr}).

\begin{lemma}\label{Lem: N-gons quasitrees}
    Consider a $(1,20\delta)$ quasigeodesic infinite $n$-gon between the points $x_1, \dots, x_n\in \~X$ i.e. $q^i$ are $(1, 20\delta)$ quasigeodesics connecting $x_i$ to $x_{i+1}$ for $i\in \Z/n$. Let $M(1, 20\delta)$ be the associated Morse constant. Then $q^i$ is in the $2M(1, 20\delta)+ \delta\log_2(n)$ neighborhood of $\Cup{j\neq i}{}q^i$. In particular, such infinite $n$-gons are quasitrees.
\end{lemma}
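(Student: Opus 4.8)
The plan is to transfer the problem to genuine geodesic $n$-gons, where the logarithmic thinness estimate \cite[Lemma 1.2.3]{BuyaloSchroeder} applies, and to pay a Morse constant each time we pass between a quasigeodesic side and its geodesic counterpart. Concretely, I would combine three ingredients: stability of quasigeodesics (Proposition \ref{lem:qg triangles slim hyp} and Theorem \ref{Slim Q-Bigons infinite}), the thinness of geodesic polygons, and a short triangle-inequality chaining argument.

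First I would recall the geodesic version. For a geodesic $n$-gon with sides $g^1, \dots, g^n$, Buyalo--Schroeder gives that every point of $g^i$ lies within $\delta\log_2(n)$ of $\bigcup_{j\neq i} g^j$. The point of the $\log_2(n)$, rather than a linear $(n-2)\delta$ bound obtained by naively triangulating, is a balanced binary subdivision: one cuts the polygon along a diagonal halving the number of sides and applies $\delta$-slimness of geodesic triangles, so that only $\log_2(n)$ levels of slimness are ever chained.

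Next, for each side $q^i$ of our quasigeodesic $n$-gon I would fix a geodesic $g^i$ with the same endpoints $x_i, x_{i+1}$ and invoke the Morse Lemma: for finite endpoints this is Proposition \ref{lem:qg triangles slim hyp}, and for endpoints in $\partial X$ it is Theorem \ref{Slim Q-Bigons infinite} (a geodesic being in particular a $(1,0)$-quasigeodesic with the same endpoints). Either way $d_{Haus}(q^i, g^i)\leq M := M(1,20\delta)$. The collection $\{g^i\}$ is a geodesic $n$-gon, so the previous step applies, and the desired bound drops out by chaining: given $p\in q^i$, choose $p'\in g^i$ with $d(p,p')\leq M$, then $p''\in g^j$ (some $j\neq i$) with $d(p',p'')\leq \delta\log_2(n)$, then $p'''\in q^j$ with $d(p'',p''')\leq M$, so that $d(p,p''')\leq 2M + \delta\log_2(n)$. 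For fixed $n$ this uniform thinness is exactly the kind of bottleneck estimate that forces the $n$-gon to be a quasitree.

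The step I expect to be the main obstacle is producing the geodesic representatives $g^i$ when a vertex $x_i$ lies on $\partial X$: in a complete but non-proper $\delta$-hyperbolic space a bi-infinite geodesic between two boundary points need not exist. I would resolve this by not insisting on honest geodesics at all, instead carrying out the binary subdivision directly with $(1,20\delta)$-quasigeodesic diagonals — which exist between any two points of $\~X$ by Remark \ref{Rem:BenakliKapovich} — and replacing $\delta$-slimness of geodesic triangles by the (Morse-upgraded) slimness of $(1,20\delta)$-quasigeodesic triangles; the per-level cost becomes a Morse constant and the same $\log_2(n)$ accounting yields the stated neighborhood. Alternatively, one can approximate each ideal side by finite geodesic subsegments and pass to a limit.
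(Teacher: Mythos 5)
Your argument is exactly the paper's intended proof: the paper gives no written proof and simply declares the lemma ``a direct consequence'' of the finite Morse property (Proposition \ref{lem:qg triangles slim hyp}) and \cite[Lemma 1.2.3]{BuyaloSchroeder}, which is precisely your Morse--Buyalo--Schroeder--Morse chaining yielding $2M(1,20\delta)+\delta\log_2(n)$. Your added caution about ideal vertices in a non-proper space (and the fix via quasigeodesic diagonals from Remark \ref{Rem:BenakliKapovich}) is a point the paper silently elides, so no correction is needed.
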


\begin{remark}\label{Rem: QI to QT and Gromov Product}
    The reader shall see that throughout, the use of $(1, \lambda)$ quasi-isometries are  very useful. One key reason for this is that if $Y\to Y'$ is a $(1, \lambda)$ quasi-isometry, denoted by $x\mapsto x'$ then $|\langle x,y \rangle_o- \langle x',y' \rangle_{o'}|\leq 2\lambda$. In particular, together with Lemma ~\ref{Lem: N-gons quasitrees} this will allow us to work with Gromov products of a finite collection of points on a tree. 
\end{remark}

\subsection{Products}\label{Subsec:Products}
Finite products of $\delta$-hyperbolic spaces are of course the star of this show. We shall use consistently the notation that $\X=\CProd{i=1}{D} X_i$, where $X_i$ are geodesic $\delta$-hyperbolic spaces. Without loss of generality, we assume these are also separable so that their isometry groups will be Polish, as in Section ~\ref{Sect: top on isom group}. For a point $x\in \X$ we shall denote the $i$th coordinate as $x_i$. 

Denote by $\mathrm{Sym}_\X(D)$ the collection of possible permutations of indices according to whether the spaces are isometric and fix such an isomorphism. This allows us to consider the corresponding automorphism group $\Aut\X :=\left( \CProd{i = 1}{D} \Isom X_i \right) \rtimes \mathrm{Sym}_\X(D)$ and we will denote by $\Aut_{\!0}\X= \CProd{i = 1}{D} \Isom X_i$, which is the maximal subgroup which preserves the factors. Furthermore, for $i \in \{1, \dots, D\}$ the $i$th coordinate of an element $g\in \Aut_{\!0}\X$ shall be denoted by $g_i$ i.e. this is the image under the projection of $\Aut_{\! 0}\X$ to $\Isom X_i$. 

There are of course different metrics that one could consider on $\X$, the class of $\ell^p$-metrics being an important family of equivalent metrics. The availability of the $\ell^2$-metric allows us to place $\X$ in the category of non-positively curved spaces. However, we  will most often use the $\ell^1$ or $\ell^\infty$ metrics, and while we distinguish these in the following definition, we shall use $d$ for all metrics in the sequel.

\begin{defn}[The $\ell^p$-product metric on $\X$]
 If  $x,y \in \X$ and $p\in [1,\8)$ then $d_p(x,y):=\sqrt[^p]{\Sum{i=1}{D}[d_i(x_i,y_i)]^{^p}}$ and if $p=\8$ then $d_\8(x,y):=\max{i=1,\dots, D}d_i(x_i,y_i).$
\end{defn}

We note that if the factors are geodesic then $\X$ is also geodesic with respect to the $\ell^p$-metric (e.g. consider the product of geodesics in each factor and choose an $\ell^p$-geodesic in the resulting $D$-dimensional interval). Furthermore, $\X$ has finite affine rank. By \cite[Corollary 1.3]{FoertschLytchak} if each $X_i$ is unbounded and not a quasi-line, then the isometry group of $\X$ with respect to the $\ell^1$ metric is $\Aut\X$. 

Finally, using the Roller compactification and boundary of a CAT(0) cube complex as inspiration, we shall consider $\~{\X}:= \CProd{i=1}{D} \~X_i$ and $\partial \X := \~{\X}\setminus \X$. We shall simply refer to $\partial \X$ as the \emph{boundary} of $\X$. Similarly, we shall define the \emph{regular points} of the boundary as the subset $\partial_{reg}\X = \CProd{i=1}{D} \partial X_i$. (See also \cite{KarSageev,FernosFPCCC}.)

\section{Isometric Actions in Rank-1}\label{sec:isomrank1}

In order to classify the isometries in higher rank, we first discuss isometries and actions in rank-1.

\subsection{Classification results.}

We recall some standard terminology used in relation to actions on hyperbolic spaces. However, we will also introduce some new terminology along the way. Recall that given a hyperbolic space $X$, we denote by $\partial X$ its Gromov boundary. 

\begin{defn}\label{defn:elementsclassextn}
 Let $X$ be a $\delta$-hyperbolic space, and $\g\in \Isom X$. For $L\geq 0$, let $\O^L(\g)= \{x\in X: d(\g^n x, x) \leq L, n\in \Z\}$. We shall say that $\g$ is 

 \begin{enumerate}
 \item \emph{elliptic} if all orbits are bounded;
\begin{enumerate}
    \item a \emph{tremble} if it is elliptic and there is an $L$ such that the set $\O^L(\g)=X$
  \item a \emph{rotation} if it is elliptic and $\O^L(\g)$ is bounded for every $L>0$; 
   \item a \emph{rift} if it is elliptic and neither a tremble nor a rotation. 
  \end{enumerate}

  \item \emph{parabolic} if $\langle \g \rangle$ has a unique fixed point in $\partial X$ and has unbounded orbits.

  \item \emph{loxodromic} if $\g$ has exactly two fixed points on $\partial X$ and $\{\g^n x: n\in \Z\}$ is quasi-isometric to $\Z$ for some (equivalently every) $x \in X$. 
 \end{enumerate}
\end{defn}

\begin{example} To distinguish between the types of elliptic elements, consider the standard embedding of the 4-valent tree $T$ in $\R^2$. A rotation by $\pi/2$ would be a rotation on the tree,  and a flip along a horizontal axis would be a rift. On the other hand, trees without leaves have no non-trivial trembles. 

To create a non-trivial tremble, we may take the product of $T$ with a bounded graph. Then any nontrivial isometry of the bounded graph will induce a non-trivial tremble of the $\delta$-hyperbolic space $T\times C$.
\end{example}

 Given an action $\G \to \Isom X$,  we denote by $\Lambda (\G)$ the set of limit points of $\G$ on $\partial X$. That is, $$\Lambda (\G)=\partial X\cap \overline{\G .x},$$ where $\overline{\G .x}$ denotes the closure of a $\G$-orbit in $X\cup \partial X$ for some  choice of basepoint $x\in X$; although, the limit set is independent of this choice of basepoint. The action of $\G$ on $X$ naturally extends to a continuous action of $\G$ on $\partial X$.

The following theorem summarizes the standard classification of groups acting on hyperbolic spaces due to Gromov \cite[Section 8.2]{Gro}, and Hamann \cite{Hamann}. We also add some new terminology to take into account the different types of elliptic isometries. Similar to above for an action $\G\to \Isom X$ and $L>0$ we set $$\O^L(\G)=\{x\in X: d(gx,x)\leq L\text{ for all } g\in \G\}.$$

The reader may have noted the figure on the title page, which has a representation for each of the types of actions in the following theorem. This schematic should be read as one would read figures in the disk model of the hyperbolic plane. We hope that, beyond its aesthetic appeal, it is also instructive \cite{CoulonDorfsman-HopkinsHarrissSkrodzki}.

\begin{theorem}\label{Thm:Class Actions Rank1}
    
\label{thm:actionsclassrk1} Let $X$ be a hyperbolic space.  For an action $\G\to \Isom X$, we have the following:
 \begin{enumerate}
 \item $|\L(\G) |= 0$ so the action is \emph{elliptic}, i.e. $X=\Cup{L\geq 0}{}\O^L(\G)$. If in addition
 \begin{itemize}
     \item there exists $L>0$ such that $\O^L(\G)=X$ then we say the action is a \emph{tremble};
     \item for every $L>0$ we have  that $\O^L(\G)$ is bounded then we say the action is a \emph{rotation};
     \item for all $L$ sufficiently large we have that $\O^L(\G)\neq X$ then we say the action is a \emph{rift}.

 \end{itemize}

  \item $|\L(\G)| = 1$ so the action is \emph{parabolic}. In this case,  $\G$ has a unique fixed point in $\partial X$, unbounded orbits, and   no loxodromic elements;
   \item $|\L(\G)| = 2$ so the action is \emph{lineal}; If further, $\G$ fixes each limit point, the action is called \emph{oriented lineal}.
  
  \item $|\L(\G)| = \8$  and so the action is one of the following:
  \begin{enumerate}
    \item \emph{quasiparabolic} if in addition $\G$ fixes a unique point in $\partial X$;
      \item \emph{general type} if there exists two elements in $\G$ acting as loxodromics with disjoint fixed point sets in $\partial X$. 
  \end{enumerate}
 \end{enumerate}
\end{theorem}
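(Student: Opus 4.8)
The plan is to classify purely according to the cardinality $|\L(\G)|$, which I will argue takes only the values $0,1,2,\infty$, and then to match each case with the dynamical description in the statement. The backbone $|\L(\G)|\in\{0,1,2,\infty\}$ together with the quasiparabolic/general-type dichotomy is the classical theorem of Gromov \cite[Section 8.2]{Gro} and Hamann \cite{Hamann}, so I would cite it for the ``gap'' (that a $\G$-invariant closed limit set cannot be finite of size $\geq 3$) and for the production of independent loxodromics; the genuinely new content to verify is the internal trichotomy of the elliptic case and the mutual consistency of the new terminology with the classical one. Throughout I would use that $\L(\G)$ is closed and $\G$-invariant (immediate from the definition of a Gromov sequence, since isometries extend to homeomorphisms of $\~X$).

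For the cases $|\L(\G)|=1,2$ I would argue directly from invariance. If $\L(\G)=\{\xi\}$ then $\G\xi=\xi$, the orbits are unbounded (otherwise $\L(\G)=\varnothing$ by the equivalence established below), and there can be no loxodromic element, since Definition \ref{defn:elementsclassextn} attaches two distinct boundary fixed points to any loxodromic, both of which would lie in $\L(\G)$; this is exactly the parabolic case. If $|\L(\G)|=2$ then $\G$ preserves the unordered pair, giving the lineal case, and the index-two subgroup fixing each point individually is the oriented-lineal locus. For $|\L(\G)|=\infty$ I would split on whether $\G$ fixes a point of $\partial X$: a global fixed point is necessarily unique (two fixed boundary points would force $\L(\G)$ to reduce to that pair, contradicting infinitude), yielding quasiparabolic, and otherwise I must produce two loxodromics with disjoint fixed-point sets. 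Here I would run the standard north--south/ping-pong argument: choose three distinct limit points, approximate the relevant finite configurations by a tree via Lemma \ref{Lem: N-gons quasitrees} and Remark \ref{Rem: QI to QT and Gromov Product}, and then use the Morse Lemma (Theorem \ref{Slim Q-Bigons infinite}) and the fellow-traveling estimates of Lemma \ref{lemma: fellow travel q-rays} and Corollary \ref{cor: (1,mu)-fellow travel} to certify that suitable products are loxodromic with transverse axes.

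The new content is the elliptic case $|\L(\G)|=0$. First I would show this is equivalent to every orbit being bounded, i.e. to $X=\Cup{L\geq 0}{}\O^L(\G)$: bounded orbits plainly do not accumulate on $\partial X$, and conversely an unbounded orbit yields, after passing to a subsequence with divergent Gromov products, a genuine limit point. With this in hand the trichotomy is a matter of unwinding Definition \ref{defn:elementsclassextn} at the level of the whole group. Since $\O^L(\G)$ is nondecreasing in $L$, the action is a tremble exactly when $\O^{L}(\G)=X$ for all large $L$, equivalently for a single $L$; the negation of this is precisely ``$\O^L(\G)\neq X$ for all $L$'', which covers both the rotation and the rift alternatives. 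I would then separate rotation ($\O^L(\G)$ bounded for every $L$) from rift (some $\O^L(\G)$ unbounded yet never equal to $X$), and check that the three are mutually exclusive when $X$ is unbounded: a tremble has $\O^{L_0}(\G)=X$ unbounded, ruling out rotation, and rift is defined as the remaining case.

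The main obstacle I anticipate is the general-type sub-case: manufacturing two independent loxodromics in a possibly non-proper $\delta$-hyperbolic space, where compactness of $\partial X$ is unavailable. The care needed is exactly the uniform control of quasigeodesics supplied by the Morse Lemma and the quasitree approximation of finitely many boundary and interior points, which let me replace the usual compactness-based convergence dynamics by explicit Gromov-product estimates; the residual subtlety is confirming that an unbounded orbit genuinely produces a boundary point despite the lack of local compactness, which is what justifies the equivalence between $|\L(\G)|=0$ and bounded orbits used above.
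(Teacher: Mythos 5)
The paper does not actually prove this statement: it is presented as a summary of the classical classification due to Gromov and Hamann, with the elliptic trichotomy (tremble/rotation/rift) introduced as new terminology rather than derived. Your plan --- cite the classics for the cardinality gap $|\L(\G)|\in\{0,1,2,\8\}$ and for the production of independent loxodromics, and verify only that the new elliptic vocabulary is internally consistent and exhaustive --- is therefore essentially the treatment the paper itself adopts, and your handling of the cases $|\L(\G)|=1,2$, of the uniqueness of a boundary fixed point in the quasiparabolic case, and of the tremble/rotation/rift bookkeeping (reading ``rift'' as the residual case, per Definition \ref{defn:elementsclassextn}) is correct.

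The one place where your self-contained sketch has a genuine soft spot is the equivalence $|\L(\G)|=0\iff X=\Cup{L\geq 0}{}\O^L(\G)$. The nontrivial direction is that an unbounded orbit forces $\L(\G)\neq\varnothing$, and your proposed argument --- ``pass to a subsequence with divergent Gromov products'' --- is not available for an arbitrary unbounded subset of a non-proper hyperbolic space: in a wedge of infinitely many rays based at $o$, the points $x_n$ at distance $n$ along the $n$-th ray satisfy $\<x_n,x_m\>_o=0$ for all $n\neq m$, so no subsequence is a Gromov sequence. For group orbits the implication is true, but it is precisely the non-elementary content of the classical classification: if some element is non-elliptic, its own limit point(s) already lie in $\L(\G)$, whereas if every element is elliptic and yet orbits are unbounded, one needs the lemma that the group then fixes a point of $\partial X$ --- this is where the group structure enters, and it is part of what Gromov and Hamann actually prove. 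You should either fold this implication into the citation along with the rest of the backbone, or supply that lemma explicitly; as written, the subsequence argument does not close the elliptic case.
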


\subsection{Busemann quasimorphism}

A function $b\colon \G \to \mathbb R$ is a \emph{quasimorphism} if there exists a constant $C\geq 0$   such that $$|b(gh)-b(g)-b(h)|\le C$$ for all $g,h\in \G$. We say that $b$ has \emph{defect at most $C$}. If, in addition, the restriction of $b$ to every cyclic subgroup of $G$ is a homomorphism, $b$ is called a homogeneous quasimorphism. Every quasimorphism $b$ gives rise to a homogeneous quasimorphism $\beta$ defined  as follows for $g\in \G$:
$$
\beta(g)=\lim_{n\to \infty} \frac{b(g{^{n}})}n.
$$

We note that since $b(g^n)$ is uniformly close to a subadditive sequence this limit exists by Fekete's Lemma. 
The function $\beta$ is called the \emph{homogenization of $b$.} It is straightforward to check that
$$
 |\beta(g) -b(g)|\le C
$$
for all $g\in \G$ where $C$ is the constant from above.

Given any action of a group on a hyperbolic space fixing a point $\xi$ on the boundary, one can associate the \emph{Busemann quasimorphism}. We briefly recall the construction and necessary properties here, and refer the reader to \cite[Sec. 7.5.D]{Gro} and \cite[Sec. 4.1]{Man} for further details.

\begin{defn}\label{Bpc}
Let $\G\to \Isom X$ be an action that fixes a point $\xi\in \partial X$.
Let $s=\{x(n)\}_{n\in \N}$ be any sequence of points of $X$ converging to $\xi$. Then  the function $b_{s}\colon \G\to \mathbb R$ defined by
$$
b_{s}(g)=\limsup\limits_{n\to \infty}\left(d (gx(0), x(n))-d(x(0), x(n)\right)
$$
is a quasimorphism. Its homogenization $\beta_{s}$ is called the \emph{Busemann quasimorphism}. It is known that this definition is independent of the choice of $s$ (see \cite[Lemma 4.6]{Man}), and thus we can drop the subscript and write $\beta$.\end{defn}

Roughly speaking, the Busemann quasimorphism measures the translation of group elements towards or away from $\xi$. Note that, in general, $\b$ need not be a homomorphism, but it is when the group is amenable or if $X$ is proper (see \cite[Corollary 3.9]{Amen}). Furthermore it is straightforward to verify that $\b(g) \neq 0$ if and only if $g$ is loxodromic. In particular, so $\b \neq 0$ if the action is oriented lineal or quasiparabolic.

We will use the following results concerning the Busemann quasimorphism, which appear in \cite[Section 3]{ABR} in a slightly different setting. We include the setup and statements here -- these, notation included, will be reused throughout, particularly in Section ~\ref{sec:elemsubgrp}. We omit some of the proofs as they follow directly from the proofs in \cite{ABR}.

 Fix $\xi \in \partial X$ and $g\in\fix(\xi)$. Let $M$ be the Morse constant associated to $(1,10\delta)$ quasigeodesic rays in $X$ with one common endpoint in $\partial X$ or $(1,20\delta)$ quasigeodesics with both shared endpoints in $\partial X$.  Let $q: \Z\to X$ be a bi-infinite quasigeodesic converging to $\xi$ in the positive direction. Fix $x = q(0)$.

The ray $q|_{[0,\infty)}$ and its translate $gq|_{[0,\infty)}$ are both $(1,20\delta)$ quasigeodesic rays that share the endpoint $\xi$ and thus  are eventually $M$-Hausdorff close (and synchronously forever after as in Lemma ~\ref{lemma: Unif Bound}). Specifically, there are numbers $t_0=t_0(g)$ and $s_0=s_0(g)\geq 0$ depending on $g$ and $x=q(0)$ so that $q|_{[t_0,\infty)}$ and $gq|_{[s_0,\infty)}$ are $M$-Hausdorff close and $d(q(t_0),gq(s_0))\leq M$.   In other words $s_0$ is a bound for how long we must wait for the ray $gq|_{[0,\infty)}$ to become close to the ray $q|_{[0,\infty)}$. This depends only on $d(x,gx)$, and $s_0(g)$ may be chosen smaller than a function of $d(x,gx)$.  We consider the difference $l=t_0-s_0$ as the amount that $g$ ``shifts'' the quasigeodesic $q$, which may be positive or negative. In the case when the space $X$ is a quasi-line, we may take $s_0 = 0$ since $q, gq$ share both end points on $\partial X$ and are thus always Hausdorff-close. 

\begin{lemma}\cite[Corollary 3.15]{ABR}
\label{Lem:shiftfn}
Under the setup described above, there is a constant $\consttwo>0$ so that for any $g\in \G$, if $s\geq s_0(g)$ then $$d\big(q(s-\b(g)),gq(s)\big)\leq \consttwo.$$ 

In particular, when $X$ is a quasi-line, then $d\big(q(-\b(g)), gx \big)\leq \consttwo$ and it follows that $d(x, gx) \leq |\b(g)| + \consttwo + 20 \delta$. 
\end{lemma}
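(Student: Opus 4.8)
The plan is to compute the Busemann quasimorphism explicitly along the sequence $\{q(n)\}_{n\in\N}$, identify $\b(g)$ with the geometric shift $l=t_0-s_0$ up to a uniform additive error, and then feed this identification into the synchronous fellow-traveling estimate for the rays $q|_{[t_0,\infty)}$ and $gq|_{[s_0,\infty)}$. First I would record that synchronous fellow-traveling. Since $q|_{[t_0,\infty)}$ and $gq|_{[s_0,\infty)}$ are $(1,20\delta)$ quasigeodesic rays converging to the common boundary point $\xi$ with $d(q(t_0),gq(s_0))\leq M$, the fellow-traveling bound for quasigeodesic rays (Lemma \ref{lemma: fellow travel q-rays}, allowing the initial points to differ by $M$; cf. Corollary \ref{cor: (1,mu)-fellow travel}) yields a constant $C_0=C_0(\delta)$ with $d(q(t_0+\tau),gq(s_0+\tau))\leq C_0$ for all $\tau\geq 0$. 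Writing $s=s_0+\tau$ and $l=t_0-s_0$, this reads $d(q(s+l),gq(s))\leq C_0$ for every $s\geq s_0$.

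Next I would show $|\b(g)+l|\leq C_1$ for a uniform $C_1$. Because the definition of $b_s$ is independent of the approximating sequence (see \cite[Lemma 4.6]{Man}), I may evaluate it along $x(n)=q(n)\to\xi$. The key geometric input is that $q(n)=q(t_0+(n-t_0))$ lies within $C_0$ of $gq(s_0+(n-t_0))=gq(n-l)$ by the fellow-traveling just established; since $gq$ is a $(1,20\delta)$ quasigeodesic, this lets me read off $d(gq(0),q(n))=(n-l)\pm(C_0+20\delta)$ from arclength along the single quasigeodesic $gq$, while $d(q(0),q(n))=n\pm 20\delta$. Subtracting and taking $\limsup_n$ gives $b_s(g)=-l$ up to a uniform constant, and since $|\b(g)-b_s(g)|\leq C$ (the defect of $b_s$) I obtain $|\b(g)+l|\leq C_1$.

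Finally I would combine the two estimates. As $q$ is a $(1,20\delta)$ quasigeodesic, $d(q(s+l),q(s-\b(g)))\leq |l+\b(g)|+20\delta\leq C_1+20\delta$, so the triangle inequality with the first step gives $d(q(s-\b(g)),gq(s))\leq C_0+C_1+20\delta=:\consttwo$ for all $s\geq s_0(g)$, which is the main inequality. For the quasi-line case the setup permits $s_0=0$, since then $q$ and $gq$ share both endpoints on $\partial X$ and are Hausdorff close throughout; setting $s=0$ gives $d(q(-\b(g)),gx)\leq \consttwo$, and then $d(x,gx)\leq d(q(0),q(-\b(g)))+d(q(-\b(g)),gx)\leq (|\b(g)|+20\delta)+\consttwo$, as claimed.

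I expect the middle step to be the main obstacle, namely pinning down that the geometric shift $l$ equals $-\b(g)$ up to a \emph{uniform} constant. This is precisely where hyperbolicity and the Morse Lemma are essential: one must justify that $q(n)$ stays uniformly (not $n$-dependently) close to $gq(n-l)$, so that the distance $d(gq(0),q(n))$ is governed by arclength along a single quasigeodesic rather than accruing an error that grows with $n$. Once this uniform closeness is secured, the remaining estimates are routine applications of the $(1,20\delta)$ quasigeodesic inequalities and the defect bound for the quasimorphism.
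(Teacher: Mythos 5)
Your proof is correct. Note that the paper itself does not prove this lemma: it imports it as \cite[Corollary 3.15]{ABR} and explicitly omits the argument, so there is no in-paper proof to compare against; your reconstruction follows the natural route one would expect from that source, namely (i) synchronous fellow-traveling of $q|_{[t_0,\infty)}$ and $gq|_{[s_0,\infty)}$ with a constant depending only on $\delta$ and the Morse constant, and (ii) evaluating $b_s$ along the sequence $q(n)$ to pin down $|\b(g)+l|$ by a uniform constant via the defect bound $|\b-b_s|\leq C$. The one step worth stating explicitly when you write this up is the reduction of the fellow-traveling claim to Lemma \ref{lemma: fellow travel q-rays}: the two rays do not share a basepoint but start within $M$ of each other, so you should reparametrize one of them into a $(1,20\delta+M)$ quasigeodesic ray with the same initial point before invoking the lemma, which keeps the resulting constant $C_0$ uniform in $g$ as your argument requires.
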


\begin{prop}\cite[Proposition 3.12]{ABR}\label{prop:smalltranslation}
Let $X$ be a quasi-line, $\xi\in \partial X$ and $\G\to \fix(\xi)$ be an action so that the associated Busemann quasimorphism $\b \colon \G\to \R$ is a  homomorphism. Define the function $A\colon [0,\8)\to \R$ by $$A(r) = \op{sup}\{ |\b(g)| \: g\in \G \text{ s.t. } d(x ,gx) \leq r\}.$$ There exists a constant $B>0$ such that the following holds:  For any $g, h \in \G$ with $d(x,gx)\leq r$ and $\b(h)\leq A(r)$, we have $d(ghx,hx)\leq |\b(g)|+B$.
\end{prop}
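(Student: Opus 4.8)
The plan is to locate both points $hx$ and $ghx$ on the reference quasigeodesic $q$ and then compare them by a single triangle inequality. The key input is the quasi-line clause of Lemma~\ref{Lem:shiftfn}: since every element of $\G$ fixes $\xi$ and a quasi-line has only the two boundary points $\xi$ and $\xi^{-}$, each $w\in\G$ fixes \emph{both} endpoints, so $q$ and $wq$ are bi-infinite $(1,20\delta)$-quasigeodesics with the same pair of endpoints, hence globally Hausdorff close and synchronously fellow-traveling (Corollary~\ref{cor: (1,mu)-fellow travel}). Consequently, for \emph{every} $w\in\G$ one has $d(q(-\b(w)),wx)\le\consttwo$, where I regard $q$ as a $(1,20\delta)$-quasigeodesic $\R\to X$ (rounding a real argument to the nearest integer costs at most $1+20\delta$, which I absorb into the final constant).

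First I would apply this displacement estimate to the two elements $h$ and $gh$, obtaining $d(q(-\b(h)),hx)\le\consttwo$ and $d(q(-\b(gh)),ghx)\le\consttwo$. Next I would run the triangle inequality through the two reference points:
$$d(ghx,hx)\le d\big(ghx,q(-\b(gh))\big)+d\big(q(-\b(gh)),q(-\b(h))\big)+d\big(q(-\b(h)),hx\big).$$
The two outer terms are each at most $\consttwo$. For the middle term I use that $q$ is a $(1,20\delta)$-quasigeodesic, so $d(q(a),q(b))\le|a-b|+20\delta$, together with the homomorphism hypothesis $\b(gh)=\b(g)+\b(h)$, which gives $|-\b(gh)-(-\b(h))|=|\b(g)|$. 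Hence the middle term is at most $|\b(g)|+20\delta$, and altogether $d(ghx,hx)\le|\b(g)|+2\consttwo+20\delta$, so that $B:=2\consttwo+20\delta$ works.

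The hard part will be justifying that the quasi-line displacement estimate $d(q(-\b(w)),wx)\le\consttwo$ holds \emph{globally}, i.e. for all elements $w$ and all real parameters with a single constant $\consttwo$ independent of $w$; this is exactly the quasi-line clause of Lemma~\ref{Lem:shiftfn}, and it rests on each $w$ fixing both ends of the quasi-line so that $q$ and $wq$ fellow-travel synchronously with a constant controlled only by $\delta$ and the Morse constant. Once this is in hand the argument is a bare triangle inequality. I note that in this quasi-line setting neither the displacement bound $d(x,gx)\le r$ nor the bound $\b(h)\le A(r)$ is actually needed, and the resulting $B$ is independent of $r$; these hypotheses (and the auxiliary function $A$) are artifacts of the more general hyperbolic framework of \cite{ABR}, where the translation estimate for $g$ is valid only past a threshold $s_0(g)$ controlled by $d(x,gx)$. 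Even the homomorphism hypothesis could be relaxed to a genuine quasimorphism by absorbing its defect into $B$.
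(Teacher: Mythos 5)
Your argument is correct. Note that the paper does not actually prove this proposition: it imports it from \cite[Proposition 3.12]{ABR} and explicitly omits the proof, so there is no in-paper argument to compare against line by line. What you give is a clean, self-contained derivation in the quasi-line special case, and it leans on exactly the tools the paper records for this purpose: the quasi-line clause of Lemma \ref{Lem:shiftfn} (with $s_0=0$, valid because an isometry of a quasi-line fixing $\xi$ must fix the other boundary point as well, so $q$ and $wq$ share both endpoints) applied to $h$ and to $gh$, followed by one triangle inequality and the identity $\b(gh)=\b(g)+\b(h)$. The only load-bearing input is that the constant $\consttwo$ in Lemma \ref{Lem:shiftfn} is uniform over all group elements; that uniformity is precisely what distinguishes the reparametrized estimate $d(q(s-\b(w)),wq(s))\le\consttwo$ from the synchronous fellow-traveling bound of Corollary \ref{cor: (1,mu)-fellow travel}, whose constant degrades with $d(x,wx)$ --- you are right to flag this as the one point requiring care, and it is exactly what the cited lemma supplies. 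Your observation that the hypotheses $d(x,gx)\le r$ and $\b(h)\le A(r)$ (and the function $A$ itself) are vacuous on a quasi-line, and that $B$ is independent of $r$, is also correct; those hypotheses are artifacts of the general hyperbolic setting of \cite{ABR}, where the translation estimate only applies past the threshold $s_0(g)$. In short: correct, and arguably more direct than deferring to the general-case proof in \cite{ABR}.
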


We will also need the following lemma pertaining to the behavior of quasigeodesics in parabolic actions. If $\G$ is infinite then  we may therefore fix  $\{1\} \subsetneq S_1 \subsetneq S_2 \subsetneq \dots$ a strictly increasing sequence of finite symmetric subsets of $\G$.

\begin{lemma}\label{lemmaelimpara}  Suppose $\G\to \fix (\xi)$ is parabolic and  $q: \N \to X$ is a $(1, \lambda)$ quasigeodesic ray  converging to $\xi$. Let $E$ be the constant from Lemma ~\ref{Lem:shiftfn}. Define $$M(t) = \op{max} \{ m \mid d( gq(t) , q(t) ) \leq E \hspace{5pt} \forall g \in S_m \}.$$ Then for every $n \in \mathbb{N}$, there is a $t_n$ such that $M(t) \geq n$ for all $t \geq t_n$. In particular, $\displaystyle \lim_{t \rightarrow \infty} M(t) = \infty$.
\end{lemma}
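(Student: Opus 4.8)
The plan is to leverage the vanishing of the Busemann quasimorphism for parabolic actions. First I would note that since the action $\G \to \fix(\xi)$ is parabolic, by the classification in Theorem~\ref{Thm:Class Actions Rank1}(2) it has no loxodromic elements. Since, as recorded above, $\b(g) \neq 0$ precisely when $g$ is loxodromic, this forces $\b \equiv 0$ on $\G$. This is the conceptual heart of the argument: parabolicity means that every element has ``zero shift'' along $q$.

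Next I would feed this into Lemma~\ref{Lem:shiftfn}. The forward-ray analysis in the setup preceding that lemma applies verbatim to the quasigeodesic ray $q \colon \N \to X$, since one only ever compares $q|_{[0,\infty)}$ with its translate $gq|_{[0,\infty)}$. Thus for each $g \in \G$ and every $s \geq s_0(g)$ we obtain $d(q(s - \b(g)), gq(s)) \leq E$, which, using $\b(g) = 0$, becomes
$$d\big(q(s), g q(s)\big) \leq E \qquad \text{for all } s \geq s_0(g).$$
In words, once we pass the finite, $g$-dependent threshold $s_0(g)$, the element $g$ displaces $q(s)$ by at most $E$.

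Finally I would carry out the bookkeeping over the finite sets $S_n$. Fixing $n \in \N$, I would set $t_n := \op{max}_{g \in S_n} s_0(g)$, which is finite because $S_n$ is finite. For any $t \geq t_n$ and any $g \in S_n$ we have $t \geq s_0(g)$, so the displacement bound above gives $d(gq(t), q(t)) \leq E$; by the definition of $M(t)$ as the largest index $m$ for which this inequality holds across all of $S_m$, we conclude $M(t) \geq n$ for every $t \geq t_n$. The ``in particular'' clause is then immediate, since for each $n$ the value $M(t)$ is eventually at least $n$, whence $\lim_{t\to\infty} M(t) = \infty$.

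I expect no serious obstacle here: the only nontrivial input is the identity $\b \equiv 0$, which is exactly where parabolicity is used, after which the statement reduces to taking the maximum of the finitely many thresholds $s_0(g)$, $g \in S_n$. The one point deserving a sentence of care is the (harmless) transition from the bi-infinite quasigeodesic in the setup of Lemma~\ref{Lem:shiftfn} to the ray $q$, which is legitimate precisely because $\b(g)=0$ keeps $s-\b(g)=s$ in the domain $[0,\infty)$ of the ray.
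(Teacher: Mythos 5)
Your proof is correct and follows essentially the same route as the paper: both arguments observe that parabolicity forces the Busemann quasimorphism to vanish, feed $\b\equiv 0$ into Lemma \ref{Lem:shiftfn} to get $d(gq(t),q(t))\leq E$ for all $t$ past a finite threshold depending on $g$, and then take the maximum of these thresholds over the finite set $S_n$. The only cosmetic difference is that the paper phrases the displacement bound for pairs $g_1,g_2\in S_n$ rather than comparing each $g$ to the identity, which amounts to the same thing.
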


\begin{proof} Since the action is parabolic, there are no loxodromic elements and hence the Busemann quasimorphism $\b \equiv 0$ on $\G$ (and hence $\b$ is a homomorphism).

Since the action is by isometries fixing $\xi$, if $q:\N\to X$ is a $(1, \lambda)$ quasigeodesic converging to $\xi$, then so is $gq$ for every $g \in \G$. Fix $n\in \N$ a let $g_1, g_2\in S_n$. Then  there exists a $t_0 = t_0(g_1, g_2)$ such that $d(g_1q(t), g_2q(t)) \leq E$ for all $t \geq t_0$; this follows from Lemma ~\ref{Lem:shiftfn}. Set $t_n = \op{max} \{t_0(g_1, g_2)\mid g_1 , g_2 \in S_n\}$. It follows that if $t \geq t_n$ then $S_n$ satisfies the definition of $M(t)$ and therefore $M(t) \geq  n $. 
\end{proof}

\subsection{Taming the Space via the Action}\label{Subsec:Taming Space via Action}

In this section we record some results about ``taming"  the spaces on which a group acts. These can be categorized into results about elliptic and loxodromic taming.

\subsubsection{Elliptic Tamings}

The main goal of this subsection is to prove a key result that will be used in Section ~\ref{sec:ripsandsela}. Namely, that the quotient of a complete locally compact geodesic $\delta$-hyperbolic space by a closed subgroup of uniformly bounded isometries (i.e. a closed tremble group) is again a  complete locally compact geodesic $\delta$-hyperbolic space (see Proposition ~\ref{Prop: loc compact tremb quot}).  We start by proving some necessary lemmas.

\begin{defn} Given a metric space $Y$, a subset $Z$ is \emph{quasiconvex} if there is a constant $\sigma \geq 0$ such that any geodesic in $Y$ with end points in $Z$ is contained in the $\sigma$-neighborhood of $Z$. 
\end{defn}

\begin{lemma}\label{Lem: elliptic orbits are quasiconvex}
    Let $\G\to \Isom X$ be elliptic. Let $L \geq 2\delta$. The set $\O^L(\G)$ is quasiconvex. 
\end{lemma}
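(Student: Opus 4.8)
The plan is to show directly that any geodesic with endpoints in $\O^L(\G)$ lies in the closed $L$-neighborhood of $\O^L(\G)$, so that the constant $\sigma = L$ witnesses quasiconvexity. Fix $x,y\in \O^L(\G)$ and a geodesic $q\colon[0,d(x,y)]\to X$ from $x$ to $y$; it suffices to show that every point $q(t)$ lies within $L$ of $\O^L(\G)$.

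The central idea is to compare $q$ with its translate $g\circ q$ for an arbitrary $g\in\G$ using the thin parallelogram estimate of Lemma \ref{Lem: thin quad}. Since $g$ is an isometry, $g\circ q$ is a geodesic from $gx$ to $gy$ with $d(gx,gy)=d(x,y)$, and because $x,y\in\O^L(\G)$ we have $d(x,gx)\le L$ and $d(y,gy)\le L$ \emph{simultaneously for every} $g\in\G$. Applying Lemma \ref{Lem: thin quad} with the comparison geodesic $q'=g\circ q$ (so that $q'(t)=gq(t)$ literally) yields, for $t\in[L,d(x,y)-L]$, the bound $d\big(q(t),gq(t)\big)\le 2\delta$. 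The point is that this bound is uniform in $g$, so each such $q(t)$ satisfies $d(q(t),gq(t))\le 2\delta$ for \emph{all} $g\in\G$, i.e. $q(t)\in\O^{2\delta}(\G)$. Since $L\ge 2\delta$ we have $\O^{2\delta}(\G)\subseteq\O^L(\G)$, so the entire middle segment $q\big([L,d(x,y)-L]\big)$ lies inside $\O^L(\G)$ itself.

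It remains to treat the two end segments. For $t\in[0,L]$ we have $d(q(t),x)=t\le L$, and for $t\in[d(x,y)-L,d(x,y)]$ we have $d(q(t),y)=d(x,y)-t\le L$; since $x,y\in\O^L(\G)$, these points lie in $\~\hood_L(\O^L(\G))$. When $d(x,y)<2L$ the middle interval is empty, but then every $q(t)$ is within $L$ of $x$ or of $y$ by the same computation, so the conclusion still holds. Combining the three ranges shows that the image of $q$ lies in $\~\hood_L(\O^L(\G))$, establishing quasiconvexity with $\sigma=L$.

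I expect the only real subtlety to be bookkeeping rather than a genuine obstacle: one must invoke Lemma \ref{Lem: thin quad} with the specific comparison geodesic $q'=g\circ q$ and use that the resulting displacement bound $2\delta$ is independent of $g$, which is exactly what forces the middle of the geodesic into $\O^{2\delta}(\G)$. The hypothesis $L\ge 2\delta$ enters precisely here, guaranteeing that this set is contained in the target set $\O^L(\G)$ rather than a strictly larger coarse stabilizer. Note that ellipticity of the action is not actually needed for the argument; only the defining property of the set $\O^L(\G)$ is used.
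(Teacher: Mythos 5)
Your proof is correct and is essentially the same as the paper's: both apply Lemma \ref{Lem: thin quad} to a geodesic $q$ and its translate $g\circ q$ to conclude that the middle portion $q([L,d(x,y)-L])$ lies in $\O^{2\delta}(\G)\subseteq\O^L(\G)$, while the end segments are within $L$ of the endpoints. Your added observation that ellipticity is not actually used is accurate but does not change the argument.
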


\begin{proof}
     Let $g\in \G$, and $x,y\in \O^L(\G)$ with $d(x,y)\geq 2L$. Fix  a geodesic $\a: [0,d(x,y)]\to X$ from $x$ to $y$. By Lemma ~\ref{Lem: thin quad} we have that for every $t\in [L,d(x,y)-L]$, $d(g\a(t),\a(t)) \leq 2\delta$ and hence $\a(t)\in \O^L(\G)$. The rest of the points on the geodesic are of course within $L$ of $x$ or $y$, and hence the set is quasiconvex. 
\end{proof}

We now prove the following main result about elliptic tamings. We shall use $\<\<\T\>\>$ to denote the normalizer of $\T$ in $\Isom X$

\begin{prop}\label{Prop: loc compact tremb quot} Let $X$ be a  locally compact, complete, hyperbolic geodesic space and  $\T \leq \Isom(X)$ a compact subgroup  with respect to the topology of uniform convergence on compact sets acting as a tremble. Then $\T \setminus X$ -- the quotient of the space $X$ by orbits of $\T$ -- is a locally compact, complete, $\delta$-hyperbolic space. Moreover,  the natural map $X\to X'$ is a continuous quasi-isometry that is $\<\<\T\>\>$-equivariant.
\end{prop}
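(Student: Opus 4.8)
The plan is to equip $X':=\T\backslash X$ with the quotient metric $d'([x],[y])=\inf_{t\in\T}d(x,ty)$ and then to extract every asserted property of $X'$ from a single structural fact: the quotient map $\pi\colon X\to X'$ is a continuous $(1,L)$-quasi-isometry. First I would record that since $\T$ is compact and acts by isometries, the orbit map $t\mapsto tx$ is continuous, so every orbit $\T x$ is compact and the infimum defining $d'$ is attained. With attainment in hand, $d'$ is a genuine metric: symmetry is immediate, positivity follows because orbits are closed (so $d'([x],[y])=0$ forces $[x]=[y]$), and the triangle inequality follows by picking $s,u\in\T$ realizing the two distances and estimating $d(x,suz)\le d(x,sy)+d(y,uz)$, using $su\in\T$.

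Next comes the quasi-isometry, which is the heart of the argument. Taking $t=\mathrm{id}$ gives $d'([x],[y])\le d(x,y)$, so $\pi$ is $1$-Lipschitz, hence continuous, and it is surjective by construction. For the reverse bound I would use the tremble hypothesis essentially: if $t_0\in\T$ realizes $d'([x],[y])=d(x,t_0 y)$, then $d(x,y)\le d(x,t_0 y)+d(t_0 y,y)\le d'([x],[y])+L$, since $\O^L(\T)=X$ forces $d(t_0 y,y)\le L$. Thus $d(x,y)-L\le d'([x],[y])\le d(x,y)$, so $\pi$ is a continuous $(1,L)$-quasi-isometry. This is exactly the step where a merely compact group would fail: a rotation has bounded but not \emph{uniformly} bounded orbits, so its additive constant would depend on the basepoint.

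With the quasi-isometry established, the remaining properties follow cleanly. For geodesics, given $[x],[y]$ I would take $t_0$ realizing the infimum and a geodesic $\gamma$ in $X$ from $x$ to $t_0 y$; then $\pi\circ\gamma$ is a continuous path from $[x]$ to $[y]$ of length at most $d(x,t_0 y)=d'([x],[y])$ (by $1$-Lipschitzness) and at least $d'([x],[y])$ (length dominates endpoint distance in any metric space), hence it realizes the distance and $X'$ is geodesic. For properness I would show $\~\hood_r([x_0])=\pi(\~\hood_r(x_0))$: one inclusion is $1$-Lipschitzness, and the other uses attainment of the infimum to produce, for each point of the $X'$-ball, a representative lying in the closed ball $\~\hood_r(x_0)\subset X$, which is compact by Hopf--Rinow \cite[Proposition I.3.7]{BridsonHaefliger}. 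Thus closed balls in $X'$ are continuous images of compact sets, so $X'$ is proper, hence locally compact and complete. Hyperbolicity then follows from quasi-isometry invariance of hyperbolicity among geodesic spaces, yielding a constant $\delta'=\delta'(\delta,L)$.

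Finally, for $\<\<\T\>\>$-equivariance, given $g$ in the normalizer of $\T$ I would set $g\cdot[x]:=[gx]$; this is well defined because $g\T g^{-1}=\T$ turns $y=tx$ into $gy=(gtg^{-1})gx$ with $gtg^{-1}\in\T$, the same normality shows $g$ acts on $X'$ by an isometry, and $\pi(gx)=g\cdot\pi(x)$ holds by definition. I expect the main obstacle to be organizational rather than a single hard estimate: one must carefully track where compactness of $\T$ is used (attainment of infima, compactness of orbits and of balls) versus where the tremble bound $L$ is used (uniformity of the quasi-isometry constant, independent of basepoint), since it is precisely the latter that upgrades ``bounded orbits'' to the strong quasi-isometry needed to transport geodesicity, properness, and hyperbolicity to the quotient.
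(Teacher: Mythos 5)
Your proposal is correct, and it reaches the same destination as the paper by a partly different and somewhat more economical route. The setup is identical: both define the quotient metric as the infimum of distances between orbit representatives, use compactness of $\T$ to see the infimum is attained and the orbits are compact, use the uniform tremble bound to get that the projection is a $(1,\text{const})$-quasi-isometry, and then quote quasi-isometry invariance of hyperbolicity for geodesic spaces. Where you diverge is in establishing that $\T\backslash X$ is geodesic, locally compact, and complete. The paper introduces the induced length metric $\widehat{d}$, proves it is equivalent to $d'$ by an explicit construction that translates successive geodesic segments by elements of $\T$ to build a nearly-optimal concatenated path upstairs, and then invokes Hopf--Rinow to conclude the length space is geodesic; local compactness and completeness are handled in separate claims. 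You instead push a geodesic of $X$ from $x$ to $t_0y$ (with $t_0$ realizing the infimum) down to the quotient and observe that, being $1$-Lipschitz with length equal to the distance between its endpoints, its image is forced to be a geodesic; and you get properness (hence local compactness and completeness) in one stroke from the identity $\overline{\hood}_r([x_0])=\pi(\overline{\hood}_r(x_0))$ together with Hopf--Rinow upstairs. Both arguments are sound; yours avoids the length-space detour entirely, while the paper's Claim 5 machinery is more robust in situations where one cannot realize the quotient distance by a single geodesic upstairs. Your closing remark correctly isolates where compactness versus the tremble bound is used, which is exactly the bookkeeping the paper's proof also performs.
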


\begin{proof} We must  endow $X': = \T \setminus X$ with a metric so that it is locally compact, complete, geodesic, and $\delta$-hyperbolic. We do so in a series of claims. By $x'$ we denote the image of a point $x \in X$, i.e. $x'=\T x$. Since $\T$ acts as a tremble, let $B>0$ be a constant such that the orbits of $\T$ have diameter at most $B$.

\noindent
\textbf{Claim \arabic{clno}: $X'$ is a metric space with respect to $$d'(x',y') = \inf\{ d(gx, hy) \mid g,h \in \T . \}.$$}\addtocounter{clno}{1}

We note that since $\T$ is compact and $X$ is complete, that the infimum is in fact a minimum that is achieved. Obviously $d'$ takes on non-negative values, is well-defined and symmetric. Further $ d'( x',y') \leq d(x,y)$. It is also easy to verify that $x'=y'$ if and only if $d'(x', y') =0$. Therefore, the claim will follow if we prove the triangle inequality. To this end, let $x',y',z' \in X'$. Up to replacing $x,y, z$ with different $\T$-orbit representatives, since the minimum is achieved we may assume $d'(x',z')= d(x,z)$, $ d'(x',y') = d(x,y)$ and $d(gy,z) = d'(y',z')$, for some $g \in \T$.  

Now $d'(x',y') + d'(y',z') = d(x,y) + d(gy, z) = d(x,y) + d(y, g^{-1}z)$. So by the triangle inequality and the fact that $d'(x',z')$ is an infimum we have
$$d'(x',z')\leq d(x, g^{-1}z) \leq d(x,y) + d(y, g^{-1}z) = d'(x',y') + d'(y',z').$$ 
\noindent
\textbf{Claim \arabic{clno}: $(X',d')$ is Hausdorff and locally compact}.\addtocounter{clno}{1} 
Note that as $\T$ is compact, the equivalence classes $\T x$ are compact and hence closed, making $X'$ Hausdorff. Moreover,  the natural map $(X,d)\to (X',d')$ is clearly onto and continuous: the inverse image of $\hood_\e(x')$ is $\Cup{g\in \T}{}\,\hood_\e(gx)$, which is open.  Since $X$ is complete, locally compact by the Hopf-Rinow Theorem \cite[Proposition I.3.7]{BridsonHaefliger}, closed bounded subsets are compact. In particular, $\~\hood_{B+\e}(x)$ is compact and contains $\Cup{g\in \T}{}\,\hood_\e(gx)$. Therefore, the image of $\~\hood_{B+\e}(x)$ in $X'$ is a compact set containing $\hood_\e(x')$, and so $X'$ is locally compact.

\noindent
\textbf{Claim \arabic{clno}: $(X', d')$ is complete.}\addtocounter{clno}{1}
Let $x'_n\in X'$ be a $d'$-Cauchy sequence. For each $n\in \N$ choose a lift $x_n\in x'_n$. Since $\{x'_n\}$ is Cauchy, it is bounded, so there is an $R>0$ such that $ d'(x'_n, x'_1)\leq R$ for all $n$. Therefore, $d(x_1, x_m) \leq R+2B$. Since the closed ball in $X$ of radius $R+2B$ is compact, there is a convergent subsequence, i.e. there exists $n_k$ and $y\in X$ such that $x_{n_k}\to y$. Since $d'(y',x'_{n_k})\leq d(y,x_{n_k})$ we deduce that $x'_{n_k} \to y'$. Since $x'_n$ is Cauchy, we deduce that $x'_n \to y'$, which proves the claim.

\noindent
\textbf{Claim \arabic{clno}: $(X', \widehat d)$ is a length space.}\addtocounter{clno}{1} For $x',y'\in X'$ let $\widehat d(x',y')$ be the infimum of the lengths of rectifiable paths connecting $x'$ and $ y'$.  By \cite[Proposition I.3.2]{BridsonHaefliger} this makes $(X', \widehat d)$ into  a length space.

\noindent
\textbf{Claim \arabic{clno}: The metrics $\widehat{d}, d'$ are equivalent. Consequently, $X'$ is complete and locally compact with respect to the metric $\widehat{d}.$} \addtocounter{clno}{1} Let $\e>0$ be given. We use the symbol $\approx_{\,\e}$ to denote that two quantities have an absolute difference of at most $\e$. i.e. the two numbers are within $\e$ 
of each other. 

By \cite[Proposition I.3.2]{BridsonHaefliger} $\widehat{d}\leq d'$ and so for every $r>0$, and $x'\in X'$ we have that $\hood_r^{d'}(x')\subseteq \hood_r^{\widehat{d}}(x')$. We now show that $\hood_r^{\widehat{d}}(x')\subseteq \hood_{r+2\e+2B}^{d'}(x')$.

By the definition of $\widehat{d}$ there exists a rectifiable path $\eta$ between $x',y'$ such that $\widehat{d}(x',y') \leq \ell(\eta) \leq \widehat{d}(x',y') + \e/2$. As $\eta$ is rectifiable, there is an $N >0$ and $x'=x'_0, \dots, x'_{N+1}=y'\in \eta$ such that $\displaystyle \left| \sum_{k =0} ^N d'(x'_k, x'_{k+1}) - \ell(\eta) \right| \leq \e/2$. Combining these inequalities we deduce that
 $$\widehat{d}(x',y')\approx_{\,\e}\sum_{k =0} ^N d'(x'_k, x'_{k+1}).$$
 
For each $k\in \{0, \dots N\}$, we may choose $y_k \in x'_k, z_{k+1} \in x'_{k+1}$ such that $$d(y_k, z_{k+1}) =d'(x'_k, x'_{k+1}).$$

Note that $y_k, z_{k+1} \in X$ and since $X$ is a geodesic metric space, we fix a geodesic $[y_k, z_{k+1}]$. Utilizing these, define a piece-wise geodesic path $\zeta$ in $X$ between $y_0$ and $z_{N+1}$ whose length is the same approximation of $\ell(\eta)$ as above. Take $y_0$ and $z_1$ and $\zeta_0 = [y_0, z_1]$ in $X$. As $y_1$ and $z_1$ are in the same orbit, there exists a $g_1 \in \T$ such that $g_1y_1 = z_1$. Take the translate $\zeta_1 = g_1[y_1, z_2]$ of the geodesic $[y_1, z_2]$ and consider the concatenation $\zeta_0 \cup \zeta_1$. Again, as $y_2, z_2$ are in the same orbit, there exists a $g_2 \in \T$ such that $g_2y_2 = g_1z_2$. Take the translate $\zeta_2 = g_2[y_2, z_3]$ and consider the concatenation $\zeta_0 \cup \zeta_1 \cup \zeta_2$. Continuing this process we obtain a path $\zeta = \zeta_0 \cup \zeta_1 \cup \cdots \zeta_{N}$ between $y_0 \in x'$ and some $z_{N+1} \in y'$.

Observe that by construction, the length of the path $\zeta$ satisfies $$\widehat{d}(x',y')\approx_{\,\e}\ell(\zeta)\approx_{\,\e}d(y_0, z_{N+1}).$$

Choose $x \in x'$ and $y \in y'$ so that $d'(x',y') = d(x,y)$. Then since $y_0 \in x'$, it follows that $d(y_0, x) \leq B$. Similarly, $d(y, z_{N+1}) \leq B$. It follows then that $$\widehat{d}(x',y')\approx_{\,\e}\ell(\zeta)\approx_{\,\e}d(y_0, z_{N+1}) \approx_{\,2B} d(x,y) =  d'(x',y').$$

\noindent
\textbf{Claim \arabic{clno}: $X'$ is a geodesic space.}\addtocounter{clno}{1} Again by the Hopf-Rinow Theorem, as $X'$ is complete, locally compact and a length space, $X'$ is proper and a geodesic metric space with respect to $\widehat{d}$ (and consequently $d'$ also). 

\noindent
\textbf{Claim \arabic{clno}: $X'$ is $(1,B)$ quasi-isometric to $X$ and hence hyperbolic.} \addtocounter{clno}{1} We will prove the claim by showing that the natural projection $X\to X'$ is a quasi-isometry. To this end, observe that since $\T$ is compact, $\T$ is bounded in $\Isom X$ with respect to the metric from Section ~\ref{Sect: top on isom group}, and hence has uniformly bounded orbits of diameter at most $B$. As above the map $X\to X'$ is a contraction, i.e. $d'(x',y')\leq d(x,y)$ for all $x,y\in X$.

Furthermore, if $x,y\in X$, then choose $a\in x'=\T x, b\in y'=\T y$ such that $d(a,b)= d'(x',y')$. Then $d(x,y)\leq 2B + d(a,b)= 2B +d'(x',y').$ 

Consequently, the projection $X\to X'$, $x\mapsto x'$ as above is clearly a continuous quasi-isometry. Moreover, the normalizer $\<\<\T\>\>$ preserves the $\T$ orbits and we deduce that the above maps are $\<\<\T\>\>$-equivariant. This completes the proof of the proposition. \end{proof}

\begin{defn} An action of a group $\G$ on a hyperbolic space $X$ is called \emph{essential} if every $\G$-invariant quasiconvex subset is coarsely dense.
\end{defn}

The notion of essentiality is very close to that of being coarsely minimal in the sense of \cite[Definition 3.4]{BaderCapraceFurmanSisto}. The main difference is that coarse minimiality restricts itself to non-parabolic actions on unbounded hyperbolic spaces, whereas the trivial action on a point is essential. 

\begin{lemma}\cite[Lemma 4.10]{PropNL}\label{Lem: Kill Normal Ell}
    Suppose that $K\norm \G$ and $\G\to \Isom X$ is such that $K$ is elliptic. Then there is a $\G$-invariant quasiconvex $Y\subset X$, a hyperbolic graph $X'$, a map $\G/K \to \Isom X'$, and a quasi-isometry $\f: Y\to X'$ that is $\G$-equivariant. 
\end{lemma}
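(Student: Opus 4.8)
The plan is to realize $X'$ as a graph model for the quotient of a suitable $\G$-invariant quasiconvex subspace by the uniformly bounded action of $K$.

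First I would fix a constant $L\geq 2\delta$ large enough that $Y:=\O^L(K)$ is nonempty; such an $L$ exists because $K$ is elliptic, so $X=\Cup{L\geq 0}{}\O^L(K)$ by Theorem \ref{Thm:Class Actions Rank1}. The set $Y$ is quasiconvex by Lemma \ref{Lem: elliptic orbits are quasiconvex} (applied to $K$), say with quasiconvexity constant $\sigma$, and it is $\G$-invariant: for $g\in\G$, $x\in Y$ and $k\in K$ we have $d(k\.gx,gx)=d((g^{-1}kg)x,x)\leq L$, since $K\norm\G$ forces $g^{-1}kg\in K$. Being quasiconvex inside a $\delta$-hyperbolic geodesic space, $Y$ is itself hyperbolic and, crucially, a quasigeodesic space: any $x,y\in Y$ are joined by a chain of points of $Y$ with consecutive steps bounded by some $R_0=R_0(\sigma)$ and of length comparable to $d(x,y)$, obtained by pushing a geodesic $[x,y]\subset\hood_\sigma(Y)$ into $Y$. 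Finally, every $K$-orbit inside $Y$ has diameter at most $L$, because $d(k_1x,k_2x)=d((k_2^{-1}k_1)x,x)\leq L$.

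Next I would form the orbit space $K\bs Y$ with the quotient pseudometric $d'(Kx,Ky)=\inf_{k\in K}d(x,ky)$, passing to its metric quotient by identifying points at $d'$-distance $0$ (the $\G$-action descends throughout, as $\G$ preserves $d$ and permutes $K$-orbits). The uniform orbit bound gives
$$d'(Kx,Ky)\leq d(x,y)\leq d'(Kx,Ky)+L,$$
so the orbit map $\pi\colon Y\to K\bs Y$ is a surjective quasi-isometry. I would then define $X'$ to be the graph with vertex set $K\bs Y$ and an edge between distinct classes whenever $d'\leq R_0$, endowed with the path metric. The quasigeodesic-space property of $Y$ makes $X'$ connected with graph metric comparable to $d'$, so the composite $\f=\pi\colon Y\to X'$ is a quasi-isometry, and it is manifestly $\G$-equivariant. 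Since $K$ fixes every vertex of $X'$ (each orbit is a single class), $K$ acts trivially and the action factors through a homomorphism $\G/K\to\Isom X'$ by graph isometries.

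It then remains only to observe that $X'$ is a hyperbolic graph: it is a connected graph, hence geodesic, and quasi-isometric to the hyperbolic space $Y$, so hyperbolicity is inherited by quasi-isometry invariance \cite{BridsonHaefliger}. The main obstacle, and where essentially all the care lies, is the quasi-isometry claim for $\f$: the upper Lipschitz bound and the connectivity of $X'$ follow from quasiconvexity of $Y$, but the crucial lower bound -- that points far apart in $Y$ are not collapsed in $X'$ -- rests entirely on the uniform orbit-diameter estimate $d'\geq d(\cdot,\cdot)-L$, which holds on $\O^L(K)$ but fails on all of $X$. Secondary bookkeeping is needed to handle the $d'=0$ collapse (resolved by the metric quotient) and to confirm that every step is compatible with the descent of the $\G$-action to $\G/K$.
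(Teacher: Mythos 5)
Your construction is correct, and it is worth noting that the paper does not actually prove this lemma: it is imported verbatim from \cite{PropNL}, so there is no internal proof to compare against. Your argument is, however, exactly the natural one, and it closely parallels the proof the paper \emph{does} give of the analogous locally compact statement, Proposition \ref{Prop: loc compact tremb quot}: there the authors also pass to the quotient pseudometric $d'(Kx,Ky)=\inf_k d(x,ky)$ and use the uniform bound on orbit diameters to get a $(1,L)$-quasi-isometry, but they can invoke Hopf--Rinow to make the quotient itself a geodesic space. In the general (non-locally-compact, non-closed $K$) setting that luxury is unavailable, which is precisely why the lemma's conclusion is a hyperbolic \emph{graph}; your Rips-type discretization of $K\bs Y$ at scale $R_0$ is the standard substitute, and your choice $Y=\O^L(K)$ with $\G$-invariance via normality and quasiconvexity via Lemma \ref{Lem: elliptic orbits are quasiconvex} is the right essential core to quotient. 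The only step I would tighten is the last one: ``hyperbolicity is inherited by quasi-isometry invariance'' is usually stated for geodesic (or length) spaces, and $Y$ with the subspace metric is not geodesic. The fix is the one the paper itself records in Remark \ref{rem:takingnbd}: replace $Y$ by $\~\hood_\sigma(Y)$ with its induced length metric, which is geodesic and hyperbolic because its geodesics are uniformly close to $X$-geodesics (Morse Lemma \ref{lem:qg triangles slim hyp}), and is $(1,C)$-quasi-isometric to $Y$; then $X'$, being a connected graph and hence geodesic, inherits hyperbolicity from this genuinely geodesic model. With that bookkeeping in place the proof is complete.
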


We now prove a result that will be crucially used in several places in this paper, before proceeding to the statement and proofs of two corollaries on taming actions - these represent the same result for the locally compact (complete), and non-locally compact spaces.

\begin{defn}
    A subgroup $H \leq \G$ is said to be \emph{commensurated} in $\G$ (or simply commensurated if the ambient group is clear) if for every $g\in\G$ we have that $H\cap gHg^{-1}$ is of finite index in both $H$ and $gHg^{-1}$. 
\end{defn} 

\begin{lemma}\label{Lem: normal in total gen type is tremb/gen type}
 Let $\G\to \Isom X$ be essential and of general type. If  $H \leq \G$ is commensurated then the action of $H$  is either elliptic or of general type. Furthermore, if $H$ is normal in $\G$ and acts elliptically, then the action of $H$ is a tremble. 
\end{lemma}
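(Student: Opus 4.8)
The plan is to read off the type of the $H$-action from its limit set $\L(H)\subseteq\partial X$, the point being that commensuration forces $\L(H)$ to be $\G$-invariant. First I would record the standard fact that the limit set is insensitive to finite-index subgroups: if $H_0\leq H$ has finite index, then $\L(H_0)=\L(H)$. The inclusion $\L(H_0)\subseteq\L(H)$ is clear, and for the converse I would pass to the normal core $N=\bigcap_{h\in H}hH_0h^{-1}$, which is finite-index and normal in $H$. Normality makes $\L(N)$ $H$-invariant, since $h\L(N)=\L(hNh^{-1})=\L(N)$; then, writing $H=\bigsqcup_{i=1}^{m}g_iN$ and applying the pigeonhole principle to a sequence realizing a given $\xi\in\L(H)$, one finds $g_i^{-1}\xi\in\L(N)$ for some $i$, whence $\xi\in g_i\L(N)=\L(N)$. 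Thus $\L(H)=\L(N)=\L(H_0)$.

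With this fact, for each $g\in\G$ the subgroup $H\cap gHg^{-1}$ has finite index in both $H$ and $gHg^{-1}$, so
$$\L(H)=\L(H\cap gHg^{-1})=\L(gHg^{-1})=g\L(H),$$
the last step because $g$ acts on $\partial X$ by a homeomorphism taking $\L(H)$ to $\L(gHg^{-1})$. Hence $\L(H)$ is $\G$-invariant (and $\L(H)\subseteq\L(\G)$).

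Now I would eliminate every case of Theorem~\ref{Thm:Class Actions Rank1} except elliptic and general type, repeatedly using that a general-type $\G$ fixes no point of $\partial X$: if it fixed $\xi$, then each of the two loxodromics witnessing general type would have $\xi$ in its fixed-point set, contradicting disjointness. If $|\L(H)|=1$, then $\G$-invariance makes $\G$ fix that point, which is impossible. If $|\L(H)|=2$, say $\L(H)=\{\xi,\eta\}$, then $\G$ permutes the pair, so for each witnessing loxodromic $a$ the square $a^2$ fixes both $\xi$ and $\eta$; as a loxodromic has exactly two boundary fixed points this forces $\{a^+,a^-\}=\{\xi,\eta\}$, so the two witnessing loxodromics cannot have disjoint fixed sets. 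The delicate case is quasiparabolic, where $|\L(H)|=\infty$ but $H$ fixes a unique $\xi\in\partial X$: here $H\cap gHg^{-1}$ is finite-index in $H$, hence again quasiparabolic with the same unique fixed point $\xi$, while being finite-index in $gHg^{-1}$ it must also fix $g\xi$; uniqueness then gives $\xi=g\xi$ for all $g\in\G$, so $\G$ fixes $\xi$, again impossible. What remains is exactly elliptic or general type.

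For the last clause, suppose $H\norm\G$ is elliptic; the plan is to promote ``elliptic'' to ``tremble'' using essentiality. Normality makes each sublevel set $\O^L(H)=\{x:d(hx,x)\leq L\ \text{for all}\ h\in H\}$ a $\G$-invariant set, since for $g\in\G$ and $x\in\O^L(H)$ one has $d(hgx,gx)=d(g^{-1}hg\,x,x)\leq L$ because $g^{-1}hg\in H$. By Lemma~\ref{Lem: elliptic orbits are quasiconvex} these sets are quasiconvex once $L\geq 2\delta$, and ellipticity guarantees they are nonempty for large $L$; essentiality of the $\G$-action then forces such an $\O^L(H)$ to be $C$-coarsely dense for some $C$. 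Finally I would boost coarse density to equality: given any $x\in X$, choose $y\in\O^L(H)$ with $d(x,y)\leq C$, so that $d(hx,x)\leq 2d(x,y)+d(hy,y)\leq L+2C$ for every $h\in H$; hence $\O^{L+2C}(H)=X$ and $H$ is a tremble. I expect the real content to sit in the quasiparabolic elimination and in the finite-index limit-set equality, the remaining cases being routine bookkeeping against the classification.
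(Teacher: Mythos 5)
Your proof is correct, and the middle portion takes a genuinely different route from the paper's. The paper organizes the dichotomy around loxodromic elements: if $H$ contains a loxodromic $h$, it conjugates $h$ by a large power of an independent loxodromic $g\in\G$ and uses commensuration to land suitable powers of both $h$ and $g^khg^{-k}$ inside $H$, producing independent loxodromics in $H$; if $H$ has no loxodromics it must be elliptic or parabolic, and the parabolic case is killed by noting that the finite-index subgroup $H\cap gHg^{-1}$ would fix two distinct boundary points. You instead prove that $\L(H)$ is $\G$-invariant (via the finite-index limit-set equality $\L(H_0)=\L(H)$, which the paper never needs) and then run a case analysis on $|\L(H)|$ against the classification, eliminating $|\L(H)|=1$, $|\L(H)|=2$, and the quasiparabolic case. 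Your quasiparabolic elimination is the same fixed-point-uniqueness trick the paper uses for the parabolic case. What your route buys is that you never need the (standard but unstated) fact that lineal and quasiparabolic actions contain loxodromic elements, which the paper implicitly invokes when it says "no loxodromics implies parabolic or elliptic"; what it costs is the extra lemma on limit sets of finite-index subgroups, whose normal-core/pigeonhole proof you correctly sketch. The final clause (normal $+$ elliptic $+$ essential $\Rightarrow$ tremble via the quasiconvex, $\G$-invariant set $\O^L(H)$) is identical to the paper's, except that you explicitly upgrade coarse density of $\O^L(H)$ to $\O^{L+2C}(H)=X$, a step the paper leaves implicit.
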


\begin{proof} We will employ the classification of isometric actions from Theorem ~\ref{thm:actionsclassrk1}. Assume there exists an $h\in H$ acting as a loxodromic. Since $\G$ is of general type, there is a loxodromic $g\in \G$ whose fixed points are disjoint from those of $h$. In particular, there is a power $k \geq0$ such that $g^khg^{-k}$ is independent from $h$. Note that $g^khg^{-k}$ is a loxodromic as it is a conjugate of $h$. Since $H$ is commensurated, $H\cap g^kHg^{-k}$ is finite index in $H$ and $g^kHg^{-k}$ and therefore, $h$ and $g^khg^{-k}$ have powers that are in $H \cap g^kHg^{-k}$ and thus in $H$, and so, $H$ contains independent loxodromics. 

Suppose now that $H$ does not contain any loxodromics, then the action of $H$ is either parabolic or elliptic.

Suppose $H$ fixes a point $\xi\in\partial X$. Since $\G$ contains independent loxodromics, it contains a loxodromic $g\in \G$ whose end points are distinct from $\xi$. Then the finite index subgroup $H \cap gHg^{-1}$ fixes both $\xi$ and $g\xi$ which are distinct. So the action of $H$ is not parabolic and hence must be elliptic. 

Lastly, assume $H$ is normal and acting elliptically. Then,  there is an $L>0$ so that $\O^L(H):= \{x\in X: d(hx,x)\leq L \text{ for all } h\in H \}\neq \varnothing$. By Lemma ~\ref{Lem: elliptic orbits are quasiconvex} this set is quasiconvex. Since $H$ is normal, $\O^L(H)$ is $\G$-invariant. By Lemma ~\ref{Lem: irred core is essential invariant}, the action of $\G$ is essential.  Thus we have that $\O^L(H)$ is coarsely dense in $X$ and hence $H$ is a tremble. 
\end{proof}

\begin{cor}\label{Cor:LC Tremble Tame}
  Assume that $X$ is complete and locally compact and that the action of $\Isom X$ is essential and of general type. Let $\T(X)$ denote the kernel of the homomorphism $\Isom X\to \Homeo (\partial X)$. Then $\T(X)$ is elliptic and compact and the quotient $X' = \T(X)\setminus X$ is a complete, locally compact $\delta'$-hyperbolic geodesic space. Furthermore, the natural map $X\to X'$ is a continuous  $\Isom X$-equivariant subgroup-action-type preserving quasi-isometry and yields the following commutative diagram. 

\begin{center}
\begin{tikzcd}
  \Isom X \arrow[r] \arrow[dr]
    & (\Isom X)/\T(X) \arrow[d]\\
& \Isom X'= \Isom(\T(X)\bs X) \end{tikzcd}
    
\end{center}
\end{cor}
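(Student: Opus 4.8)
The plan is to identify $\T(X)$ as a compact tremble subgroup and then feed it into Proposition~\ref{Prop: loc compact tremb quot}, reading off the remaining assertions from the quasi-isometry it produces.

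First I would pin down the action type of $\T(X)$. Being the kernel of $\Isom X\to\Homeo(\partial X)$ it is normal, hence commensurated, in $\Isom X$, and by hypothesis the action of $\Isom X$ on $X$ is essential and of general type. Lemma~\ref{Lem: normal in total gen type is tremb/gen type} then forces $\T(X)$ to be elliptic or of general type. I would rule out general type by observing that every $g\in\T(X)$ fixes $\partial X$ pointwise, whereas a loxodromic fixes exactly two boundary points (and the limit set here is infinite), so $\T(X)$ contains no loxodromic and must be elliptic. Being normal and elliptic, the same lemma upgrades this to: $\T(X)$ is a tremble, so there is $L>0$ with $d(gx,x)\leq L$ for all $g\in\T(X)$ and $x\in X$.

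Next I would prove compactness. Since $X$ is complete, locally compact and geodesic, it is proper by Hopf--Rinow \cite[Proposition I.3.7]{BridsonHaefliger} and $\Isom X$ acts properly \cite{ManoussosStrantzalos}; moreover $\partial X$ is compact metrizable, so $\T(X)=\bigcap_{\xi\in\partial X}\fix(\xi)$ is an intersection of closed fixators and hence closed. Fixing a basepoint $x_0$, the tremble bound gives $\T(X)x_0\subseteq\~\hood_L(x_0)$, a compact ball, so $\T(X)\subseteq\{g:gx_0\in\~\hood_L(x_0)\}$, which is relatively compact by properness; a closed subset of a relatively compact set is compact. With $\T(X)$ now a compact tremble, Proposition~\ref{Prop: loc compact tremb quot} yields that $X'=\T(X)\setminus X$ is a complete, locally compact, $\delta'$-hyperbolic geodesic space and that the projection $X\to X'$ is a continuous $(1,B)$ quasi-isometry, equivariant for the normalizer $\<\<\T(X)\>\>=\Isom X$ (the equality holding because $\T(X)$ is normal).

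Finally I would assemble the diagram and the type statement. Each $g\in\Isom X$ descends to $\T x\mapsto\T gx$ on $X'$; normality ($g\T=\T g$) together with $g\in\Isom X$ shows this preserves the quotient metric $d'$, hence also its intrinsic length metric, so it lies in $\Isom X'$. This defines a homomorphism $\Isom X\to\Isom X'=\Isom(\T(X)\bs X)$ that kills $\T(X)$ and therefore factors through $(\Isom X)/\T(X)$, producing the commutative triangle. Since the projection is a $(1,B)$ quasi-isometry it induces an $\Isom X$-equivariant homeomorphism $\partial X\to\partial X'$ carrying the limit set of any $H\leq\Isom X$ onto that of its image, and as the action type of a subgroup is determined by its limit set through Theorem~\ref{thm:actionsclassrk1}, the map is subgroup-action-type preserving. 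The hard part will be the compactness step: the tremble condition only yields bounded orbits, and converting this into compactness of $\T(X)$ genuinely requires combining properness of the $\Isom X$-action with the closedness of $\T(X)$; eliminating the general type alternative in the first step is the other point needing care.
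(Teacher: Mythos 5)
Your proposal is correct and follows essentially the same route as the paper: rule out general type for $\T(X)$ via the absence of loxodromics, apply Lemma~\ref{Lem: normal in total gen type is tremb/gen type} to get a tremble, establish compactness from closedness plus properness of the $\Isom X$-action (Hopf--Rinow and \cite{ManoussosStrantzalos}), and then invoke Proposition~\ref{Prop: loc compact tremb quot} for the quotient space, the isometric descent of the action, and the type-preserving equivariant quasi-isometry. The only cosmetic difference is that the paper proves closedness of $\T(X)$ by noting that a pointwise limit of isometries with uniformly bounded orbits again has bounded orbits, rather than writing $\T(X)$ as an intersection of boundary fixators.
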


\begin{proof} We begin by observing that  $\T(X)$ can not contain a loxodromic element as these act non-trivially on the boundary and so it is a tremble by Lemma ~\ref{Lem: normal in total gen type is tremb/gen type}  (which justifies its notation). This means that $\T(X)$ is bounded in $\Isom X$ with respect to the metric from Section ~\ref{Sect: top on isom group}. Moreover, $\T(X)$ is also closed:  Indeed, since $\Isom X$ is locally compact if $g_n\in \T(X)$ is a sequence converging to $g\in \Isom X$ then as the $g_n$  have uniformly bounded orbits, with bound uniform over $n$, it follows that $g$ has uniformly bounded orbits as well, i.e. $g\in \T(X)$. Since $X$ is locally compact and complete, then $\T(X)$ is compact by the Hopf-Rinow Theorem. Therefore, $X'$ is a complete, locally compact $\delta'$-hyperbolic geodesic space where the natural map $X\to X'$ is a continuous  $\Isom X$-equivariant quasi-isometry by Proposition ~\ref{Prop: loc compact tremb quot}.

To prove that we have the commutative diagram, consider the natural map as above $X\to X'$. Let $g\in \Isom X$, $x',y'\in X'$ then $gx':= g.\T(X) x= \T(X)g.x$ by normality. Therefore, the action of $\Isom X$ on $X'$ by permutations factors through $(\Isom X)/ \T(X)$. Moreover, the action is in fact by isometries: Let $g\in \Isom X$, $x',y'\in X'$ and fix $x,y\in X$ so that $d(x,y) = d'(x',y')$. Then
$d'(x',y') = d(x,y) = d(gx,gy)\geq d'(gx',gy')$.

To show the other inequality, let $a\in g\T(X)x$, $b\in g\T(X)y$ such that $d'(gx',gy')=d(a,b)$. Then $a=gtx$ and $b=gsy$ for some $t,s\in \T(X)$ and so $d(a,b) = d(gtx, gsy)= d(tx,sy)\geq d'(x',y')$ by definition of $d'$ as the minimum over orbit representatives. 

To observe that the map $\Isom X\to \Isom X'$ preserves action types, recall that by Claim 7 of Proposition ~\ref{Prop: loc compact tremb quot} the spaces $X$ and $X'$ are $\Isom X$-equivariantly quasi-isometric. Therefore, the type of individual isometries is preserved as is the action type of subgroups.
\end{proof}

\begin{cor}\label{Cor:Non-LC tremble tame}
  Assume that the action of $\Isom X$ is essential and of general type. Let $\T(X)$ denote the kernel of the homomorphism $\Isom X\to \Homeo (\partial X)$. Then there is a $\delta'$-hyperbolic graph $X'$, a homomorphism  $\Isom X\to \Isom X'$ and a quasi-isometry $X\to X'$ that is $\Isom X$-equivariant such that the action type of subgroups of $\Isom X$ is preserved and $\T(X)$ acts trivially on $X'$ and $\partial X'$.
\end{cor}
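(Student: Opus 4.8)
The plan is to run the non-locally-compact analogue of Corollary \ref{Cor:LC Tremble Tame}, replacing the metric-quotient construction of Proposition \ref{Prop: loc compact tremb quot} (which genuinely used local compactness) by the graph-model construction of Lemma \ref{Lem: Kill Normal Ell}, which only requires that the relevant normal subgroup be elliptic. The whole argument is thus: identify $\T(X)$ as an elliptic normal subgroup, quotient it out via Lemma \ref{Lem: Kill Normal Ell}, and then use essentiality to promote the resulting quasi-isometry from the invariant core to all of $X$.

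First I would verify that $\T(X)$ is elliptic. As the kernel of $\Isom X\to \Homeo(\partial X)$ it is normal, hence commensurated, in $\Isom X$. Since the action of $\Isom X$ is essential and of general type by hypothesis, Lemma \ref{Lem: normal in total gen type is tremb/gen type} forces the action of $\T(X)$ to be either elliptic or of general type. But every element of $\T(X)$ fixes $\partial X$ pointwise, whereas a loxodromic isometry acts with north--south dynamics and so moves boundary points; hence $\T(X)$ contains no loxodromics and cannot be of general type. Therefore $\T(X)$ is elliptic (indeed a tremble, by the ``furthermore'' clause of the same lemma, which justifies the notation $\T(X)$), exactly as in the first paragraph of the proof of Corollary \ref{Cor:LC Tremble Tame}.

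Next I would apply Lemma \ref{Lem: Kill Normal Ell} with the ambient group $\Isom X$ and the elliptic normal subgroup $\T(X)$. This produces an $\Isom X$-invariant quasiconvex subset $Y\subset X$, a $\delta'$-hyperbolic graph $X'$, a homomorphism $(\Isom X)/\T(X)\to \Isom X'$, and an $\Isom X$-equivariant quasi-isometry $\f\colon Y\to X'$. Precomposing with the quotient map $\Isom X\to (\Isom X)/\T(X)$ yields the desired homomorphism $\Isom X\to \Isom X'$; because it factors through the quotient by $\T(X)$, the image of $\T(X)$ is trivial, so $\T(X)$ acts trivially on $X'$ and consequently on $\partial X'$. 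To upgrade $\f$ to a quasi-isometry defined on all of $X$, I invoke essentiality: $Y$ is an $\Isom X$-invariant quasiconvex subset of the essential action of $\Isom X$ on $X$, hence coarsely dense, so the inclusion $Y\hookrightarrow X$ is an $\Isom X$-equivariant quasi-isometry. Composing a coarse inverse of this inclusion with $\f$ gives the required $\Isom X$-equivariant quasi-isometry $X\to X'$. Preservation of action types then follows exactly as in the final paragraph of Corollary \ref{Cor:LC Tremble Tame}: an equivariant quasi-isometry preserves both the classification of individual isometries and the classification (elliptic, parabolic, lineal, quasiparabolic, general type) of subgroup actions.

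The only genuine point of care, and hence the main obstacle, is the transition from $Y$ to $X$. One must confirm that coarse density of the quasiconvex set $Y$ makes the inclusion an honest quasi-isometry (quasiconvexity gives the lower bound on distances in $Y$, coarse density gives coarse surjectivity) and that a coarse inverse can be chosen so as to remain equivariant up to bounded error, which is all that the stated conclusions about action types require. Everything else is a faithful transcription of the locally compact case, with the metric-quotient step of Proposition \ref{Prop: loc compact tremb quot} swapped out for the graph model of Lemma \ref{Lem: Kill Normal Ell}.
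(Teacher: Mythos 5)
Your proposal is correct and follows essentially the same route as the paper: identify $\T(X)$ as a tremble via Lemma \ref{Lem: normal in total gen type is tremb/gen type}, apply Lemma \ref{Lem: Kill Normal Ell} to pass to a hyperbolic graph $X'$ with the action factoring through $\Isom X/\T(X)$, and use essentiality together with equivariance of the quasi-isometry to transfer action types. Your explicit justification of the passage from the quasiconvex core $Y$ to all of $X$ via coarse density is a detail the paper leaves implicit, but it is the intended argument.
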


\begin{proof}
    As in the proof of Corollary ~\ref{Cor:LC Tremble Tame} since the action is essential and of general type, $\T(X)$ has bounded orbits. Therefore, as in Lemma ~\ref{Lem: Kill Normal Ell}, there is a hyperbolic graph $X'$ on which $\Isom X$ acts with the action factoring through $\Isom X/\T(X)$, and an $\Isom X$-equivariant quasi-isometry to $X\to X'$. Therefore there is an equivariant homeomorphism $\partial X\to \partial X'$.
   The fact that action types of subgroups are preserved follows as in the previous corollary.
\end{proof}

\subsubsection{The Essential Core}

Loxodromics play a staring role in the theory of actions on hyperbolic spaces. Indeed, it is through them that we are able to see many important properties of the group. We shall use them to find an unbounded ``essential core" for actions with loxodromic elements. Similar concepts have been introduced before. For example  there is a notion of bushiness  in \cite{Bowditch1998, Manning2006}, coarse minimality in\cite{BaderCapraceFurmanSisto}, and essentiality in \cite{PropNL, CapraceSageev}.  

\begin{defn}\cite{Hamann}
   Given an action $\G\to \Isom X$, the \emph{loxodromic limit set}, denoted $\H(\G)$, is defined to be the points in $\partial X$ that are fixed by some loxodromic in $\G$. 
\end{defn}

\begin{lemma}\label{Lem: Contracting QGeod}
        Suppose  $\O\subset X$ is quasiconvex, and $q: \Z\to X$ a $(1, 20\delta)$ quasigeodesic. Let $\pi_q$ denote the projection where $\pi_q(x) = \{n \in \Z : d(q(n),x) \leq  d(q(m),x), m\in \Z\}$. If the set of elements in $\pi_q(\O)$ is not bounded above and not bounded below then there exists $M'=M'(\lambda, \delta)$ such that $q(\Z)\subset \hood_{M'}(\O)$. 
\end{lemma}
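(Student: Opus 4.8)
The plan is to show that each point $q(n)$ of the quasigeodesic lies within a uniform distance of $\mathcal{O}$ by trapping it between two points of $\mathcal{O}$ whose nearest-point projections onto $q$ lie on opposite sides of $n$, and then invoking quasiconvexity. Write $\sigma\geq 0$ for the quasiconvexity constant of $\mathcal{O}$, and let $M=M(\delta,\lambda)$ be the Morse constant of Proposition \ref{lem:qg triangles slim hyp} for $(1,\lambda)$-quasigeodesics (here $\lambda=20\delta$). Since $\pi_q(\mathcal{O})$ is unbounded above and below, I can fix a threshold $K=K(\delta,\lambda)$, to be determined, and for each $n\in\Z$ choose $a,b\in\mathcal{O}$ together with indices $m_a\in\pi_q(a)$ and $m_b\in\pi_q(b)$ satisfying $m_a\leq n-K$ and $m_b\geq n+K$. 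The goal then reduces to bounding $d(q(n),[a,b])$ independently of $n$, since $\mathcal{O}$ being $\sigma$-quasiconvex gives $[a,b]\subset\mathcal{N}_\sigma(\mathcal{O})$, whence $q(n)\in\mathcal{N}_{\,d(q(n),[a,b])+\sigma}(\mathcal{O})$.

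The first step is to control the ``turning'' of $q$ at the projection points, i.e. to show $\langle a,q(m_b)\rangle_{q(m_a)}\leq \delta+M$ and, symmetrically, $\langle b,q(m_a)\rangle_{q(m_b)}\leq\delta+M$. For the first, set $r=\langle a,q(m_b)\rangle_{q(m_a)}$ and examine the comparison tripod of the triangle on $a,q(m_a),q(m_b)$: the point $u$ at distance $r$ from $q(m_a)$ along $[q(m_a),q(m_b)]$ lies within $\delta$ of the point $v$ at distance $r$ from $q(m_a)$ along $[q(m_a),a]$, so $d(a,u)\leq d(a,v)+\delta=d(a,q(m_a))-r+\delta$. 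By the finite Morse property $u$ is within $M$ of some $q(m')$, and since $q(m_a)$ is a nearest point of $q(\Z)$ to $a$ we obtain $d(a,q(m_a))\leq d(a,q(m'))\leq d(a,q(m_a))-r+\delta+M$, forcing $r\leq\delta+M$. (Corollary \ref{Cor: Gromov Prod vs projection} is what relates this Gromov product to the distance from $q(m_a)$ to the geodesic $[a,q(m_b)]$.)

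With these junction bounds in hand, the main step is to observe that the broken path $\omega=[a,q(m_a)]\cup[q(m_a),q(m_b)]\cup[q(m_b),b]$ is a global $(1,\lambda')$-quasigeodesic for some $\lambda'=\lambda'(\delta,\lambda)$: it is a concatenation of geodesics whose consecutive pieces meet with Gromov product at most $\delta+M$ and whose middle piece $[q(m_a),q(m_b)]$ has length at least $2K-\lambda$, which we make large via the choice of $K$; this is exactly the setting of the standard local-to-global (``tame concatenation'') lemma for $\delta$-hyperbolic spaces. Applying the Morse property to $\omega$ and the geodesic $[a,b]$ shows they are Hausdorff-$M_1$-close for some $M_1=M_1(\delta,\lambda)$. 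Finally $q(n)$ is within $M$ of $[q(m_a),q(m_b)]\subset\omega$, by the Morse property applied to $q|_{[m_a,m_b]}$, so $q(n)$ is within $M+M_1$ of $[a,b]$ and hence within $M'=M+M_1+\sigma$ of $\mathcal{O}$; as this bound is independent of $n$, we conclude $q(\Z)\subset\mathcal{N}_{M'}(\mathcal{O})$.

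The step I expect to be the main obstacle is verifying that $\omega$ is a genuine \emph{global} quasigeodesic with constants depending only on $\delta$ and $\lambda$: the bounded-backtracking estimates at $q(m_a)$ and $q(m_b)$ are local, and one must use hyperbolicity together with the long middle segment to promote them to a global statement, while ensuring that the threshold $K$ and all the constants are uniform in $n$ and in particular do not depend on the unbounded quantities $|m_a|,|m_b|$. Everything else reduces to bookkeeping with the thin-triangle and Morse estimates already recorded in Section \ref{sec:basics} (noting that the final constant also absorbs the quasiconvexity constant $\sigma$ of $\mathcal{O}$).
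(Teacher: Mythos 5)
Your proof is correct, but it takes a genuinely different route from the paper's. The paper reduces to a quasiray $q:\N\to X$ converging to $\xi\in\partial X$, shows that the points of $\O$ realizing the unbounded projections form a Gromov sequence equivalent to $\{q(r_k)\}$ (via the quasi-tree approximation of $4$-gons, Lemma \ref{Lem: N-gons quasitrees}), concludes $\xi\in\partial\O$, and then invokes the infinite Morse Lemma to fellow-travel $q$ with a quasiray inside $\O$. You instead argue finitarily, point by point: for each $n$ you trap $q(n)$ between $a,b\in\O$ projecting far to either side, verify the junction bound $\<a,q(m_b)\>_{q(m_a)}\leq \delta+M$ by playing the tripod estimate against the nearest-point property of $m_a$ (this step is clean and correct), and promote the broken path to a global quasigeodesic. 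The step you flag as the main obstacle does go through, and more cheaply than you fear: with only three geodesic pieces, two corners of Gromov product at most $\delta+M$, and a middle segment of length at least $2K-20\delta$, the four-point $\delta$-inequality already forces $\<a,b\>_{q(m_b)}\leq 2\delta+M$ once $K$ is large, and summing the reverse triangle inequalities across the two junctions shows $\omega$ is a $(1,\lambda')$-quasigeodesic on every pair of its points — no general local-to-global theorem is needed. Your approach buys a self-contained argument that never passes to the boundary or needs $\O$ to be treated as a hyperbolic space in its own right (and works verbatim for bi-infinite $q$ without the paper's reduction to a ray); the paper's approach is shorter given the boundary machinery it has already set up. Both proofs share the feature, worth making explicit, that the final constant $M'$ also depends on the quasiconvexity constant $\sigma$ of $\O$, not only on $\lambda$ and $\delta$.
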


\begin{proof}
It is sufficient to prove the result for a $(1, 20\delta)$ quasiray $q:\N\to X$. Let $q(n)\to\xi \in \partial X$. Without loss of generality, we may assume that $\O$ is path connected, and the metric on $\O$ is the infimum of lengths of paths in $\O$, making it a $\delta'$-hyperbolic space in its own right; see Remark ~\ref{rem:takingnbd}. 

By assumption,  there exists a sequence $r_k\to \8$, and $x_k\in \O$ such that $r_k \in \pi_q(x_k)$. We claim that  $\{x_k\}$ is a sequence converging to $\xi$ as well. If this claim is proven, then we have that $\xi \in \partial \O$. Thus, there is a $(1, 20\delta')$ quasigeodesic ray $q'$ in $\O$ such that $q'$ converges to $\xi$ as well. As $\O$ is quasiconvex, it is easy to check that $\O$ is quasi-isometrically embedded (see for example, \cite[Lemma 3.4]{DGO}) and thus $q'$ is a quasigeodesic in $X$ as well. As a consequence of the Morse Property Theorem ~\ref{Slim Q-Bigons infinite}, we get that $q, q'$ are within bounded Hausdorff distance of each other. Thus there exists an $M'$ such that $q(\N)$ is within the $M'$-neighborhood of $\O$ in $X$. 

To prove the claim,  fix $n,m$ such that $r_n \leq r_m$. Consider the quasigeodesic 4-gon given by $$q[r_1, +\infty] \cup [\xi, x_m] \cup [x_m, x_n] \cup [x_n, q(r_1)].$$ 

 Observe that $q(r_n), q(r_m) \in q[r_1, +\infty]$. This 4-gon can be approximated by a $(1, \lambda(4))$ quasi-tree, see Lemma ~\ref{Lem: N-gons quasitrees} and Remark ~\ref{Rem: QI to QT and Gromov Product}. 
 We will use primes to denote the the images under the quasi-isometry to the tree. Consequently, we get that 
\begin{eqnarray*}
    \langle x_n, x_m \rangle_{q(r_1)} &\approx_{\,2\lambda(4)}& \langle x'_n, x'_m \rangle_{q(r_1)'} = d(q(r_1)', [q(r_n)', x'_n]) \\ & \qquad =&  d(q(r_1)', q(r_n)') \approx_{\,2\lambda(4) + 20\delta}  |r_n-r_1|.
\end{eqnarray*} 
 
 Now, allowing $n,m$ to vary, these expressions tend to $+\infty$ as $n,m \to +\infty$. Thus $\{x_k\}$ is a Gromov sequence. 
Further we have that $| \langle q(r_n), x_n \rangle_{q(r_1)} - \langle q(r_n)', x'_n \rangle_{q(r_1)'}| \leq 2\lambda(4)$. Also we have that $\langle q(r_n)', x'_n \rangle_{q(r_1)'} = d(q(r_1)', [q(r_n)', x'_n])$. By the above calculation, it follows that $\{x_k\}$ is equivalent to $\{q(r_k)\}$ and hence converges to $\xi$ as well. 
\end{proof}

\begin{lemma}\label{Lem:Geods in Tree}
    Let $T$ be a tree, and $\xi(1), \dots, \xi(4)\in \partial X$ such that  $[\xi(1), \xi(2)], [\xi(3), \xi(4)]$ are bi-infinite geodesics. Suppose moreover that $x\in [\xi(1), \xi(2)], y\in [\xi(3), \xi(4)]$. Up to a permutation of the boundary points, we have that $[\xi(1),x]\cup[x,y]\cup [y, \xi(3)]$ is a bi-infinite geodesic. 
\end{lemma}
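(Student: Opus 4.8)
The plan is to use the defining feature of trees: between any two points of $T\cup\partial T$ there is a \emph{unique} geodesic, and a concatenation of geodesic segments is itself a geodesic precisely when it is \emph{reduced}, i.e. it does not backtrack. Since backtracking in a tree is a purely local phenomenon occurring at a single point, and since the three pieces $[\xi(1),x]$, $[x,y]$, and $[y,\xi(3)]$ are each already geodesics, the whole concatenation will be a (bi-infinite) geodesic as soon as we rule out backtracking at the two junction points $x$ and $y$. Thus the entire task reduces to choosing the labels appropriately at $x$ and at $y$.

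First I would analyze the junction at $x$, assuming for the moment $x\neq y$, so that the geodesic $[x,y]$ leaves $x$ in a well-defined direction (germ) $d_y$. The line $[\xi(1),\xi(2)]$ passes through $x$ using two \emph{distinct} directions at $x$, say $d_{\xi(1)}$ towards $\xi(1)$ and $d_{\xi(2)}$ towards $\xi(2)$. Because $d_{\xi(1)}\neq d_{\xi(2)}$, at most one of them can coincide with $d_y$; I relabel the two endpoints of the first line so that the one named $\xi(1)$ satisfies $d_{\xi(1)}\neq d_y$. Then the initial direction of $[\xi(1),x]$ at $x$ differs from that of $[x,y]$, so $[\xi(1),x]\cup[x,y]$ does not backtrack at $x$. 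By the identical argument applied to the line $[\xi(3),\xi(4)]$ at $y$ against the incoming direction of $[x,y]$, I relabel $\{\xi(3),\xi(4)\}$ so that $[x,y]\cup[y,\xi(3)]$ does not backtrack at $y$. Since the two relabelings involve disjoint pairs of endpoints, they do not interfere. As no backtracking occurs at either junction (and none can occur in the interior of a geodesic segment), the path $[\xi(1),x]\cup[x,y]\cup[y,\xi(3)]$ is reduced, hence a geodesic; it is bi-infinite because $\xi(1),\xi(3)\in\partial T$.

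The only point needing separate care is the degenerate case $x=y$, where there is no direction $d_y$ to avoid. Here the common point lies on both lines, which contribute the pairs of distinct germs $\{d_{\xi(1)},d_{\xi(2)}\}$ and $\{d_{\xi(3)},d_{\xi(4)}\}$ respectively. I would choose one endpoint from each line whose directions differ; this is always possible, since it could fail only if all four germs coincided, contradicting $d_{\xi(1)}\neq d_{\xi(2)}$. With this choice $[\xi(1),x]\cup[x,\xi(3)]$ is reduced at $x$ and the conclusion follows as before. I expect no genuine obstacle here: the entire content is the elementary tree fact that reducedness is verifiable one point at a time, together with the mild bookkeeping of the relabeling, and the case $x=y$ is the only subtlety worth flagging.
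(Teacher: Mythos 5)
Your argument is correct, and it takes a genuinely different route from the paper's. The paper proves this lemma by enumerating the possible global configurations of the two lines according to the cardinality of $\{\xi(1),\dots,\xi(4)\}$ (two, three, or four distinct ends) and reading the conclusion off a figure in which the shared portions may degenerate to length zero. You instead argue purely locally: you verify non-backtracking at the two junction points $x$ and $y$ by comparing germs of directions, choosing the label $\xi(1)$ (resp. $\xi(3)$) so that its direction at $x$ (resp. $y$) avoids the single direction of $[x,y]$, and then invoke the fact that in a tree a locally geodesic concatenation is a global geodesic. The two relabelings act on disjoint pairs, so they are compatible, and your separate treatment of the degenerate case $x=y$ (where a common direction for all four ends would contradict $d_{\xi(1)}\neq d_{\xi(2)}$) is exactly the point worth flagging. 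What your approach buys is a uniform, figure-free argument that needs no case split on how many ends coincide; what the paper's approach buys is brevity and an explicit picture of the possible configurations, which is in the same spirit as the figure-based arguments used nearby (e.g.\ in Lemma \ref{Lem: Convergence}). Both are complete proofs.
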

\begin{proof}
Under our hypothesis, the cardinality $|\{\xi(1),\dots, \xi(4)\}|=2, 3, 4$. Therefore, the non-trivial cases are demonstrated in Figure ~\ref{fig:geods in tree}, where the jagged line segments could possibly have length 0 and the result follows. 
\begin{figure}
        \centering
        \includegraphics[width=3in]{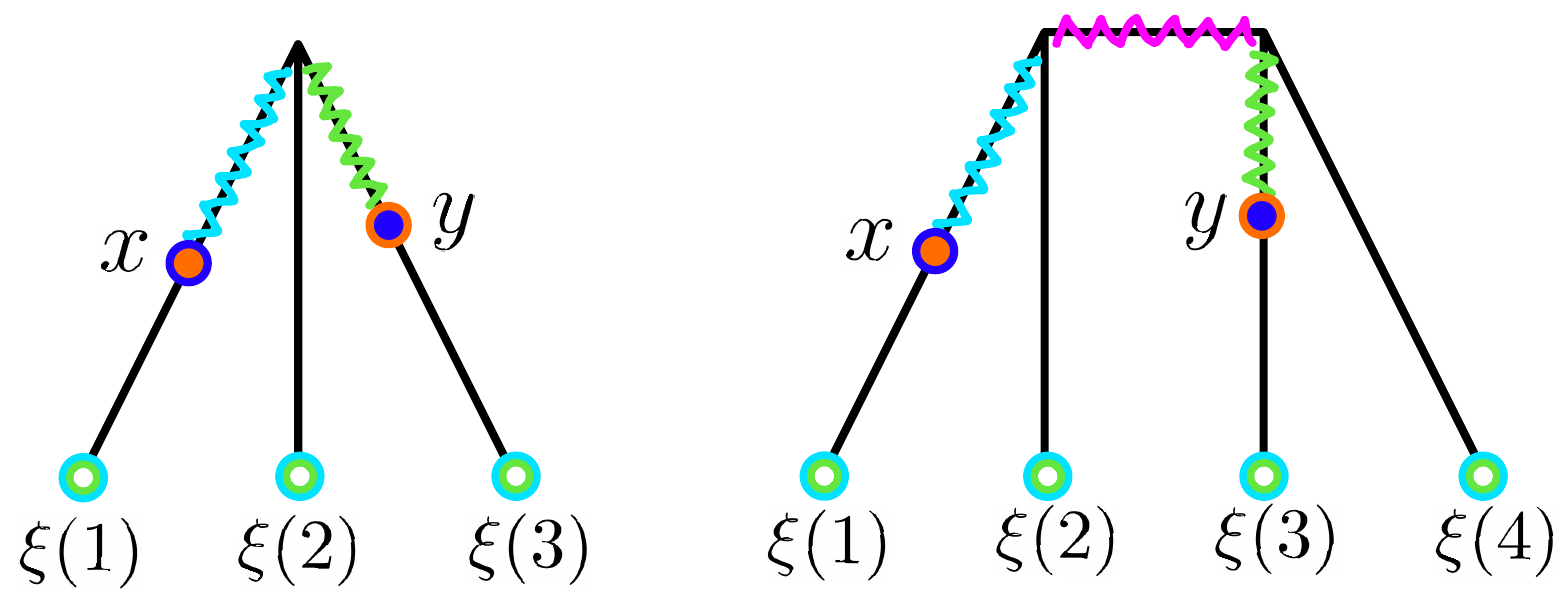}
        \caption{See Proof of Lemma ~\ref{Lem:Geods in Tree}}
        \label{fig:geods in tree}
    \end{figure}
\end{proof}
The following states that if $(1, \lambda)$ quasigeodesics have end-points that are ``close", then they must synchronously fellow travel for a long time. 

\begin{cor}\label{Cor: Boundary close means fellow travel}
    For all $\lambda\geq 0$, $R>0$ there exists $M''= M''(\lambda, R)$ such that if $c,q: \Z\to X$ are  $(1,\lambda)$ quasigeodesic between  $\xi^\pm\in \partial X$ and $\eta^\pm\in \partial X$  respectively and $\<\eta^-, \xi^-\>_{c(0)},\<\eta^+, \xi^+\>_{c(0)}\geq R$ then there is a $n_0\in \Z$ such that for every $n\in [-R,R]$ we have that such that $d(q(n_0+n),c(n))\leq M''$.
\end{cor}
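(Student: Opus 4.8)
The plan is to reduce the statement to the transparent fact about two bi-infinite geodesics in a tree, and to transport it back through a $(1,\lambda'')$ quasi-isometry. First I would assemble the four boundary points into an infinite quasigeodesic $4$-gon with cyclic vertices $\xi^-,\eta^-,\eta^+,\xi^+$, whose sides are $c$ (between $\xi^-$ and $\xi^+$), $q$ (between $\eta^-$ and $\eta^+$), and two auxiliary $(1,20\delta)$ quasigeodesics joining $\xi^+$ to $\eta^+$ and $\eta^-$ to $\xi^-$, which exist by Remark \ref{Rem:BenakliKapovich}. Setting $\lambda'=\max\{\lambda,20\delta\}$, this is a $(1,\lambda')$ quasigeodesic $4$-gon, so by Lemma \ref{Lem: N-gons quasitrees} and Remark \ref{Rem: QI to QT and Gromov Product} there is a constant $\lambda''=\lambda''(\delta)$ and a $(1,\lambda'')$ quasi-isometry $\Phi$ from the $4$-gon onto a tree $T$. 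I write $z'=\Phi(z)$ throughout, and note that the basepoint $c(0)$ lies on the side $c$, hence is in the domain of $\Phi$.

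Next I would transport the hypothesis into $T$. By Remark \ref{Rem: QI to QT and Gromov Product} Gromov products are preserved up to $2\lambda''$, so in $T$ we have $\<\eta^-,\xi^-\>_{c(0)'}\geq R-2\lambda''$ and $\<\eta^+,\xi^+\>_{c(0)'}\geq R-2\lambda''$. In a tree the Gromov product of two boundary points based at a point equals exactly the distance from that point to the branch point of the two rays. Writing $\gamma_\xi=[\xi^-,\xi^+]$ and $\gamma_\eta=[\eta^-,\eta^+]$ for the bi-infinite geodesics of $T$ (each within a Morse constant of $\Phi(c)$, resp.\ $\Phi(q)$, by Theorem \ref{Slim Q-Bigons infinite}), this says that $c(0)'$ lies within bounded error on $\gamma_\xi$, that $[c(0)',\xi^-]$ and $[c(0)',\eta^-]$ agree until a branch point $p^-$ with $d(c(0)',p^-)\geq R-2\lambda''$, and symmetrically that on the positive side there is $p^+$ with $d(c(0)',p^+)\geq R-2\lambda''$. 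Feeding the configuration $\{\xi^-,\xi^+,\eta^-,\eta^+\}$ into Lemma \ref{Lem:Geods in Tree} organizes these branch points into the only consistent picture: $\gamma_\xi$ and $\gamma_\eta$ share the common segment $[p^-,p^+]$, which contains $c(0)'$ and extends at least $R-2\lambda''$ to either side of it.

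Now I would read off the synchronous fellow-traveling along the shared segment. Since $c$ is a $(1,\lambda)$ quasigeodesic and $\Phi$ is a $(1,\lambda'')$ quasi-isometry, $c(n)'$ lies within a constant $M=M(\delta,\lambda)$ of $\gamma_\xi$ at signed arclength $\approx n$ from $c(0)'$; for $|n|\leq R$ its nearest-point projection to $\gamma_\xi$ is within a constant depending only on $\delta$ and $\lambda$ of $[p^-,p^+]\subset\gamma_\eta$, because any overshoot beyond $p^\pm$ is at most $|n|-(R-2\lambda'')\leq 2\lambda''$. Choosing $n_0\in\Z$ so that $q(n_0)'$ is nearest to $c(0)'$ along $\gamma_\eta$ (possible since $q$ parametrizes $\gamma_\eta$ up to Morse distance, and the endpoint pairing $\eta^\pm\!\leftrightarrow\!\xi^\pm$ forces the two orientations to agree), the $(1,\lambda)$-parametrization of $q$ places $q(n_0+n)'$ at signed arclength $\approx n$ from $c(0)'$ along this same shared segment; hence $d(c(n)',q(n_0+n)')$ is bounded by a constant depending only on $\delta$ and $\lambda$ for all $n\in[-R,R]$. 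Pulling this bound back through the $(1,\lambda'')$ quasi-isometry $\Phi$ gives $d(c(n),q(n_0+n))\leq M''$ for some $M''=M''(\lambda,R)$, as required.

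The main obstacle I anticipate is bookkeeping rather than ideas: one must juggle several additive constants simultaneously — the Morse constants for $c$, $q$, and the auxiliary sides, the $2\lambda''$ distortions of the Gromov product and of distances under $\Phi$, and the small overshoot near $p^\pm$ — and, crucially, verify that a \emph{single} shift $n_0$ synchronizes the two parametrizations across the whole interval $[-R,R]$ rather than pointwise. The orientation check, namely that the pairing $\eta^\pm\leftrightarrow\xi^\pm$ forces $c$ and $q$ to run the same way along $[p^-,p^+]$ so that no direction-reversal is needed in defining $n_0$, is the one place where the precise hypothesis on which Gromov products are large genuinely enters.
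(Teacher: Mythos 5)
Your proposal is correct and follows essentially the same route as the paper: complete the two quasigeodesics to a quasigeodesic $4$-gon, pass to an approximating tree via Lemma \ref{Lem: N-gons quasitrees}, use the fact that the $(1,\lambda(4))$ quasi-isometry distorts Gromov products by at most $2\lambda(4)$, verify the synchronous fellow-traveling statement in the tree, and pull back. The paper leaves the tree-level verification as ``straightforward'' (and notes only that the cardinality of $\{\xi^\pm,\eta^\pm\}$ may be $2$, $3$, or $4$, a degenerate case your $4$-gon setup should briefly acknowledge), whereas you carry out the branch-point and orientation bookkeeping explicitly; the content is the same.
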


\begin{proof}
We begin by completing to a $(1,20\delta)$ $n$-gon, where $n$ is the cardinality of $\{\xi^-,\xi^+,\eta^-,\eta^+\}$ which is 2, 3, or 4. By Lemma ~\ref{Lem: N-gons quasitrees}, this is $(1, \lambda(4))$ quasi-isometric to a tree, as in Lemma ~\ref{Lem:Geods in Tree}. It is straightforward to see that the claim holds in a tree and since a $(1, \lambda(4))$ quasi-isometry preserves the Gromov product up to an additive error of at most $2\lambda(4)$. 
\end{proof}

\begin{lemma}\label{Lem: Convergence}
    Fix $o\in X$ and $\xi\in \partial X$. If $c_n: \Z\to X$ is a sequence of $(1, \lambda)$ quasigeodesics such that neither sequence $c_n(-\8)$, nor $c_n(+\8)$ converges to $\xi$, then for every $R>0$ there is an $N$ so that if $n>N$ we have for all $m \in \Z$
    $$\<c_n(m),\xi\>_o\leq R.$$
\end{lemma}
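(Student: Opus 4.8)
The plan is to reduce the whole statement to a computation on a tree. The geometric heart is the following uniform estimate: writing $\alpha_n := c_n(-\8)$ and $\omega_n := c_n(+\8)$ for the two endpoints of $c_n$, I claim that for every interior point $c_n(m)$ one has
$$\<c_n(m),\xi\>_o \leq \op{max}\{\<\alpha_n,\xi\>_o,\ \<\omega_n,\xi\>_o\} + C,$$
where $C=C(\delta,\lambda)$ depends only on the hyperbolicity and quasigeodesic constants, not on $n$ or $m$. Granting this, the lemma follows at the very last step by feeding in the hypothesis: since neither $c_n(+\8)$ nor $c_n(-\8)$ converges to $\xi$, the endpoint products $\<\alpha_n,\xi\>_o$ and $\<\omega_n,\xi\>_o$ are controlled, so for $n$ beyond some $N=N(R)$ the right-hand side falls below the prescribed $R$, simultaneously for every $m$. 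Thus the hypothesis is used only to bound the endpoints, and all the real work is in the estimate above.

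To prove the estimate, fix $n$ and $m$ and apply Lemma~\ref{Lem: N-gons quasitrees} together with Remark~\ref{Rem: QI to QT and Gromov Product} to the finite configuration $\{o,\xi,\alpha_n,\omega_n,c_n(m)\}$. Completing the relevant $(1,20\delta)$ quasigeodesics between these points — the ray from $o$ to $\xi$, the rays from $o$ to $\alpha_n$ and to $\omega_n$, and the quasigeodesic $c_n$ itself, which already passes through $c_n(m)$ — into a quasigeodesic polygon, I obtain a $(1,\lambda')$ quasi-isometry onto a tree $T$ that distorts each Gromov product among these points by at most $2\lambda'$. Denoting images by a prime, the Morse Lemma (Theorem~\ref{Slim Q-Bigons infinite}) places $c_n(m)'$ within a bounded distance $b=b(\delta,\lambda)$ of the bi-infinite geodesic $[\alpha_n',\omega_n']$ in $T$.

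The computation in $T$ is then elementary. Let $p'$ be the nearest-point projection of $c_n(m)'$ onto $[\alpha_n',\omega_n']$; since $d(c_n(m)',p')\leq b$, passing from $c_n(m)'$ to $p'$ changes the Gromov product based at $o'$ by at most $b$. The point $p'$ lies on one of the two halves into which $[\alpha_n',\omega_n']$ is cut by the median of $\{o',\alpha_n',\omega_n'\}$ — say the half towards $\omega_n'$. On a tree $[o',p']$ is then an initial segment of $[o',\omega_n']$, so the overlap of $[o',p']$ with the ray $[o',\xi')$ is a truncation of the overlap of $[o',\omega_n']$ with that ray; hence $\<p',\xi'\>_{o'}\leq \<\omega_n',\xi'\>_{o'}$, and symmetrically $\<p',\xi'\>_{o'}\leq \<\alpha_n',\xi'\>_{o'}$ in the other case. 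This is exactly the tree bookkeeping supplied by Lemma~\ref{Lem:Geods in Tree} applied to the bi-infinite geodesic $[\alpha_n',\omega_n']$ and the ray $[o',\xi')$. Reassembling the additive error $b$ from the projection with the $2\lambda'$ incurred by each of the three Gromov-product comparisons yields the estimate with $C=b+6\lambda'$.

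The main obstacle is the careful handling of the ideal points inside the tree approximation. The products $\<\cdot,\xi\>_o$ (and those involving $\alpha_n,\omega_n\in\partial X$) are defined by an infimum/$\liminf$ over representative sequences converging to the ideal points, so I must check that the quasi-tree comparison — stated for finite point sets in Remark~\ref{Rem: QI to QT and Gromov Product} — passes to the boundary with only an additive $O(\delta)$ error, and that $c_n(m)'$ genuinely projects into the correct half of $[\alpha_n',\omega_n']$. Beyond this, everything is bounded bookkeeping of Morse and quasi-tree constants, which are uniform in $n$ and $m$ precisely because every quasigeodesic involved is a $(1,\cdot)$ quasigeodesic with parameters depending only on $\delta$ and $\lambda$.
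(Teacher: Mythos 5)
Your proposal is correct and follows essentially the same route as the paper: both reduce to a tree approximation of a quasigeodesic polygon containing $o$, $\xi$, and $c_n$ (via Lemma \ref{Lem: N-gons quasitrees} and Remark \ref{Rem: QI to QT and Gromov Product}), establish that $\<c_n(m),\xi\>_o$ is bounded by the endpoint products $\<c_n(\pm\8),\xi\>_o$ up to a uniform additive constant, and only then invoke the non-convergence hypothesis to control those endpoint products for large $n$. Your write-up is in fact more explicit about the tree bookkeeping than the paper, which delegates the case analysis to a figure.
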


\begin{proof}
    Let $R>0$. Since the sequences of boundary points $c_n(\pm\8)$ do not converge to $\xi$   there is an $N$ so that if $n>N$ then $\<c_n(\pm\8),\xi\>_o\leq R$. Let $q:\N\to X$ be a $(1, 10\delta)$ quasigeodesic between $o$ and $\xi$. Then, for each $n>N$ we may choose $(1, 20\delta)$-quasi geodesics so that $q(\N)$ and $c_n(\Z)$ are opposite sides of a 4-gon which by Lemma ~\ref{Lem: N-gons quasitrees} is a quasitree whose consants depend only on $\lambda$ and $\delta$. 
    
    Fix $n>N$. There is a $(1, \lambda')$ quasi-isometry mapping this 4-gon to a quasi-tree. Coarsely, we are then in one of the cases in Figure ~\ref{fig:Convergence}, where the jagged lines may be of length 0, and the result follows by Remark ~\ref{Rem: QI to QT and Gromov Product}.

     \begin{figure}
        \centering
        \includegraphics[width=0.6\linewidth]{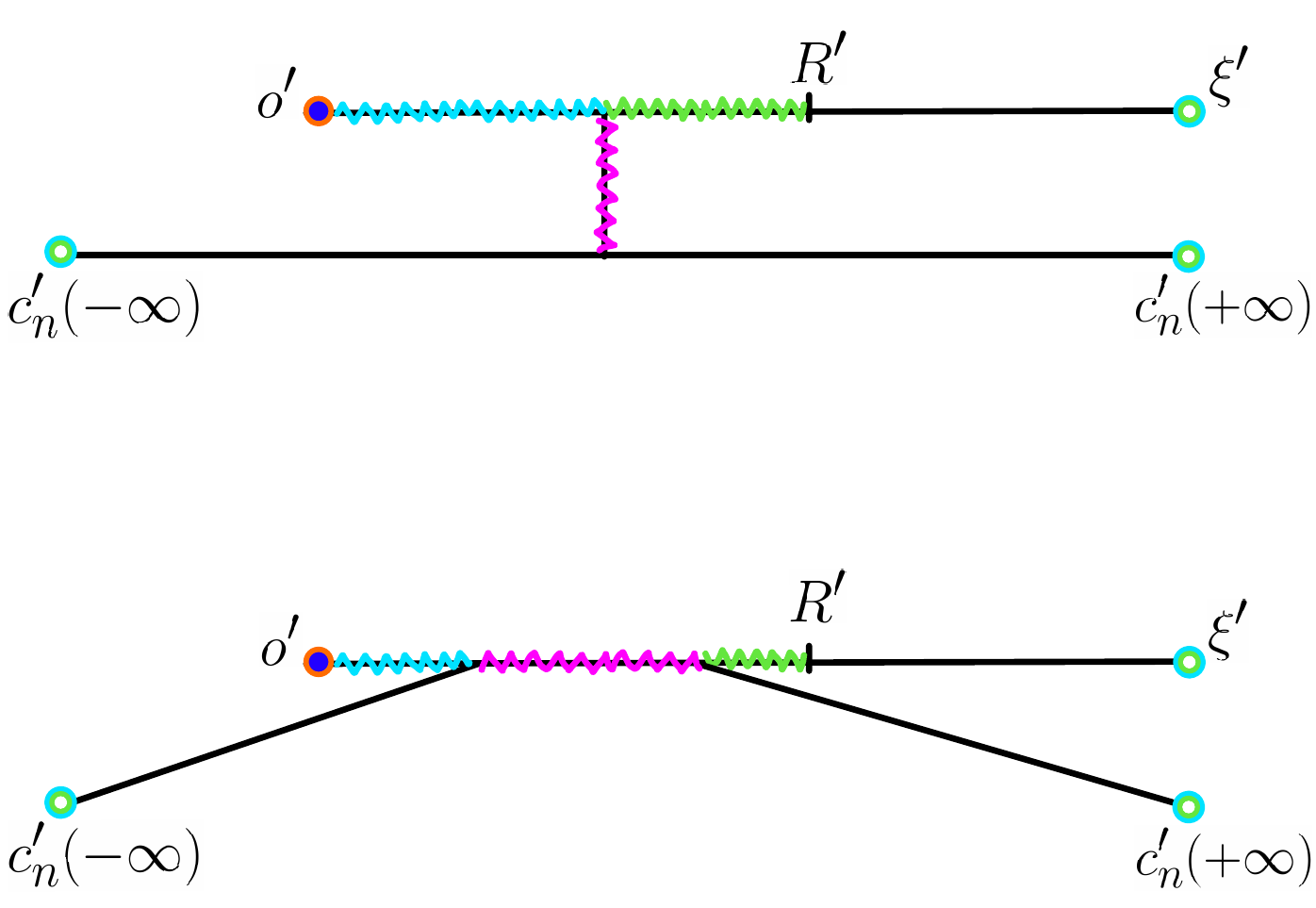}
        \caption{ Top: $m(o', \xi', c_n'(-\8))=m(o', \xi', c_n'(+\8))$. Bottom: otherwise. From  proof of Lemma ~\ref{Lem: Convergence}.}
        \label{fig:Convergence}
    \end{figure}
\end{proof}

The following combines \cite[Theorem 2.9]{Hamann} and Corollary ~\ref{Cor: Boundary close means fellow travel}. Hamman's theorem states that the loxodromic limit set is bilaterally dense in the limit set. We turn this into a statement about synchronous fellow traveling for the quasigeodesics. 

\begin{theorem}\label{thm: lox lim set is dense}
An action $\G\to \Isom X$ is not parabolic if and only if for every $R>0$ and $(1,20\delta)$ quasigeodesic $c:\Z\to X$ with end points in $\L(\G)$ there exists an element acting as a loxodromic $\g\in \G$ and $q: \Z\to X$  a $(1, 20\delta)$ quasi-axis for $\g$ such that $\<\g^{-\8}, c(-\8)\>_{c(0)}, \<\g^\8, c(\8)\>_{c(0)} \geq R$  and $d(c(n),q(n))\leq M'$ for all $n\in [-R,R]$. \end{theorem}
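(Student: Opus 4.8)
The plan is to prove the two implications separately, the reverse direction being elementary and the forward direction carrying the substance, obtained by feeding Hamann's bilateral density statement into Corollary \ref{Cor: Boundary close means fellow travel}. For the reverse implication I would argue by producing a loxodromic element. Suppose the stated fellow-traveling property holds. Whenever $\L(\G)$ contains two distinct points $\xi^-\neq\xi^+$, Remark \ref{Rem:BenakliKapovich} supplies a $(1,20\delta)$ quasigeodesic $c\colon\Z\to X$ with $c(\pm\8)=\xi^\pm$, so the hypothesis, applied with say $R=1$, yields an element $\g\in\G$ acting as a loxodromic. By the classification in Theorem \ref{thm:actionsclassrk1} a parabolic action contains no loxodromic element, so the existence of $\g$ forces the action to be non-parabolic; and if $\L(\G)$ has at most one point the action is elliptic, hence already non-parabolic.

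For the forward implication, assume the action is not parabolic and fix $R>0$ together with a $(1,20\delta)$ quasigeodesic $c$ whose endpoints $\xi^\pm:=c(\pm\8)$ lie in $\L(\G)$; these are distinct since $c$ is a genuine bi-infinite quasigeodesic. The key input is \cite[Theorem 2.9]{Hamann}, which asserts that the loxodromic limit set is bilaterally dense in $\L(\G)$: ordered pairs $(\g^{-\8},\g^{+\8})$ of fixed points of loxodromic elements are dense among ordered pairs of distinct limit points. Since the sets $\{\eta\colon\<\eta,\xi^-\>_{c(0)}>R\}$ and $\{\eta\colon\<\eta,\xi^+\>_{c(0)}>R\}$ are neighborhoods of $\xi^-$ and $\xi^+$ in $\partial X$, and can be taken disjoint because $\xi^-\neq\xi^+$, bilateral density provides a loxodromic $\g\in\G$ with $\<\g^{-\8},\xi^-\>_{c(0)}\geq R$ and $\<\g^{+\8},\xi^+\>_{c(0)}\geq R$, with the two sides correctly matched.

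Next I would let $q\colon\Z\to X$ be a $(1,20\delta)$ quasigeodesic between $\g^{-\8}$ and $\g^{+\8}$, again by Remark \ref{Rem:BenakliKapovich}; by the Morse Lemma (Theorem \ref{Slim Q-Bigons infinite}) it is Hausdorff-close to the genuine axis of $\g$, hence a $(1,20\delta)$ quasi-axis. Applying Corollary \ref{Cor: Boundary close means fellow travel} to the pair $c,q$ with $\lambda=20\delta$ gives a constant $M'':=M''(20\delta,R)$ and a shift $n_0\in\Z$ with $d(q(n_0+n),c(n))\leq M''$ for all $n\in[-R,R]$. Reparametrizing $q$ by $n_0$ leaves it a $(1,20\delta)$ quasi-axis for $\g$ with unchanged endpoints, so the Gromov-product inequalities persist while the fellow-traveling bound becomes $d(q(n),c(n))\leq M'$ with $M':=M''$, as required. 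The main obstacle is the bookkeeping in the density step: one must convert the topological density of \cite[Theorem 2.9]{Hamann} into the explicit Gromov-product bounds based at $c(0)$ and preserve orientation, ensuring $\g^{-\8}$ lands near $\xi^-$ rather than near $\xi^+$. This is precisely why the bilateral, ordered-pair form of Hamann's theorem is needed rather than mere density of the loxodromic limit set in $\L(\G)$.
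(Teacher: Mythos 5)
Your proposal follows exactly the route the paper intends: the paper offers no written proof beyond declaring the theorem to be the combination of \cite[Theorem 2.9]{Hamann} with Corollary \ref{Cor: Boundary close means fellow travel}, and your forward direction carries that out correctly, including the point that one needs the \emph{bilateral} (ordered-pair) form of Hamann's density to match $\g^{-\8}$ with $c(-\8)$ and the harmless reparametrization of $q$ by $n_0$.

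One small slip in your reverse direction: if $|\L(\G)|\leq 1$ the action need not be elliptic --- by the classification in Theorem \ref{thm:actionsclassrk1}, $|\L(\G)|=1$ is precisely the parabolic case. In that case no bi-infinite quasigeodesic has both endpoints in $\L(\G)$, so the stated property is vacuously satisfied by a parabolic action; the reverse implication is therefore only meaningful under the implicit convention that such a $c$ exists (equivalently $|\L(\G)|\geq 2$). The paper sidesteps this entirely by never arguing the reverse direction and only ever invoking the forward one, so this does not affect the substance of your argument, but the sentence as written asserts something false about the classification.
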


We now establish a few properties about quasiconvex sets that will be used to define the essential core. The first of which is immediate from the fact that the limit set is independent of choice of base-point and noting that elliptic actions have empty limit set.

\begin{lemma}\label{Lem: Limit set minimal}
    For an action $\G\to \Isom X$ if $Y \subset X$ is $\G$-invariant and quasiconvex then $\partial Y\supset \L(\G)$.
\end{lemma}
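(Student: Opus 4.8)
The plan is to use the basepoint-independence of the limit set to relocate the basepoint into $Y$, and then exploit $\G$-invariance of $Y$ to trap the whole orbit inside $Y$; its accumulation points on $\partial X$ are then forced onto $\partial Y$.

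If the action is elliptic then $\L(\G)=\varnothing$ and the inclusion is vacuous, so I may assume the action is non-elliptic and (the empty case being excluded) that $Y\neq\varnothing$. I would then fix a basepoint $x_0\in Y$, and note that $\G$-invariance of $Y$ gives $\G\.x_0\subseteq Y$. Here $\partial Y$ is the Gromov boundary of the subspace $Y$: since $Y$ is quasiconvex it is quasi-isometrically embedded in $X$ (as in the proof of Lemma \ref{Lem: Contracting QGeod}; cf. \cite[Lemma 3.4]{DGO}), so with its induced path metric $Y$ is $\delta'$-hyperbolic and the inclusion $Y\hookrightarrow X$ induces an embedding $\partial Y\hookrightarrow\partial X$ whose image is exactly the set of points of $\partial X$ that are limits of Gromov sequences lying in $Y$. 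It is quasiconvexity that makes this identification valid.

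Finally, using the basepoint $x_0\in Y$, the basepoint-independence of the limit set gives $\L(\G)=\partial X\cap\overline{\G\.x_0}$. For $\xi\in\L(\G)$ choose $g_n\in\G$ with $g_nx_0\to\xi$; since $g_nx_0\in Y$ for every $n$, this is a Gromov sequence inside $Y$ converging to $\xi$, so $\xi\in\partial Y$ by the previous paragraph. Hence $\L(\G)\subseteq\partial Y$. The argument is essentially formal once the identification of $\partial Y$ with the trace of $\partial X$ on $Y$ is in place, which is the only step where the hypotheses are genuinely used.
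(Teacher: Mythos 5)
Your proof is correct and follows the same route the paper intends: the paper dispatches this lemma by invoking exactly the base-point independence of the limit set (relocating the base point into $Y$) together with the observation that elliptic actions have empty limit set, and your write-up simply makes explicit the identification of $\partial Y$ with its image in $\partial X$ via the quasi-isometric embedding coming from quasiconvexity. No issues.
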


\begin{remark}\label{rem:takingnbd}
    We note that if $S\subset X$ is a $\sigma$-quasiconvex subset then the neighborhood $\~\hood_\sigma(S)$ is path connected. Indeed, consider two points $x,y \in \~\hood_\sigma(S)$. By definition, there exist points $x', y' \in S$ such that $d(x,x') \leq \sigma$ and $d(y,y') \leq \sigma.$ Consider the geodesics $[x,x'], [y',y]$ and $[x',y']$. First, for any point $t \in [x,x']$, we have that $d(x,t) \leq d(x,x') \leq \sigma$. Thus $t \in \~\hood_\sigma(S)$ and  $[x, x'] \subset \~\hood_\sigma(S)$. By an analogous argument, we get that $[y',y] \subset \~\hood_\sigma(S)$. Lastly, the geodesic $[x',y'] \subset \~\hood_\sigma(S)$ by quasiconvexity of $S$. It follows that the concatenated path $[x,x'] \cup [x',y'] \cup [y', y]$ is in $\~\hood_\sigma(S)$ and so the neighborhood $\~\hood_\sigma(S)$ is path connected. Note also that the concatenated path $[x,x'] \cup [x',y'] \cup [y'y]$ is a $(1, 4\sigma)$ quasigeodesic and therefore $\~\hood_\sigma(S)$ is also $M$-quasiconvex, where $M = M(1, 4\sigma)$ is the associated Morse constant. 

        The constant $r(\rho)$ in the  definition of the essential core and the following lemma is chosen so as to ensure we are in the context of a path-connected quasiconvex set which is therefore hyperbolic, as we shall see in the proof of Lemma ~\ref{Lem: irred core is essential invariant}.
\end{remark}

\begin{defn}[The Essential Core]
    Let  $\rho:\G\to \Isom X$ be of general type. Define the \emph{essential core} of the action to be $\mathcal{L}_\rho(X)=\~{\hood}_{r(\rho)}\left(\Cup{\g\in \mathrm{Lox}(\rho(\G))}{}\mathbb{A}(\g)\right)$, where $\mathbb{A}(\g)$ is the union of $(1, 20\delta)$ quasigeodesics between the end points of $\g$ in $\partial X$, and $\mathrm{Lox}(\rho(\G))$ is the set of elements of $\g\in\G$ so that $\rho(\g)$ is loxodromic. 
\end{defn}

The following result is a combination of \cite[Lemma 2.21]{PropNL} and Theorem ~\ref{thm: lox lim set is dense}.

\begin{lemma}\label{Lem: irred core is essential invariant}Let $\G\to \Isom X$ be such that $|\H(\G) |\geq 2$. Then the essential core $\mathcal{L}_\rho(X)$ is non-empty, quasiconvex, $\G$-invariant, on which  the $\G$-action is essential. 
\end{lemma}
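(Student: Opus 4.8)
The plan is to verify the four asserted properties in turn, treating non-emptiness and $\G$-invariance as formalities, settling essentiality by a direct limit-set argument, and reserving the real work for quasiconvexity. Non-emptiness is immediate: the hypothesis $|\H(\G)|\geq 2$ forces the existence of a loxodromic $\g\in\G$, so $\mathbb{A}(\g)$, and hence $\mathcal{L}_\rho(X)$, is non-empty. For $\G$-invariance, note that any $g\in\G$ carries a loxodromic $\g$ to the loxodromic $g\g g^{-1}$ and sends each $(1,20\delta)$ quasigeodesic between the endpoints $\g^\pm$ to one between $(g\g g^{-1})^\pm=g\g^\pm$; thus $g\cdot\mathbb{A}(\g)=\mathbb{A}(g\g g^{-1})$. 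Since $\mathrm{Lox}(\rho(\G))$ is closed under conjugation, the union $\bigcup_{\g}\mathbb{A}(\g)$ is $\G$-invariant, and so is its neighborhood $\mathcal{L}_\rho(X)$ because $\G$ acts by isometries.

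For essentiality, let $Z\subseteq\mathcal{L}_\rho(X)$ be $\G$-invariant and quasiconvex; I must show $Z$ is coarsely dense. By Lemma \ref{Lem: Limit set minimal}, $\partial Z\supseteq\L(\G)\supseteq\H(\G)$, so for every loxodromic $\g$ both endpoints $\g^\pm$ lie in $\partial Z$. Choosing sequences in $Z$ converging to $\g^-$ and to $\g^+$ and applying quasiconvexity to the geodesics joining them, a standard limiting argument places the bi-infinite geodesic $(\g^-,\g^+)$ in a uniform neighborhood $\hood_{\sigma}(Z)$, where $\sigma$ is the quasiconvexity constant of $Z$. The Infinite Morse Lemma (Theorem \ref{Slim Q-Bigons infinite}) then puts all of $\mathbb{A}(\g)$ in $\hood_{\sigma+M}(Z)$ with $M=M(\delta,20\delta)$ independent of $\g$. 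Taking the union over loxodromics and the $r(\rho)$-neighborhood gives $\mathcal{L}_\rho(X)\subseteq\hood_{\sigma+M+r(\rho)}(Z)$, i.e. $Z$ is coarsely dense, as required.

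The main obstacle is quasiconvexity with a \emph{uniform} constant, which is where Theorem \ref{thm: lox lim set is dense} (Hamann's bilateral density of the loxodromic limit set, recast as synchronous fellow travelling) enters, following \cite[Lemma 2.21]{PropNL}. Given $x,y\in\mathcal{L}_\rho(X)$, pick axes $\mathbb{A}(\g_1)\ni x'$ and $\mathbb{A}(\g_2)\ni y'$ with $d(x,x'),d(y,y')\leq r(\rho)$ and approximate the configuration of $x',y'$ together with the four endpoints $\g_1^\pm,\g_2^\pm$ by a quasitree (Lemma \ref{Lem: N-gons quasitrees}). In the tree, the geodesic $[x',y']$ runs along $\mathbb{A}(\g_1)$, then across a bridge, then along $\mathbb{A}(\g_2)$, so only the bridge need be controlled. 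Extending the bridge to a bi-infinite $(1,20\delta)$ quasigeodesic $c$ with endpoints in $\L(\G)$ via Lemma \ref{Lem:Geods in Tree}, Theorem \ref{thm: lox lim set is dense} supplies a loxodromic quasi-axis $q$ synchronously fellow travelling $c$ along the bridge, so the bridge lies in a bounded neighborhood of $\mathbb{A}(q)\subseteq\bigcup_{\g}\mathbb{A}(\g)$. The crucial point is that the fellow-travelling bound coming from Corollary \ref{Cor: Boundary close means fellow travel} is, through the quasitree approximation of Lemma \ref{Lem: N-gons quasitrees}, of size comparable to $2M(1,20\delta)$ and hence independent of $d(x,y)$; this yields a neighborhood constant for $[x,y]$ that does not depend on $x,y$, giving uniform quasiconvexity. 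I expect verifying this uniformity --- in particular that the bridge is genuinely covered and that the constant does not degrade as $d(x,y)\to\infty$ --- to be the delicate step.
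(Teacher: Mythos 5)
Your proof is correct and follows essentially the same route as the paper's: the same quasitree approximation (Lemma \ref{Lem: N-gons quasitrees}), the concatenation of the two axes and the bridge into a single quasigeodesic via Lemma \ref{Lem:Geods in Tree}, and the synchronous-fellow-travelling form of Hamann's density theorem (Theorem \ref{thm: lox lim set is dense}) to get a quasiconvexity constant independent of $d(x,y)$, together with $\L(\G)\subset \partial Z$ for essentiality. The only cosmetic difference is in the essentiality step, where the paper produces a quasigeodesic inside the fattened (hence hyperbolic) set $Z$ and applies the Morse lemma while you approximate by geodesics with endpoints in $Z$ tending to $\g^{\pm}$; both are standard and equivalent, though you should phrase your limiting step with $(1,20\delta)$ quasigeodesics rather than ``the bi-infinite geodesic,'' which need not exist when $X$ is not proper.
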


\begin{proof}
Let $Y= \Cup{\g\in \mathrm{Lox}(\G)}{}\mathbb{A}(\g)$.
    By the classification of actions Theorem ~\ref{thm:actionsclassrk1} and  Remark ~\ref{Rem:BenakliKapovich} we have that  $Y$ is nonempty and clearly $\G$-invariant. Let us prove that it is quasiconvex.

    Suppose that $x,y \in Y$, with $c,q: \Z\to X$ $(1, 20\delta)$ quasigeodesics  with $c(0)=x$ and $q(0)=y$, respectively, with end-points in $\H(\G)$. Fix a geodesic $[x,y]$. By Lemma ~\ref{Lem: N-gons quasitrees}, we have that $c(\Z)\cup q(\Z)\cup [x,y]$ is a quasitree which is   $(1,\lambda)$ quasi-isometric to a simplicial tree by \cite[Theorem 1.3]{Kerr} (where $\lambda=\lambda(4)$ is independent of the position of the points).

    By Lemma ~\ref{Lem:Geods in Tree} up to reversing the orientations of either $c$ or $q$ (or both) we have that $c(-\8,0]\cup[x,y]\cup q[0, \8)$ is a $(1, \lambda')$ quasigeodesic, where $\lambda'\geq 20\delta$ depends only on $\delta$ and $\lambda$. 

    This means that $[x,y]$ is within $M(1, \lambda')$ of any $(1, 20\delta)$ quasigeodesic connecting $c(-\8)$ and $q(\8)$, let $c': \Z\to X$ be a choice of such a quasigeodesic. By Theorem ~\ref{thm: lox lim set is dense}, there exists a loxodromic $\g\in \mathrm{Lox}(\G)$ so that if $q':\Z\to X$ is a $(1, 20\delta)$ quasigeodesic between the end points of $\g$ then $q'$ fellow travels $c'$. Thus the geodesic $[x,y]$ is within uniformly bounded distance (depending on $\lambda, \delta$ only) of the axis of the loxodromic $\gamma$, which is in $Y$. Hence $Y$ is quasiconvex. 

    By Remark ~\ref{rem:takingnbd}, there is an $r(\rho)\geq 0$ which depends only on the action so that $\mathcal{L}_\rho(X)$ is path connected and quasiconvex. Thus $\mathcal{L}_\rho(X)$ is a hyperbolic space with respect to the induced metric from $X$. 
    It remains to show the action is essential. Let $Z\subset \mathcal{L}_\rho(X)$ be nonempty,  quasiconvex and $\G$-invariant. By Remark ~\ref{rem:takingnbd}, we may take a uniform neighborhood of $Z$ (inside $\mathcal{L}_\rho(X)$) and assume $Z$ is also path connected, hence, hyperbolic with constant $\delta'$. It follows by  Lemma ~\ref{Lem: Limit set minimal} that $\partial Z\supset \L(\G)$.

    Let $x\in \mathcal{L}_\rho(X)$. Then there exists $\g\in \mathrm{Lox}(\G)$ and $c:\Z\to X$ a $(1,20\delta)$ quasigeodesic with $d(c(0),x) \leq r(\rho)$. However, as $c(\pm \infty) \in \partial Z$, $c$ connects two points in the boundary of $Z$ also. Therefore, there exists a $(1,20\delta')$ quasigeodesic $q: \Z\to Z$ and by the Morse property, we have that $d(c(0), q(n))\leq M(1,20\delta')$, for some $q(n) \in Z$. Thus $d(x,q(n)) \leq r(\rho) + M(1 ,20\delta')$ and this shows that $Z$ is coarsely dense.  
\end{proof}

While we normally do not label the map denoting the homomorphism from our acting group to the isometry group, we make an exception in the following as there are several maps to consider. 

\begin{lemma}\label{lem:tremblefreerk1}
    If  $\rho:\G\to \Isom X$ is of general type then $\mathcal{L}_\rho(X)$ is $\rho(\Gamma)$-invariant and quasiconvex and $\partial \mathcal{L}_\rho(X) = \L(\rho(\G))$. Moreover, setting  $\T=\ker(\G\to \Homeo(\L(\rho(\G))$ there is a $\delta'$-hyperbolic geodesic space $X'$ equipped with an injective homomorphism $\rho':\G/\T\hookrightarrow \Isom X'$ whose associated action is essential and a quasi-isometry $\f: \mathcal{L}_\rho(X) \to X'$ that is $\G$-equivariant with respect to the restriction $\rho(\G)|_{\mathcal{L}_\rho(X)}$ and the image $\rho'(\G/\T)$.
\end{lemma}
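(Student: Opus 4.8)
The plan is to read the first assertion straight off the essential-core machinery of Section \ref{Subsec:Taming Space via Action}, and to produce the space $X'$ by quotienting out a tremble via Lemma \ref{Lem: Kill Normal Ell}. First, since $\rho$ is of general type, Theorem \ref{thm:actionsclassrk1} provides two loxodromics with disjoint fixed point sets, so in particular $|\H(\rho(\G))|\geq 2$. Lemma \ref{Lem: irred core is essential invariant} then gives at once that $\mathcal{L}_\rho(X)$ is nonempty, quasiconvex and $\rho(\G)$-invariant, and Remark \ref{rem:takingnbd} lets me take $r(\rho)$ large enough that it is path connected and hence a genuine $\delta'$-hyperbolic geodesic space on which $\G$ acts by isometries, essentially. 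Because the quasi-axes of all loxodromics lie inside $\mathcal{L}_\rho(X)$, this restricted action is again of general type.

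For the boundary identity, the inclusion $\L(\rho(\G))\subseteq \partial\mathcal{L}_\rho(X)$ is immediate from Lemma \ref{Lem: Limit set minimal}. For the reverse inclusion I would take $\xi\in\partial\mathcal{L}_\rho(X)$, realized by a sequence $x_n\to\xi$ with each $x_n$ within $r(\rho)$ of a point $y_n$ on some axis $\mathbb{A}(\g_n)$, so that $y_n\to\xi$ as well. Approximating the configuration $\{o,\g_n^-,\g_n^+,y_n\}$ by a tree (Lemma \ref{Lem: N-gons quasitrees} and Remark \ref{Rem: QI to QT and Gromov Product}) and using that $y_n$ sits on the quasigeodesic between $\g_n^-$ and $\g_n^+$, one checks that one of the Gromov products $\<\g_n^\pm,\xi\>_o$ must tend to infinity; since each $\g_n^\pm\in\H(\rho(\G))$ and this set is bilaterally dense in the (closed) limit set $\L(\rho(\G))$ by Hamann's theorem as recorded in Theorem \ref{thm: lox lim set is dense}, we conclude $\xi\in\L(\rho(\G))$.

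To build $X'$ I would first verify that $\T=\ker(\G\to\Homeo(\L(\rho(\G))))$, which is normal in $\G$, acts elliptically on $\mathcal{L}_\rho(X)$. By the boundary identity just proved, $\T$ fixes $\partial\mathcal{L}_\rho(X)$ pointwise; Lemma \ref{Lem: normal in total gen type is tremb/gen type} says the $\T$-action is either elliptic or of general type, and the latter is impossible since independent loxodromics move boundary points. Hence $\T$ is a tremble, in particular elliptic, and Lemma \ref{Lem: Kill Normal Ell} applied to $\T\norm\G$ acting on $\mathcal{L}_\rho(X)$ yields a $\G$-invariant quasiconvex $Y\subseteq\mathcal{L}_\rho(X)$, a $\delta'$-hyperbolic graph $X'$, a homomorphism $\G/\T\to\Isom X'$, and a $\G$-equivariant quasi-isometry $Y\to X'$. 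Essentiality of the $\G$-action forces $Y$ to be coarsely dense, so the inclusion $Y\hookrightarrow\mathcal{L}_\rho(X)$ is itself a quasi-isometry; composing produces the desired $\G$-equivariant quasi-isometry $\f:\mathcal{L}_\rho(X)\to X'$.

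Finally, $\f$ induces a $\G$-equivariant homeomorphism $\partial\mathcal{L}_\rho(X)\to\partial X'$, so the kernel of $\G\to\Homeo(\partial X')$ is exactly $\T$; composing $\Isom X'\hookrightarrow\Homeo(\partial X')$ then shows that $\rho':\G/\T\to\Isom X'$ is injective, and essentiality transfers across the equivariant quasi-isometry (a $\G$-invariant quasiconvex subset of $X'$ pulls back to one in $\mathcal{L}_\rho(X)$, which is coarsely dense). The main obstacle is the reverse boundary inclusion $\partial\mathcal{L}_\rho(X)\subseteq\L(\rho(\G))$: ruling out spurious boundary points created by hopping between distinct axes is precisely where the tree approximation together with the bilateral density of the loxodromic limit set does the work, whereas the construction of $X'$ and the injectivity of $\rho'$ are a clean assembly of the preceding taming lemmas.
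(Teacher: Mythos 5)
Your proposal is correct and follows essentially the same route as the paper: Lemma \ref{Lem: irred core is essential invariant} and Lemma \ref{Lem: Limit set minimal} for the first claims, a tree-approximation/convergence argument (the paper packages this as Lemma \ref{Lem: Convergence}) plus Hamann's density theorem for the reverse boundary inclusion, and the tremble-taming machinery for $X'$ and the injectivity of $\rho'$. The only difference is that you unpack Corollary \ref{Cor:Non-LC tremble tame} into its ingredients (Lemma \ref{Lem: normal in total gen type is tremb/gen type} plus Lemma \ref{Lem: Kill Normal Ell}) where the paper simply cites it.
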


\begin{proof} By Lemma ~\ref{Lem: irred core is essential invariant} $\mathcal{L}_\rho(X)$ is a hyperbolic space which is $\rho(\G)$-invariant, on which the action is essential. By Lemma ~\ref{Lem: Limit set minimal} we have that $\partial \mathcal{L}_\rho(X) \supset \L(\rho(\G))$. To show the reverse inclusion, suppose that $\xi\in \partial \mathcal{L}_\rho(X)$. Then there exists a sequence of $(1, 20\delta)$ quasigeodesics $c_n: \Z\to X$ and $m_n\in \Z$ such that $c_n(m_n) \to \xi$ and such that $c_n(\Z)\subset \mathbb{A}(\rho(\g_n))$ for loxodromics $\rho(\g_n)$. Therefore, by Lemma ~\ref{Lem: Convergence}, up to possibly reversing the orientations,  we deduce that $c_n(+\8)\to \xi$. By \cite[Theorem 2.9]{Hamann}, we have that $\~{\H(\rho(\G))} = \Lambda(\rho(\G))$.  In particular, $\xi \in\Lambda(\rho(\G))$.

Next, apply Corollary ~\ref{Cor:Non-LC tremble tame} to trivialize the action of $\T$ and obtain $\X'$. The essentiality of the action is preserved since the promised quasi-isometry is $\G$-equivariant.  Therefore $\T$ is in the kernel. On the otherhand, the kernel surely acts trivially on $\partial X$ and so the homomorphism $\G/\T\to \Isom X'$ is injective. 
\end{proof}

\begin{cor}\label{Cor: FI-ess}  Suppose $\rho:\G\to X$ is general type and essential and $H < \G$ is of finite index. Then the induced action of $H$ on $X$ is also essential. 
\end{cor}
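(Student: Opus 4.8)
The plan is to reduce essentiality of the $H$-action to the essentiality of the $\G$-action on its essential core $\mathcal{L}_\rho(X)$, which is already in hand. First I would isolate the only role played by the finite-index hypothesis. For every loxodromic $\g\in\mathrm{Lox}(\G)$ the cosets $\g^iH$, $i\geq 0$, must repeat, so some power $\g^k$ lies in $H$; this power is again loxodromic with the same fixed point pair $\g^{\pm}\in\partial X$. Hence the endpoints of every loxodromic axis lie in the $H$-limit set, that is $\H(\G)\subseteq\L(H)$. In particular $|\H(H)|\geq 2$, so $H$ is itself of general type, but what matters below is only the inclusion $\H(\G)\subseteq\L(H)$.

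Next, let $Z\subset X$ be an arbitrary $H$-invariant quasiconvex subset; the goal is to show $Z$ is coarsely dense in $X$. Replacing $Z$ by a uniform neighborhood as in Remark \ref{rem:takingnbd}, I may assume $Z$ is path-connected, hence a hyperbolic space in its induced length metric, without changing its coarse density or its boundary. Since $Z$ is $H$-invariant and quasiconvex, Lemma \ref{Lem: Limit set minimal} gives $\partial Z\supseteq\L(H)\supseteq\H(\G)$.

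The heart of the argument is then essentially a repetition of the essentiality computation in the proof of Lemma \ref{Lem: irred core is essential invariant}, but carried out against $Z$ rather than a subset of the core. Fix $x\in\mathcal{L}_\rho(X)$. By definition $x$ lies within $r(\rho)$ of a $(1,20\delta)$ quasi-axis $c$ of some loxodromic $\g\in\mathrm{Lox}(\G)$, and $c(\pm\infty)=\g^{\pm}\in\H(\G)\subseteq\partial Z$. Using Remark \ref{Rem:BenakliKapovich} inside the hyperbolic space $Z$, I would pick a $(1,20\delta')$ quasigeodesic $q$ in $Z$ between $\g^-$ and $\g^+$; as $Z$ is quasiconvex it is quasi-isometrically embedded in $X$, so $q$ is a $(1,\lambda)$ quasigeodesic of $X$ sharing both endpoints with $c$. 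The Infinite Morse Lemma (Theorem \ref{Slim Q-Bigons infinite}) bounds the Hausdorff distance between $c$ and $q$ by some $M$, whence $d(x,Z)\leq r(\rho)+M$. Since $x$ was arbitrary, $\mathcal{L}_\rho(X)\subseteq\~{\hood}_{r(\rho)+M}(Z)$.

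Finally I would invoke the hypothesis that the $\G$-action on $X$ is essential. The essential core $\mathcal{L}_\rho(X)$ is $\G$-invariant and quasiconvex by Lemma \ref{Lem: irred core is essential invariant}, so essentiality of the $\G$-action forces it to be coarsely dense in $X$; composing this with the previous paragraph shows $Z$ is coarsely dense in $X$, exactly what essentiality of the $H$-action demands. The only genuine subtlety, and the step I expect to require the most care, is the third paragraph: ensuring that the thickened $Z$ really is a hyperbolic space admitting bi-infinite quasigeodesics between the prescribed boundary points, and that such a quasigeodesic survives as a quasigeodesic of the ambient $X$ so the Morse Lemma legitimately compares it with $c$.
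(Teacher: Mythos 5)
Your proof is correct and follows essentially the same route as the paper's: the finite-index hypothesis is used only to get a power of each $\G$-loxodromic inside $H$ with the same axis, so the ($H$- and $\G$-) essential cores coincide, and essentiality of the $\G$-action makes that common core coarsely dense in $X$. The paper simply states this and cites Lemma \ref{Lem: irred core is essential invariant}, whereas you unpack that lemma's coarse-density argument against an arbitrary $H$-invariant quasiconvex $Z$; the extra care you flag about thickening $Z$ and comparing quasigeodesics via the Morse Lemma is exactly what the cited lemma already handles.
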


\begin{proof} Since the action is of general type and essential, we have that, up to possibly increasing $r(\rho)$,  $\mathcal L_\rho(X)=X$. Since $H$ is of finite index in $\G$, the set $\mathcal{L}_\rho(X)$ is the same for both $H$ and $\G$. Indeed, every loxodromic element from $G$ has a (uniform) power that is loxodromic in $H$. As a consequence, the essential core for $H$ is the same as the essential core for $\G$. Since the action of $\G$ was essential on $X$, so is the action of $H$.
\end{proof}

\begin{remark}[Tremble-free essential core]\label{rem: irred core in grom bordn} The space $X'$ as in Lemma   
~\ref{lem:tremblefreerk1} will be referred to as the \emph{tremble-free  essential core} of the action. 

We make the following observations. 
\begin{enumerate}
    \item If $\rho(\G)$ is elliptic, then the loxodromic limit set is empty and hence, setting $\T=\G$ and we may take $Y$ to be an orbit, which is quasiconvex by Lemma ~\ref{Lem: elliptic orbits are quasiconvex} (and clearly $\G$-invariant) and set $X'$ to be a point. 
    \item Lemma ~\ref{Lem: irred core is essential invariant} does not hold in the lineal case as loxodromics would act trivially on the associated boundary points and thus be in the kernel considered in the statement.
    \item  As in \cite[Remark 3.6]{BaderCapraceFurmanSisto}, it follows from \cite[Remark 4]{GruberSistoTessera} that if $\G\to \Isom X$ is not parabolic then there is another $\delta$-hyperbolic geodesic space $X'$ on which $\G$ also acts by isometries and a coarsely $\G$-equivariant quasi-isometric embedding $X'\hookrightarrow X$, and moreover, $X'$ may be made separable. 
  
\end{enumerate}
\end{remark}

\begin{defn}
  An action $\G\to \Isom X$ is said to be 
  \begin{itemize}
      \item \emph{tremble-free} if the only elements that act as trembles are those in the kernel;
      \item \emph{rift-free} if no subgroup of $\G$ acts as a rift, i.e. elliptic subgroups are trembles or rotations;
      \item \emph{essential-rift-free} if no subgroup of $\G$ acts as a rift on the essential core.
  \end{itemize}
\end{defn}

\section{Actions in Higher Rank}\label{sec:isomhigherrank}

For the notation used in the following results, please refer to Table ~\ref{table:notation} and Section ~\ref{Subsec:Products} for an explanation.

\subsection{Taming in higher rank}

We begin by assembling our taming results from Section ~\ref{Subsec:Taming Space via Action} in the higher rank setting. 

\begin{cor}[Higher rank tremble-free essential core]\label{Cor: pass to irreducible tremble free core}
    Let $\rho:\G \to \Aut \X$ be (AU-)acylindrical with general type factors. Then, there exists $\X':=\CProd{i=1}{D}X_i'$ a product of $D$-many  $\delta'$-hyperbolic spaces such that:
    \begin{enumerate}
        \item there is a homomorphism $\rho':\G\to \Aut \X'$ that factors through $\rho$  such that the action is (AU-)acylindrical;
        \item the action on each factor is essential, of general type, and tremble-free;
        \item if $\L_i$  denotes the limit set in the $i^{\text{th}}$ coordinate then the following kernel is finite. $$\ker(\rho') = \ker(\G \to \Homeo(\CProd{i=1}{D}\L_i));$$  
        \item if a factor of $\X$ is locally compact, then so is the associated factor of $\X'$.
    \end{enumerate}
\end{cor}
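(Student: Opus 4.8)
The plan is to construct $\X'$ one factor at a time, applying the rank-one tremble-free essential core of Lemma \ref{lem:tremblefreerk1} in each coordinate, and then to reassemble these into a product while checking that (AU-)acylindricity, the permutation symmetry of the factors, and the kernel computation all survive the passage.

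First I would restrict attention to the factor-preserving subgroup $\G_0 \leq \G$, so that each projection $\G_0 \to \Isom X_i$ is defined and, by hypothesis, of general type. Applying Lemma \ref{lem:tremblefreerk1} to $\G_0 \to \Isom X_i$ yields for each $i$ the essential core $\mathcal{L}_i \subseteq X_i$, a $\delta'$-hyperbolic model $X_i'$, the normal elliptic subgroup $\T_i = \ker(\G_0 \to \Homeo(\L_i))$, and a $\G_0$-equivariant quasi-isometry $\f_i \colon \mathcal{L}_i \to X_i'$ for which $\G_0/\T_i \hookrightarrow \Isom X_i'$ is essential, general type, and tremble-free; this already delivers item (2). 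For item (4), when $X_i$ is locally compact I would replace the final tremble-trivialising step ---which in Lemma \ref{lem:tremblefreerk1} runs through the graph construction of Corollary \ref{Cor:Non-LC tremble tame}--- by Corollary \ref{Cor:LC Tremble Tame}, taking the quotient of the (complete, locally compact, quasiconvex) essential core by the compact boundary-kernel; this keeps the resulting factor locally compact.

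Setting $\X' = \CProd{i=1}{D} X_i'$, the map $\prod_i \rho_i'$ defines the $\G_0$-action. To promote this to a homomorphism $\rho' \colon \G \to \Aut \X'$ I would use that the whole construction is natural: a factor-permuting $g \in \G$ identifies $X_i$ with an isometric $X_{\sigma(i)}$ and conjugates the associated $\G_0$-actions, hence carries loxodromic axes to loxodromic axes, $\mathcal{L}_i$ to $\mathcal{L}_{\sigma(i)}$, and so descends to an isometry $X_i' \to X_{\sigma(i)}'$. Thus $\Sym_\X(D)$ acts on $\X'$, we obtain $\rho'$ factoring through $\rho$, and the maps $\f_i$ assemble to a $\G$-equivariant quasi-isometry $\prod_i \mathcal{L}_i \to \X'$ for the $\ell^1$ (equivalently $\ell^\infty$) metric.

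The heart of the proof ---and the step I expect to be the main obstacle--- is verifying that $\rho'$ is (AU-)acylindrical and computing its kernel, since the target space has genuinely changed. For acylindricity I would argue in two moves: $\prod_i \mathcal{L}_i$ is a $\G$-invariant quasiconvex subset of $\X$, and restricting an (AU-)acylindrical action to such a subset preserves the property (on the subspace the $\e$-coarse stabilizers of pairs are unchanged, and the induced length metric is quasi-isometric to the restricted metric on a quasiconvex set, so the constants only rescale); then $\prod_i \f_i$ is a $\G$-equivariant quasi-isometry, across which (AU-)acylindricity transfers because coarse stabilizers and distances distort only by bounded and coarsely linear amounts, with the uniform and the ambiguous-uniformity versions handled identically. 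With $\rho'$ (AU-)acylindrical and $\X'$ unbounded, Lemma \ref{Lem:AU-acyl has finite kernel} forces $\ker(\rho')$ to be finite. To identify it, I would combine the factorwise identities $\ker(\G_0 \to \Isom X_i') = \T_i = \ker(\G_0 \to \Homeo(\L_i))$ ---supplied by the injectivity of $\G_0/\T_i \hookrightarrow \Isom X_i'$ in Lemma \ref{lem:tremblefreerk1}--- with the observation that an element permuting factors nontrivially moves $\prod_i \L_i$ nontrivially (each $\L_i$ being nonempty as the factors are general type). Hence $\ker(\rho')$ lies in $\G_0$ and equals $\bigcap_i \T_i = \ker(\G \to \Homeo(\CProd{i=1}{D}\L_i))$, completing items (1) and (3).
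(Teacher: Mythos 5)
Your proposal is correct and follows essentially the same route as the paper's proof: apply the rank-one tremble-free essential core (Lemma \ref{lem:tremblefreerk1}) factorwise over $\G_0$, use the conjugacy of the factor actions under permuting elements to extend to all of $\G$, transfer (AU-)acylindricity through the invariant quasiconvex product of essential cores and the equivariant quasi-isometry, and invoke Lemma \ref{Lem:AU-acyl has finite kernel} together with Proposition \ref{Prop: loc compact tremb quot} (via Corollary \ref{Cor:LC Tremble Tame}) for the kernel and local-compactness claims. Your identification of $\ker(\rho')$ as $\bigcap_i \T_i$ and the argument that it lies in $\G_0$ are in fact spelled out more carefully than in the paper, which simply cites Lemma \ref{lem:tremblefreerk1} for item (3).
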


\begin{proof} 
    Apply Lemma ~\ref{lem:tremblefreerk1} to find the tremble-free essential cores. Of course, this is happening on the finite index subgroup  $\G_0$ that is the kernel $\G\to \Sym_\X(D)$. If this map is not trivial, then an element of $\G$ takes some factor $X_i$ to $X_j$. But this means that the projected action of $\G_0$ to $X_i$ and $X_j$ are conjugate, and therefore, so are the associated essential cores $X_i'$ and $X_j'$. This means that the action $\G_0\to \Aut_{\!0}\X'$ lifts to an action $\G\to \Aut\X'$, with isomorphic permutation quotient $\G\to \Sym_{\X'}(D)$.

    By Lemma ~\ref{Lem: irred core is essential invariant}, $\mathcal{L}_\rho(X_i)$ is an invariant subspace of $X_i$. Since the action of $\G$ on $\X$ is (AU-)acylindrical, the action of $\G$ on $\Prod{i=1}{D} \mathcal{L}_\rho(X_i)$ is (AU-)acylindrical. By Lemma ~\ref{lem:tremblefreerk1}, we have a $\G_0$-equivariant quasi-isometry from $\mathcal{L}_\rho(X_i)$ to $X'_i$ for all $1 \leq i \leq D$. This implies that we have a $\G_0$ equivariant quasi-isometry from $\Prod{i=1}{D} \mathcal{L}_\rho(X_i)$ to $\Prod{i=1}{D} X'_i = \X'$, which extends to an action of $\G$. Thus the action of $\G$ on $\X'$ is also (AU-)acylindrical. This justifies items (1) and (2). 

    Item (3) also follows from Lemma ~\ref{lem:tremblefreerk1}. Lastly, note that $\ker(\rho')$ is finite by  Lemma ~\ref{Lem:AU-acyl has finite kernel} and so item (4) follows from Proposition ~\ref{Prop: loc compact tremb quot} applied to each locally compact factor.  
\end{proof}

\begin{lemma}\label{lem:finquotsacyl}
Let $\G \to \Aut_{\!0}\X$ be (AU-)acylindrical and with general type factors. Let $N \norm \G$ be a finite normal subgroup. Then $\G/N$ admits an (AU-)acylindrical action on $\X'$. 
\end{lemma}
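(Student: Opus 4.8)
The plan is to pass to the tremble-free essential core, observe that the finite normal subgroup $N$ is forced into its kernel there, and then descend the resulting action along the finite quotient map $\G\to\G/N$. First I would apply Corollary \ref{Cor: pass to irreducible tremble free core} to the action $\G\to\Aut_{\!0}\X\subseteq\Aut\X$, which is (AU-)acylindrical with general type factors. This produces a product $\X'=\CProd{i=1}{D}X_i'$ together with a homomorphism $\rho'\colon\G\to\Aut\X'$ that factors through the original action and is (AU-)acylindrical, whose factor actions $\G\to\Isom X_i'$ are each essential, of general type, and tremble-free, and with $\ker(\rho')$ finite. Since the original action preserves factors, the permutation part of $\rho'$ is trivial as well, so in fact $\rho'\colon\G\to\Aut_{\!0}\X'$.

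The key step is to show $N\leq\ker(\rho')$. As $N$ is finite, all of its orbits are bounded, so $N$ is elliptic on each factor $X_i'$; and being normal, $N$ is in particular commensurated in $\G$. Applying Lemma \ref{Lem: normal in total gen type is tremb/gen type} to the essential general type action $\G\to\Isom X_i'$, the commensurated subgroup $N$ is elliptic or of general type. It cannot be of general type, as a finite group contains no loxodromics, so it is elliptic; since it is moreover normal, the ``furthermore'' clause of that lemma gives that $N$ acts as a tremble on $X_i'$. In particular every element of $N$ acts as a tremble on $X_i'$, so by tremble-freeness every element of $N$ lies in $\ker(\G\to\Isom X_i')$. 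As this holds for each $i$, the subgroup $N$ acts trivially on $\X'$, i.e. $N\leq\ker(\rho')$.

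Consequently $\rho'$ descends to a homomorphism $\bar\rho'\colon\G/N\to\Aut_{\!0}\X'$ with $\bar\rho'(gN)=\rho'(g)$, and it remains to verify that $\bar\rho'$ is (AU-)acylindrical. This follows because quotienting by a finite subgroup of the kernel merely rescales coarse stabilizers. Concretely, for $x,y\in\X'$ and $\e>0$ one has $gN\in\cs{\e}^{\bar\rho'}(x,y)$ if and only if $g\in\cs{\e}^{\rho'}(x,y)$; and since $\ker(\rho')\supseteq N$ fixes every point, $\cs{\e}^{\rho'}(x,y)$ is a union of $N$-cosets, so the quotient map restricts to an exactly $|N|$-to-$1$ surjection $\cs{\e}^{\rho'}(x,y)\to\cs{\e}^{\bar\rho'}(x,y)$, giving $|\cs{\e}^{\bar\rho'}(x,y)|=|\cs{\e}^{\rho'}(x,y)|/|N|$. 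Since distances in $\X'$ are unchanged, for any $\e$ the same threshold $R(\e)$ works, and whenever $d(x,y)\geq R(\e)$ the finiteness (respectively the uniform bound, now $N(\e)/|N|$) of the coarse stabilizers for $\rho'$ is inherited by $\bar\rho'$. Thus $\bar\rho'$ is (AU-)acylindrical on $\X'$.

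The main obstacle is the middle step: one must be certain that $N$ genuinely disappears in the core, and this is exactly where tremble-freeness is indispensable, since without it a finite normal subgroup could persist as a nontrivial tremble and block the descent. (Alternatively, one could bypass the core and kill $N$ factorwise via Lemma \ref{Lem: Kill Normal Ell}, obtaining $\G$-invariant quasiconvex $Y_i\subseteq X_i$ and a $\G$-equivariant quasi-isometry $\CProd{i=1}{D}Y_i\to\X'$ through which the $\G/N$-action is already defined; the coarse-stabilizer bookkeeping under this quasi-isometry is then the routine part.)
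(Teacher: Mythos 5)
Your proof is correct and takes essentially the same route as the paper: both pass to the tremble-free essential core (you via Corollary \ref{Cor: pass to irreducible tremble free core}, the paper via Lemmas \ref{Lem: irred core is essential invariant}, \ref{Lem: normal in total gen type is tremb/gen type} and \ref{lem:tremblefreerk1}, which that corollary packages), observe that the finite normal subgroup becomes a tremble and hence lands in the kernel, and descend the action to $\G/N$. Your explicit coset-counting verification that (AU-)acylindricity survives the finite quotient is a detail the paper leaves implicit, but the argument is the same.
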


\begin{proof} Applying Lemma ~\ref{Lem: irred core is essential invariant}, we first obtain an action of $\G$ on $\Prod{i=1}{D} \mathcal{L}_\rho(X_i)$, which is (AU-)acylindrical and essential and of general type of each factor. Since $N$ is finite, it follows from Lemma ~\ref{Lem: normal in total gen type is tremb/gen type}, that the induced action of $N$ on (each factor of) $\Prod{i=1}{D} \mathcal{L}_\rho(X_i)$ is a tremble. But then $N$ is in the kernel of the action. As a consequence of Lemma ~\ref{lem:tremblefreerk1}, we get an action of $\G/N$ on $\X'$, which is also (AU-)acylindrical. \end{proof}

\subsection{Isometries in higher rank}
We now turn our attention to classifying isometries in higher rank. Note that some of these have been considered in the work of Button \cite[Section 4]{buttonhigherrank}. 

\begin{defn}
An element $g=(g_1, \dots, g_D)\in \Aut_{\! 0}\X$ is said to be
\begin{enumerate}
\item \emph{elliptic} if all components $g_i$ are elliptic;
\item \emph{parabolic} if all components $g_i$ are  parabolic;
\item \emph{pseudo-parabolic} if all components $g_i$ are either elliptic or parabolic, with at least one of each type;
\item \emph{regular loxodromic} if all components $g_i$ are loxodromic;
\item \emph{weakly-loxodromic} if at least one component $g_i$ is loxodromic, but $g$ is not regular. 

\end{enumerate}
\end{defn}

We now explore acylindrical actions in the higher rank setting (see  Corollary ~\ref{cor:elimpapra} and Lemma ~\ref{Lem:noqpinacyl}). Our goal is to show that factors where the action is elliptic or parabolic do not ``contribute" to acylindricity, and can thus be omitted without loss of acylindricity (see Lemmas ~\ref{Lem:elimellfactors} and ~\ref{prop:removepara}). For the case of AU-acylindricity, we note that elliptic factors can be omitted. 

We note that the notion of an elliptic action is well defined for any metric space, and so we prove the following lemma in that more general setting. 

\begin{lemma} \label{Lem:elimellfactors} Suppose that $\G \to \Isom Y \times \Isom Z$ is (AU-)acylindrical, where $Y,Z$ are metric spaces. If the projection $\G \to \Isom Y$ is elliptic then the projection $\G \to \Isom Z$ is (AU-)acylindrical. 
\end{lemma}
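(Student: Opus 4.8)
The plan is to transfer (AU-)acylindricity from the product action down to the $Z$-factor by isometrically ``copying'' $Z$ into $Y\times Z$ along a single fixed basepoint of $Y$, using ellipticity of the $Y$-action precisely to control the $Y$-displacements uniformly. First I would invoke the hypothesis that $\G\to\Isom Y$ is elliptic: since for an isometric action on a metric space ellipticity means all orbits are bounded, I can fix a basepoint $y_0\in Y$ and set $B:=\sup_{g\in\G} d_Y(g y_0, y_0)<\8$. This finite number $B$ is the only place the elliptic hypothesis enters, and it is the heart of the argument.

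Next I would set up the embedding $z\mapsto \bar z:=(y_0,z)$ of $Z$ into $Y\times Z$ and record two elementary facts. The first is that, because $\bar z$ and $\bar z'$ share the same $Y$-coordinate $y_0$, their product distance is exactly $d(\bar z,\bar z')=d_Z(z,z')$ for any of the equivalent $\ell^p$ product metrics (the $Y$-contribution vanishes). The second is a coarse-stabilizer containment: if $g\in\G$ displaces $z$ by at most $\e$ in $Z$, then it displaces $\bar z$ by at most some $\e':=\e'(\e,B)$ in $Y\times Z$ (for instance $\e'=\e+B$ for the $\ell^1$ metric, or $\e'=\mathrm{max}\{\e,B\}$ for the $\ell^\8$ metric), since its $Y$-displacement is at most $B$. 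Hence, for any $z,z'\in Z$,
\[
\{g\in\G : d_Z(gz,z)\leq\e,\ d_Z(gz',z')\leq\e\}\subseteq \cs{\e'}(\bar z,\bar z'),
\]
the right-hand coarse stabilizer being taken for the product action on $Y\times Z$.

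Finally I would feed in the product (AU-)acylindricity constant(s) associated to $\e'$: let $R'=R(\e')$ and, in the acylindrical case, $N'=N(\e')$. Whenever $d_Z(z,z')\geq R'$ we get $d(\bar z,\bar z')=d_Z(z,z')\geq R'$, so by (AU-)acylindricity of $\G\to\Isom(Y\times Z)$ the set $\cs{\e'}(\bar z,\bar z')$ is finite (of cardinality at most $N'$ in the acylindrical case); by the displayed containment, so is the corresponding $\e$-coarse stabilizer set in $Z$. Taking $R:=R'$ (and $N:=N'$) then witnesses (AU-)acylindricity of $\G\to\Isom Z$, and the identical argument handles both the uniform and the ambiguous-uniformity cases at once. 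I do not expect a genuine obstacle here: the only points requiring care are the metric-dependent value of $\e'$ (immaterial, since all $\ell^p$ product metrics are bi-Lipschitz equivalent and acylindricity is insensitive to such a change) and keeping straight that the distance between the two lifted points is unaffected by their common $Y$-coordinate.
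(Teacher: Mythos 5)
Your proposal is correct and follows essentially the same route as the paper's proof: fix a basepoint $y_0\in Y$, use ellipticity to bound its orbit by some $B$, observe that the $\e$-coarse stabilizer of $z,z'$ in $Z$ is contained in the $(\e+B)$-coarse stabilizer of $(y_0,z),(y_0,z')$ in the product, and import the product's (AU-)acylindricity constants at the adjusted scale. No gaps.
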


\begin{proof} We shall just prove the case of an acylindrical action since the case of nonuniformly acylindrical is analogous.  Let $\e >0$ be given. Fix $y \in Y$. Then there is a constant $B >0$ such that $\op{diam}(\G y) \leq B$. Let $R, \mfN$ be the acylindricity constants for $\e' = \e +B$. Let $a,b \in Z$ be any points such that $d(a,b) > R$. Consider the set $$S = \{g \in \G \mid d(a, ga) \leq \e \text{ and } d(b, gb) \leq \e \}.$$ We claim that $|S| \leq \mfN$, which will prove acylindricity.  

Firstly, observe that \begin{align*}
    d( (y ,a), (y, b)) &= d(y ,y) + d(a, b) \\
    &= d(a, b)\\
    & \geq R
\end{align*}

Further, if $g \in S$, then \begin{align*}
    d((y, a), g(y, a)) &= d(y, gy) + d(a, ga) \\
    & \leq B+ \e\\
    & = \e'
\end{align*}

Similarly, $d((y,b), g(y,b)) \leq \e'$. By acylindricity of the action on $Y \times Z$, we have that $|S| \leq \mfN$.    \end{proof}

In a vein similar to the previous lemma, the next lemma allows us to remove parabolic factors from an acylindrical action on a product. Again, we prove the result in a more general setting, as it might be helpful for future explorations.

\begin{prop}\label{prop:removepara} Let $\G \to \Isom Z \times \Isom X$ be acylindrical, where $Z$ is a metric space and $X$ is hyperbolic. If the projection $\G \to\Isom X$ is parabolic, then the projection $\G \to \Isom Z$ is acylindrical. 
 \end{prop}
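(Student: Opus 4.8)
The plan is to follow the architecture of Lemma \ref{Lem:elimellfactors}, using acylindricity of the full product action on $Z\times X$ to bound coarse stabilizers in $Z$. The new difficulty, and the reason the elliptic argument does not transfer verbatim, is that parabolic orbits in $X$ are \emph{unbounded}, so I cannot bound the $X$-displacement of the relevant elements by a single global constant $B$. Instead I will use that, since the $X$-projection is parabolic, every element of $\G$ fixes the unique boundary point $\xi$ and the Busemann quasimorphism satisfies $\b\equiv 0$ (there are no loxodromic elements). This is precisely what lets me slide a basepoint far out toward $\xi$ along a quasigeodesic ray so that any prescribed \emph{finite} collection of elements moves it by a uniformly bounded amount, via the mechanism behind Lemma \ref{lemmaelimpara}.

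Concretely, I would fix $\e>0$, let $E$ be the constant from Lemma \ref{Lem:shiftfn}, and set $\e'=\e+E$. Let $R',N'$ be the acylindricity constants of the product action $\G\to\Isom(Z\times X)$ at scale $\e'$, and claim that $R:=R'$ and $N:=N'$ work for the $Z$-action. Given $a,b\in Z$ with $d(a,b)\geq R$, consider $S=\cs{\e}(a,b)=\{g\in\G : d(a,ga)\leq\e,\ d(b,gb)\leq\e\}$ and let $F\subseteq S$ be an arbitrary finite subset. Since every $g\in\G$ fixes $\xi$ and $\b\equiv 0$, Lemma \ref{Lem:shiftfn} provides, for each $g\in F$, a threshold $s_0(g)$ beyond which $d\big(q(s),g q(s)\big)\leq E$ along a fixed $(1,\lambda)$ quasigeodesic ray $q$ converging to $\xi$. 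Taking $t\geq\max_{g\in F}s_0(g)$ and setting $x:=q(t)\in X$, every $g\in F$ then satisfies $d(x,g_X x)\leq E$.

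Evaluating in $Z\times X$ with the $\ell^1$ product metric, I would then observe that for every $g\in F$,
\[
d\big((a,x),g(a,x)\big)=d(a,ga)+d(x,g_X x)\leq \e+E=\e',
\]
and likewise $d\big((b,x),g(b,x)\big)\leq\e'$, while $d\big((a,x),(b,x)\big)=d(a,b)\geq R=R'$. Hence $F$ lies in the $\e'$-coarse stabilizer of the pair $(a,x),(b,x)$, which by acylindricity of the product action has cardinality at most $N'$. Since $F$ was an arbitrary finite subset of $S$, this forces $|S|\leq N'=N$, giving acylindricity of the $Z$-action.

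The main obstacle is exactly the point flagged above: the basepoint $x$ making the $X$-displacement small depends on the finite set $F$, so one cannot run the argument with a single global basepoint and must instead proceed finite-subset by finite-subset. This is harmless because the product bound $N'$ is uniform and therefore controls every finite subset of $S$ simultaneously, hence $S$ itself. I would also dispose of the trivial case where $\G$ is finite (for which the exhaustion underlying Lemma \ref{lemmaelimpara}, and the statement, are automatic), and remark that acylindricity is insensitive to the choice among the bi-Lipschitz-equivalent $\ell^p$ product metrics, so using $\ell^1$ costs nothing.
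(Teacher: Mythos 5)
Your proof is correct and uses essentially the same mechanism as the paper's: the vanishing of the Busemann quasimorphism together with Lemma \ref{Lem:shiftfn} to slide a basepoint along a quasiray toward the fixed point $\xi$ so that any prescribed finite set of elements displaces it by at most $E$, followed by product acylindricity at scale $\e'=\e+E$. The only difference is presentational — you argue directly, bounding arbitrary finite subsets of $\cs{\e}(a,b)$, whereas the paper runs the identical argument in contrapositive form via the exhaustion $S_m$ and Lemma \ref{lemmaelimpara} — and both are valid.
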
 

\begin{proof} Since the action of $\G$ on $X$ is parabolic, $\G$ has a fixed point in $\partial X$, and hence the associated Busemann quasimorphism $\b: \G\to \R$ is identically $0$. Let $E$ be the constant from Lemma ~\ref{Lem:shiftfn}. 

We proceed by contrapositive. Assume, that the action on $Z$ is not acylindrical. Then, there exists an $\e >0$ such that for any $R, \mfN >0$, there exist points $z,w\in Z$ with $d(z,w) \geq R$ and $$| \op{set}_\e(z,w)| =| \{ g \in \G \mid d(z,w) \leq \e \text{ and } d(w, gw) \leq \e \}| \geq \mfN.$$

Set $\e' = \e + E$. We will complete the proof by showing that the action of $\G$ on $Z \times Y$ is not acylindrical for this $\e' >0$.  Let $\G=\Cup{n\geq 1}{}S_{n}$ be a nested sequence of finite symmetric sets exhausting $\G$. Choose $m$ large enough so that $|\op{set}_\e(z,w)\cap S_m| \geq N+1$. By Lemma ~\ref{lemmaelimpara}, there is a $t_m>0$ such that $M(t) \geq m$ for all $t \geq t_m$. It follows from the proof of Lemma ~\ref{lemmaelimpara}, that $y = q(t_m) \in X$ is such that $$d(gy, y) \leq E$$ for all $g \in S_m \cap \op{set}_\e(z,w)$. Then $(z, y), (w,y) \in Z \times X$ and $d((z, y), (w,y)) \geq R.$ Consider $g \in S_m \cap \op{set}_\e(z,w)$. Then for $u\in \{z,w\}$, we have that $$d((u,y), g(u, y)) = \max{} \{d(u, gu), d(y, gy)\} \leq \e + E \leq \e'.$$ As a consequence $| \op{set}_{\e'} ((z, y), (w, y))| \geq \mfN+ 1 > \mfN$. This shows that any choice of $R, \mfN$ fail to satisfy the acylindricity condition for $\e'$ on $Z \times X$.
\end{proof}

\begin{cor}\label{cor:elimpapra} Let $\G \to \Aut_{\!0}\X$ such that each factor is parabolic. Then the action of $\G$ on $\X$ is not acylindrical. 
\end{cor}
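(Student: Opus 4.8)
The plan is to argue by contradiction, assuming $\G\to\Aut_{\!0}\X$ is acylindrical, and to strip off the parabolic factors one at a time using Proposition \ref{prop:removepara} until a single parabolic factor remains. Writing $\X = X_1\times Z$ with $Z=\prod_{i=2}^{D} X_i$, the projection $\G\to\Isom X_1$ is parabolic and $Z$ is a metric space, so Proposition \ref{prop:removepara} tells us that the product action $\G\to\Isom Z=\Isom(\prod_{i=2}^{D} X_i)$ is again acylindrical; moreover each of its $D-1$ factors is still parabolic. Iterating this $D-1$ times I would arrive at the conclusion that the single projection $\G\to\Isom X_D$ is acylindrical, where this action is parabolic. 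It then suffices to rule out this last configuration.

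So the crux is the base case: a parabolic action on a hyperbolic space is never acylindrical. I would prove this directly from Lemma \ref{lemmaelimpara}. Since a parabolic action fixes a unique boundary point $\xi$ and has unbounded orbits, $\G$ is infinite; fix a $(1,\lambda)$ quasigeodesic ray $q$ converging to $\xi$, let $E$ be the constant of Lemma \ref{Lem:shiftfn}, and recall the exhaustion $\{1\}\subsetneq S_1\subsetneq S_2\subsetneq\cdots$ with $\bigcup_m S_m=\G$, so that $|S_m|\to\infty$. By Lemma \ref{lemmaelimpara} there is, for each $m$, a value $t_m$ with $d(gq(t),q(t))\leq E$ for all $g\in S_m$ and all $t\geq t_m$. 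Given any proposed acylindricity constants $R,N$ for $\e=E$, I would pick $m$ with $|S_m|>N$, set $a=q(t_m)$ and $b=q(t_m+s)$ with $s$ large enough that $d(a,b)\geq s-\lambda\geq R$, and observe that every $g\in S_m$ moves both $a$ and $b$ by at most $E$ (as $t_m,\,t_m+s\geq t_m$). Hence $S_m\subseteq\cs{E}(a,b)$, so $|\cs{E}(a,b)|>N$, contradicting acylindricity. Alternatively, this base case is exactly the content of Osin's theorem \cite[Theorem 1]{Acylhyp}, which excludes parabolic acylindrical actions, and could simply be cited.

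The main obstacle is essentially bookkeeping rather than a deep difficulty: I must make sure that each application of Proposition \ref{prop:removepara} is legitimate, i.e. that the intermediate spaces $\prod_{i=k}^{D} X_i$ really are metric spaces, that the induced $\G$-action on them is the product action (so that the projection onto each remaining coordinate is still the original parabolic factor), and that acylindricity of the product genuinely descends as claimed. Granting the base case, the induction then closes immediately.
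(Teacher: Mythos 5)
Your proposal is correct and follows essentially the same route as the paper: assume acylindricity for contradiction, strip off parabolic factors one at a time via Proposition \ref{prop:removepara}, and then invoke the impossibility of a parabolic acylindrical action on a single hyperbolic space, which the paper handles by citing \cite[Theorem 1.1]{Acylhyp}. Your optional self-contained proof of that base case via Lemma \ref{lemmaelimpara} is a valid (and pleasant) supplement, but the paper simply cites Osin.
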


\begin{proof} Assume, by contradiction, that the action on $\X$ is acylindrical. Repeatedly applying Proposition ~\ref{prop:removepara} to the factors of $\X$ ultimately leaves one factor with a parabolic, acylindrical action, which is impossible by \cite[Theorem 1.1]{Acylhyp}.
\end{proof}

We now eliminate the possibility of quasiparabolic actions in the case of acylindrical actions in higher rank. 

\begin{lemma}\label{Lem:noqpinacyl} Suppose that $\G \to \Aut\X$ such that the action on each factor is quasiparabolic. Then the action of $\G$ on $\X$ is not acylindrical. 
\end{lemma}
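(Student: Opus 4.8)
The plan is to show that the product action has arbitrarily large coarse stabilizers of pairs of points that are arbitrarily far apart, directly contradicting acylindricity. First I would pass to the finite-index subgroup $\G_0=\ker(\G\to\Sym_\X(D))$; since the restriction of an acylindrical action to a subgroup is acylindrical (coarse stabilizers only shrink), it suffices to show that the factor-preserving action $\G_0\to\Aut_{\!0}\X$ is not acylindrical. By the classification (Theorem \ref{thm:actionsclassrk1}(4a)), each factor action fixes a unique boundary point $\xi_i\in\partial X_i$, so \emph{every} element of $\G_0$ fixes $\xi_i$; let $\beta_i\colon\G_0\to\R$ be the associated homogeneous Busemann quasimorphism, which is nonzero because the factor contains loxodromics. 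I would fix $(1,20\delta)$ quasigeodesic rays $q_i\colon\N\to X_i$ with $q_i(n)\to\xi_i$ (these exist by Remark \ref{Rem:BenakliKapovich}), and write $\beta=(\beta_1,\dots,\beta_D)$.

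The mechanism for producing large coarse stabilizers is the shift estimate. For $g\in\G_0$, which fixes each $\xi_i$, Lemma \ref{Lem:shiftfn} gives $d\big(q_i(s-\beta_i(g)),gq_i(s)\big)\leq E$ for all $s\geq s_0(g)$, and since $q_i$ is a $(1,20\delta)$ quasigeodesic this yields $d(q_i(s),gq_i(s))\leq E+|\beta_i(g)|+20\delta$ for $s$ large. Consequently, if $W\subset\G_0$ is any \emph{finite} set on which $\max_i|\beta_i|\leq C$, then choosing $s$ beyond the finitely many thresholds $s_0$ of the elements of $W$ across all factors, the diagonal points $x=(q_1(s),\dots,q_D(s))$ and $y=(q_1(s+R),\dots,q_D(s+R))$ satisfy $d(wx,x),d(wy,y)\leq E+C+20\delta=:\e$ for every $w\in W$ (in the $\ell^\infty$ metric; the constant only changes by a factor of $D$ in $\ell^1$), while $d(x,y)\geq R-20\delta$. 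Thus $W\subseteq\cs{\e}(x,y)$ for a pair of points as far apart as we like, with $\e$ independent of $W$ and $R$.

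It therefore remains to exhibit, for each $N$, a set $W$ of more than $N$ elements with $\max_i|\beta_i|$ bounded by a single constant $C$. Here I would use \emph{single commutators}: since each $\beta_i$ is a homogeneous quasimorphism, the standard estimate $|\beta_i([g,h])|\leq\mathrm{def}(\beta_i)$ holds for all $g,h$ (using homogeneity and conjugation-invariance), so every single commutator satisfies $\max_i|\beta_i|\leq C:=\max_i\mathrm{def}(\beta_i)$. To obtain infinitely many distinct such elements, fix one factor, say $X_1$; as it is quasiparabolic, $|\L_1|=\8$, and Hamann's density of the loxodromic limit set (Theorem \ref{thm: lox lim set is dense}) provides loxodromics $a,b\in\G_0$ whose second fixed points $a',b'\neq\xi_1$ are distinct. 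Note $\xi_1$ is automatically the other fixed point of each, since $a,b$ fix $\xi_1$ and a loxodromic has exactly two fixed points. Then $\{[a,b^n]\}_{n\in\Z}$ are pairwise distinct single commutators: if $[a,b^n]=[a,b^m]$ with $n\neq m$ then $b^{n-m}$ centralises $a$, forcing $a$ to preserve $\fix(b^{n-m})=\{\xi_1,b'\}$ and hence to fix $b'$, contradicting that $\fix(a)=\{\xi_1,a'\}$ with $a'\neq b'$. Taking $W=\{[a,b^0],\dots,[a,b^N]\}$ then completes the contradiction with acylindricity.

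The main obstacle, and the reason this is genuinely subtler than the single-factor picture (and than Corollary \ref{cor:elimpapra}), is that the naive analogue of the parabolic argument fails: an element that is horospherical (so $\beta_i=0$) in one factor may be loxodromic in another, so the joint horospherical kernel $\bigcap_i\ker\beta_i$ need not be infinite, and its powers do not have bounded $\beta$. The crucial device is that single commutators have $\beta_i$-values controlled by the defect \emph{regardless} of how loxodromic their individual factors are; the only remaining technical points are verifying that these commutators are distinct and that the deep-point thresholds can be chosen uniformly over the finite set $W$, both of which are routine.
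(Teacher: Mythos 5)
Your proof is correct, but it takes a genuinely different route from the paper's. Both arguments run on the same engine: Lemma \ref{Lem:shiftfn} shows that any $g$ fixing all the $\xi_i$ moves deep points of the diagonal quasiray by at most $\max_i|\b_i(g)|+20\delta+E$, so any set on which $\|\b\|_\8$ is uniformly bounded by $C$ sits inside the $\e$-coarse stabilizer of a pair of points that can be taken arbitrarily far apart, with $\e=E+C+20\delta$ fixed. Where you diverge is in how you manufacture, for a single fixed $\e$, arbitrarily many such elements. The paper sets $F_n=\{g:\|\b(g)\|_\8\leq n\}$, shows each $F_n$ is finite by acylindricity, proves that length-$l$ words in generators from $F_m$ land in $F_{l(m+B)}$, and plays this growth bound off against the free subsemigroup that a quasiparabolic action always contains. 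You instead note that single commutators $[a,b^n]$ have every $|\b_i|$ bounded by the defect of $\b_i$ --- one uniform constant $C$ --- and that choosing $a,b$ loxodromic in one factor with distinct second fixed points (available since the loxodromic limit set is dense in the infinite limit set) forces the $[a,b^n]$ to be pairwise distinct; this exhibits an infinite subset of a single $F_C$ and contradicts acylindricity at one scale outright. Your version is arguably the more robust of the two: the paper's route bounds the $l$-ball of the subsemigroup by $\mfN(\e_{l(m+B)})$, and extracting a contradiction from the $2^l$ growth of a free subsemigroup implicitly requires some control on how the acylindricity constant $\mfN(\e)$ grows with $\e$, which the definition does not supply, whereas your argument needs no growth comparison at all. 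The supporting steps you invoke --- conjugation invariance and the defect bound for homogeneous quasimorphisms, distinctness of the $[a,b^n]$ via fixed-point counting on $\partial X_1$, and the uniform choice of the thresholds $s_0$ over the finite set $W$ --- all check out.
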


\begin{proof} Since acylindricity of the action passes to subgroups, without loss of generality, assume $\G\to \Aut_{\!0}\X$. By contradiction, assume that this action is acylindrical and quasiparabolic in each factor. For $i\in \{1, \dots, D\}$, let $\xi_i\in \partial X_i$ be the $\G$-fixed point. Fix $(1, 20\delta)$ infinite quasigeodesics $q_i: \N \to X_i$ converging to $\xi_i$, and  let $\b_i: \G\to \R$ be the Busemann quasimorphism associated to the action in each factor and of defect at most $B \geq 0$.  Set $\b: \G\to \R^D$, and $q: \N \to \X$ be the associated diagonal maps.  Considering  the $\ell^\infty$ metric on $\X$ and $\R^D$, $q$ is a $(1, 20\delta)$ quasigeodesic in $\X$.

Define $F_n = \left\{ g \in \G \: \|\b(g)\|_\8 \leq n \right\}$, for $n\in \N$. The reader may think of these as F\o lner sets and we will show they have polynomial growth. 

\noindent
\textbf{Claim 1:} For each $n \in \N$, $| F_n| \leq \mfN$, where $\mfN$ is the smallest constant from the definition of acylindricity associated to $\e_n = n+ 20\delta + E$. 

Suppose by contradiction, that there exists a $k_0$ such that $|F_{k_0}| > \mfN$ associated to $\e_{k_0}$. Let $Q\subset F_{k_0}$ be of cardinality  $\mfN+1$. Recall our notation that we have $h\mapsto (h_1, \dots, h_D)$. For each $ h\in Q$, it follows from Lemma ~\ref{Lem:shiftfn}, that if $t \geq t_0(h_i)$, then $d(q_i( t - \b_i(h)), h_iq_i(t)) \leq E$. Using the triangle inequality and that $q_i$ is a $(1, 20\delta)$ quasigeodesic, we get 
\begin{align*}
    d(q_i(t), h_iq_i(t)) &\leq d(q_i(t), q_i(t - \b_i(h)) + d(q_i(t - \b(h), h_iq_i(t))\\
    & \leq |\b_i(h)| + 20\delta + E\\
    & \leq k_0 + 20\delta +E.
    \end{align*}

Take $T =\op{max} \{ t_0(h_i) \mid h \in Q, 1 \leq i \leq D \}$. Then for all $t \geq T$, it follows that $$d(q(t), h.q(t)) \leq k_0 + 20\delta +E= \e_{k_0}$$

Choose $a, b \geq T$ such that $d(q(a), q(b)) \geq R(\e)$, where $R$ is the constant from the definition of acylindricity for $\e_{k_0}$. Such $a,b$ exists as $q(t)$ is an infinite quasigeodesic. Then it follows that for all $h \in Q$, $d(q(a), h.q(a)) \text{ and } d(q(b), h.q(b)) \leq \e_{k_0}$. This contradicts the acylindricty since $|Q| = \mfN +1 > \mfN$. Thus Claim 1 holds. 

It is easy to verify that $\bigcup_{n} F_n = \G$.  Since we use subscripts to denote the factors of $\X$, we will use superscripts here to enumerate elements -- these should not be confused for powers. Let $s^1, \dots, s^k \in \G$ and let $S=\<s^1, \dots, s^k \>^+$ be the semigroup generated by them. We now show that  $S$ has polynomial growth. Fix $m \in \N$ to be the smallest index such that $s^1,\dots, s^k \in F_m$.

\noindent
\textbf{Claim 2:} For any $l$, the product $a^1\cdots a^l \in F_{l(m + B)}$ where each $a^i \in \{s^1, \dots, s^k\}$ is a generator of $S$.

Since $a^1, \dots, a^l \in F_k$ and $\b_i$ is a quasimorphism, it is easy to check that for each $i= 1, \dots, D$, $$|\b_i(a^1\cdots a^l)| \leq lB + |\b_i(a^{1})| + \dots |\b_i(a^{l})| \leq lB + lm,$$ which proves the claim.   

However, as $\G$ admits a quasiparabolic action on a hyperbolic space, it follows from the standard Ping-Pong lemma, that $\G$ contains a finitely generated free subsemi-group, which is a contradiction.
\end{proof}

We now have all the tools to prove the following theorem.

\begin{theorem}\label{thm:elimmixedfactors} Let $\G \to \Aut_{\!0}\X$  such that the projections $\G\to \Isom X_i$ are all either elliptic, parabolic or quasiparabolic (with at least one factor being parabolic or quasiparabolic). Then $\G \to \Aut \X$ is not acylindrical.

In particular, if $\G \to \Aut_{\!0}\X$ is acylindrical, then every element of $\G$ is either an  elliptic, weakly loxodromic, or regular loxodromic isometry.
\end{theorem}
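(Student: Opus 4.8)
The plan is to assume for contradiction that $\rho\colon\G\to\Aut\X$ is acylindrical and to derive a contradiction by successively peeling off the factors using the three reduction results already in hand, namely Lemma \ref{Lem:elimellfactors} (elliptic factors), Proposition \ref{prop:removepara} (parabolic factors), Corollary \ref{cor:elimpapra} (all-parabolic obstruction), and Lemma \ref{Lem:noqpinacyl} (all-quasiparabolic obstruction). The ``in particular'' clause will then follow immediately by applying the first statement to cyclic subgroups.

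First I would discard the elliptic factors. Partition $\{1,\dots,D\}$ into the set $E$ of indices whose projection is elliptic and its complement $J$; since at least one factor is parabolic or quasiparabolic, $J\neq\varnothing$. Writing $\X=\big(\CProd{i\in E}{}X_i\big)\times\big(\CProd{i\in J}{}X_i\big)$, the projection of $\G$ to the first factor is elliptic, because a product of bounded-orbit actions has bounded orbits in the $\ell^\infty$ metric. Hence Lemma \ref{Lem:elimellfactors} yields an acylindrical action of $\G$ on $\CProd{i\in J}{}X_i$, where every surviving factor is parabolic or quasiparabolic. Next I would split into two cases according to whether a quasiparabolic factor survives. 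If every factor in $J$ is parabolic, then Corollary \ref{cor:elimpapra} directly contradicts acylindricity. Otherwise let $Q\subseteq J$ be the nonempty set of quasiparabolic indices and $P=J\setminus Q$ the parabolic ones; I would peel off the factors in $P$ one at a time with Proposition \ref{prop:removepara}. At each stage the remaining product still contains the factors indexed by $Q$, so it is a nonempty metric space playing the role of $Z$, while the factor $X_j$ being removed is hyperbolic with parabolic projection, so the proposition applies and preserves acylindricity. After exhausting $P$ we obtain an acylindrical action of $\G$ on $\CProd{i\in Q}{}X_i$ with every factor quasiparabolic, contradicting Lemma \ref{Lem:noqpinacyl}. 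This proves the first statement.

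For the ``in particular'' statement, suppose toward a contradiction that some $g\in\G$ is neither elliptic, weakly loxodromic, nor regular loxodromic. By the classification of elements of $\Aut_{\!0}\X$ above, each component $g_i$ is elliptic, parabolic, or loxodromic, so the excluded cases force $g$ to be parabolic or pseudo-parabolic; in either case $g$ has no loxodromic component and at least one parabolic component. Set $H=\langle g\rangle$. Since acylindricity is inherited by subgroups (the coarse stabilizers $\cs{\e}(x,y)$ only shrink under passage to a subgroup), $H\to\Aut_{\!0}\X$ is acylindrical. The projection of $H$ to $X_i$ is $\langle g_i\rangle$, which is elliptic when $g_i$ is elliptic and parabolic when $g_i$ is parabolic; hence every projection of $H$ is elliptic or parabolic with at least one parabolic. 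The first statement, applied to $H$, then says that $H$ does not act acylindrically on $\X$, a contradiction. Therefore every element of $\G$ is elliptic, weakly loxodromic, or regular loxodromic.

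The only delicate point, and the step I expect to require the most care, is the order and bookkeeping of the peeling: one cannot simply strip away all parabolic factors with Proposition \ref{prop:removepara}, since that result needs a nontrivial complementary factor to survive. This is precisely why the argument splits on whether a quasiparabolic factor remains, routing the all-parabolic case through Corollary \ref{cor:elimpapra} instead of through repeated applications of Proposition \ref{prop:removepara}. Once this case analysis is set up correctly, every other step is a routine verification that the hypotheses of the cited lemmas are met.
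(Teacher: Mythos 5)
Your proposal is correct and follows essentially the same route as the paper: the paper's proof is a one-line combination of Lemma \ref{Lem:elimellfactors}, Proposition \ref{prop:removepara}, and Lemma \ref{Lem:noqpinacyl} (with Corollary \ref{cor:elimpapra} handling the all-parabolic case), followed by restriction to cyclic subgroups for the ``in particular'' clause. Your write-up simply makes explicit the case split and the order of peeling that the paper leaves implicit.
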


\begin{proof} The theorem follows by combining Lemmas  ~\ref{Lem:elimellfactors} and ~\ref{Lem:noqpinacyl} and Proposition ~\ref{prop:removepara}. If $\G \leq \Aut_{\!0}\X$ is acylindrical, then so is every subgroup, in particular $\langle g \rangle$ for each $g \in \G$. The result now follows since it must be the case that $g$ is either elliptic in each factor or has at least one loxodromic factor.
\end{proof}

\subsection{Regular Elements Exist}
We now turn our attention to regular elements i.e those that preserve factors and act as a loxodromic in each factor. Our goal is to prove the following proposition.  (See also \cite{ CapraceSageev, CapraceZadnik, FLM}.)

\begin{prop}\label{Prop:Regular Exist}
If  $\G \to \Aut\X$ has general type factors then $\G$ contains regular elements.
\end{prop}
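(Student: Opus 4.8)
The plan is to deduce the statement from the genericity of loxodromics for random walks, established by Maher and Tiozzo \cite{MaherTiozzo}, the essential point being that the number of factors $D$ is finite.

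First I would reduce to the factor-preserving subgroup and manufacture a suitable finitely generated subgroup on which to walk. Since $\G_0=\ker(\G\to \Sym_\X(D))$ is of finite index in $\G$ and general type is inherited by finite-index subgroups, each projection $\G_0\to \Isom X_i$ is of general type. For each $i\in\{1,\dots,D\}$, choose $a_i,b_i\in \G_0$ acting as independent loxodromics on $X_i$, and set $H=\<a_i,b_i : 1\leq i\leq D\>\leq \G_0$. Then $H$ is finitely generated, and since it contains the independent loxodromic pair $a_i,b_i$ of the $i$-th factor, the projection $H\to \Isom X_i$ is of general type for every $i$. As any element of $H\subset \G_0$ preserves factors, it suffices to produce $\g\in H$ acting as a loxodromic on every $X_i$; such a $\g$ is then a regular element of $\G$.

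Next I would drive a random walk on $H$. Fix a non-degenerate, symmetric, finitely supported probability measure $\mu$ whose support generates $H$ (so $\mu$ has finite first moment). For each fixed $i$, the subgroup generated by $\op{supp}(\mu)$ is $H$, which acts on the separable $\delta$-hyperbolic space $X_i$ as a non-elementary (indeed general type) action, so the hypotheses of \cite{MaherTiozzo} are met for the $i$-th factor action. Writing $w_n=g_1\cdots g_n$ for the $n$-step walk, their theorem gives for each $i$ that
$$\Prob\big(w_n \text{ is loxodromic on } X_i\big)\xrightarrow[n\to\8]{}1.$$
The crux is then a union bound over the finitely many factors. Setting $A_i^n=\{w_n \text{ is not loxodromic on } X_i\}$, we have $\Prob(A_i^n)\to 0$ for each $i$, whence
$$\Prob\Big(\Cup{i=1}{D}A_i^n\Big)\leq \Sum{i=1}{D}\Prob(A_i^n)\xrightarrow[n\to\8]{}0.$$
Therefore $\Prob\big(w_n \text{ is loxodromic in every factor}\big)\to 1$, and in particular this probability is positive for all large $n$. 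Any sample path realizing such a $w_n$ yields an element $\g=w_n\in H$ that is simultaneously loxodromic on each $X_i$, i.e. a regular element of $\G$.

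There is no real analytic obstacle here, as the paper anticipates; the only point demanding care is that a single driving measure be non-elementary for all $D$ factor actions at once. This is exactly what the construction of $H$ secures, and the finiteness of $D$ is what allows the union bound to close.
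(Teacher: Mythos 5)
Your proposal is correct and follows essentially the same route as the paper: apply the Maher--Tiozzo genericity of loxodromics to each of the $D$ factor actions and close with a union bound (the paper's Lemma \ref{Lem:prob} is exactly this bound in complemented form). The only cosmetic difference is that you first pass to a finitely generated subgroup $H$ so as to use a finitely supported driving measure, whereas the paper simply takes a generating symmetric probability measure on the countable group $\G_0$ itself, which the Maher--Tiozzo theorem already accommodates.
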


The proof of Proposition ~\ref{Prop:Regular Exist} relies on the following results of Maher and Tiozzo and properties of probability measures, which will allow us to find regular elements in actions with general type factors with asymptotically high probability.

\begin{theorem}\cite[Theorem 1.4]{MaherTiozzo}\label{thm: MaherTiozzo}
 Let $\G\to \Isom X$ be a general type action, and $\mu$ be a generating symmetric probability measure on $\G$. Then the translation-length $\tau(w_n)$ of the $n^{th}$-step of the $\mu$-random walk grows at least linearly, i.e. there exists $T>0$ such that $\mathbb P(w\in \G^\N:\tau(w_n)\leq Tn) \to 0$ as $n\to \8$. 
\end{theorem}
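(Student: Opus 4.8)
\textbf{The plan} is to estimate the stable translation length $\tau(w_n)$ in terms of two geometric quantities measured along the sample path, namely the displacement $d(o,w_n o)$ and the basepoint Gromov product $\<w_n o, w_n^{-1}o\>_o$, and then to control each of these separately using the random walk structure. Throughout fix a basepoint $o\in X$, write $w_n=g_1\cdots g_n$ with the $g_i$ i.i.d. of law $\mu$, and recall that $\tau(g)=\lim_k \frac1k d(o,g^k o)$.

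\emph{Step 1: a geometric reduction.} First I would prove the purely metric inequality that for every $g\in \Isom X$,
\begin{equation*}
\tau(g)\;\geq\; d(o,go)-2\<go,g^{-1}o\>_o - C\delta,
\end{equation*}
where $C$ is universal. The mechanism is that $\<go,g^{-1}o\>_o$ measures how far the bi-infinite path $\dots,g^{-1}o,o,go,g^2o,\dots$ backtracks at each vertex; when this is small relative to $d(o,go)$ the path is an unparametrised quasigeodesic, so by the Morse property (Proposition \ref{lem:qg triangles slim hyp}) its two ends are distinct points of $\partial X$, whence $g$ is loxodromic and its translation length is bounded below by the net per-step progress $d(o,go)-2\<go,g^{-1}o\>_o$. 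It therefore suffices to show that, with probability tending to $1$, one has $d(o,w_n o)\geq \ell n$ for some fixed $\ell>0$ while $\<w_n o,w_n^{-1}o\>_o = o(n)$.

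\emph{Step 2: positive drift.} Next I would establish that there is $\ell>0$ with $\P(d(o,w_n o)\leq \ell n)\to 0$. Since no moment hypothesis on $\mu$ is assumed, this cannot be read off from the subadditive ergodic theorem; instead one uses the boundary theory. Because the action is of general type, the sample paths $w_n o$ converge almost surely to a point $\xi^+\in\partial X$, and the associated hitting measure $\nu$ on $\partial X$ is non-atomic with no finite $\G$-invariant set. Non-atomicity is the key input: it lets one show, via a separation estimate on the Gromov products of successive increments, that the walk backtracks a definite linear fraction with only exponentially small probability, which yields the desired linear lower bound on $d(o,w_n o)$ in probability. This step is the technical heart of the argument.

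\emph{Step 3: sublinear Gromov product, and conclusion.} Since $\mu$ is symmetric, the reversed increments $g_n^{-1},\dots,g_1^{-1}$ are again i.i.d. of law $\mu$, so $w_n^{-1}o$ is itself (in distribution) a random-walk trajectory and converges almost surely to a boundary point $\xi^-$ independent of $\xi^+$; non-atomicity of $\nu$ then forces $\xi^+\neq\xi^-$ almost surely. Using the boundary-convergence estimates for both the forward and reversed walks, I would bound the basepoint Gromov product, showing $\P(\<w_n o,w_n^{-1}o\>_o\geq \e n)\to 0$ for every $\e>0$, i.e. the two trajectories separate at the origin at sublinear cost. Finally, fixing $\e$ with $0<\e<\ell/2$, on the intersection of the high-probability events of Steps 2 and 3 the reduction of Step 1 gives $\tau(w_n)\geq (\ell-2\e)n-C\delta \geq Tn$ with $T=\ell/2$ for all large $n$; hence $\P(\tau(w_n)\leq Tn)\to 0$. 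The main obstacle is Step 2, positivity of drift in the absence of moment conditions, which is exactly where the persistence and boundary-convergence machinery is indispensable; Steps 1 and 3 are comparatively soft once that input is in hand.
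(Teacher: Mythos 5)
First, note that the paper does not prove this statement at all: it is imported verbatim from Maher--Tiozzo, so the only meaningful comparison is with the proof in the cited source. At the level of strategy your outline reconstructs that proof faithfully: Maher--Tiozzo likewise combine (i) the deterministic criterion $\tau(g)\geq d(o,go)-2\langle go,g^{-1}o\rangle_o-O(\delta)$ (a local-to-global/Morse argument exactly as you describe), (ii) their linear progress theorem $\mathbb{P}\left(d(o,w_no)\leq \ell n\right)\to 0$, proved via a.s. convergence to the boundary and non-atomicity of the hitting measure, and (iii) the estimate $\mathbb{P}\left(\langle w_no,w_n^{-1}o\rangle_o\geq \epsilon n\right)\to 0$. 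So your decomposition is the right one, and Step 2, which you correctly flag as the technical heart, is precisely where their horofunction-boundary and stopping-time machinery lives; as a self-contained proof your proposal is incomplete there, but the ingredients you name are the correct ones.

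There is, however, one genuinely false assertion in Step 3. The sequence $w_n^{-1}=g_n^{-1}g_{n-1}^{-1}\cdots g_1^{-1}$ is a \emph{backward} product: passing from $w_{n-1}^{-1}$ to $w_n^{-1}$ multiplies by a fresh increment on the \emph{left}. Such a sequence has the law of a forward walk for each fixed $n$, but the trajectory $(w_n^{-1}o)_n$ does not converge almost surely to any boundary point (a.s. convergence holds for forward products only); the point $\xi^-$ you invoke exists for the bilateral walk, not for this sequence. The standard repair -- and what the source actually does -- is distributional: for fixed $n$ split $w_n=uv$ at the midpoint, so that $w_no$ tracks the direction of the independent half $u$ while $w_n^{-1}o=v^{-1}u^{-1}o$ tracks $v^{-1}$, where $u$ and $v^{-1}$ are \emph{independent} walks driven by $\mu$ and the reflected measure $\check\mu$ respectively; non-atomicity of the two hitting measures makes the Gromov product of their limit points a.s. finite, which yields $\mathbb{P}\left(\langle w_no,w_n^{-1}o\rangle_o\geq\epsilon n\right)\to 0$. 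Two smaller remarks: symmetry of $\mu$ is not needed for this (Maher--Tiozzo only assume the measure is non-elementary, since $\check\mu$ is then non-elementary as well -- the symmetry hypothesis is an artifact of the paper's restatement), and your phrase ``non-atomicity \ldots forces $\xi^+\neq\xi^-$'' should be run through this independence structure rather than through the (nonexistent) a.s. limit of the reversed trajectory.
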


\begin{lemma}\label{Lem:prob}
Let $\e\in (0,1)$, and $\P$ be a probability measure on $\Omega$. Assume $A_1, \dots, A_D\subset \Omega$ are measurable sets such that $\P\left(A_i\right) > 1-\frac{\e}{D}$ for each $i=1, \dots, D$. Then $\P\left(\Cap{i=1}{D}A_i\right) >1-\e$. 
\end{lemma}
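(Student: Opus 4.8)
The plan is to pass to complements and invoke finite subadditivity (the union bound) of the probability measure $\P$. The hypothesis gives us control on each $A_i$ individually, and the conclusion asks for control on their intersection; the natural bridge between "each set is large" and "the intersection is large" is to dualize, turning the intersection of large sets into a union of small sets.

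Concretely, I would first set $B_i := \Om \bs A_i$ for each $i = 1, \dots, D$, so that the hypothesis $\P(A_i) > 1 - \frac{\e}{D}$ is exactly the statement $\P(B_i) < \frac{\e}{D}$. By De Morgan's law, $\Om \bs \Cap{i=1}{D}A_i = \Cup{i=1}{D}B_i$, which reduces the problem of bounding the measure of the intersection to bounding the measure of a finite union. Then I would apply finite subadditivity of $\P$ to get $\P\(\Cup{i=1}{D}B_i\) \leq \Sum{i=1}{D}\P(B_i) < D\.\frac{\e}{D} = \e$. Taking complements once more yields $\P\(\Cap{i=1}{D}A_i\) = 1 - \P\(\Cup{i=1}{D}B_i\) > 1 - \e$, which is the desired inequality.

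There is no genuine obstacle here: the statement is a direct consequence of the union bound, and the only point worth flagging is that the finiteness of $D$ is what makes the estimate $D\.\frac{\e}{D} = \e$ exact, so the argument is tailored to the finite-product setting in which it will be applied (the $D$ factors of $\X$).
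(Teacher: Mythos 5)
Your proof is correct and is essentially identical to the paper's: both pass to complements, apply finite subadditivity to the union of the $B_i = A_i^c$, and take complements again. (If anything, you are slightly more careful than the paper in preserving the strict inequality throughout.)
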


\begin{proof}
We shall use $A^c:= \Omega \setminus A$ to denote set compliments. For $i=1, \dots, D$ our assumption is equivalent to $\P(A_i^c)\leq \frac{\e}{D}$ which means that 

$$ \P\left(\Cup{i=1}{D}A_i^c\right)\leq\Sum{i=1}{D}\,\P(A_i^c)\leq \e$$
Therefore, $\P\left(\Cap{i=1}{D}A_i\right)\geq 1-\e$.
\end{proof}

\begin{proof}[Proof of Proposition ~\ref{Prop:Regular Exist}]
Up to passing to a finite index subgroup, without loss of generality we  assume that $\G\to \Aut_{\!0}\X$. Fix a generating probability measure $\mu$ on $\G$, and $0<\e<1$. Since each projection is of general type, we may apply Theorem ~\ref{thm: MaherTiozzo} to each action $\G\to \Isom X_i$. This gives real numbers $T_i>0$, so that for any $n$ sufficiently large (where $\tau_i: \G\to \R$ denotes the translation length of elements acting on $X_i$) we have the following measurable sets:

\begin{enumerate}
    \item $A_i = \{w\in \G^\N:\tau_i(w)> T_i n\}$
    \item $\P(A_i) > 1 - \dfrac{\e}{D}$
    
\end{enumerate}

It then follows from Lemma ~\ref{Lem:prob} that $\P\left(\Cap{i=1}{D}A_i\right) > 1 -\e,$. In particular, the set $\Cap{i=1}{D}A_i$ consists of regular elements and is not empty.
\end{proof}

\section{A Plethora of Examples}\label{sec:examples}

In this section we will discuss the enormous class of groups which admit an AU-acylindrical action with general type factors on non-positively curved spaces. Observe that this class of groups is closed under taking products and  subgroups (for which the action has general type factors). Moreover, applying the technique of induced representations from finite index subgroups shows that the class of groups is also closed under finite extensions.  Corollary ~\ref{Cor: Class closed finite kernel} also shows that the class is closed under taking finite-kernel quotients and therefore under virtual isomorphism and Lemma ~\ref{lem:finquotsacyl} and Corollary ~\ref{cor:prodstrdecomp} show that the class is closed under direct factors. We note that the question as to whether acylindrical hyperbolicity is retained under finite index extension remains open. We begin by compiling a list of examples that can be found in the existing literature.

\begin{enumerate}

\item Acylindrically hyperbolic groups (see also \cite[Theorem 1.2]{Acylhyp}), with $D=1$, which is itself an extensive list; for example:

\begin{itemize}
\item Most mapping class groups \cite{Bowditch2008, MasurMinsky}.
    \item (Nonelementary) Hyperbolic groups
    \item (Nonelementary)  relatively hyperbolic groups (see e.g. \cite{DGO})
     \item Nonamenable amalgamated products (see \cite{MinasyanOsin} and \cite[Theorem 2.25]{DGO})
    \item Many small cancellation groups \cite{GruberSisto}
    \item  Groups acting properly on proper CAT(0) spaces with rank-1 elements \cite{Sisto2018}. In particular, this includes irreducible RAAGs
   
\item Many Artin groups \cite{HaettelXXLArtin, MartinPrzytycki, Vaskou, HagenMartinSisto}

\end{itemize}

\item The class of Hierarchically hyperbolic groups (abbreviated as HHG), is closed under direct products but not necessarily direct factors, has finitely many  eyries and the corresponding diagonal action is acylindrical, see \cite[Remark 4.9]{PetytSpriano}. To ensure the factor actions are of general type, it suffices to assume no eyrie is a quasi-line. This is a broad class itself and contains \cite{DurhamZalloum}:
\begin{itemize}
    \item Mapping class groups and right angled Artin Groups (RAAGs) and fundamental groups of special CAT(0) cube complexes, and cocompact lattices in products of $\delta$-hyperbolic spaces \cite{BHS1, HagenSusse}
    \item 3-manifold groups with no $Nil$ or $Sol$ components \cite{BHS2}
\item Surface-by-multi-curve stabilizer groups \cite{russell2021}
 \item The genus two handlebody group \cite{Chesser_2022}
\item Surface-by-lattice Veech groups \cite{DowdallDurhamLeiningerII}
\end{itemize}

\item Since our class is closed under passing to nonamenable subgroups that act properly, we also obtain groups with exotic finiteness properties such as the kernels of Bieri-Stallings and Bestvina-Brady, see \cite{Bieri, Stallings}, \cite{BestvinaBrady} and other subdirect products such as those in \cite{BridsonHowieMillerShort, Bridson, BridsonMiller} (See also \cite{NicolasPy}).

\item Infinite groups with property (QT) admit an action on a finite product of quasitrees whose orbit maps yield a quasi-isometric embedding, which are in particular proper \cite{BBFQt}. In this case, due to properness, the action is AU-acylindrical. 

\item Groups acting properly on products of trees, such as Coxeter groups and RAAGs \cite{Januszkiewicz, Button}, or finitely generated subgroups of $\PGL_2K$, where $K$ is a global field of positive characteristic \cite{FisherLarsenSpatzierStover}.

\item Finitely generated infinite groups that are quasi-isometric to a finite product of proper cocompact non-elementary $\delta$-hyperbolic spaces act geometrically, and hence acylindrically on a (possibly different) finite product of rank-1 symmetric spaces and locally finite $\delta$-hyperbolic graphs \cite[Theorem K]{Margolis}. 

\item\label{Ex:S-arith} Finitely generated subgroups $\G < \SL_2 \~\Q$, where $\~\Q$ is the algebraic closure of $\Q$. Indeed, in this case there exists $F$ a finite Galois extension of $\Q$ such that $\G\leq \SL_2 F$. Also because $\G$ is finitely generated, there exist finitely many places $S$ such that the $s$-norm of the entries in a generating set for $\G$ is bigger than 1. Therefore we have in fact that $\G < \SL_2 O_F[S^{-1}]$, where $O_F$ is the ring of algebraic integers in $F$. 

Recall that there exists $r\in \N$, $c\in \N$ such that $O_F[S^{-1}] \hookrightarrow \Prod{i=1}{r} \R \times \Prod{j=1} {c} \mathbb{C} \times \Prod{s \in S}{} F_s$ is a discrete cocompact embedding, where $F_s$ is the $s$-adic completion of $F$ (see for example \cite[Section 5.4]{Rapinchuk}). As a consequence the following is a discrete embedding $$\G \hookrightarrow  \Prod{i=1}{r} \SL_2\R \times \Prod{j =1}{c} \SL_2\mathbb{C} \times \Prod{s \in S}{}\SL_2 F_s.$$ 

Finally, taking $\X$ to be the product  of $r$-many copies of $\mathbb{H}^2$, $c$-many copies of $ \mathbb{H}^3$ and the Bruhat-Tits trees $T_s$ associated to each $\SL_2 F_s$ yields a proper (and hence AU-acylindrical) action on $\X$. 
 
\item Limit groups (in the sense of Sela) or fully residually free (in the sense of Kharlampovich and Miasnikov) are relatively hyperbolic with respect to their higher-rank maximal abelian subgroups \cite{Alibegovic,Dahmani}. Subdirect products of such have been studied \cite{BridsonHowieMillerShort}, \cite{KochloukovaLopezdeGamizZearra}.

\item If a finitely generated infinite group acts on a metric space $Y$ such that the orbit map is a quasi-isometric embedding of the group, then it follows easily that the group admits an AU-acylindrical action on $Y$. Such phenomena occurs for a large class of groups and was recently studied under the notion of $A/QI$ triples by Abbott-Manning in \cite{AbbottManning}. In forthcoming work, we will be explore this notion in the higher rank setting \cite{BFApSM}. 
\end{enumerate}

\begin{remark}
    Mapping class groups importantly appear in both (1) and (4) (with $D>1$). We take the opportunity to highlight the importance of taking a ``representation theoretic" approach when viewing this theory. Often within the context of geometric group theory there is a drive to view one action as superior to others. However, here we see that belonging to both of these classes provide different information and are therefore both of interest.    
\end{remark}

\subsection{Genericity}
In this section, we deduce that admitting an AU-acylindrical action on a CAT(0) space is generic among the finitely generated subgroups of $SU_n$. The key to this is the following result by Douba.

\begin{theorem}\cite[Theorem 1.1.i]{Douba}
Let $\G\leq \SL_n\mathbb C$ be finitely generated such that no element of $\G$ is unipotent. Then $\G$ acts properly on a CAT(0) space. 
\end{theorem}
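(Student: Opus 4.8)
The plan is to realize $\G$ as a discrete subgroup of a reductive group over a product of local fields, and to let it act on the associated product of symmetric spaces and Euclidean buildings, which is a proper $\mathrm{CAT}(0)$ space; the unipotent-free hypothesis will be used exactly once, to strip off the unipotent radical.

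First I would pass to the Zariski closure $\mathbf G = \overline{\G}$ inside $\SL_n$ and invoke the Levi decomposition $\mathbf G = \mathbf L \ltimes \mathbf R_u$ (valid in characteristic $0$), where $\mathbf R_u$ is the unipotent radical. Every element of $\mathbf R_u(\mathbb C)$ is unipotent, so the hypothesis that $\G$ is unipotent-free gives $\G \cap \mathbf R_u(\mathbb C) = \{1\}$; hence the projection $\G \to \mathbf L(\mathbb C)$ is injective and I may replace $\G$ by its image in the reductive group $\mathbf L$. This is the only place the unipotent-free hypothesis enters, and it is essential: reductive groups over local fields act properly, with compact point stabilizers, on their symmetric spaces and buildings, whereas a nontrivial unipotent radical produces non-proper directions.

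Next, since $\G$ is finitely generated there is a finitely generated field $K \subset \mathbb C$ over $\mathbb Q$ with $\G \leq \mathbf L(K)$. I would then choose a finite set $S$ of places of $K$, including the archimedean ones induced by $K \hookrightarrow \mathbb C$ together with enough non-archimedean ones, so that (i) the generators of $\G$ are $S$-integral, giving a well-defined isometric action on $X := \prod_{v \in S} X_v$, where $X_v$ is the symmetric space of $\mathbf L(K_v)$ when $v$ is archimedean and its Euclidean (Bruhat--Tits) building otherwise; and (ii) the diagonal image of $\G$ in $\prod_{v \in S} \mathbf L(K_v)$ is discrete. Symmetric spaces of reductive groups and Euclidean buildings are $\mathrm{CAT}(0)$, and a finite product of $\mathrm{CAT}(0)$ spaces is $\mathrm{CAT}(0)$, so $X$ is a proper $\mathrm{CAT}(0)$ space on which $\prod_{v \in S} \mathbf L(K_v)$ acts properly. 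Since a discrete subgroup of a group acting properly acts properly, the discreteness in (ii) immediately yields the desired proper action of $\G$ on $X$.

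The crux, and the step I expect to be the main obstacle, is arranging discreteness in (ii) over a \emph{finite} set of places. For a number field this is the classical statement that $S$-integers are discrete in the product of completions once $S \supseteq S_\infty$, which follows from the product formula. In positive transcendence degree there is no global adele ring, and the danger is that a nontrivial element could a priori have small displacement at every chosen place; the resolution is a Weil-height (product-formula) argument for finitely generated fields: any element that is not a root of unity has an eigenvalue whose absolute value differs from $1$ at some place, so by enlarging $S$ to detect the finitely many relevant eigenvalues of the generators one prevents any infinite sequence in $\G$ from accumulating at the identity across all factors simultaneously. Reductiveness is what makes this usable, since for semisimple behavior ``nontrivial eigenvalue at $v$'' is equivalent to ``positive translation length on $X_v$''; thus the unipotent-free reduction of the second paragraph is precisely what lets the height argument control displacement, and once discreteness is secured the theorem follows.
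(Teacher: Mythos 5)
First, a point of comparison: the paper does not prove this statement at all --- it is quoted from Douba and used as a black box --- so your proposal can only be judged on its own terms. The overall skeleton (pass to the Levi quotient of the Zariski closure, then act on a product of symmetric spaces and Bruhat--Tits buildings attached to places of a finitely generated field) is indeed the right shape and matches Douba's strategy, but the two steps that carry the real content are missing or would fail as written.

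The decisive gap is the discreteness step, which you correctly identify as the crux but do not resolve. Over a field $K$ of positive transcendence degree there is no product formula and no analogue of ``$S$-integers are discrete in $\prod_{v\in S}K_v$''; your proposed fix --- enlarging $S$ to detect the eigenvalues of the \emph{generators} --- cannot work, because discreteness is a condition on all of $\G$, and the eigenvalues and displacements of an arbitrary word in the generators are not controlled by those of the generators. Worse, the framework ``discrete subgroup of a properly acting group acts properly'' is unavailable here: for $K=\Q(t)$, say, the completions at places trivial on $\Q$ are not locally compact, the associated buildings are not locally finite (the paper flags exactly this after the statement), point stabilizers in $\prod_{v\in S}\mathbf{L}(K_v)$ are bounded but enormous, and the ambient group does not act properly on $X$. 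One must prove properness of the $\G$-action directly, and $X$ is in general only a complete CAT(0) space, not a proper one as you assert.

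Relatedly, the unipotent-free hypothesis cannot be spent ``exactly once'' at the Levi step. The reductive group $\mathbf{L}$ contains plenty of unipotents; what you need, and do in fact get via compatibility of the Jordan decomposition with $\mathbf{G}\to\mathbf{G}/\mathbf{R}_u$ (this should be said), is that the image of $\G$ in $\mathbf{L}$ is again unipotent-free. More importantly, the hypothesis must be invoked a second time at the properness step: an element of $\G$ that is elliptic in every factor has all eigenvalues of absolute value $1$ at every chosen place, and the most a Kronecker/height argument over a finitely generated field yields is that its eigenvalues are roots of unity, i.e.\ that some power of it is unipotent. Only unipotent-freeness upgrades this to ``finite order'', and one then still needs a Selberg/Jordan--Schur-type finiteness statement to bound the set of such elements in a coarse stabilizer. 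Your sketch treats only semisimple elements with a non-unit eigenvalue and is silent on the quasi-unipotent elements, which are precisely the ones that threaten properness.
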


If $n=2$ then the associated CAT(0) space will be the $\ell^2$-product of copies of $\mathbb{H}^2, \mathbb{H}^3$ and (not necessarily locally finite) Bruhat-Tits trees, as in Example ~\ref{Ex:S-arith} above.

The reader may wonder how to find examples of such groups that are free of non-trivial unipotents: Consider the special unitary group $SU_n\leq \SL_n\mathbb C$, which is the group of matrices whose inverse is given by the transpose-complex conjugate, i.e.  $AA^*=I$. By way of this equation, we see that all eigen-values of a unitary matrix must belong to the unit circle in $\mathbb C$, and in particular, the only unipotent element (i.e. all eigenvalues are identically 1) in $SU_n$ is the identity. Moreover, if $n>1$ then $SU_n$ is not virtually solvable, and so by the Tits alternative, it contains free groups. Therefore, the direct product of $SU_2\times SU_2$ contains uncountably many isomorphism classes of groups \cite{BaumslagRoseblade}. Using Lemma ~\ref{Lem:acyl+loc comp implies unif proper}, we obtain the following corollary\footnote{We note that, since our class closed under virtual isomorphism, $SU_n$ can also be replaced with any group within its isogeny class.}:

\begin{cor}
    Let $n_1, \dots, n_k\in \N$, and $\G\leq \CProd{i=1}{k}SU_{n_i}$ be finitely generated. Then $\G$ acts properly (and hence AU-acylindrically) on a CAT(0) space. If in fact $\G$ can be conjugated in to $SU_{n_i} \cap \SL_{n_i}(\~\Q)$ for each $i=1, \dots, k$ then the CAT(0) space is uniformly locally compact and hence the $\G$ action is acylindrical.  If $\{n_1, \dots, n_k\}=\{2\}$, then the CAT(0) space is a finite product of copies of $\mathbb H^2$, $\mathbb H^3$, and locally finite trees.  
\end{cor}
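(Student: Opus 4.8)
The plan is to reduce to the single-factor situation, where the theorem of Douba \cite{Douba} applies, then reassemble the factor actions into an action on a product, and finally upgrade properness to acylindricity in the algebraic case. First I would pass to the coordinate projections $p_i\colon \G\to SU_{n_i}$. Each image $p_i(\G)$ is finitely generated and sits inside $SU_{n_i}\leq \SL_{n_i}\mathbb{C}$. Since a unitary matrix is diagonalizable while a unipotent matrix has all eigenvalues equal to $1$, the only unipotent element of $SU_{n_i}$ is the identity; hence $p_i(\G)$ contains no nontrivial unipotents, and Douba's theorem produces a proper action of $p_i(\G)$ on a CAT(0) space $Y_i$.

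Next I would let $\G$ act on the $\ell^2$-product $\CProd{i=1}{k}Y_i$ through the maps $p_i$. The key point making this action proper is that the map $\G\hookrightarrow \CProd{i=1}{k}p_i(\G)$ is injective: for any $\e>0$ and $y=(y_i)_i$, an element $\g\in \cs{\e}(y)$ satisfies $p_i(\g)\in \cs{\e}(y_i)$ for every $i$ (each coordinate displacement is dominated by the $\ell^2$-displacement), and each of these factor coarse stabilizers is finite by properness of the factor actions; injectivity then forces $\cs{\e}(y)$ to be finite. Thus the $\G$-action on $\CProd{i=1}{k}Y_i$ is proper, and since $\cs{\e}(x,y)\subseteq \cs{\e}(x)$ this immediately gives AU-acylindricity (take $R=0$ in the definition). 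When every $n_i=2$, each $Y_i$ is, by the discussion following Douba's theorem and by Example \ref{Ex:S-arith}, an $\ell^2$-product of copies of $\mathbb{H}^2$, $\mathbb{H}^3$ and (a priori not necessarily locally finite) Bruhat--Tits trees, so the same holds for $\CProd{i=1}{k}Y_i$.

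For the algebraic refinement, suppose $\G$ conjugates into $SU_{n_i}\cap \SL_{n_i}(\~\Q)$. Then all matrix entries are algebraic, so each $p_i(\G)$ lies in $\SL_{n_i}(F)$ for a common number field $F$ and, being finitely generated, in $\SL_{n_i}(O_F[S^{-1}])$ for a finite set of places $S$, exactly as in Example \ref{Ex:S-arith}. The resulting $S$-arithmetic factors are now products of symmetric spaces and Bruhat--Tits buildings whose non-archimedean factors are locally finite (the residue fields are finite), so $\CProd{i=1}{k}Y_i$ is uniformly locally compact; when $n_i=2$ the tree factors are genuinely locally finite, giving the final description. It remains only to upgrade the (already established) properness to acylindricity.

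Here is where I expect the main obstacle. Properness together with uniform local compactness does not suffice: the action of $\SL_2\Z$ on $\mathbb{H}^2$ is proper on a uniformly locally compact space yet is not acylindrical, the obstruction being precisely its parabolic (equivalently, unipotent) elements. The point is that unipotency is a Galois-stable condition, so the absence of nontrivial unipotents in $SU_{n_i}$ propagates to every archimedean and non-archimedean completion; hence the $S$-arithmetic action has no parabolic isometries in any factor. One then argues that a discrete, unipotent-free $S$-arithmetic action admits no accumulation of short-displacement elements, so that the number of $\g$ with $d(\g x,x)\leq \e$ is bounded independently of $x$; that is, the action is uniformly proper, whence Lemma \ref{Lem:acyl+loc comp implies unif proper}(2) yields acylindricity. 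Making the implication ``no parabolics $\Rightarrow$ uniform properness'' precise---ruling out both cusps (handled by the absence of unipotents, via Godement's compactness criterion) and arbitrarily short translation lengths (handled by the height rigidity of $O_F[S^{-1}]$, or a Kazhdan--Margulis argument)---is the technical heart of the proof; alternatively, one may show that the absence of unipotents makes the action cobounded on a $\G$-invariant convex subspace and invoke Lemma \ref{Lem:prop cobound is acyl}.
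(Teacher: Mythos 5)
Your argument for properness, for AU-acylindricity, and for the description of the space is correct and is essentially the route the paper intends (the corollary is stated without a formal proof; it is meant to follow from Douba's theorem, the observation that $SU_n$ is unipotent-free, and Example \ref{Ex:S-arith}). Your only deviation there is cosmetic: you project to the factors, apply Douba to each $p_i(\G)$, and recover properness of the diagonal action on the $\ell^2$-product from the injectivity of $\G\hookrightarrow \CProd{i=1}{k}p_i(\G)$; one can equally observe that $\CProd{i=1}{k}SU_{n_i}$ embeds block-diagonally in $SU_{n_1+\cdots+n_k}\leq \SL_{n_1+\cdots+n_k}\mathbb{C}$ with no new unipotents and invoke Douba once. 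Both work, and your coarse-stabilizer bookkeeping ($\cs{\e}(y)\subset\Cap{i}{}p_i^{-1}(\cs{\e}(y_i))$, each factor set finite, intersection finite by injectivity) is sound. Your reading of the last sentence as cumulative with the algebraicity hypothesis (so that the residue fields are finite and the trees locally finite) is also the correct one.

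On the acylindricity claim you have put your finger on a genuine issue rather than introduced one. The paper's justification is the single clause ``uniformly locally compact and hence the $\G$ action is acylindrical,'' with a pointer to Lemma \ref{Lem:acyl+loc comp implies unif proper}; but that lemma only gives \emph{uniformly proper} $\Rightarrow$ acylindrical (and acylindrical $+$ uniformly locally compact $\Rightarrow$ uniformly proper), never \emph{proper} $\Rightarrow$ acylindrical. Your counterexample $\SL_2\Z\curvearrowright\mathbb{H}^2$ shows exactly why properness on a uniformly locally compact space is not enough, and your diagnosis that the absence of unipotents is the extra input is the right one. What remains a gap -- in your proposal and, as written, in the paper -- is the implication ``unipotent-free finitely generated subgroup of $\SL_{n}(O_F[S^{-1}])$ acts \emph{uniformly} properly on the associated product of symmetric spaces and buildings.'' Your sketch (Margulis-lemma control of almost-stabilizers, plus a Northcott/height lower bound on translation lengths of semisimple elements over a fixed $O_F[S^{-1}]$, or alternatively coboundedness on an invariant convex core followed by Lemma \ref{Lem:prop cobound is acyl}) is the correct shape of the missing argument, but you do not carry it out, so as submitted the proof of the middle assertion of the corollary is incomplete. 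Since the first and third assertions are fully proved and the second is exactly where the paper itself is elliptical, the fair summary is: same approach, correctly executed except for one step that both you and the source leave unproved -- and which you, unlike the source, explicitly flag.
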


\subsection{\textit{S}-Arithmetic Lattices}\label{Sect: S-arith}

In this section we will primarily see examples of groups that do \emph{not} have acylindrical or even AU-acylindrical actions on finite products of $\delta$-hyperbolic spaces (but do admit such actions on CAT(0) spaces). 

Haettel proved that a lattice in a product of higher rank, almost simple connected algebraic groups with finite centers over a local field only admits elliptic or parabolic actions on $\delta$-hyperbolic spaces \cite{Haettel}. 

On the other hand, they admit proper actions on the corresponding product of symmetric spaces and/or Bruhat-Tits buildings (as in Example ~\ref{Ex:S-arith}). This action is uniformly proper (and hence acylindrical) if the lattice is uniform and the action is nonuniformly proper (and hence nonuniformly acylindrical) in the nonuniform case (i.e. when the lattice contains unipotent elements). Therefore, while these do not fall in to our framework of AU-acylindrical actions on products of hyperbolic spaces, they do fall in the framework of AU-acylindrical actions on non-positively curved spaces.

In a similar direction, Bader-Caprace-Furman-Sisto \cite{BaderCapraceFurmanSisto} proved that lattices in certain products of locally compact groups only admit actions on hyperbolic spaces in as much as their ambient group does.

\begin{defn}\label{Def: standard rank-1}
    A locally compact second countable group $G$ is a \emph{standard rank one group} if it has no non-trivial compact normal subgroup and either 
    \begin{enumerate}
        \item $G$ is the group of isometries or orientation-preserving isometries of a rank-1 symmetric space of non-compact type, or
        \item $G$ has a continuous, proper, faithful action by automorphisms on a locally finite non-elementary tree, without inversions,  with exactly two orbits of vertices, and whose action on the boundary is 2-transitive.
        \end{enumerate}
\end{defn}

With this definition in place, we state their striking result \cite[Theorem 1.1]{BaderCapraceFurmanSisto} in our setting:

\begin{theorem}\label{Thm: BCFS}
    Let  $\G$ be an  irreducible lattice in a product of locally compact second countable groups $G= \CProd{i=1}{k}G_i$, where each $G_i$ is either standard rank one or a semi-simple algebraic group over a local field. An  essential general type action of $\G$ on a $\delta$-hyperbolic space must be equivalent to one that factors through a projection of $\G$ to $G_i$, if it is standard rank-1. (Otherwise, no such actions exist.)
\end{theorem}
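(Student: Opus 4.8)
The plan is to recognize that this statement is, up to a dictionary between two frameworks, precisely \cite[Theorem 1.1]{BaderCapraceFurmanSisto}, so that the entire substantive content --- a boundary-theoretic superrigidity argument for irreducible lattices --- is imported wholesale. Thus the only thing I would actually verify is that our hypothesis of an \emph{essential general type} action matches their hypothesis of a \emph{coarsely minimal general type} action, after which their conclusion transfers verbatim.

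For the translation, first I would note that by the classification in Theorem \ref{thm:actionsclassrk1} a general type action contains two independent loxodromics and in particular has $|\L(\G)|=\8$; such an action is automatically non-parabolic with unbounded orbits. These are exactly the two conditions under which our notion of essentiality coincides with the coarse minimality of \cite[Definition 3.4]{BaderCapraceFurmanSisto} (compare the remark following the definition of essential, where the only discrepancy between the two notions was the trivial action on a point, which is excluded here by the general type hypothesis). Hence an essential general type action carries the same data as a coarsely minimal general type action, and no further hypotheses need to be checked before invoking their theorem.

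Should one wish to state the result for a general type action that is not a priori essential, I would first replace $X$ by its tremble-free essential core $X'$ using Theorem \ref{intro:essentialcore} (equivalently Lemma \ref{lem:tremblefreerk1}): this is a $\G$-equivariant quasi-isometry onto an invariant subspace on which the action is essential and still of general type, and the property of factoring through a projection $\G\to G_i$ is insensitive to this replacement since it is detected on the boundary $\partial X' = \L(\rho(\G))$. With the action now in coarsely minimal general type form, I would apply \cite[Theorem 1.1]{BaderCapraceFurmanSisto} to the irreducible lattice $\G\leq \CProd{i=1}{k}G_i$ and read off the dichotomy directly: if some factor $G_i$ is standard rank-1 then the action is equivalent to one factoring through the projection $\G\to G_i$, and otherwise no essential general type action exists.

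The only real obstacle is this reconciliation of definitions, together with keeping track of what ``equivalence'' of actions means under the passage to the essential core; the genuinely hard mathematics --- the use of the Furstenberg--Poisson boundary and the resulting rigidity of the boundary action --- lives entirely inside the cited theorem and is not reproved here.
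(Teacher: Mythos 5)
Your proposal is correct and takes essentially the same route as the paper: the paper gives no proof of Theorem \ref{Thm: BCFS} beyond declaring it to be \cite[Theorem 1.1]{BaderCapraceFurmanSisto} restated in its own language, and your reconciliation of \emph{essential general type} with \emph{coarsely minimal general type} is precisely the content of the paper's remark following the definition of essentiality (where the only discrepancy noted is the trivial action on a point, excluded here by the general type hypothesis).
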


From this remarkable result, we deduce the following:

\begin{cor}\label{Cor:rank1}
    A lattice $\G$ as in Theorem ~\ref{Thm: BCFS} admits an AU-acylindrical action on a finite product of $\delta$-hyperbolic spaces if and only if its ambient group has only standard rank-1 factors. Furthermore, the corresponding action is acylindrical if and only if $\G$ is cocompact. 
\end{cor}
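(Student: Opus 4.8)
The plan is to establish the two equivalences separately, writing $G=\prod_{i=1}^{k}G_i$ for the ambient group and fixing the irreducible lattice embedding $\G\hookrightarrow G$. For the ``if'' direction I would argue constructively. If every $G_i$ is standard rank-1 then, by Definition \ref{Def: standard rank-1}, each $G_i$ acts continuously, properly and cocompactly by isometries on a locally compact $\delta$-hyperbolic space $Y_i$ -- a rank-1 symmetric space of non-compact type, or a locally finite tree -- exactly as in Example \ref{Ex:S-arith}. Hence $G$ acts properly and cocompactly on $\prod_{i=1}^{k}Y_i$, and restricting to $\G$ gives a proper action on a finite product of $\delta$-hyperbolic spaces; as this space is locally compact, the action is AU-acylindrical (any proper action is, by taking $R=0$ in the definition, cf.\ Lemma \ref{Lem:acyl+loc comp implies unif proper}). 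This product action is the \emph{corresponding action} named in the statement.

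For the ``only if'' direction I would begin with an AU-acylindrical action of $\G$ on some $\X=\prod_{i=1}^{D}X_i$ and pass to the finite-index subgroup $\G_0$ preserving the factors, which remains AU-acylindrical and an irreducible lattice in $G$. Using the classification of Theorem \ref{thm:actionsclassrk1} factorwise, I would discard the elliptic factors by Lemma \ref{Lem:elimellfactors} and reduce the surviving non-elliptic factors to general type. Passing to the tremble-free essential cores via Corollary \ref{Cor: pass to irreducible tremble free core} then produces an AU-acylindrical action $\rho'\colon\G_0\to\Aut_{\!0}\X'$ with $\X'=\prod_i X_i'$, each factor essential and of general type, and with finite kernel by Lemma \ref{Lem:AU-acyl has finite kernel}. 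By Theorem \ref{Thm: BCFS}, the action on each $X_i'$ is equivalent to one factoring through the projection $\G_0\to G_{\sigma(i)}$ onto a standard rank-1 factor $G_{\sigma(i)}$; writing $J=\{\sigma(i)\}\subseteq\{1,\dots,k\}$, the whole action factors as $\G_0\xrightarrow{\,p\,}\prod_{j\in J}G_j\xrightarrow{\,\alpha\,}\Aut_{\!0}\X'$ with $p$ the lattice projection and $\alpha$ continuous.

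The crux is a discreteness dichotomy. I would show that AU-acylindricity forces $p(\G_0)$ to be discrete in $\prod_{j\in J}G_j$: if it were not, a sequence of distinct elements $h_n=p(\g_n)\to e$ would yield $\alpha(h_n)\to\mathrm{id}$, so that $\alpha(h_n)\in\cs{\e}(x,y)$ eventually for \emph{every} pair $x,y\in\X'$ and every $\e>0$; finiteness of $\ker\rho'$ guarantees infinitely many of the $\alpha(h_n)$ are distinct, making every $\cs{\e}(x,y)$ infinite and contradicting AU-acylindricity. Irreducibility, on the other hand, makes the projection of $\G_0$ to any \emph{proper} sub-product non-discrete. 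Hence $J$ cannot be proper, so $J=\{1,\dots,k\}$: every factor $G_i$ is used, and each used factor is standard rank-1 by Theorem \ref{Thm: BCFS}. Therefore all $G_i$ are standard rank-1.

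Finally, for the furthermore I would examine the corresponding action on $\prod_{i=1}^{k}Y_i$. If $\G$ is cocompact this action is proper and cobounded, hence acylindrical by Lemma \ref{Lem:prop cobound is acyl}. If $\G$ is not cocompact, the Godement compactness criterion supplies a non-trivial unipotent $u$; as $\G$ acts properly and $u$ has infinite order, $u$ is non-elliptic on the product, and since a unipotent is never loxodromic on a rank-1 symmetric space or a tree, $\<u\>$ is elliptic or parabolic in each factor with at least one parabolic factor, so Theorem \ref{thm:elimmixedfactors} (or Corollary \ref{cor:elimpapra} when $k=1$) prevents $\<u\>$, and hence $\G$, from acting acylindrically. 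I expect the main obstacle to be the reduction of the non-elliptic factors to general type in the second step: excluding lineal, parabolic and quasiparabolic factor actions rests on the vanishing of non-trivial homogeneous quasimorphisms and the Zariski density of the factor projections of an irreducible lattice -- the same rigidity that underlies Theorem \ref{Thm: BCFS}.
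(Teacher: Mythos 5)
The overall route you take is the right one and, in its main classification step, matches what the paper intends (the corollary is stated without proof as a consequence of Theorem \ref{Thm: BCFS}, Example \ref{Ex:S-arith}, and Theorem \ref{thm:elimmixedfactors}). Your discreteness dichotomy is the genuine content of the ``only if'' direction and is correct: AU-acylindricity makes the image of $\G$ in $\Aut\X'$ discrete (a coarse stabilizer $\cs{\e}(x,y)$ is the trace of an identity neighborhood for the topology of Section \ref{Sect: top on isom group}), while irreducibility makes the projection to any proper sub-product non-discrete; one only has to be slightly careful that Theorem \ref{Thm: BCFS} gives an action \emph{equivalent} to one factoring through $G_{\sigma(i)}$, so elements of $p(\G_0)$ near the identity move points of $X_i'$ a bounded amount rather than an arbitrarily small one -- this still kills AU-acylindricity. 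Also note that the obstacle you flag (reducing non-elliptic factors to general type) dissolves under the paper's standing convention that the AU-acylindrical actions considered have general type factors; without some such non-triviality hypothesis the ``only if'' direction is literally false, since every group acts AU-acylindrically on a point.

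The genuine gap is in the ``furthermore''. Your argument for ``non-cocompact $\Rightarrow$ not acylindrical'' rests on Godement's criterion producing an infinite-order unipotent that is parabolic in some factor. This works when at least one factor is of type (1) in Definition \ref{Def: standard rank-1} (this is exactly why Corollary \ref{Cor:SL2ZS} carries that hypothesis), but it fails when all factors are of type (2): for a nonuniform lattice in a product of trees (e.g. $\PSL_2(\mathbb{F}_q[t,t^{-1}])$ in $\PSL_2(\mathbb{F}_q((t)))\times\PSL_2(\mathbb{F}_q((t^{-1})))$), the unipotents are torsion and act \emph{elliptically} on every tree factor, so Theorem \ref{thm:elimmixedfactors} gives nothing, and Godement's criterion is not available in this generality in any case. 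The robust argument is the one the paper's Lemma \ref{Lem:acyl+loc comp implies unif proper}(3) is set up for: the corresponding product of symmetric spaces and locally finite trees is uniformly locally compact, so an acylindrical action would be uniformly proper; but a nonuniform lattice is never uniformly proper on its ambient space -- in the archimedean case because $|\cs{\e}(x)|$ blows up in the thin part of the cusps, and in the tree case because finite covolume together with infinitely many vertex orbits forces vertex stabilizers of unbounded order. You should replace the unipotent step with this uniform-properness argument (or at least supplement it for the all-tree case).
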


\begin{cor}\label{Cor:SL2ZS}
Let $\G=\SL_2\Z[S^{-1}]$, where $S$ is a finite nonempty set of primes, or more generally a nonuniform irreducible lattice in a product of standard rank-1 groups, with at least one factor of type (1) as in Definition ~\ref{Def: standard rank-1}.
Then $\G$ does not act acylindrically on any finite product of hyperbolic spaces, unless the actions are all elliptic.
\end{cor}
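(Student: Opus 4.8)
The plan is to deduce this from Corollary \ref{Cor:rank1}, together with the obstruction Theorem \ref{thm:elimmixedfactors} and the superrigidity Theorem \ref{Thm: BCFS}. First I would record the structural input: by Example \ref{Ex:S-arith}, $\G=\SL_2\Z[S^{-1}]$ embeds as a lattice in $H=\SL_2\R\times\prod_{p\in S}\SL_2\Q_p$, where the real factor is a standard rank-one group of type (1) and each $p$-adic factor acts on its Bruhat--Tits tree and is of type (2); the lattice is irreducible and, because it contains the unipotents $\left(\begin{smallmatrix}1 & a\\ 0 & 1\end{smallmatrix}\right)$ with $a\in\Z[S^{-1}]$, it is nonuniform, i.e. not cocompact. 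Thus $\G$ is of the form treated in Corollary \ref{Cor:rank1}, which asserts that such a lattice admits an acylindrical action on a finite product of hyperbolic spaces exactly when it is cocompact. Since $\G$ is not cocompact, no such action can have a nonelliptic factor, and it remains only to make the ``unless elliptic'' clause precise and self-contained.

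To that end I would argue by contradiction: suppose $\rho\colon\G\to\Aut\X$ is acylindrical and some factor projection is not elliptic. Passing to the finite-index subgroup $\G_0$ that acts factor-preservingly (Section \ref{Subsec:Products}), acylindricity is inherited by $\G_0$; moreover if all factor projections of $\G_0$ were elliptic then all $\G$-orbits would be bounded (being a finite union of $\G_0$-orbits), so $\rho$ would be elliptic. Hence it suffices to reach a contradiction assuming $\G_0\to\Aut_{\!0}\X$ is acylindrical with some non-elliptic factor. Using Lemma \ref{Lem:elimellfactors} and Proposition \ref{prop:removepara} I would discard the elliptic and parabolic factors, retaining an acylindrical action on the product of the survivors; by Theorem \ref{thm:elimmixedfactors} and Lemma \ref{Lem:noqpinacyl} (which excludes the purely quasiparabolic case) at least one surviving factor $X_i$ carries a lineal or general type action of $\G_0$.

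I would then feed a general type factor into superrigidity. After replacing $X_i$ by its tremble-free essential core (Lemma \ref{lem:tremblefreerk1}), Theorem \ref{Thm: BCFS} shows the action is equivalent to one factoring through a projection $\G_0\to G_j$ onto a single standard rank-one factor and extending to a \emph{continuous} action of $G_j$. The nonuniformity enters through the unipotents: the image of $N=\{\left(\begin{smallmatrix}1 & a\\ 0 & 1\end{smallmatrix}\right):a\in\Z[S^{-1}]\}$ is dense in the horospherical subgroup of $G_j$, and every unipotent $u$ acts with zero translation length in the continuous $G_j$-action. In the type (1) case this is because each $u^n$ is conjugate to $u$ in $G_j$ (by the diagonal torus), so $n\,\tau(u)=\tau(u^n)=\tau(u)$ forces $\tau(u)=0$; in the type (2) case it is because the closure $\overline{\langle u\rangle}$ is compact and hence acts with bounded orbits. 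Consequently $N$ fixes a boundary point and acts parabolically on $X_i$, with no loxodromic elements. Restricting the acylindrical action to $N$ and invoking Theorem \ref{thm:elimmixedfactors} once more -- acylindricity passes to subgroups, while $N$ has a parabolic factor and no loxodromic factor -- yields the contradiction, mirroring the noncocompact direction of Corollary \ref{Cor:rank1}.

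The hard part is that the projection of $\rho$ to a single factor need \emph{not} be acylindrical, so Osin's rank-one trichotomy cannot be applied factorwise; the genuine mechanism, and the role of the type (1) hypothesis, is that the unipotent subgroup $N$ of a nonuniform lattice is \emph{nondiscrete} in the Archimedean direction, so its dense image produces, via the continuous extension of Theorem \ref{Thm: BCFS}, unboundedly large coarse stabilizers of point-pairs at unbounded distance. The most delicate bookkeeping is the treatment of any surviving lineal or quasiparabolic factors, to which Theorem \ref{Thm: BCFS} does not directly apply and on which $N$ may a priori act with nonzero but arbitrarily divisible (hence arbitrarily small) translation lengths; here I would use that $\Z[S^{-1}]$ is dense in $\R$ to manufacture small-displacement elements and again violate \emph{uniform} acylindricity, which is precisely the point at which the Archimedean type (1) factor is indispensable.
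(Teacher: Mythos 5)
Your overall route -- reduce to factor-preserving actions, strip elliptic and parabolic factors, feed the general type factors into Theorem \ref{Thm: BCFS}, show the unipotent subgroup acts parabolically, and conclude with Theorem \ref{thm:elimmixedfactors} -- is the same skeleton as the paper's proof, which is three lines long: by Theorem \ref{Thm: BCFS} the action may be taken to be standard, nonuniformity supplies \emph{exponentially distorted} unipotent elements by Lubotzky--Mozes--Raghunathan, these must be elliptic or parabolic in each factor, and Theorem \ref{thm:elimmixedfactors} finishes. The one genuinely different ingredient in your write-up is the mechanism for excluding loxodromic behaviour of unipotents: you use conjugacy under the diagonal torus (type (1)) and compactness of $\overline{\langle u\rangle}$ (type (2)), where the paper uses distortion. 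The distortion argument buys something concrete: since $\G$ is finitely generated, any orbit map $\G\to\X$ is Lipschitz for a word metric, so an element that is loxodromic in \emph{some} factor has linearly growing orbit and is therefore undistorted -- in \emph{every} factor of \emph{every} isometric action, regardless of whether that factor is standard, lineal, or quasiparabolic. This uniformly disposes of exactly the case you flag as delicate.

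That flagged case is where your argument has a genuine soft spot. On a surviving lineal or quasiparabolic factor you propose to divide $u$ by $p^k$ to produce elements $v_k$ of arbitrarily small translation length and ``violate uniform acylindricity.'' But small translation length in one factor does not give small displacement in the product: acylindricity of the action on $\X$ (with the $\ell^1$ or $\ell^\infty$ metric) requires the coarse stabilizer $\cs{\e}(x,y)$ to control displacement in \emph{all} coordinates simultaneously, and the elements $v_k$ are merely elliptic with \emph{bounded} (not $\e$-small) displacement on the nonarchimedean tree factors. One can salvage this by choosing $\e$ larger than that bound and base points along a quasi-axis in the lineal factor, but as written the step is not complete; the paper's appeal to exponential distortion (or, alternatively, to the absence of unbounded quasimorphisms on such lattices, which kills lineal and quasiparabolic factors outright) is the cleaner way to close it.
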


\begin{proof}
 By Theorem ~\ref{Thm: BCFS}, without loss of generality, we may assume that the action of $\G$ on a $\delta$-hyperbolic space is one of these standard actions. The fact that these lattices are nonuniform means that they have unipotent exponentially distorted elements \cite[Theorem B]{LubotzkyMozesRaghunathan}. Therefore, they must be elliptic or parabolic in each factor. By Theorem ~\ref{thm:elimmixedfactors} the action is not acylindrical. \end{proof}

\subsection{Groups with Property (NL)}\label{Subsec: NL} In \cite{PropNL}, the authors initiate the systematic study of groups that do not admit \emph{any} actions on hyperbolic spaces with a loxodromic element -- such groups are said to have Property $\nl$.  In other words, the only  actions such a group can admit on a hyperbolic space are elliptic or parabolic. If every finite index subgroup also has Property $\nl$, the group is said to be \emph{hereditary} $\nl$. Examples of groups with Property $\nl$, besides the obvious finite groups, include Burnside groups, Tarskii monster groups, Grigorchuk groups, Thompsons groups $T,V$ and many ``Thompson-like" groups (see \cite[Theorem 1.6]{PropNL} for a more extensive list). It is worth noting however that Thompson's group and similar-type diagram groups act properly on infinite-dimensional CAT(0) cube complexes \cite{Farley}. Furthermore, while finitely generated torsion groups do not act on 2-dimensional CAT(0) complexes \cite{NorinOsajdaPrzytycki},  free Burnside groups do act non-trivially on infinite dimensional CAT(0) cube complexes \cite{Osajda}. Finitely generated bounded-torsion groups are conjectured not to admit fixed-point free actions on locally compact CAT(0) spaces. In a personal communication, Coulon and Guirardel have told us of some upcoming work: there is a finitely generated infinite amenable (unbounded) torsion group G which admits a proper action on an infinite dimensional CAT(0) cube complex.

Returning to our context, it follows from \cite[Corollary 6.5]{PropNL}, that groups with the hereditary $\nl$ property also do not admit interesting actions on products of hyperbolic graphs. In particular, it follows from the proof of \cite[Proposition 6.4]{PropNL} and Theorem ~\ref{thm:elimmixedfactors}, that if $\G$ is a hereditary $\nl$ group, and $\X$ is a product of hyperbolic spaces, then $\G \to \Aut \X$ is acylindrical only if the action is elliptic in each factor.


\section{Elementary subgroups and the Tits Alternative}\label{sec:elemsubgrp}
In this section, we explore the structure of the stabilizer of a pair of points in $\partial_{reg}\X= \Prod{i=1}{D}\partial X_i$ . If the points have distinct factors and the action is AU-acylindrical, we prove the stabilizer is virtually free abelian of rank no larger than $D$. This generalizes the analogous result in rank-1 \cite[Proposition 6]{BestvinaFujiwara} and for HHGs \cite[Theorem 9.15]{DurhamHagenSisto}, although our proof uses new techniques. We then consider the case of a single regular point in an acylindrical action and obtain the same result. More specifically:

\begin{prop}\label{prop:elemsubvirtab} If $\G\to \Aut \X$ is AU-acylindrical, and $\xi^\pm \in \partial_{reg}\X$ with distinct factors then $E_\G(\xi^\pm):= \stab_\G\{\xi^-, \xi^+\}$ is virtually isomorphic to $\Z^k$ where $0\leq k \leq D$. \end{prop}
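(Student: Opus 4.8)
The plan is to produce, on a finite-index subgroup, a homomorphism to $\R^D$ whose kernel is finite and whose image is a discrete free-abelian subgroup, and then to read off the structure of $E_\G(\xi^\pm)$.

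First I would reduce to a convenient finite-index subgroup. Since $\G_0$ has finite index in $\G$ and the setwise stabilizer $E_\G(\xi^\pm)$ contains the pointwise stabilizer $\stab_\G(\xi^-)$ with index at most two, it suffices---virtual isomorphism being insensitive to finite-index subgroups---to analyze $E^+:=E_\G(\xi^\pm)\cap\G_0\cap\stab_\G(\xi^-)$, which preserves factors and fixes each $\xi^+_i,\xi^-_i\in\partial X_i$. Because $\xi^-,\xi^+$ have distinct factors we have $\xi^-_i\neq\xi^+_i$ for every $i$, so in each factor $E^+$ coarsely preserves a bi-infinite $(1,20\delta)$-quasigeodesic $L_i$ joining them (Theorem \ref{Slim Q-Bigons infinite}); replacing $X_i$ by a uniform neighborhood of $L_i$, which by Remark \ref{rem:takingnbd} is an $E^+$-invariant quasiconvex hyperbolic quasi-line and on which AU-acylindricity is inherited, I may assume each $X_i$ is a quasi-line with endpoints fixed by $E^+$.

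Next, parametrizing $L_i$ by a $(1,20\delta)$-quasigeodesic $q_i\colon\Z\to X_i$ converging to $\xi^+_i$, let $\beta_i\colon E^+\to\R$ be the homogeneous Busemann quasimorphism and set $\beta=(\beta_1,\dots,\beta_D)\colon E^+\to\R^D$, a homogeneous quasimorphism of some defect $C$. The geometric core of the argument is that every sublevel set $S_M:=\{g\in E^+:\|\beta(g)\|_\infty\le M\}$ is finite. Indeed, in the quasi-line case Lemma \ref{Lem:shiftfn} applies with $s_0=0$, giving $d(q_i(s-\beta_i(g)),gq_i(s))\le\consttwo$ for all $s\ge0$, so for $g\in S_M$ one has $d(q_i(s),gq_i(s))\le|\beta_i(g)|+20\delta+\consttwo\le M+20\delta+\consttwo=:\e_M$. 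Taking $y=(q_i(0))_i$ and $z=(q_i(R))_i$ with $R\ge R(\e_M)+20\delta$ yields points of $\X$ with $d(y,z)\ge R(\e_M)$ and $S_M\subseteq\cs{\e_M}(y,z)$, which is finite by AU-acylindricity.

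The main obstacle is that $\beta$ is a priori only a quasimorphism, so its kernel and image need not be a subgroup of $\R^D$; the crux is to upgrade it to an honest homomorphism. Since each $\beta_i$ is homogeneous it is conjugation-invariant, whence $\|\beta([g,h])\|_\infty\le C$ and every commutator of $E^+$ lies in the finite set $S_C$. Invoking the classical fact that a group with only finitely many distinct commutators has finite derived subgroup, $[E^+,E^+]$ is finite, so $E^+$ is finite-by-abelian and in particular amenable; on an amenable group every homogeneous quasimorphism is a homomorphism, so $\beta\colon E^+\to\R^D$ is a homomorphism. Its kernel is precisely $S_0$, the set of elements elliptic in every factor, which is finite by the previous paragraph, and its image meets every ball of $\R^D$ in a finite set and is therefore a discrete subgroup, hence isomorphic to $\Z^k$ with $0\le k\le D$. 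The resulting extension $1\to S_0\to E^+\to\Z^k\to1$ with $S_0$ finite exhibits $E^+$, and therefore $E_\G(\xi^\pm)$, as virtually isomorphic to $\Z^k$.
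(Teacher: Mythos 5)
Your proof is correct, and it diverges from the paper's argument at exactly the step where something substantive must be done: showing the Busemann quasimorphism $\beta=(\beta_1,\dots,\beta_D)$ is an honest homomorphism. The paper first proves $E_\G(\xi^\pm)$ is amenable (Theorem \ref{theorem: elementary acyl is amenable}) via a fairly lengthy computation showing every finitely generated subgroup has polynomial growth of degree at most $D$, using the coarsely surjective quasi-flat $q:\Z^D\to\Fl$ together with AU-acylindricity; it then separately proves (Proposition \ref{acyldescends}) that the induced translation action on $\R^D$ is AU-acylindrical, hence proper and discrete, and concludes via \cite{BorevichShafarevich}. You instead note that AU-acylindricity applied to one well-separated pair of orbit points on the quasi-flat already makes every sublevel set $S_M$ of $\|\beta\|_\infty$ finite; since homogeneous quasimorphisms are conjugation-invariant, every commutator lies in the finite set $S_C$, so every conjugacy class $\{hgh^{-1}\}=\{[h,g]\cdot g\}\subseteq S_C\, g$ is finite and $E^+$ is a group with boundedly finite conjugacy classes. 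This is shorter and yields slightly more (a finite derived subgroup rather than mere amenability), at the cost of the unreferenced ``classical fact,'' which is precisely B.\,H.~Neumann's theorem on BFC-groups and should be cited as such; note, though, that for your purposes the elementary observation that FC-groups are amenable (every finitely generated subgroup has finite-index center) already suffices to make $\beta$ a homomorphism, after which finiteness of the kernel is just finiteness of $S_0$. Two points of hygiene: the genuinely $E^+$-invariant object in each factor is the union of all $(1,20\delta)$-quasigeodesics between $\xi_i^-$ and $\xi_i^+$ (a single quasigeodesic is only coarsely preserved), so one should take a uniform neighborhood of that union as in Remark \ref{rem:takingnbd} and the paper's $\Fl$; and the phrase ``contains the pointwise stabilizer with index at most two'' should say that the subgroup of $E_\G(\xi^\pm)$ fixing $\xi^-$ has index at most two in $E_\G(\xi^\pm)$.
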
 

\begin{prop}\label{prop: reg fixed implies reg pair} Suppose that $\xi \in \partial_{reg}  \X$, where $\G \to \Aut_{\!0}\X$ is acylindrical. Then either $\stab (\xi)$ is finite or there exists an $\xi' \in \partial_{reg} \X$ such that $\xi \neq \xi'$ and $\stab_\G(\xi) = \stab_\G\{\xi, \xi'\}$. 
\end{prop}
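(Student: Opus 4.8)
My plan is to study $H:=\stab_\G(\xi)$ through the $D$ Busemann quasimorphisms attached to the fixed point $\xi=(\xi_1,\dots,\xi_D)\in\partial_{reg}\X$. Acylindricity passes to subgroups, so $H\to\Aut_{\!0}\X$ is acylindrical, and since every $h\in H$ fixes $\xi_i\in\partial X_i$, each projected action $H\to\Isom X_i$ has a global boundary fixed point; by the classification Theorem \ref{thm:actionsclassrk1} it is therefore elliptic, parabolic, oriented lineal, or quasiparabolic (general type and orientation-reversing lineal are excluded because $H$ fixes $\xi_i$). In particular each factor carries a Busemann quasimorphism $\beta_i\colon H\to\R$ with $\beta_i(h)\neq 0$ iff $h_i$ is loxodromic, and I would assemble $\beta=(\beta_1,\dots,\beta_D)\colon H\to\R^D$. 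Fixing $(1,20\delta)$ rays $q_i\to\xi_i$ and the diagonal ray $q\to\xi$, Lemma \ref{Lem:shiftfn} shows each $h$ eventually displaces $q$ by at most $\|\beta(h)\|_\8+20\delta+E$. The first step is to record, exactly as in Claim 1 of the proof of Lemma \ref{Lem:noqpinacyl}, that the sets $F_n:=\{h\in H:\|\beta(h)\|_\8\le n\}$ are finite: if $|F_n|>N(\e_n)$ for $\e_n=n+20\delta+E$, then choosing two points far apart on $q$ beyond the (finite) common waiting time would place more than $N(\e_n)$ elements in a single coarse stabilizer $\cs{\e_n}(q(a),q(b))$, contradicting acylindricity.

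If $\beta\equiv 0$, i.e. no element of $H$ is loxodromic in any factor, then $H=F_0$ is finite and we are in the first alternative. So I would assume henceforth that $H$ is infinite; then $\beta\not\equiv 0$ and at least one factor is oriented lineal or quasiparabolic.

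The crux of the argument is to rule out quasiparabolic factors, and the key new observation is that $\{F_n\}$ grows \emph{polynomially}. Indeed, if $B$ bounds all the defects of the $\beta_i$, then $\beta(h^{-1}h')=\beta(h')-\beta(h)$ up to an $\ell^\8$-error of $B$, so covering the box $[-n,n]^D$ by a unit net and pulling back shows $F_n$ is covered by $O(n^D)$ left translates of a single set $F_c$ with $c=c(B)$; since left translation is injective and $|F_c|<\8$, this gives $|F_n|=O(n^D)$. On the other hand, a quasiparabolic factor $j$ would yield, by ping-pong on $\partial X_j$ as in Lemma \ref{Lem:noqpinacyl}, two elements $a,b\in H$ generating a free sub-semigroup; since $\beta$ is a quasimorphism, every length-$l$ word lies in $F_{l(m+B)}$ with $m=\max\{\|\beta(a)\|_\8,\|\beta(b)\|_\8\}$, whence $2^l\le|F_{l(m+B)}|=O(l^D)$, which is absurd. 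Hence every factor in which $H$ has loxodromics is oriented lineal. I expect this to be the main obstacle, as it is precisely the place where acylindricity must be converted into a growth estimate strong enough to exclude focal dynamics in a single factor sitting inside a product.

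Finally, for each oriented-lineal factor $j$ the limit set of $H$ is a pair $\{\xi_j,\xi_j'\}$, both of whose points are fixed by $H$; I set $\xi_j'$ to be this second endpoint, and for every remaining (elliptic or parabolic) factor $i$ I set $\xi_i'=\xi_i$. Then $\xi'=(\xi_1',\dots,\xi_D')\in\partial_{reg}\X$, and $\xi'\neq\xi$ because they differ on the nonempty set of active factors. By construction $H$ fixes $\xi'$, so
$$\stab_\G(\xi)=H\subseteq\fix_\G\{\xi,\xi'\}\subseteq\stab_\G(\xi),$$
giving $\stab_\G(\xi)=\fix_\G\{\xi,\xi'\}$, which is the equality needed for Theorem \ref{intro:stabs}. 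Since $\fix_\G\{\xi,\xi'\}\subseteq\stab_\G\{\xi,\xi'\}$ always, and any element of $\stab_\G\{\xi,\xi'\}$ either fixes $\xi$ or exchanges $\xi\leftrightarrow\xi'$, the asserted equality $\stab_\G(\xi)=\stab_\G\{\xi,\xi'\}$ amounts to the absence of an exchanging element; such an element would reverse the orientation on every active lineal factor, so the stated equality holds on the orientation-preserving subgroup (of index at most two) and on all of $\G$ whenever no such exchange occurs.
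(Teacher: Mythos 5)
Your proof is correct and follows the same overall strategy as the paper's: classify each factor of $H=\stab_\G(\xi)$ using the fixed boundary point, exclude quasiparabolic factors by playing polynomial growth against a ping-pong free sub-semigroup (the argument of Lemma \ref{Lem:noqpinacyl}), and assemble $\xi'$ from the second endpoints of the oriented lineal factors while keeping $\xi_i'=\xi_i$ on the elliptic and parabolic ones. Three points of divergence are worth recording. First, you dispose of the finite case uniformly by showing $F_0=\{h:\beta(h)=0\}$ is finite, whereas the paper splits on whether $H$ contains a regular element and treats the all-elliptic case separately via Remark \ref{Rem: ell, fixing point on bound}; your route avoids that case division. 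Second, and more substantively, your covering argument --- pulling back a unit net on $[-n,n]^D$ through $\beta$ to bound $|F_n|$ by $O(n^D)$ translates of a fixed finite set --- supplies a genuinely polynomial estimate, where Claim 1 of Lemma \ref{Lem:noqpinacyl} only records $|F_n|\leq N(\e_n)$ with the acylindricity constant depending on $n$; your bound is what is actually needed to contradict the exponential growth of the free sub-semigroup, and it imports the counting idea of Theorem \ref{theorem: elementary acyl is amenable} into the Busemann setting. Third, your closing caveat about elements exchanging $\xi$ and $\xi'$ is apt: what both you and the paper in fact establish is $\stab_\G(\xi)=\fix_\G\{\xi,\xi'\}$, which is the form appearing in Theorem \ref{intro:stabs}, and the setwise version in the proposition's statement should be read accordingly.
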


Note that the statement of Proposition ~\ref{prop: reg fixed implies reg pair} does not claim that $\xi'$ has distinct factors from $\xi$, as the reader will also see in the proof. However, we note that if $D'$ is the number of factors where $\stab_\G(\xi)$ is not elliptic nor parabolic, then we can ensure that $\xi,\xi'$ have $D'$ many distinct factors and hence $\stab(\xi)$ is virtually $\Z^k$ for some $1 \leq k \leq D'$.

Our strategy to prove the above propositions is as follows:

\begin{enumerate}
    \item We prove that $E_\G(\xi^\pm)$ is amenable by showing that all finitely finitely generated subgroups have uniformly polynomial growth. 
    \item Up to passing to a finite index subgroup to guarantee that the components of $\xi^\pm$ are in fact fixed, we utilize the associated product of the Busemann homomorphisms, to descend the action to $\R^D$ and prove that it is AU-acylindrical; this allows us to deduce Proposition ~\ref{prop:elemsubvirtab}.
    \item We then deduce Proposition ~\ref{prop: reg fixed implies reg pair}  by utilizing the uniformity  of acylindricity to exclude the possibility of a quasi-parabolic factor and hence  find another  point in $\partial_{reg} \X$ which is also fixed and then conclude by applying Proposition ~\ref{prop:elemsubvirtab}.
\end{enumerate}

Assuming the above results hold, we can immediately prove our version of the Tits Alternative as stated in Theorem ~\ref{intro:titsalt}. 

\begin{proof}[Proof of Theorem ~\ref{intro:titsalt}] By Lemma ~\ref{Lem:AU-acyl has finite kernel} the kernel $\G\to \Aut\X$ is finite. Without loss of generality, we assume $\G\leq \Aut_{\!0}\X$. If any projection $\G\to \Isom X_i$ is general type then $\G$ contains a free group by the standard Ping-Pong argument. Therefore, assume no factor is of general type.

    By Lemmas ~\ref{Lem:elimellfactors}, up to possibly obtaining another finite kernel, since the action is not elliptic, we may assume that no factor is elliptic.
    Furthermore, by Lemma ~\ref{prop:removepara}, we  may also remove any parabolic factor as by Corollary ~\ref{cor:elimpapra}, not all factors can be parabolic. 

    Therefore, each factor must be lineal or quasi-parabolic, and in particular fixes a point $\xi\in \partial_{reg}\X$. We are then in the case of Proposition ~\ref{prop: reg fixed implies reg pair}. Applying the proposition, we conclude that the action in each remaining factor is necessarily oriented lineal. This completes the proof.
\end{proof}

\subsection{The rank-1 case.}
We begin by establishing some results about isometries of quasi-lines. Recall  that $(1, \lambda)$ quasigeodesics between the same end-points synchronously fellow travel by Corollary ~\ref{cor: (1,mu)-fellow travel}. The following statement generalizes this to the group setting. Note that a quasi-isometry of a quasitree can be realized by a $(1,\lambda)$ quasi-isometry by \cite{Kerr}. 

\begin{lemma}\label{lemma: Unif Bound}
 Let $L$ be a quasi-line and $q: \Z \to L$ be a $(1,\lambda)$ quasi-isometry that is $M$-coarsely surjective. Let $\Isom_{\!0}(L)$ denote the isometries which fix  the Gromov boundary  $\{- \8, +\8\}$ pointwise. Fix  $g\in \Isom_{\!0}(L)$. Setting $C_0(g)= 2M+ 3\lambda + 3d(g q(0), q(0))$ and $C(g) = 2M + C_0(g)$ we have that

 \begin{enumerate}
\item  If $n\in \Z$ then $d(g q(n), q(n))\leq C_0(g)$. 
\item If $x\in L$ then $d(g x, x)\leq C(g)$.
\end{enumerate}
If furthermore $\G\to \Isom_{\!0}(L)$ is elliptic then the constant $C_{ell}(\G) = 4M + 3\lambda +3B_0$ is such that $d(g x, x)\leq C_{ell}(\G)$ for every $g\in \G$ and $x\in L$, where $B_0:=\Sup{g\in \G}d(g q(0), q(0))<\8$. In particular an elliptic action on a quasi-line that fixes the Gromov boundary pointwise is a tremble.
\end{lemma}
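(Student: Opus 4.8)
The plan is to dispatch the three assertions in order, since each feeds into the next, exploiting the single structural fact that an element of $\Isom_{\!0}(L)$ fixes both ends of the quasi-line. For part (1), I fix $g\in \Isom_{\!0}(L)$ and note that $q$, being a $(1,\lambda)$ quasi-isometry, is in particular a bi-infinite $(1,\lambda)$ quasigeodesic with endpoints $\pm\8$ on $\partial L$; applying the isometry $g$, the translate $gq$ is again a $(1,\lambda)$ quasigeodesic, and because $g$ fixes $\partial L=\{-\8,+\8\}$ pointwise it converges to the same two endpoints as $q$. I would then invoke Corollary \ref{cor: (1,mu)-fellow travel} directly, which gives synchronous fellow traveling $d(gq(n),q(n))\leq 2M+3\lambda+3d(gq(0),q(0))=C_0(g)$ for every $n\in\Z$. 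Here the relevant Morse constant is the one attached to $(1,\lambda)$ quasigeodesics in the hyperbolic space $L$ via Theorem \ref{Slim Q-Bigons infinite}, and one takes $M$ large enough to serve both as this Morse constant and as the coarse-surjectivity constant of $q$; crucially, the only dependence on $g$ is the explicit term $d(gq(0),q(0))$.

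Part (2) is then a one-line triangle-inequality argument built on part (1): given $x\in L$, $M$-coarse surjectivity supplies an $n$ with $d(x,q(n))\leq M$, and since $g$ is an isometry $d(gx,gq(n))=d(x,q(n))\leq M$, so that
$$d(gx,x)\leq d(gx,gq(n))+d(gq(n),q(n))+d(q(n),x)\leq M+C_0(g)+M=C(g),$$
which is the asserted bound $C(g)=2M+C_0(g)$. For the elliptic case I would observe that ellipticity of $\G\to\Isom_{\!0}(L)$ forces all orbits to be bounded, so in particular $B_0:=\Sup{g\in\G}d(g q(0),q(0))<\8$. Substituting $d(g q(0),q(0))\leq B_0$ into $d(gx,x)\leq C(g)=4M+3\lambda+3d(g q(0),q(0))$ yields the uniform estimate $d(gx,x)\leq 4M+3\lambda+3B_0=C_{ell}(\G)$, valid for all $g\in\G$ and $x\in L$. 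This says precisely that $\O^{C_{ell}(\G)}(\G)=L$, which is exactly the definition of a tremble (Theorem \ref{thm:actionsclassrk1}), giving the final conclusion.

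I expect the only genuine obstacle to lie in the bookkeeping of part (1): one must be certain that the fellow-traveling constant produced by Corollary \ref{cor: (1,mu)-fellow travel} is uniform in $g$ apart from the displayed $d(g q(0),q(0))$ term — that is, that the underlying Morse constant depends only on $\lambda$ and the hyperbolicity constant of $L$, and not on the offset between the two quasigeodesics. This uniformity is what legitimizes the passage to a bound over all of $\G$ in the elliptic step, where $d(g q(0),q(0))$ is replaced by the single constant $B_0$. Once this point is secured, parts (2) and (3) are purely formal.
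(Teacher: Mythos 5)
Your proposal is correct and follows essentially the same route as the paper: apply Corollary \ref{cor: (1,mu)-fellow travel} to $q$ and $gq$ (which share both endpoints since $g$ fixes $\partial L$ pointwise) to get part (1), then use $M$-coarse surjectivity and the triangle inequality for part (2), and substitute the uniform bound $B_0$ in the elliptic case. The uniformity concern you raise at the end is a fair one and is handled the same way in the paper, by taking $M$ to serve as both the coarse-surjectivity constant and the Morse constant for bi-infinite $(1,\lambda)$ quasigeodesics sharing both endpoints, which depends only on $\lambda$ and $\delta$.
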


\begin{proof}
We note that since $q$ is $M$-coarsely surjective, we may take it to be the Morse constant. 

(1) Observe that for $g\in \Isom_{\!0}(L)$ we can directly apply Corollary ~\ref{cor: (1,mu)-fellow travel} with $c=g.q$ and hence we set $C_0(g)= 2M+3\lambda+ 3d(g q(0), q(0))\geq 0$ so that for every $n\in \N$ 
$$d(g. q(n), q(n))\leq C_0(g).$$

\noindent
(2) Let $x\in L$. There is an $n$ such that $d(x, q(n)) \leq M$ and since $g$ is an isometry, $d(gx, g.q(n)) \leq M$. We deduce:

\begin{eqnarray*}
d(gx,x) &\leq& d(x, q(n))+ d(q(n), g.q(n)) + d(g.q(n), gx)
\\
&\leq&2M +  C_0(g)= C(g).
\end{eqnarray*}

\noindent
Now consider the case when $\G\to \Isom_{\!0}(L)$ elliptic. Then the constant $B_0$ defined in the statement is finite. Applying  part (1) uniformly to $q$ and all $g.q$, $g\in \G$ we can set $C'_{ell}(\G)= 2M + 3\lambda +3B_0$ so that $d(g q(n), q(n)) \leq C'_{ell}(\G)$.

Let $x\in X$. Then, $d(x, q(n))\leq M$ for some $n\in \Z$. Letting $g\in \G$ and using the triangle inequality as above, we get 
\begin{align*}
    d(g x, x)     & \leq 2M+ C'_{ell}(\G) = C_{ell}(\G)
\end{align*}
\end{proof}

\begin{remark}\label{Rem: ell, fixing point on bound}
    We note that the above proof also shows that if $\G\to \Isom X$ is elliptic and also fixes a point $\xi \in \partial X$, then there is an $L>0$ so that the set $\O^L(\G)$ is unbounded. Indeed, take a $(1, 10\delta)$ quasi-ray $q$ converging to $\xi$. Since $\G$ fixes $\xi$, the Hausdorff distance between $q$ and $gq$ for any $g \in \G$ is uniformly bounded. The arguments from the lemma can then be adapted to the quasi-ray $q$ to get the desired conclusion.  
\end{remark}

\subsection{The higher rank case.} We are now ready to begin tackling elementary subgroups in the higher rank case. We begin with some definitions. 

\begin{defn}
Fix $\G \to  Aut_{\!0}\X$ and $\xi^-, \xi^+\in \partial_{reg}\X$ with distinct factors. The subgroup  $\Cap{i=1}{D}\fix_\G\{\xi_i^-, \xi_i^+\}$ is the associated \emph{elementary subgroup}, denoted by $E_\G(\xi^\pm)$. 
\end{defn}

The following is immediate from Lemma ~\ref{lemma: Unif Bound} applied to the factors. Note that we are using the $\ell^\8$-product metric. 

 \begin{cor}\label{cor: Unif Bound quasiflat}
 Let $q=(q_1, \dots, q_D): \Z^D \to \CProd{i=1}{D}L_i$ be a product of $(1,\lambda)$ quasigeodesics  both with  the $\ell^\8$-product metric. We take $q$ to be  $M$-coarsely surjective, where $M=M(\delta,1,\lambda)$ is the Morse constant. Consider an element  $g\in \Prod{i=1}{D}\Isom(L_i)$ whose factors act trivially on $\partial L_i$ for each $i=1, \dots, D$.  Set $C_0(g)= 2M+ 3\lambda+ 3d(g q(0), q(0))$ and $C(g) = 2M + C_0(g)$. Then we have that
 
 \begin{enumerate}
\item If $a\in \Z^D$ then $d(g.q(a), q(a))\leq C_0(g)$.
\item If $x\in \CProd{i=1}{D}L_i$ then $d(g x, x)\leq C(g)$.
\end{enumerate}

 Lastly, suppose that $\G\to \CProd{i=1}{D}\Isom (L_i)$ is elliptic and whose projections also act trivially on $\partial L_i$, for $i\in \{1, \dots, D\}$.  Set $B_0:=\Sup{g\in \G}d(g q(0), q(0))<\8$ and $C_{ell}(\G) = 4M + 3\lambda +3B_0$. Then we have that $d(g x, x)\leq C_{ell}(\G)$ for every $g\in \G$ and $x\in \CProd{i=1}{D}L_i$, i.e. $\G$ acts as a tremble.
\end{cor}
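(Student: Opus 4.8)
The plan is to deduce each assertion coordinatewise from Lemma \ref{lemma: Unif Bound}, using that the $\ell^\infty$-product metric is the maximum of the factor metrics. Write $g = (g_1, \dots, g_D)$; the hypothesis that $g$ acts trivially on each $\partial L_i$ means precisely that $g_i \in \Isom_{\!0}(L_i)$, so the rank-$1$ lemma applies in every factor. Two preliminary observations make this uniform, and I would record them first: since every factor is $\delta$-hyperbolic and every $q_i$ is a $(1,\lambda)$ quasigeodesic, the Morse constant $M = M(\delta, 1, \lambda)$ is common to all factors; and coarse surjectivity descends, since $d(q(a), x) = \max{i} d_i(q_i(a_i), x_i) \leq M$ forces $d_i(q_i(a_i), x_i) \leq M$, so each $q_i$ is $M$-coarsely surjective.

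For (1), fix $a = (a_1, \dots, a_D) \in \Z^D$. The first part of Lemma \ref{lemma: Unif Bound} gives, in each factor, $d_i(g_i q_i(a_i), q_i(a_i)) \leq C_0^{(i)}(g_i)$, where $C_0^{(i)}(g_i) = 2M + 3\lambda + 3 d_i(g_i q_i(0), q_i(0))$. The key point is that $d_i(g_i q_i(0), q_i(0)) \leq \max{j} d_j(g_j q_j(0), q_j(0)) = d(gq(0), q(0))$, so $C_0^{(i)}(g_i) \leq C_0(g)$ for every $i$; taking the maximum over $i$ yields $d(gq(a), q(a)) = \max{i} d_i(g_i q_i(a_i), q_i(a_i)) \leq C_0(g)$. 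For (2), the same argument with the second part of Lemma \ref{lemma: Unif Bound} gives $d_i(g_i x_i, x_i) \leq 2M + C_0^{(i)}(g_i) \leq 2M + C_0(g) = C(g)$ in each factor, and taking the maximum over $i$ gives $d(gx,x) \leq C(g)$.

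For the tremble statement, I would first note that an elliptic action on the product projects to an elliptic action in each factor, since a bounded orbit has bounded image under each coordinate projection; hence the elliptic clause of Lemma \ref{lemma: Unif Bound} applies factorwise, and each $B_0^{(i)} := \sup_{g \in \G} d_i(g_i q_i(0), q_i(0))$ is finite. As above $B_0^{(i)} \leq \sup_{g \in \G} \max{j} d_j(g_j q_j(0), q_j(0)) = B_0$, so the factor constants satisfy $4M + 3\lambda + 3 B_0^{(i)} \leq C_{ell}(\G)$. Thus $d_i(g_i x_i, x_i) \leq 4M + 3\lambda + 3 B_0^{(i)} \leq C_{ell}(\G)$ uniformly over $g \in \G$, $x$, and $i$, and taking the maximum over $i$ gives $d(gx, x) \leq C_{ell}(\G)$; uniformly bounded orbits means the action is a tremble. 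There is no real obstacle here — the entire content is the bookkeeping that the max-metric lets a single constant absorb all $D$ factor estimates — and the only things worth checking at the outset are that the factors share a common $\delta$ (hence a common $M$) and that both coarse surjectivity and ellipticity pass to the factors.
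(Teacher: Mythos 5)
Your proof is correct and is exactly the argument the paper intends: the paper dispenses with this corollary in one line ("immediate from Lemma \ref{lemma: Unif Bound} applied to the factors" under the $\ell^\infty$-metric), and your write-up simply supplies the factorwise bookkeeping — that coarse surjectivity, ellipticity, and the constant $d(gq(0),q(0))$ all dominate their coordinate counterparts under the max-metric. Nothing to add.
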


We are now ready  to prove the first step towards the amenability of $E_\G(\xi^\pm)$ when $\G$ is acting acylindrically on $\X$. We note that to keep with our convention that subscripts denote components in products, we must use superscripts for sequences. We hope that the reader will not confuse these for powers, as the only powers we take are inverses. 

\begin{theorem}\label{theorem: elementary acyl is amenable}
Let $\G \to \Aut_{\!0}\X$ be  AU-acylindrical. If $\xi^\pm\in \partial_{reg}\X$ have distinct factors then  $E_\G(\xi^\pm)$ is amenable.
\end{theorem}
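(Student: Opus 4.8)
The plan is to prove amenability via its locality: a group is amenable if and only if each of its finitely generated subgroups is, and a finitely generated group of polynomial growth is amenable. So it suffices to fix a finitely generated subgroup $H \leq E_\G(\xi^\pm)$ and to show that $H$ has polynomial growth of degree at most $D$.

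First I would set up the Busemann machinery factorwise. Since $E_\G(\xi^\pm) = \Cap{i=1}{D}\fix_\G\{\xi_i^-,\xi_i^+\}$ and $\xi^\pm$ have distinct factors, every $g \in E_\G(\xi^\pm)$ fixes the two distinct boundary points $\xi_i^-,\xi_i^+$ of $X_i$ for each $i$. Let $\b_i \colon E_\G(\xi^\pm) \to \R$ be the homogeneous Busemann quasimorphism associated to $\xi_i^+$ and set $\b = (\b_1,\dots,\b_D)$. Fixing both endpoints means $g$ acts with trivial boundary action on the quasi-line $L_i$ spanned by a bi-infinite $(1,20\delta)$-quasigeodesic from $\xi_i^-$ to $\xi_i^+$, so Corollary \ref{cor: Unif Bound quasiflat} applies to the quasi-flat $\CProd{i=1}{D}L_i$: there is a function $t \mapsto C(t)$ so that $\|\b(g)\|_\8 \leq t$ forces $d(gz,z) \leq C(t)$ for every point $z$ of the quasi-flat. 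This is the key conversion from a bound on the Busemann value to a uniform bound on the displacement of the whole quasi-flat.

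Next I would bound word-ball growth. Fix a symmetric generating set $s^1,\dots,s^k$ of $H$; each $\b_i|_H$ is a quasimorphism with some defect, so let $C_0$ bound all the defects and set $K = C_0 + \op{max}\{|\b_i(s^j)| : i,j\}$. For a word $a^1\cdots a^l$ in the generators, the quasimorphism inequality gives $|\b_i(a^1\cdots a^l)| \leq lK$, so $\|\b(h)\|_\8 \leq Kl$ for every $h$ in the word-ball $B_H(l)$. Thus $B_H(l) \subseteq F_{Kl}$, where $F_n := \{h \in H : \|\b(h)\|_\8 \leq n\}$, and it remains to bound $|F_n|$ polynomially.

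The heart of the argument, and the step I expect to be the main obstacle, is to show that for a fixed box-size the number of elements of $F_n$ whose $\b$-image lands in a single unit cube of $\R^D$ is bounded independently of $n$; AU-acylindricity is exactly what delivers this. Cover $[-n,n]^D$ by $O(n^D)$ unit cubes. If $g,h \in F_n$ have $\b(g),\b(h)$ in a common cube, then since $\b$ is a homogeneous quasimorphism $\|\b(g^{-1}h)\|_\8 \leq 1 + C_0 =: t_0$, so by Corollary \ref{cor: Unif Bound quasiflat} the element $g^{-1}h$ displaces every point of the quasi-flat by at most $\e_1 := C(t_0)$. Choosing a base point $x$ and, using that the defining quasigeodesics are bi-infinite, a second point $y$ on the quasi-flat with $d(x,y) \geq R(\e_1)$, we obtain $g^{-1}h \in \cs{\e_1}(x,y)$, a finite set by AU-acylindricity; call its cardinality $N_0$. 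Since $x,y,\e_1,N_0$ are independent of $n$ and the map $h \mapsto g_0^{-1}h$ for a fixed $g_0$ in the cube injects the elements of $F_n$ in that cube into $\cs{\e_1}(x,y)$, each cube carries at most $N_0$ elements. Hence $|F_n| \leq N_0 \cdot O(n^D)$, so $|B_H(l)| \leq |F_{Kl}| = O(l^D)$. Thus $H$ has polynomial growth of degree at most $D$ and is amenable, and by locality of amenability $E_\G(\xi^\pm)$ is amenable.
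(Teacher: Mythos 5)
Your proposal is correct and rests on the same two pillars as the paper's proof: AU-acylindricity applied to a single well-separated pair of points on the invariant quasi-flat $\Fl=\CProd{i=1}{D}L_i$ to uniformly bound the "fiber" of elements with small displacement, and then polynomial growth of degree $D$ for finitely generated subgroups plus locality of amenability. Where you diverge is in the bookkeeping. The paper defines its F\o lner-type sets by orbit position, $F_n=\Cup{a\in[-n,n]^D}{}\{g: d(g q(0),q(a))\leq\lambda\}$ for the coarsely surjective $q:\Z^D\to\Fl$, and then runs an explicit induction on word length to show that a product of $l$ generators lands in $F_{O(l)}$; you instead index by the Busemann vector, $F_n=\{h:\|\b(h)\|_\8\leq n\}$, and get the linear bound on word balls in one line from quasimorphism subadditivity. (Amusingly, your version is closer in spirit to the paper's proof of Lemma \ref{Lem:noqpinacyl} than to its proof of this theorem.) Your route is arguably cleaner, at the cost of one extra conversion that you should cite more carefully: Corollary \ref{cor: Unif Bound quasiflat} bounds the displacement of every point of $\Fl$ in terms of $d(gq(0),q(0))$, not in terms of $\|\b(g)\|_\8$, so to get your function $t\mapsto C(t)$ you also need the quasi-line case of Lemma \ref{Lem:shiftfn}, which gives $d(q(0),gq(0))\leq|\b(g)|+\consttwo+20\delta$ factorwise; with that inserted, every step of your argument goes through, including the use of homogeneity to get $\b(g^{-1})=-\b(g)$ and the observation that the AU-acylindricity cardinality bound, though a priori dependent on the pair $(x,y)$, is a single finite constant because $x,y$ are fixed once and for all independently of $n$ and of the cube.
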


\begin{proof}
We shall use the $\ell^\8$ metric on the product in this proof.
Let $\Fl=\Prod{i=1}{D}L_i\subset \X$ be the corresponding product of all $(1,20\delta)$ quasigeodesics between $\xi_i^-$ and $\xi_i^+$, and $q: \Z^D \to \Fl \subset \X$ be the map as defined in Corollary ~\ref{cor: Unif Bound quasiflat}. By Theorem ~\ref{Slim Q-Bigons infinite}, each component is $M(\delta, 1, 20\delta)$ coarsely surjective. Assume that the $i$-th component of $q$ is a $(1, \lambda_i)$ quasi-isometry for $i= 1, \dots, D$. Set $$\lambda=\max{} \{ 20\delta, M(\delta,1,20\delta), \lambda_i \mid i= 1, \dots, D\}.$$  Since $\G$ is AU-acylindrical on $\X$, so is the action of $E_\G(\xi^\pm)$. As $E_\G(\xi^\pm)$ preserves $\Fl$, it follows that  $E_\G(\xi^\pm)$  is also AU-acylindrical on $\Fl$. 

By  Corollary ~\ref{cor: Unif Bound quasiflat} (1),  for each $g\in \G$ and $a\in \Z^D$ and with $C_0(g)= 2M+ 3\lambda+ 3d(g q(0), q(0))$, $$d(g.q(a), q(a))\leq C_0(g) $$

As a consequence, given any $C \geq 0$, if $d(g.q(0), q(0)) \leq C$, then $$d(g.q(a), q(a))\leq C_0(g) \leq 2M + 3\lambda + 3C.$$

In particular the $C$-coarse stabilizer of $q(0)$ is contained in the $2M+3\lambda+3C$ coarse stabilizer of $q(0)$ and $q(a)$ for any $a\in \Z^D$.

Set $C=2\lambda$ and apply the AU-acylindricity to $\e = 2M + 3 \lambda + 3C = 2M + 9\lambda$. This give us an $R=R(\e)>0$ and we may fix a point $z \in \Z^D$ such that $d(q(0), q(z)) >R$. The AU-acylindricity condition on the pair $q(0), q(z)$ gives us that $N_{\e}=N_{\e}(q(0),q(z))$ (the smallest AU-acylindricity cardinality constant associated to $\e$, $q(0)$, $q(z)$  satisfies 

$$|\{g \in E_\G(\xi^\pm): d(g.q(0), q(0)) \leq 2\lambda\}|\leq N_{\e}<\8.$$

Let $F_n = \Cup{a\in [-n,n]^D}{}\{ g \in E_\G(\xi^\pm): d(g.q(0), q(a))\leq \lambda \}$.  It follows from the coarse surjectivity of $q$ that $\Cup{n\in \N}{} F_n = E_\G(\xi^\pm)$. To give an upper bound on the cardinality $|F_n|$ observe that  for each $a\in [-n,n]^D$ either $\{ g \in E_\G(\xi^\pm): d(g.q(0), q(a))\leq \lambda \}= \varnothing$ or there is a $g_{a}\in \{ g \in E_\G(\g): d(g.q(0), q(a))\leq \lambda \}$. If there is such a $g_{a}$, then for any $h\in \{ g \in E_\G(\xi^\pm): d(g.q(0), q(a))\leq \lambda\}$, we get that 
\begin{eqnarray*}
   d(g_{a}^{-1}.h .q(0), q(0))&\leq& d(g_{a}^{-1}.h .q(0), g_a^{-1}q(a))+ d(g_a^{-1}.q(a), q(0)) \leq  2 \lambda .
\end{eqnarray*}

Therefore, $| g_a^{-1}\cdot \{ g \in E_\G(\xi^\pm): d(g.q(0), q(a))\leq \lambda\}|\leq N_{\e}$ and we have shown by isometry of the action that

\begin{eqnarray*}
|F_n |\leq N_{\e}\cdot (2n+1)^D.
\end{eqnarray*}

Note that the finite constant $N_{\e}$ above is independent of $n$. Since amenable groups are closed under taking unions, we now prove that every finitely generated subgroup of $E_\G(\xi^\pm)$ has polynomial growth of degree bounded by $D$ and is hence amenable. To this end, let $S\subset E_\G(\xi^\pm)$ be a finite set and consider the associated finitely generated subgroup $H$. Then there is a $k^0\in \N$ so that  $S\subset F_{k^0}$, i.e. if $s^1, s^2 \in S$ then there are $a^1, a^2 \in [-k^0,k^0]^D$ so that $d(s^i q(0), q(a^i))\leq \lambda$ for $i=1, 2$. 

Since the action is by isometries, this implies that 
\begin{eqnarray*}
d(s^1s^2.q(0), q(a^1+ a^2))&\leq& d(s^1s^2.q(0), s^1.q(a^2)) + 
d(s^1.q(a^2), s^1.q(a^1+a^2)) \\
&&+ d(s^1.q(a^1+a^2), q(a^1+a^2))\\
&\leq& \lambda +\|a^1\|_\8+ \lambda + \max{s\in S}\, d(s.q(a^1 + a^2), q(a^1 + a^2))\\
&\leq&2\lambda +k^0 + \max{s\in S}\, C_s,
\end{eqnarray*}
where $C_s = C_0(s)$ are the constants provided by Corollary ~\ref{cor: Unif Bound quasiflat}.

Now by the coarse surjectivity, there exists  $b \in \Z^D$ such that $d( s^1s^2.q(0), q(b)) \leq \lambda$. But then 

\begin{eqnarray*} 
\|b\|_\8 & = &\|a^1 + a^2 +b -a^1 -a^2\|_\8 \\
 & \leq & \|a^1 + a^2 -b\|_\8 + \|a^1 +a^2\|_\8 \\
& \leq & d( q(a^1+ a^2) , q(b)) + \lambda + 2k^0 \\
& \leq & d(q(a^1+ a^2), s^1s^2.q(0)) + d(s^1s^2.q(0), q(b)) + \lambda + 2k^0 \\
& \leq & 4\lambda + 3k^0 + \max{s\in S}\, C_s
\end{eqnarray*}

Thus $s^1s^2 \in F_{4\lambda + 3k^0 + \max{s\in S}\, C_s}$. 

We claim that $s^1s^2\cdots s^n \in F_{ (3n -2)\lambda + 3k^0 + (n-1) \max{s\in S}\, C_s}$, which we shall prove by induction on $n$, where $s^i \in S$ for all $1 \leq i \leq n$. Obviously, we have established above that this holds for $n = 2$ (and holds for $n=1$ as well). Assume we have proven the claim for $s^1s^2\cdots s^n$. i.e. there exists a $b \in \Z^D$ such that $$d(s^1s^2\cdots s^n q(0), q(b)) \leq \lambda$$ and $\|b\|_\infty \leq (3n -2)\lambda + 3k^0 + (n-1)\max{s\in S}\, C_s$. 

Let $s^{n+1}\in S$ and consider $s^1s^2\cdots s^n s^{n+1}. q(0)$. Then, with $b$ as above,  choose $w \in \Z^D$ such that $$d(s^1s^2\cdots s^n s^{n+1}. q(0), q(w)) \leq \lambda.$$ 

Then similarly to above, \begin{eqnarray*} 
\|w\|_\8 &= &\|w +b-b\|_\8\\
& \leq &\|w-b\|_\8 + \|b\|_\8 \\
& \leq& \|w-b\|_\8 + (3n -2)\lambda + 3k^0 + (n-1)\max{s\in S}\, C_s\\
& \leq &d(q(w), q(b)) + \lambda + (3n -2)\lambda + 3k^0 + (n-1)\max{s\in S}\, C_s\\
& =& d(q(w), q(b)) + (3n -1)\lambda + 3k^0 + (n-1)\max{s\in S}\, C_s \\
& \leq& d(q(w), s^1\cdots s^n s^{n+1}. q(0)) + d( s^1\cdots s^n s^{n+1}. q(0), s^1s^2\cdots s^n. q(0)) \\
 && + d( s^1s^2\cdots s^n. q(0), q(b)) + (3n -1)\lambda + 3k^0 + (n-1)\max{s\in S}\, C_s  \\
& \leq &\lambda + d(s^{n+1}. q(0), q(0)) + \lambda + (3n -1)\lambda + 3k^0 + (n-1)\max{s\in S}\, C_s \\
& \leq &(3n +1)\lambda + 3k^0 +  d(s_{n+1}. q(0), q(0))  + (n-1)\max{s\in S}\, C_s \\
& \leq &(3n +1)\lambda + 3k^0 + n\max{s\in S}\, C_s.
\end{eqnarray*}

This completes the induction argument, and gives us the linear relation that the set of words from $S$ of length bounded by $n$ is a subset of $F_{n(\lambda + \max{s \in S}C_s) + \lambda'}$.  Therefore, $H$ has polynomial growth and is hence amenable. Finally $E_\G(\g)$ is the union of it's finitely generated subgroups, all of which have polynomial growth of exponent $D$, and hence $E_\G(\g)$ is also amenable.
\end{proof}

For the remainder of this subsection,  suppose that $\G$ is an amenable group with an  action $\G\to \Aut_{\!0}\Fl$, where $\Fl = \CProd{i=1}{D}L_i$. Furthermore, suppose that $\G$ acts trivially on $\partial_{reg} \Fl$. We then obtain $\b_i: \G \to \R$, the Busemann homomorphisms for each factor and thus an action by translations on given $\R^D$ by $$g.(r_1, \dots, r_D) = (r_1 + \b_1(g), \dots, r_D + \b_D(g))$$ for any $(r_1, \dots, r_D) \in \R^D.$

We will show that AU-acylindricity of the original action on $\F$ descends to this action on $\R^D$ as well.

\begin{prop}\label{acyldescends} Let $\G$ be amenable and $\G \to \Aut_{\!0}\Fl$ be an AU-acylindrical action. Then the associated action on $\R^D$ is also AU-acylindrical.  \end{prop}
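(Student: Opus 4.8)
The plan is to reduce AU-acylindricity of the translation action on $\R^D$ to a single finiteness statement, and then import that finiteness from the hypothesis on $\Fl$. First I would record that, since $\G$ is amenable, each Busemann quasimorphism $\b_i$ is in fact a homomorphism (as noted following Definition \ref{Bpc}), so that $\b=(\b_1,\dots,\b_D)\colon \G\to \R^D$ is a homomorphism and $g\.(r_1,\dots,r_D)=(r_1+\b_1(g),\dots,r_D+\b_D(g))$ genuinely defines an action by translations. Using the $\ell^\8$ metric on $\R^D$, every element acts as the translation by the vector $\b(g)$, so for any $\vec r\in\R^D$ one has $\|g\.\vec r-\vec r\|_\8=\|\b(g)\|_\8$. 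Consequently the $\e$-coarse stabilizer of every point, and hence of every pair of points, equals the single subset
$$K_\e:=\{g\in\G:\|\b(g)\|_\8\leq\e\},$$
which is independent of the chosen points. AU-acylindricity of the $\R^D$-action (with $R=0$ valid for every $\e$) therefore reduces exactly to the claim that $K_\e$ is finite for each $\e>0$.

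To prove $K_\e$ finite I would transfer the coarse stabilizer back to $\Fl$ via the map $q\colon\Z^D\to\Fl$ and the displacement estimates of Corollary \ref{cor: Unif Bound quasiflat}. Fix $\e>0$ and let $g\in K_\e$. Applying Lemma \ref{Lem:shiftfn} in each quasi-line factor gives $d(gq(0),q(0))\leq\|\b(g)\|_\8+E+20\delta\leq\e+E+20\delta=:C$. Feeding this into Corollary \ref{cor: Unif Bound quasiflat}(1) yields the key uniform bound
$$d(gq(a),q(a))\leq C_0(g)=2M+3\lambda+3\,d(gq(0),q(0))\leq 2M+3\lambda+3C=:\e',$$
valid simultaneously for every $a\in\Z^D$; this point-independence is precisely the flat geometry that makes the argument run. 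Now let $R'$ be the AU-acylindricity constant for $\e'$ coming from the action on $\Fl$, and use coarse surjectivity (equivalently, that $q$ is a $(1,\lambda)$ quasi-isometry onto the unbounded space $\Fl$) to choose $a,b\in\Z^D$ with $d(q(a),q(b))\geq R'$. Every $g\in K_\e$ then satisfies $d(gq(a),q(a)),\,d(gq(b),q(b))\leq\e'$, so $K_\e\subseteq\cs{\e'}(q(a),q(b))$; the latter is finite by AU-acylindricity of $\G$ on $\Fl$. Hence $K_\e$ is finite, which completes the proof.

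I expect the only genuinely delicate point to be the uniform-in-$a$ displacement bound $d(gq(a),q(a))\leq\e'$: it is exactly the statement that an element with small Busemann vector moves the entire quasi-flat $q(\Z^D)$ by a bounded amount, and it is what allows a single coarse stabilizer $K_\e$ to be captured inside the coarse stabilizer of an arbitrarily distant pair. This is handled cleanly by combining the quasi-line estimate of Lemma \ref{Lem:shiftfn} with the product estimate of Corollary \ref{cor: Unif Bound quasiflat}, both of which rely on the standing hypothesis that $\G$ fixes $\partial_{reg}\Fl$ pointwise, so that each factor fixes the two endpoints of its quasi-line.
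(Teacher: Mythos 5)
Your proof is correct, and it takes a genuinely different route from the paper's at the key step. Both arguments open the same way: an element $g$ with $\|\b(g)\|_\8\leq\e$ satisfies $d(gq(0),q(0))\leq\e+E+20\delta$ by the quasi-line case of Lemma \ref{Lem:shiftfn}, and the goal is then to trap all such $g$ inside the $\e'$-coarse stabilizer of a pair of far-apart points of $\Fl$, which is finite by hypothesis. Where you diverge is in producing the second point of that pair and controlling the displacement there: the paper fixes a group element $h$ with $d(hq(0),q(0))\geq R$ and invokes Proposition \ref{prop:smalltranslation} (with the function $A_i$, the constant $B$, and a two-case analysis on the size of $\b_i(h_i)$) to bound $d(g_ih_ix_i,h_ix_i)$, whereas you take the second point to be $q(b)$ on the quasi-flat itself and use the synchronous fellow-traveling estimate of Corollary \ref{cor: Unif Bound quasiflat}(1) to bound $d(gq(a),q(a))\leq\e'$ uniformly over all $a\in\Z^D$. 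Your route buys a few things: it bypasses Proposition \ref{prop:smalltranslation} entirely; it replaces the paper's need for an element $h$ with large displacement (which tacitly uses that the $\G$-orbit on $\Fl$ is unbounded -- true here, but deserving of a line of justification when $\G$ is infinite) by the automatic fact that $\Fl$ is unbounded and $q$ is coarsely surjective; and your opening observation that for a translation action the $\e$-coarse stabilizer of every point and of every pair is the single set $K_\e$ makes the reduction transparent and shows that $R=0$ works for every $\e$, while the paper works with arbitrary $r,s\in\R^D$ at distance greater than $2\e$. The uniformity remark at the end of the paper's proof (that one gets a uniform $N'=N(\e')$ in the genuinely acylindrical case) transfers verbatim to your argument via the inclusion $K_\e\subseteq\cs{\e'}(q(a),q(b))$.
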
 

\begin{proof}  We shall continue to use the $\ell^\8$ metric in both $\F$ and $\R^D$. Fix  $q: \Z^D\to \Fl$ to be the product of $(1, 20\delta)$ quasi-isometries on each factor.  Let $\e > 0$. Since $\G$ preserves the factors,  for  each $i\in \{1, \dots, D\}$  we may project $\G \to \Isom(L_i)$ and apply Proposition ~\ref{prop:smalltranslation} and Lemma ~\ref{Lem:shiftfn}, yielding the functions $A_i$  and associated constants $B_i, E_i$. Let $B = \max{i} B_i$ and $E = \max{i} E_i$.

Let $x= q(0)$ be a base point in $\Fl$. Let $R$ be the AU-acylindricity constant associated to the action of $\G \to \Aut \Fl$ for $\displaystyle \e' = \e + 20\delta + B + E$.

Let $r, s\in\R^D$ at distance greater than $2\e$. We claim that the AU-acylindricty on $R^D$ holds with constants $R' = 2\e$. Consider $g \in \G$ such that  $$d( r, gr) \leq \e \text{  and  } d(s, gs) \leq \e.$$

Since $\G$ is acting by translation, the above inequalities imply that $|\b_i(g_i)| \leq \e$ for all $i \in\{1, \dots, D\}$. Applying Lemma ~\ref{Lem:shiftfn} to each factor, it follows that $$d(x, gx) \leq \e + 20\delta + E.$$ 

Fix an element $h \in \G$ such that $d(h  x,  x) \geq R$. Such an element must exist as the orbit of $\G$ on $\F$ is unbounded. There are now two possibilities to consider in each factor $L_i$. If $\b_i(h_i) \leq A_i(\e + 20\delta + E_i)$, then it follows from Proposition ~\ref{prop:smalltranslation} that $$d(h_ix_i, g_ih_ix_i) \leq |\b_i(g_i)| + B_i \leq \e + B_i$$.

If not, then $\b_i(h_i) \leq A_i(n)$ for a sufficiently large $n \geq \e + 20\delta + E_i$. However, then $d(x_i, g_ix_i) \leq \e + 20\delta + E_i \leq n$, and Proposition ~\ref{prop:smalltranslation} still implies $d( h_ix, g_ih_ix) \leq |\b_i(g_i)| + B_i \leq \e + B_i$.

Thus we have $$d(x, gx)  \leq \e + 20\delta + E \leq \e'$$ and $$d(gh x , hx)  \leq \e  + B \leq \e'$$

By the AU-acylindricity, the element $g$ has at most finitely many choices (which is a uniform $N' = N(\e')$ in the acylindrical case) and thus we are done. \end{proof}

\begin{proof}[Proof of Proposition ~\ref{prop:elemsubvirtab}] By Theorem ~\ref{theorem: elementary acyl is amenable}, we know that $E_\G(\xi^{\pm})$ is amenable. Let $E'_\G(\xi^\pm)\leq E_\G(\xi^\pm)$ be the finite index subgroup whose projections to the factors of $\X$ fix $\xi^-_i$ and $\xi^+_i$ for $i\in \{1, \dots, D\}$. By Proposition ~\ref{acyldescends}, we see that the action via the Busemann quasimorphisms descends to an AU-acylindrical action on $\R^D$ by translations. By Lemma ~\ref{Lem:acyl+loc comp implies unif proper} the action is proper and hence discrete and hence by \cite[Lemma 4, p102]{BorevichShafarevich}, $E'_\G(\xi^\pm)$ is virtually isomorphic to  $\Z^k$, where $0\leq k\leq D$. Thus so is $E_\G(\xi^\pm)$. 
\end{proof}

We now turn to the proof of Proposition ~\ref{prop: reg fixed implies reg pair}. Thinking of acylindricity as being a generalization of a co-compact lattice, this result is in line with saying that there are no unipotents in a co-compact lattice (see Section ~\ref{Sect:Theory is SS} for more details). 

\begin{proof}[Proof of Proposition ~\ref{prop: reg fixed implies reg pair}] Let $H = \stab_\G(\xi)$, where $\xi\in \partial_{reg} \X$. First assume that $H$ contains a regular element $\g$. Set $\xi'$ to be the limit point of $\g$ which is distinct from $\xi$. Observe that the projection of $H$ fixes $\xi_i\in\partial X_i$ and hence the action is either (oriented) lineal or quasiparabolic for $i \in \{1, \dots, D\}$.

Proceeding as in the proof of Lemma ~\ref{Lem:noqpinacyl}, we can show that every finitely generated subsemigroup of $H$ has at most polynomial growth. This implies that the action of $H$ on each factor $X_i$ must be oriented lineal. Moreover $H \to \Aut_{\!0}\X$ is acylindrical. It follows that $H =\fix_\G\{\xi, \xi'\}$ and by Corollary ~\ref{prop:elemsubvirtab}, is virtually isomorphic to  $\Z^k$ where $0\leq k \leq D$. 

If $H$ does not contain a regular elements, then first note that the action of $H$ could be elliptic in each factor, in which case the result follows obviously. Indeed, it follows from Remark ~\ref{Rem: ell, fixing point on bound} and the acylindricity applied to a half-ray tending to each $\xi_i \in \partial X_i$ that $H$ is finite.

So we may assume that there is a factor with an unbounded action. By Theorem ~\ref{thm:elimmixedfactors}, there must be a factor with an oriented lineal action of $H$, since $H$ has a fixed point on each $\partial X_i$. Without loss of generality, we may assume that $I_e, I_p \subset \{1, \dots, D\}$ are the sets of indices where the action is elliptic and parabolic respectively. By Lemmas ~\ref{Lem:elimellfactors} and ~\ref{prop:removepara}, we may then consider the action of $H$ on $\mathbb{Y} = \Prod{ i \notin I_e \sqcup I_p}{} X_i$, which is still acylindrical. As argued above, we can conclude that $H \to \Aut\mathbb{Y}$ must be oriented lineal on all factors. Take $\xi'\in \partial_{reg} \mathbb{Y}$ where 
\begin{itemize}
    \item[(a)] $\xi'_i = \xi_i \in \partial X_i$ for $i \in I_e \sqcup I_p$, and 
    \item[(b)] $\xi'_i \in \partial X_i$ is the other limit point of the oriented lineal action for $i \notin I_e \sqcup I_p$. 
\end{itemize}

It then follows that $\xi' \neq \xi$ and that $H = \fix_\G\{\xi, \xi'\}$. Lastly, note that the factors $\xi_i$ and $\xi'_i$ are distinct for all $i \notin I_e \sqcup I_p$. Let $\eta = \Prod{i \notin I_e \sqcup I_p}{} \xi_i$ and $\eta' = \Prod{i \notin I_e \sqcup I_p}{} \xi'_i$. Then $\eta, \eta' \in \partial_{reg} \mathbb{Y}$ with distinct factors. Let $D' = D - |I_e \sqcup I_p|$. Then it follows that $H \leq \fix_\G\{\eta, \eta'\}$. By Proposition ~\ref{prop:elemsubvirtab}, $\fix_\G\{\eta, \eta'\}$ is virtually $\Z^k$ for some $1 \leq k \leq D'$; and therefore so is $H$. 
\end{proof}

\section{Connections to Lattice Envelopes}\label{sec:latticeenvs}
In recent work of Bader, Furman and Sauer, the authors establish a short list of properties which, when exhibited by a discrete countable group $\G$, dictate the  possible locally compact second countable groups in which $\G$  can be a lattice \cite{BaderFurmanSauer}.  In this section we shall see the connections of these properties to groups that admit AU-acylindrical actions on finite products of $\delta$-hyperbolic geodesic spaces. Note that since these properties are also enjoyed by linear groups with finite amenable radical, in particular ($S$-)arithmetic  lattices in semi-simple linear groups, this establishes a partial connection between our framework and such lattices.

We begin with some definitions and preliminaries that are used in the work contained in this and further sections.

\begin{defn}
    Two discrete countable groups $\G$ and $\Lambda$ are  \emph{virtually isomorphic} if there are finite index subgroups $\G'\leq\G$ and $\Lambda'\leq \Lambda$ and finite normal subgroups $K\norm \G'$, $L\norm \L'$ such that 
  $$\G'/K \cong \L'/L.$$  
\end{defn}

\begin{remark}\label{Rem: comm passed to finite intersections}
    We note that if $H \leq \G$ is commensurated, and $F\subset \G$ is a finite subset then $\Cap{g\in F}{}\, gHg^{-1}$ is of finite index in $gHg^{-1}$ for each $g\in F$ and is furthermore also commensurated in $\G$. Note also that normal subgroups are always commensurated.
\end{remark}

The following are the properties considered in \cite{BaderFurmanSauer}. 

\begin{defn}\label{defn:BFSprops}
A countable discrete group $\G$ is said to have 
 
 \begin{enumerate}
\item Property (CAF) (for Commensurated Amenable implies Finite) if every commensurated  amenable subgroup is finite;
\item Property (wNbC) (for weak Normal by Commensurated) if for every normal subgroup $K\norm \G$ and every commensurated subgroup $H < \G$ such that $K \cap H = \{1\}$ there is a finite index subgroup of $H$ that commutes with $K$;
\item Property (NbC) if every quotient of $\G$ by a finite normal subgroup has Property (wNbC); 
\item Property (BT) (for Bounded Torsion) if there is an upper bound on the order of its finite subgroups.
\end{enumerate}

Further $\G$ is called irreducible if it can only be expressed as product with itself and the trivial group and  strongly irreducible if all of its finite index subgroups and quotients by finite normal subgroups are irreducible. 
\end{defn}

With the above definitions, the following is one of the main theorems of \cite{BaderFurmanSauer}.

\begin{theorem}\cite[Lattice Envelopes]{BaderFurmanSauer}\label{BFS}
Let $\G$ be a countable strongly irreducible group with properties (CAF) and (NbC). Then, up  to virtual isomorphism, if $\G\hookrightarrow G$ is a nondiscrete lattice embedding into $G$  a compactly generated locally compact second countable group, then the embedding is one of the following types:

\begin{enumerate}
\item an  irreducible  lattice in a connected center-free, semi-simple real Lie group  without compact factors;
\item an $S$-arithmetic lattice  or an $S$-arithmetic lattice up  to  tree extension (see Section ~\ref{Sect:tree ext}).

\item a lattice in a  non-discrete totally disconnected locally compact group with trivial amenable radical.
\end{enumerate}
If in addition, $\G$ has (BT), then the  lattice  in (3) is uniform. 
\end{theorem}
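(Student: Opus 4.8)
The statement is quoted directly from \cite{BaderFurmanSauer}, so within the present paper it is invoked rather than reproved; nonetheless, the architecture of their argument is the following. The plan is to analyze a lattice envelope $G$ through the two canonical filtrations of a compactly generated locally compact second countable group---by its amenable radical and by its identity component---and to use Properties (CAF) and (NbC) of $\G$ to rigidify the way $\G$ meets each piece.

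First I would dispose of the amenable radical $R\norm G$. As $R$ is closed and normal, $R\cap\G$ is a normal, hence commensurated, amenable subgroup of $\G$, so Property (CAF) forces it to be finite; on the other hand the standard theory of lattices in closed normal subgroups exhibits $R\cap \G$ as a lattice in $R$. A group admitting a finite lattice is compact, so $R$ is compact, and after passing to $G/R$ one reduces---up to virtual isomorphism---to the situation where $G$ has \emph{trivial} amenable radical. Here Property (NbC), applied to the finite normal kernels produced along the way, is exactly what guarantees that the reduced group still satisfies the hypotheses in each quotient.

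Next I would feed $G$ (now with trivial amenable radical) into the structure theory of locally compact groups. The identity component $G^0$, modulo its center, is a connected center-free semisimple real Lie group without compact factors (Montgomery--Zippin together with the triviality of the amenable radical), while $G/G^0$ is totally disconnected and acts on its associated building or tree. The crucial use of (NbC) and strong irreducibility is to forbid a compatible product splitting of $G$: any commensurated-by-normal configuration witnessing such a splitting would, via (wNbC), force $\G$ to respect the product, contradicting (strong) irreducibility. This collapses the possibilities into the stated trichotomy: a purely Archimedean target yields case (1); the appearance of non-Archimedean ($p$-adic) and genuine tree factors, combined with Margulis and commensurator superrigidity to algebraize the embedding, yields the $S$-arithmetic case (2), possibly up to the tree extensions of Section \ref{Sect:tree ext}; and the residual totally disconnected targets with trivial amenable radical give case (3).

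The main obstacle is the arithmeticity input needed for case (2): one must promote the abstract lattice embedding to an algebraic one, which requires commensurator superrigidity and a careful identification of the commensurator of $\G$ inside $G$---this is the technical heart of \cite{BaderFurmanSauer}. Finally, the additional hypothesis (BT) settles uniformity in case (3): a nonuniform lattice in a totally disconnected group necessarily contains finite subgroups of unbounded order (coming from the compact open subgroups cut out along the cusps), which is incompatible with bounded torsion, so the lattice must be cocompact.
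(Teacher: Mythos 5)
You are right that this theorem is imported verbatim from \cite{BaderFurmanSauer}: the paper offers no proof of it, only the citation, so there is nothing internal to compare your argument against. Your recognition of this is the correct "proof," and your sketch of the Bader--Furman--Sauer architecture (killing the amenable radical via (CAF), invoking the connected/totally-disconnected structure theory, using (NbC) and strong irreducibility to forbid splittings, superrigidity for the $S$-arithmetic case, and (BT) for uniformity) is a faithful summary of their strategy, though it goes beyond anything the present paper contains or needs.
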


\begin{remark} Since the  category of groups admitting AU-acylindrical actions on $\X$ is closed under taking direct products, it is not possible to expect them to be strongly irreducible. But we shall see in Theorem ~\ref{Thm: CanProdDecomp} that, up to virtual isomorphism, they do enjoy a canonical product decomposition into strongly irreducible groups. Another interesting connection to direct products is Corollary ~\ref{cor:prodstrdecomp}, which, under certain hypotheses, shows that if $\G$ is a direct product of infinite groups, then the factors of $\G$ are also in the class of groups acting AU-acylindrically on products of hyperbolic spaces. 

In a similar vein, we can also not expect (BT). Consider the examples given as follows: Let $W$ be a group with unbounded torsion and let $\G= W*\Z$. This group is acylindrically hyperbolic (relatively hyperbolic in fact with peripheral subgroup $W$) and does not have bounded torsion. However, acylindrically hyperbolic groups do enjoy properties (CAF) and (NbC), see \cite[Theorem 1.4 and Remark 2.9]{BaderFurmanSauer}. We note that for the group $W$ we may take the direct sum (or free product) of $\Z/n\Z$ for all $n\in \N$, or, if we wish to stay in the class of finitely generated groups, we may take $W$ to be $\Oplus{n\in \Z}{}\Z/2\Z \rtimes\Z$ or even torsion groups like Tarski monsters.
\end{remark}

\subsection{Property (CAF)} Our first goal in this subsection is to show the following. 

\begin{theorem}\label{Thm: Acyl -> CAF}
Let $\G \to \Aut\X$ be AU-acylindrical with all factors of general type. Then $\G$ has property (CAF). 
\end{theorem}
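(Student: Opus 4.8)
The plan is to unwind Property (CAF) directly: let $H \leq \G$ be a commensurated amenable subgroup, and show $H$ is finite. First I would reduce to the situation where each factor action is \emph{essential} and of general type, so that Lemma \ref{Lem: normal in total gen type is tremb/gen type} becomes available. Passing to the finite-index, factor-preserving subgroup $\G_0 \leq \G$ and replacing $H$ by $H_0 := H \cap \G_0$ (which is still commensurated in $\G$ and amenable, and whose finiteness implies that of $H$), I would restrict the action to the product of essential cores $\Fl := \CProd{i=1}{D}\mathcal{L}_\rho(X_i)$. By Lemma \ref{Lem: irred core is essential invariant} each $\mathcal{L}_\rho(X_i)$ is $\G_0$-invariant, quasiconvex, and carries an essential action of general type; since $\Fl$ is a $\G_0$-invariant subspace of $\X$, the restricted action $\G_0 \to \Isom \Fl$ is again AU-acylindrical, because coarse stabilizers can only shrink upon restriction to an invariant subspace.

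Next I would pin down the type of $H_0$ on each factor. As the $\G_0$-action on $\mathcal{L}_\rho(X_i)$ is essential and of general type and $H_0$ is commensurated, Lemma \ref{Lem: normal in total gen type is tremb/gen type} forces the $H_0$-action on each factor to be either elliptic or of general type. A general type action contains two independent loxodromics and hence, by ping-pong, a free group of rank two; since $H_0$ is amenable this is impossible. Therefore $H_0$ is elliptic on every factor. Choosing $x_i \in \mathcal{L}_\rho(X_i)$ with $\diam(H_0 x_i)$ bounded by some $B_i$, and setting $x = (x_1,\dots,x_D)$ and $B := \max{i} B_i$, the $\ell^\infty$-metric gives $d(hx,x) \leq B$ for all $h \in H_0$, that is, $H_0 \subseteq \cs{B}(x)$.

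The key step is a conjugation-and-acylindricity argument. Because the factors are of general type, $\G_0$ has unbounded orbits on $\Fl$ (indeed it contains regular elements by Proposition \ref{Prop:Regular Exist}), so I may choose $g \in \G_0$ with $d(gx,x) \geq R(B)$, where $R(B)$ is the AU-acylindricity constant associated to $\e = B$. Conjugation by the isometry $g$ yields $gH_0g^{-1} \subseteq \cs{B}(gx)$, while commensuration of $H_0$ makes $H_0 \cap gH_0 g^{-1}$ a finite-index subgroup of $H_0$. This intersection lies in $\cs{B}(x) \cap \cs{B}(gx) = \cs{B}(x,gx)$, which is finite by AU-acylindricity since $d(x,gx) \geq R(B)$. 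Hence $H_0$ is finite, and therefore so is $H$, establishing Property (CAF).

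I expect the main obstacle to lie in the reduction of the first paragraph: one must verify that restricting to $\Fl$ preserves AU-acylindricity and that Lemma \ref{Lem: normal in total gen type is tremb/gen type} is genuinely applicable, which is precisely why the action must first be made essential, since the hypothesis only supplies general type factors. Everything after the dichotomy ``elliptic or general type'' is soft, the only real inputs being that amenability eliminates the general type alternative and that commensuration permits intersecting with a far-translated conjugate without losing finite index.
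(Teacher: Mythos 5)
Your proposal is correct and follows essentially the same route as the paper: reduce to the factor-preserving subgroup acting on the (tremble-free) essential cores, invoke Lemma \ref{Lem: normal in total gen type is tremb/gen type} together with amenability to force ellipticity on every factor, and then intersect $H$ with a conjugate by a far-translating (regular) element so that AU-acylindricity makes the finite-index intersection finite. The paper packages the first reduction as Corollary \ref{Cor: pass to irreducible tremble free core} and likewise uses Proposition \ref{Prop:Regular Exist} for the translating element, so the two arguments coincide in all essentials.
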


\begin{lemma}\label{Lem: Bound on number of inf factors}
    Let $\G=\CProd{i=1}{F} K_i\to \Aut_{\! 0} \X$ be AU-acylindrical such that $\G$ acts on each factor by a general type action. Assume that for each $i=1, \dots, F$, $K_i$ admits at least one general type factor. Then $1\leq F\leq D$, where $D$ is the number of factors of $\X$. 
\end{lemma}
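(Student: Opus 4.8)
The plan is to encode the general-type behaviour of each direct factor combinatorially. For each $i\in\{1,\dots,F\}$ I would set $S_i=\{j\in\{1,\dots,D\}: K_i\to\Isom X_j \text{ is of general type}\}$. The hypothesis that every $K_i$ has at least one general-type factor says exactly that each $S_i\neq\varnothing$. If I can show that the sets $S_i$ are pairwise disjoint, then since they are nonempty subsets of a $D$-element set I obtain $F\leq\sum_{i=1}^{F}|S_i|=\left|\bigcup_{i=1}^F S_i\right|\leq D$; the bound $F\geq 1$ is automatic because $\G$ is nontrivial, acting by a general-type action on $X_1$. So the whole lemma reduces to proving disjointness of the $S_i$.

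The core step is then the following: for $i\neq i'$, the factors $K_i$ and $K_{i'}$ cannot both be of general type on the same coordinate $X_j$. I would argue this from the fact that distinct direct factors commute, together with the classification of actions in Theorem \ref{thm:actionsclassrk1}. Suppose $j\in S_i$, so $K_i$ contains an element $a$ acting loxodromically on $X_j$ with two-point boundary fixed set $\mathrm{Fix}_{\partial X_j}(a)=\{a^{+\8},a^{-\8}\}$. Since $K_i$ and $K_{i'}$ commute inside $\Aut_{\!0}\X$, every $g\in K_{i'}$ commutes with $a$, and by equivariance of the boundary action $g\cdot\mathrm{Fix}_{\partial X_j}(a)=\mathrm{Fix}_{\partial X_j}(gag^{-1})=\mathrm{Fix}_{\partial X_j}(a)$; that is, $g$ permutes the pair $\{a^{+\8},a^{-\8}\}$.

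I would close the argument by feeding this back into the definition of general type. If some $g\in K_{i'}$ is loxodromic on $X_j$, then $g^2$ fixes both $a^{+\8}$ and $a^{-\8}$, while $g^2$ is itself loxodromic with exactly the two fixed points $\{g^{+\8},g^{-\8}\}$; comparing the two descriptions forces $\{g^{+\8},g^{-\8}\}=\{a^{+\8},a^{-\8}\}$. Hence all loxodromics of $K_{i'}$ on $X_j$ share the single fixed pair $\{a^{+\8},a^{-\8}\}$, so $K_{i'}$ cannot contain two loxodromics with disjoint fixed sets and is therefore not of general type on $X_j$, i.e. $j\notin S_{i'}$. This yields disjointness and completes the proof. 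The only points that require care are the two standard boundary facts I am invoking—that a power of a loxodromic keeps the same endpoints, and that commuting isometries preserve one another's fixed sets on the boundary—and once these are in place the counting is immediate. I note that the argument I have in mind uses only the general-type hypotheses and the product structure, and does not actually invoke acylindricity.
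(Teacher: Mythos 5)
Your proof is correct. The counting scheme is the same as the paper's (each $K_i$ gets a nonempty set $S_i$ of coordinates on which it is of general type, and disjointness of these sets gives $F\leq D$), but your mechanism for the disjointness step is genuinely different and more elementary. The paper first passes to the tremble-free essential core (Corollary \ref{Cor: pass to irreducible tremble free core}, which uses the AU-acylindricity hypothesis), invokes Lemma \ref{Lem: normal in total gen type is tremb/gen type} to pin down each $K_i$'s factor actions as tremble or general type, and then cites \cite[Lemma 4.20]{ABO} (see also Lemma \ref{Lem:norm com}, proved via a commutator trick) to conclude that on any coordinate where one direct factor is of general type the others are elliptic. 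You instead argue directly on the boundary: a loxodromic $a\in K_i$ on $X_j$ forces every commuting $g\in K_{i'}$ to permute $\{a^{+\8},a^{-\8}\}$, so any loxodromic $g\in K_{i'}$ has $\mathrm{Fix}_{\partial X_j}(g^2)=\{a^{+\8},a^{-\8}\}$ and $K_{i'}$ cannot contain independent loxodromics on $X_j$. This buys you a shorter, self-contained argument that, as you correctly observe, uses neither acylindricity nor essentiality; what it gives up is the stronger structural conclusion the paper extracts along the way (that the other factors act as trembles on that coordinate), which the paper reuses elsewhere (e.g.\ in Corollary \ref{cor:prodstrdecomp}). The two boundary facts you flag — that a power of a loxodromic has the same two fixed points, and that $g\,\mathrm{Fix}(a)=\mathrm{Fix}(gag^{-1})$ — are indeed standard and valid in this non-proper setting.
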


\begin{proof}
    Up to replacing $\X$ with $\X'$ as in Corollary ~\ref{Cor: pass to irreducible tremble free core}, we may assume that all the factor actions are essential, general type, and with tremble-free. By Lemma ~\ref{Lem: normal in total gen type is tremb/gen type}, since each $K_i$ is normal in $\G$, the action in each factor is either a tremble (and hence trivial) or of general type.

    Further note that by \cite[Lemma 4.20]{ABO} (see also Lemma ~\ref{Lem:norm com}), for distinct $i,j$, if $K_i \times K_j$ acts on a hyperbolic space $X$ via a general type action, then exactly one of $K_i, K_j$ acts as general type while the other is elliptic and hence a tremble. It follows that if $K_i$ acts on a factor as general type, then all other $K_j$'s must act as trembles. Since each $K_i$ acts on at least one factor by a general type action, it follows that $1\leq F \leq D$. 
\end{proof}

We are now ready to prove the result connecting AU-acylindricity and (CAF).

\begin{proof}[Proof of Theorem ~\ref{Thm: Acyl -> CAF}] 
We will prove that if $\G\to \Aut\X$ is AU-acylindrical general type factors, then any commensurated subgroup of $\G$ with all factor actions elliptic must be finite. In particular, by Lemma ~\ref{Lem: normal in total gen type is tremb/gen type}, this will imply that any commensurated amenable subgroup of $\G$ must be finite, i.e. $\G$ has property (CAF); and that $\G$ has a finite amenable radical. 

We begin by applying Corollary ~\ref{Cor: pass to irreducible tremble free core} so that we may assume all the factors are of general type and essential and tremble-free, and so that the corresponding action has finite kernel. Therefore, up to virtual isomorphism, we may assume that $\G\leq \Aut_{\!0}\X$. 

Let $H \leq \G$ be commensurated and such that the  action of $H$ on each factor is elliptic. We now show that $H$ is finite. Fix $L$ so that $\O^L(H)\subset \X$ is not empty, and fix $x\in \O^L(H)$.  By Proposition ~\ref{Prop:Regular Exist} there exists a regular element $\g\in \G$. Set $\e=L$, and let $R$ be the associated AU-acylindricity constant. Then, up to replacing $\g$ with a power, we may assume that $d(x,\g x)\geq R$. Consider $H'=\g H\g^{-1}\cap H$, which is contained in $\cs L(x,\g x)$. Thus $H'$ is finite by the AU-acylindricity. Since $H'$ is of finite index in $H$, we deduce that $H$ is also finite. 
\end{proof}

We now prove the following crucial lemma which will be used later in this paper.

\begin{lemma}\label{Lem:norm com}
Let $\G\to \Isom X$ be essential and of general type. Consider a normal subgroup  $K\norm \G$ and a commensurated subgroup $H <\G$  with $K\cap H=\{1\}$ and $\G=KH$. One of the following cases hold:
\begin{enumerate}
    \item The action of $H$ is of general type and the action of $K$ is a tremble; or 
    \item The action of $K$ is of general type and the action of $H$ is elliptic.
\end{enumerate}
 \end{lemma}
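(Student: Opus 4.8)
The plan is to extract, for each of $K$ and $H$, a dichotomy from Lemma \ref{Lem: normal in total gen type is tremb/gen type} and then show that only the two advertised pairings survive. Since $K$ is normal, that lemma tells us $K$ acts either as a tremble or as a general type action; since $H$ is commensurated it acts either elliptically or as a general type action. Thus there are a priori four combinations, and the work lies in eliminating the two ``mixed'' ones: that $K$ and $H$ are \emph{both} general type, and that $K$ is a tremble while $H$ is merely elliptic.

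The main tool against the both-general-type scenario is a centralizer computation exploiting the commensuration of $H$ together with $K\cap H=\{1\}$ and the normality of $K$. Concretely, I would fix $k\in K$ and show that $H\cap kHk^{-1}=C_H(k)$, the centralizer of $k$ in $H$. Indeed, for $x\in H$ one writes $k^{-1}xk=x\kappa$ with $\kappa=(x^{-1}k^{-1}x)k\in K$ by normality of $K$; since $x\in H$, membership $k^{-1}xk\in H$ forces $\kappa\in K\cap H=\{1\}$, which is exactly the statement that $x$ commutes with $k$. As $H$ is commensurated, $H\cap kHk^{-1}$ has finite index in $H$, so $C_H(k)$ has finite index in $H$ for \emph{every} $k\in K$.

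Now suppose $K$ is general type and choose a loxodromic $k\in K$. If $H$ were also general type, then its finite-index subgroup $C_H(k)$ would again be general type --- passing to a finite-index subgroup only requires replacing loxodromics by suitable powers, which preserves independence (cf. the argument in Corollary \ref{Cor: FI-ess}). Hence $C_H(k)$ would contain two independent loxodromics $h_1,h_2$, each commuting with $k$. Using the standard fact that commuting loxodromic isometries of a $\delta$-hyperbolic space share their fixed-point pair, we would get $\fix(h_1)=\fix(k)=\fix(h_2)$, contradicting the independence of $h_1$ and $h_2$. Therefore $H$ must be elliptic, giving case (2). I expect this elimination to be the crux of the argument.

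It remains to treat the case where $K$ is a tremble, where I would argue that $H$ cannot be elliptic. Let $L$ be a uniform bound so that $d(kx,x)\leq L$ for all $k\in K$ and $x\in X$. If $H$ were elliptic, then fixing a basepoint $x$ the orbit $Hx$ would be bounded, say by $B$; but then every $g=kh\in\G=KH$ satisfies $d(gx,x)\leq d(khx,kx)+d(kx,x)=d(hx,x)+d(kx,x)\leq B+L$, so $\G x$ would be bounded and $\G$ elliptic --- contradicting that the action of $\G$ is of general type. Hence $H$ is general type; since $K$ is a tremble this is precisely case (1), completing the dichotomy. (Note that the remaining ``both small'' combination is ruled out by the same boundedness estimate.)
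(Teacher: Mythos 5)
Your proof is correct and follows essentially the same route as the paper: the same appeal to Lemma \ref{Lem: normal in total gen type is tremb/gen type} for the two individual dichotomies, the same observation that $K$ and $H$ cannot both have bounded orbits since $\G=KH$ is of general type, and the same commutator trick --- your identity $H\cap kHk^{-1}=C_H(k)$ is exactly the paper's computation that $[g^{-1},m]\in K\cap H=\{1\}$ --- used to manufacture commuting independent loxodromics and reach a contradiction. The only cosmetic difference is that you centralize a single loxodromic of $K$ and find two independent loxodromics inside the finite-index subgroup $C_H(k)$ of $H$, whereas the paper centralizes two independent loxodromics of $K$ simultaneously; both yield the same contradiction.
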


\begin{proof}
It follows from Lemma ~\ref{Lem: normal in total gen type is tremb/gen type} that the action of  $K$ (and respectively $H$) can only be elliptic or general type. Furthermore, one of them must be general type since the action of $\G= KH$ is of general type. 

We first show that if $K$ is of general type then $H$ is elliptic. Let $g, s\in K$ be independent loxodromics. Since $H$ is commensurated, we have that  $H':=gHg^{-1}\cap sHs^{-1}\cap H$ is of finite index in $H$, as in Remark ~\ref{Rem: comm passed to finite intersections}.

Let $m\in gHg^{-1}\cap   H$. Then there is an $m' \in H$ such  that $m= gm'g^{-1}$. Therefore:
\begin{eqnarray*}
[g^{-1},m]&=&g^{-1}(mgm^{-1})\\
&=&g^{-1}gm'g^{-1}gm\\
&=&m'm.
\end{eqnarray*}
The first equality  shows that $[g^{-1},m]\in K$ whereas the last equality shows that $[g^{-1},m]\in H$. As $K \cap H = \{1\}$ by assumption, we conclude that $\<g\>$  commutes  with $gHg^{-1}\cap H$. Similarly, we obtain that $\<s\>$ commutes with $sHs^{-1}\cap H$, and so $\<g,s\>$ commutes  with  $H'$. Thus $H'$ (and consequently $H$) cannot contain a loxodromic and therefore acts elliptically. Therefore, part (2) is proven. 

A similar proof will show that if $H$ is of general type then $K$ must act elliptically. We provide the details for (1) due to some changes from the above argument. 

Let $k \in K$. As $H$ is commensurated, we have $H \cap kHk^{-1}$ is of finite index in $H$. Note that the action of $H \cap kHk^{-1}$ is of general type since it is of finite index in $H$. Take independent loxodromics $g,s$ in $H \cap kHk^{-1}$. Then $[g^{-1}, k] = g^{-1}kgk^{-1} \in kHk^{-1}$. Further, as $K$ is normal, $k = gk'g^{-1}$ for some $k' \in K$. Then $[g^{-1},k] = g^{-1}kgk^{-1} = g^{-1}gk'g^{-1}gk^{-1} = k'k \in K$.

Thus $[g^{-1},k] \in kHk^{-1} \cap K$, so $k^{-1}[g^{-1},k]k \in K \cap H = \{1\}$. It follows that $\<g\>$ commutes with $K$. By a similar argument for $s$, we get that $\<g,s\>$ commutes with $K$ and so $K$ must act elliptically. To conclude that $K$ is in fact a tremble, we apply Lemma ~\ref{Lem: normal in total gen type is tremb/gen type}. 
\end{proof}
  
We end this subsection with the following result about our class of groups and direct products, which may also be thought of as a generalization of \cite[Lemma 4.20]{ABO}.

\begin{cor}\label{cor:prodstrdecomp} Let $\G = \Prod{j=1}{F}K_j$ be a direct  product of infinite groups and $\G \to \Aut_{\! 0} \X$ be (AU-)acylindrical with general type factors. Then $\{1, \dots, D\}= \overset{F}{\Sqcup{j=1}}I_j$, where $I_j\neq \varnothing$ is the set of indices on which $K_j$ acts as general type. Setting $\X_j' = \Prod{i\in I_j}{}X_j'$ to be the associated products of the tremble-free essential cores, we have that $K_j\to \Aut_{\! 0} \X_j'$ is (AU-)acylindrical with general type factors and the map $\G\to \Aut_{\! 0} \X_j'$ factors through the projection $\G\to K_j$ for each $j=1, \dots, F$. 
\end{cor}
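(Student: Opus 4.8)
The plan is to refine the argument of Lemma \ref{Lem: Bound on number of inf factors} by recording \emph{which} direct factor $K_j$ is responsible for the general type action on each coordinate of $\X$, and then to discard the coordinates on which $K_j$ is inert using the elliptic-elimination lemma. First I would invoke Corollary \ref{Cor: pass to irreducible tremble free core} to pass to $\X'=\CProd{i=1}{D}X_i'$, the product of tremble-free essential cores. As the original representation lands in $\Aut_{\!0}\X$, its permutation quotient is trivial, so the resulting action is again factor-preserving, $\rho':\G\to\Aut_{\!0}\X'$; it is (AU-)acylindrical, each factor is essential, general type and tremble-free, and $\ker(\rho')$ is finite. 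Fix a coordinate $i$. Since each $K_j\norm\G$ is normal, Lemma \ref{Lem: normal in total gen type is tremb/gen type} says its image in $\Isom X_i'$ is either of general type or elliptic, and in the latter (normal, elliptic) case it is a tremble, hence trivial by tremble-freeness.

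Next I would prove that exactly one $K_j$ is of general type on $X_i'$. For \emph{at most one}: were $K_{j_1}$ and $K_{j_2}$ with $j_1\neq j_2$ both of general type, then the commuting product $K_{j_1}\times K_{j_2}$ would be of general type, contradicting \cite[Lemma 4.20]{ABO} (see also Lemma \ref{Lem:norm com}), which forces one of two commuting general-type subgroups to be elliptic. For \emph{at least one}: if every $K_j$ were elliptic on $X_i'$, then by the previous paragraph every $K_j$ would act trivially, so $\G=\CProd{j=1}{F}K_j$ would act trivially on $X_i'$, contradicting that the $i$-th factor is of general type. Letting $I_j$ be the set of coordinates on which $K_j$ is of general type, this dichotomy yields the disjoint covering $\{1,\dots,D\}=\overset{F}{\Sqcup{j=1}}I_j$. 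Moreover $I_j\neq\varnothing$, for otherwise $K_j$ would act trivially on every $X_i'$, placing the infinite group $K_j$ inside the finite kernel $\ker(\rho')$.

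Finally I would verify the two assertions about $K_j$. For each $i\in I_j$ and each $j'\neq j$, the factor $K_{j'}$ acts trivially on $X_i'$; hence $\Prod{j'\neq j}{}K_{j'}$ lies in the kernel of the coordinate projection $\G\to\Aut_{\!0}\X_j'$, and since $\G=\CProd{j=1}{F}K_j$ this map factors through $\G\to K_j$, giving the last claim. For (AU-)acylindricity, I would restrict $\rho'$ to $K_j$ (restriction preserves (AU-)acylindricity) to obtain an (AU-)acylindrical action on $\X'=(\Prod{i\notin I_j}{}X_i')\times\X_j'$ in which $K_j$ acts trivially, hence elliptically, on the first factor; Lemma \ref{Lem:elimellfactors} then deletes that factor and leaves $K_j\to\Aut_{\!0}\X_j'$ (AU-)acylindrical, with general type factors by the definition of $I_j$.

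The step I expect to be the crux is the \emph{exactly one} dichotomy, and within it the \emph{at most one} direction: it is precisely here that the hypothesis that $\G$ is a \emph{direct} product (so that the $K_j$ commute) is essential, enabling the appeal to \cite[Lemma 4.20]{ABO}; normality alone would not suffice. The passage to the tremble-free essential core is what makes the inert factors genuinely trivial rather than merely elliptic, which is in turn what lets both the elliptic-elimination and the factoring-through-$K_j$ arguments run cleanly.
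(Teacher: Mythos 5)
Your proposal is correct and follows essentially the same route as the paper: pass to the tremble-free essential core via Corollary \ref{Cor: pass to irreducible tremble free core}, use Lemma \ref{Lem:norm com} (equivalently the normal/commuting dichotomy) to see that each coordinate is general type for exactly one $K_j$ and a trivial tremble for the rest, conclude $I_j\neq\varnothing$ from the finiteness of the kernel, and delete the inert coordinates with Lemma \ref{Lem:elimellfactors}. Your explicit verification that every index is covered by some $I_j$ is a small point the paper leaves implicit, but it is the same argument.
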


\begin{proof} 

We begin by applying Corollary ~\ref{Cor: pass to irreducible tremble free core} if necessary, so we may assume that all factors are additionally essential and tremble-free.
Fix $j$. It follows from Lemma ~\ref{Lem:norm com} that if $X_i$ is a factor with a general type action of $K_j$, then the action of $K_n$ for all $n \neq j$ is a tremble and hence trivial on $X_i$.

We claim that  $I_j\neq \varnothing$. Indeed by Lemma ~\ref{Lem:norm com} $K_j$ acts either as a tremble or as general type on each factor. If $I_j$ was empty, then $K_j$ would be a tremble in each factor, in particular in the kernel of the action. This is a contradiction since $K_j$ is infinite and the kernel of the $\G$-action is finite by Lemma ~\ref{Lem:AU-acyl has finite kernel}. 

Furthermore, since  $K_j$ acts (AU-)acylindrically on $\X$, it follows from Lemma ~\ref{Lem:elimellfactors} that $K_n \to \Aut_{\! 0}\X_j$ is (AU-)acylindrical.
Thus we have produced the desired partition  $\{1, \dots, D\} = \overset{F}{\Sqcup{j=1}}I_j$.
\end{proof}

\begin{cor}\label{Cor: Class closed finite kernel}
    Let $\G\to \Aut\X$ be (AU-)acylindrical and with general type factors, and $\A\norm \G$ the amenable radical and $\A_0= \G_0\cap \A$. There exists a nonempty subset $J\subset \{1, \dots, D\}$ such that $\G/\A$ acts (AU-)acylindrically on $(\Prod{j\in J}{}X_j)^I$, where $I= [\G_0/\A_0: \G/\A]$. In particular, $\G/\A$ acts (AU-)acylindrically on a finite product of $\delta$-hyperbolic spaces. 
\end{cor}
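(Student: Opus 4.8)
The plan is to reduce to the factor‑preserving subgroup $\G_0$, kill the finite tremble part there, and then induce the resulting action up to all of $\G/\A$. First I would note that $\A$ is finite: since $\G\to\Aut\X$ is AU‑acylindrical with general type factors, Theorem \ref{Thm: Acyl -> CAF} shows that $\G$ has Property (CAF), and as the amenable radical $\A$ is normal (hence commensurated) and amenable, it must be finite.

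Next, recall that $\G_0\leq\G$ is the finite‑index normal subgroup mapping into $\Aut_{\!0}\X$, and put $\A_0=\A\cap\G_0$, a finite normal subgroup of $\G_0$. The restricted action $\G_0\to\Aut_{\!0}\X$ is (AU‑)acylindrical, since acylindricity passes to subgroups, and it still has general type factors because a finite‑index subgroup of a general type action retains independent loxodromics. Applying Lemma \ref{lem:finquotsacyl} to $\A_0\norm\G_0$ then yields an (AU‑)acylindrical action of $\G_0/\A_0$ on $\X'=\CProd{j=1}{D}X_j'$ with general type factors, where the $X_j'$ are the tremble‑free essential cores furnished in the proof of that lemma. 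Since every factor is of general type, each $X_j'$ is unbounded, so I may take $J=\{1,\dots,D\}$. I would then identify $\G_0/\A_0\cong\G_0\A/\A$ as a finite‑index subgroup of $\G/\A$, of index $I=[\,\G/\A:\G_0/\A_0\,]=[\G:\G_0\A]$, a divisor of $|\Sym_\X(D)|$. (This is why one passes to $\G_0$ first: the factor‑permuting part of $\A$ is not a tremble, so it cannot be removed by the taming lemmas directly, but it disappears once we work inside the finite‑index factor‑preserving subgroup and then induce.)

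The final move is an induced representation, which is exactly where I expect the real work to lie. I claim that whenever $H\leq\L$ is a finite‑index subgroup of index $I$ and $H\to\Isom Y$ is (AU‑)acylindrical, the induced coset‑permutation action of $\L$ on $Y^I$ (with the $\ell^\8$‑metric) is again (AU‑)acylindrical; specializing to $\L=\G/\A$, $H=\G_0/\A_0$, $Y=\X'$ then yields the required (AU‑)acylindrical action of $\G/\A$ on $(\X')^I=\big(\CProd{j\in J}{}X_j'\big)^I$ and proves the corollary.

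To verify the claim I would fix coset representatives $t_1,\dots,t_I$, let $N=\Cap{i}{}t_iHt_i^{-1}\norm\L$ be the finite‑index normal core inside $H$, and write $\sigma_g\in\Sym(I)$ for the permutation of cosets induced by $g$, so that $g\mapsto\sigma_g$ is a homomorphism. Given $\e>0$, I would set $R=R_Y(2\e)$, the (AU‑)acylindricity radius of $H\curvearrowright Y$ at scale $2\e$, and take $p,q\in Y^I$ with $d_\8(p,q)\geq R$; fix a coordinate $i_0$ attaining the maximum, so $d(p_{i_0},q_{i_0})\geq R$. Partitioning $\cs{\e}(p,q)$ by the value of $\sigma_g$ produces at most $I!$ classes. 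Any two elements $g,g'$ in a single class satisfy $g^{-1}g'\in N$ and $g^{-1}g'\in\cs{2\e}(p,q)$; such an element preserves every coordinate and acts on the $i_0$‑th one through $\rho\big(t_{i_0}^{-1}(g^{-1}g')t_{i_0}\big)$, so $t_{i_0}^{-1}(g^{-1}g')t_{i_0}\in H$ lies in the coarse stabilizer $\cs{2\e}(p_{i_0},q_{i_0})$ computed in $Y$. By (AU‑)acylindricity of $H\curvearrowright Y$ this set is finite (respectively of cardinality at most $N_Y(2\e)$), and since conjugation by $t_{i_0}$ is injective, each class is finite (respectively bounded). Hence $\cs{\e}(p,q)$ is finite (respectively of size at most $I!\cdot N_Y(2\e)$), which is precisely AU‑acylindricity (respectively acylindricity) of the induced action. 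The one point requiring care — and the main obstacle — is keeping the coarse‑stabilizer inequalities intact through the $\ell^\8$‑metric and the coset‑representative bookkeeping, but this becomes routine once the coordinate $i_0$ is fixed.
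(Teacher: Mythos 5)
Your proof is correct, and it reaches the same destination by the same overall skeleton (finiteness of $\A$ via Theorem \ref{Thm: Acyl -> CAF}, killing $\A_0$ inside the factor-preserving subgroup, then inducing up to $\G/\A$), but the middle step differs from the paper in a genuine way. The paper exploits the fact that $\G_0$ and $\A$ generate a direct product $\G_0\times\A$ to decompose $\{1,\dots,D\}$ into $\A$-orbits, observes that the $\G_0$-actions on factors within one orbit are $\A$-conjugate, and hence restricts $\G_0$ to a diagonally, quasi-isometrically embedded copy of $\prod_{j\in J}X_j$ with $J$ a set of orbit representatives; only then does it induce. You skip the orbit analysis entirely: you quotient by $\A_0$ via Lemma \ref{lem:finquotsacyl}, keep all $D$ (tamed) factors, and take $J=\{1,\dots,D\}$, which the statement permits since it only asks for some nonempty $J$. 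What you lose is the smaller product (the paper gets $|J|\cdot I$ factors with $|J|$ the number of $\A$-orbits, you get $D\cdot I$); what you gain is the avoidance of the diagonal-embedding argument, and — more substantively — you actually verify that induction from a finite-index subgroup preserves (AU-)acylindricity (the partition by the coset permutation $\sigma_g$, reduction to the normal core, and the conjugated coarse-stabilizer estimate in a single coordinate $i_0$ realizing the $\ell^\infty$-distance). The paper asserts this closure under induction without proof here and elsewhere, so your verification fills in a step the authors leave implicit. One cosmetic point, shared with the paper's own proof: the space you produce is a power of the tamed cores $X_j'$ rather than of the original $X_j$, which is absorbed into the paper's "without loss of generality" and is harmless for the "in particular" conclusion.
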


\begin{proof}
    Without loss of generality, by Corollary ~\ref{Cor: pass to irreducible tremble free core} we may assume that the action is essential and tremble-free. By Theorem ~\ref{Thm: Acyl -> CAF}, $\A$ is finite. Therefore, $\A_0:= \A\cap \G_0$ is the trembling radical and hence is the kernel of the action. Up to taking the quotient of $\G$ by $\A_0$, we may assume that $\A_0$ is trivial, which is the kernel of the map from $\A\to \Sym_\X(D)$.
    
    Consider the group generated by $\G_0$ and $\A$. Since $\G_0$ is the kernel of the map from $\G$ to $\Sym_\X(D)$, we have that $\A\cap \G_0=\A_0=\{1\}$. Furthermore, these are both normal subgroups of $\G$ and so normalize each other, and therefore $\G_0\A\cong \G_0\times\A$. 

Let $I_1\sqcup \cdots \sqcup I_F= \{1, \dots, D\}$ be a decomposition into $\A$ orbits. Up to permutation, we may assume that $j\in I_j$ for $j= 1, \dots, F$. 

We claim that $\G_0 \to \Prod{j=1}{F}X_j$ is (AU-)acylindrical. First observe that since $\G_0$ commutes with $\A$, we must have that for each $j=1, \dots, F$ the coordinates of $\G_0$ in $\Isom X_i$ must be equivalent under $\A$, i.e. are conjugate by an  element of $\A$ for all $i\in I_j$. Therefore, if we consider the diagonal injections $X_j \hookrightarrow \Prod{i \in I_j}{} X_i$ to compose an injection  $\iota: \Prod{j=1}{F}X_i\hookrightarrow \X$ we see that the image $\iota(\Prod{j=1}{F}X_i)$ is a $\G_0$-invariant subspace, which is quasi-isometrically embedded and hence the action of $\G_0$ is (AU-)acylindrical. 

Finally, since $\G_0\cap \A = \{1\}$ we get that $\G_0$ injects as a finite index subgroup into $\G/\A$. We may therefore induce the representation from $\G_0$ to $\G/\A$, which concludes the proof. 
\end{proof}

\subsection{Property (NbC)}\label{subsec:nbc} We now turn our focus to  property (NbC). As the following example demonstrates, we cannot deduce property (wNbC) in our full generality. However, it is unclear as to whether we can expect the property to hold up to taking the quotient by the amenable radical.  We will examine certain conditions under which property (NbC) can be obtained. 

\begin{example}\label{rem:nbcnotpossible} It is also not true that groups admitting acylindrical actions have (wNbC). Consider the following counter example: Let $p$ be an odd prime and define $\G$ to be the group generated by $\{g_i: i \in \Z\}$ with relations: $$g_0^p =[g_i, g_{-j}] =[g_k, g_0]=1, [g_k, g_{-k}]=g_0, \text{ for } i,j\in \N, i\neq j, k\in \Z$$


 Here, the groups $\G_+= \<\g_i,i\in \N\>$ and $\G_-= \<\g_{-i},i\in \N\>$ are each isomorphic to $F_\8$, the free group on infinitely many generators and each commutes with the finite group $C:=\<g_0\>$. Let $K=C\G_\pm$ and $H = \G_\mp$. Then $K$, $H$,  and $\G$ satisfy the hypotheses of property (wNbC). However, it is straightforward to verify that $H$ does not contain a finite index subgroup that commutes with $K$. Indeed, the required subgroup would be generated by the  $p$th powers of the generators of $H$ and is therefore of infinite index. \emph{However, this is not a counter-example up to virtual isomorphism: taking the quotient by the central subgroup $C$ produces $\G_-\times \G_+$ which does have property (wNbC).}
 
 This group $\G$ is a ``universal"-type cover of the infinite extra-special group obtained by taking the quotient by the normal subgroup $[\G_-,\G_-][\G_+,\G_+]$, and is not finitely generated. Note though that the splitting necessary to deduce Property (wNbC) follows from \cite[Lemma 2.11]{BaderFurmanSauer} when the normal subgroup is in fact finitely generated.  
\end{example}

We begin by establishing the connection between Properties (NbC) and (wNbC) for $\G\to \Aut\X$ acting AU-acylindrically with general type factors. Recall that  AU-acylindricity  is invariant under virtual isomorphism (see Corollary ~\ref{Cor: Class closed finite kernel}) so that by Theorem ~\ref{Thm: Acyl -> CAF} we may assume that $\G$ has trivial amenable radical, and in particular has no finite normal subgroups. This means that Property (NbC) will follow immediately if Property (wNbC) is established. 

Now, if $\{1\}\neq K\norm \G$, $H\leq \G$ is infinite commensurated, and $K\cap H=\{1\}$ then $KH$ must also have general type factors  by Lemma ~\ref{Lem: normal in total gen type is tremb/gen type}.  Therefore, by Corollary ~\ref{Cor: pass to irreducible tremble free core}, up to a virtual isomorphism of $KH$, we may assume that $KH\leq \Aut_{\! 0}\X$ is AU-acylindrical, all the factor actions are essential, general type, and tremble-free. 

Furthermore, since the actions on all of the factors are general type, by Lemma ~\ref{Lem:norm com} (and as in the proof of Corollary ~\ref{cor:prodstrdecomp}), we obtain a decomposition $\{1, \dots, D\}= I_K\sqcup I_H$, where $I_K$ is the nonempty set of indices where the $K$ action is of general type (and $H$ is elliptic), and $I_H$ is the nonempty set (nonempty as in the proof of Theorem ~\ref{Thm: Acyl -> CAF}) of indices where the $H$ action is of general type (and $K$ is trivial). We therefore have that $K \leq \ker(\Aut_{\!0}\X\to \Prod{i\in I_H}{}\Isom X_i)$.

\noindent
\emph{\textbf{Goal:}} Prove that if a factor of $\X$ is of general type for $K$, then it is a tremble for $H$ up to finite index (not just elliptic).

If the claim is proven, then up to finite index $H \leq \ker(\Aut_{\!0}\X\to \Prod{i\in I_K}{}\Isom X_i)$, and so $K$ commutes with a finite index subgroup of $H$, which proves Property (wNbC).

Since an immediate proof of the above goal seems out of reach, we therefore introduce the following hypothesis to prove Property (wNbC). 

\begin{defn}\label{Def: (wNbC)-essential-rift-free}
    An action $\G\to \Isom X$ is said to be \emph{(wNbC)-essential-rift-free} if it is general type and whenever $H\leq \G$ is commensurated and acting elliptically and   $K\norm \G$ acting general type such that $K\cap H= 1$ then the action of $HK$ is essential-rift-free.
\end{defn}

The following example shows that a successful proof that groups in our class have Property (wNbC) must take into consideration all of the hyperbolic factors that yield the acylindrical action.

\begin{example}\label{Ex:Denis}
    Let $F_1=\<g_n: n\in \N\>$ and $F_2=\<h_n: n\in \N\>$ be free groups on the countably many given generators.  Consider the action of $\f: F_2\to \Aut(F_1)$ determined by:
    $$
\f(h_m)(g_n)=
\begin{cases}
g_n^{-1},\text{ if } n=m;\\
g_n,\text{ if } n\neq m.
\end{cases}
$$
   With this in place the semidirect product  $F_1\rtimes_\f F_2$  acts on the  Cayley tree $T_1$ for $F_1$ in the obvious way and factors through $F_1\rtimes \G$, where $\Oplus{n\in \N}{}\Z/2\cong\G= F_2/\ker(\f)\leq \Aut(F_1)$.

   The reader may check the following: \begin{enumerate}
       \item the action of both groups is essential, general type, tremble-free, but not (wNbC)-essential-rift-free;
       \item $F_1$ is normal  in both groups and acting by general type on $T_1$;
       \item $\G$ is commensurated but $F_2$ is not (in their respective ambient groups); 
       \item $F_1\rtimes_\f \F_2$ admits an acylindrical action with general type factors on $T_1\times T_2$ but $F_1\rtimes \G$ does not act in this fashion on any $\X$. Indeed, one can show that the restriction to $\G$ of a general type action of $F_1\rtimes \G$ on any hyperbolic space  will necessarily be rift.
   \end{enumerate}

 In this case, one has to taken into consideration both factors of $T_1\times T_2$, where $T_2$ is the Cayley tree for $F_2$, with $F_1$ acting trivially on $T_2$. 
\end{example}

The following is the necessary adaptation of Lemma ~\ref{Lem: normal in total gen type is tremb/gen type} to this specialized setting.
\begin{lemma}
  Let $\G\to \Isom X$ be  (wNbC)-essential-rift-free. If $H\leq \G$ is commensurated and acts elliptically, $K\norm\G$ acting general type with $H\cap K = \{1\}$ then the action of $H$ on the tremble-free essential core is trivial. 
\end{lemma}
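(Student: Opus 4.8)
The plan is to pass to the essential core and then promote ``elliptic'' to ``tremble'' using the rift-free hypothesis, feeding the commutator computation of Lemma \ref{Lem:norm com} into the trichotomy of elliptic actions. First I would replace $X$ by the essential core $\mathcal{L}_\rho(X)$, on which $\G$ acts essentially and of general type by Lemma \ref{Lem: irred core is essential invariant}. Since $K\norm\G$ is of general type, its limit set is $\G$-invariant and contains $\L(\G)$, so $\L(K)=\L(HK)=\L(\G)$ and the essential cores of $K$, $HK$ and $\G$ coincide; thus the tremble-free essential core $X'$ and its kernel $\T$ (as in Lemma \ref{lem:tremblefreerk1}) are the common ones. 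Now observe that ``$H$ acts trivially on $X'$'' is equivalent to $H\subseteq\T$, i.e. to $H$ fixing $\L(\rho(\G))$ pointwise; and any subgroup acting as a tremble on $\mathcal{L}_\rho(X)$ moves every point a uniformly bounded amount and hence lies in $\T$ (compare Lemma \ref{lemma: Unif Bound}). So the target reduces to: \emph{$H$ is a tremble on $\mathcal{L}_\rho(X)$}.

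Next I would run the commutator computation of Lemma \ref{Lem:norm com} locally. For each loxodromic $g\in K$ the subgroup $H'_g:=gHg^{-1}\cap H$ is of finite index in $H$ (commensuration), and it commutes with $g$: for $m\in H'_g$ the element $[g^{-1},m]$ lies in $K$ by normality and in $H$ by construction, hence in $K\cap H=\{1\}$. Because every $m\in H'_g$ commutes with the loxodromic $g$ and $H'_g$ is elliptic, the orbit $H'_g\cdot p$ of an axis point $p$ is bounded, and the identity $d(m\,g^np,g^np)=d(mp,p)$ shows that the whole unbounded set $\{g^np:n\in\Z\}$ lies in $\O^L(H'_g)$ for $L=\diam(H'_g p)$. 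Hence $\O^L(H'_g)$ is unbounded and $H'_g$ is \emph{not} a rotation. The hypothesis that $\G\to\Isom X$ is (wNbC)-essential-rift-free (Definition \ref{Def: (wNbC)-essential-rift-free}), applied to our $H$ (commensurated, elliptic) and $K$ (normal, general type) with $K\cap H=\{1\}$, says the action of $HK$ is essential-rift-free, so the subgroup $H'_g\le HK$ is \emph{not} a rift on $\mathcal{L}_\rho(X)$ either. By the trichotomy for elliptic actions (Definition \ref{defn:elementsclassextn}, Theorem \ref{thm:actionsclassrk1}), $H'_g$ is therefore a tremble, and so $H'_g\subseteq\T$.

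It remains to remove the ``finite index'' and the main obstacle lies exactly here. Using a coset decomposition $H=\bigcup_i h_iH'_g$ one checks that $H$ is a tremble on $\mathcal{L}_\rho(X)$ precisely when $H'_g$ is a tremble \emph{and} each representative $h_i$ is a tremble; since each $h_i$ is elliptic and, by essential-rift-freeness, not a rift, the only remaining possibility to exclude is that some image $\bar h_i$ acts on $X'$ as a nontrivial finite-order rotation (finite order because a power of $h_i$ lands in $H'_g\subseteq\T$). I expect ruling this out to be the delicate step. The mechanism I would use is that a nontrivial rotation $\bar h$ can fix no loxodromic axis of $\bar K$: if it fixed both endpoints of some loxodromic it would fix that quasi-line's ends and hence move it boundedly (Lemma \ref{lemma: Unif Bound}), forcing $\O^L(\bar h)$ unbounded and contradicting that $\bar h$ is a rotation. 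Combining such a $\bar h$ with a loxodromic $\bar k\in\bar K$ whose endpoints $\bar h$ genuinely moves, the interaction is designed to produce an elliptic element of $\langle\bar h,\bar K\rangle$ with an unbounded but proper almost-fixed region, i.e. a rift, contradicting essential-rift-freeness; this is precisely the pathology isolated by Example \ref{Ex:Denis}, and is the reason the hypothesis is imposed on all of $HK$ rather than on $H$ alone. Making this rift-construction rigorous, while controlling the near-fixed set of the relevant product in a possibly non-proper hyperbolic space, is the part of the argument requiring the most care.
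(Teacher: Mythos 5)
Your proposal reproduces the paper's own argument for the core of the lemma: reduce to the essential core, set $H'=gHg^{-1}\cap H$ for a loxodromic $g\in K$, use the commutator computation from Lemma \ref{Lem:norm com} to see that $H'$ commutes with $g$, deduce that $\O^L(H')$ is unbounded, and invoke the rift-free hypothesis plus the elliptic trichotomy to upgrade $H'$ from elliptic to tremble. Your route to unboundedness (the identity $d(mg^np,g^np)=d(mp,p)$ for $m\in H'$) is in fact more direct than the paper's, which goes through the closest-point projection and Lemma \ref{Lem: Contracting QGeod}; both are valid.

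Where you stop --- ruling out that a coset representative of $H'$ in $H$ acts as a nontrivial finite-order rotation on the tremble-free core --- is exactly where the paper's proof also stops: it writes ``$H'$ and hence $H$ is a tremble'' with no further argument. Your suspicion that this step is delicate is well founded, and in fact it cannot be completed: the statement is false as written. Take $\G=F_2\rtimes\Z/4$ acting on the $4$-valent tree $T$ (the Cayley graph of $F_2=\langle a,b\rangle$), where the generator $r$ of $\Z/4$ acts by the automorphism $a\mapsto b\mapsto a^{-1}$. The fixed subgroup of $r^2$ (hence of $r$) is trivial, since $x_1^{-1}\cdots x_k^{-1}=x_1\cdots x_k$ for a reduced word forces $x_i=x_i^{-1}$; as $d(x,\sigma x)=2\,d(x,\mathrm{Fix}(\sigma))$ for an elliptic tree isometry, every nontrivial elliptic element of $\G$ has $\O^L$ equal to a metric ball, so every nontrivial elliptic subgroup is a rotation and the action is (wNbC)-essential-rift-free. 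With $K=F_2\norm\G$ of general type and $H=\langle r\rangle$ commensurated (being finite), elliptic, and meeting $K$ trivially, all hypotheses hold, yet $H$ acts nontrivially --- indeed faithfully --- on the tremble-free essential core $T$. (Crossing with a trivially acting $\Z$ factor makes $H$ infinite if one insists.) In particular your proposed mechanism of manufacturing a rift out of the rotation and a loxodromic of $K$ must fail, since no subgroup of $F_2\rtimes\Z/4$ is a rift here. What is true, what both your argument and the paper's actually establish, and what the application to Corollary \ref{cor:nbc} requires (as the discussion preceding the lemma already phrases it, ``up to finite index''), is that a finite-index subgroup of $H$, namely $H'$, acts trivially on the tremble-free essential core.
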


\begin{proof}
    Without loss of generality, assume that $\G= HK$ and the action of $\G$ is essential and tremble free. Let $g\in K$ be loxodromic, and let $H'=gHg^{-1}\cap H$.

Let $L>0$ such that $\O^L(H'):=\{x: d(mx, x)\leq L, h\in H'\}\neq \varnothing$. 
Fix $q: \Z\to X$ be a $(1,20\delta)$ quasigeodesic between the end points of $g$ in $\partial X$. We claim that $q(\Z)$ is at bounded distance from $\O^L(H')$. Otherwise, the closest point projection from $\O^L(H')$ to $q(\Z)$ would be either bounded below or bounded above.  But this is not possible since, as in the proof of Lemma ~\ref{Lem:norm com} $g$ commutes with $H'$, as does the closest point projection. Therefore, Lemma ~\ref{Lem: Contracting QGeod} applies, and thus $\O^L(H')$ is unbounded. Since the action is (wNbC)-essential-rift-free, we deduce that $H'$ and hence $H$ is a tremble. 
\end{proof}

\begin{cor}\label{cor:nbc}
    Let $\G\to \Aut \X$ be AU-ayclindrical with  (wNbC)-essential-rift-free factors. Then $\G$ has Property (NbC).    
\end{cor}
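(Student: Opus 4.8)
The plan is to first reduce Property (NbC) to Property (wNbC), and then to establish the latter by reading off, factor by factor, where the normal subgroup $K$ and the commensurated subgroup $H$ are active. For the reduction I would invoke Theorem \ref{Thm: Acyl -> CAF} to conclude that $\G$ has Property (CAF) and in particular a finite amenable radical $\A$. Since AU-acylindricity (and the (wNbC)-essential-rift-free hypothesis, which is a statement about the quasi-isometry type of the factor actions) is preserved under virtual isomorphism by Corollary \ref{Cor: Class closed finite kernel}, I may replace $\G$ by $\G/\A$ and assume that $\G$ has trivial amenable radical, hence no nontrivial finite normal subgroup. For such a group every quotient by a finite normal subgroup is $\G$ itself, so Property (NbC) is literally Property (wNbC), and it suffices to prove the latter.

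So let $\{1\}\neq K\norm\G$ and let $H\leq\G$ be commensurated with $K\cap H=\{1\}$. If $H$ is finite then the trivial subgroup already commutes with $K$, so I assume $H$ infinite. Then $KH$ still has general type factors by Lemma \ref{Lem: normal in total gen type is tremb/gen type}, and applying Corollary \ref{Cor: pass to irreducible tremble free core} to $KH$ I may pass to the tremble-free essential core and assume $KH\leq\Aut_{\!0}\X$ acts AU-acylindrically, with each factor action essential, general type and tremble-free. Because $K\cap H=\{1\}$, with $K\norm KH$ and $H$ commensurated in $KH$, Lemma \ref{Lem:norm com} applies on every factor and yields a partition $\{1,\dots,D\}=I_K\sqcup I_H$ in which $K$ acts as general type and $H$ elliptically on the factors indexed by $I_K$, while $H$ acts as general type and $K$ trivially on those indexed by $I_H$; both index sets are nonempty, since $K$ is infinite (a finite normal subgroup would be trivial after the reduction) and $H$ is infinite, whereas the kernel of the action is finite.

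Next, for each $i\in I_K$ the factor $X_i$ carries a (wNbC)-essential-rift-free action on which $H$ is commensurated and elliptic, $K$ is normal and general type, and $K\cap H=\{1\}$; this is exactly the hypothesis of the lemma immediately preceding this corollary, which I would invoke to conclude that $H$ acts trivially on $X_i$. Consequently $K$ is supported on the factors indexed by $I_K$ and $H$ on those indexed by $I_H$, so their images in the direct product $\Aut_{\!0}\X=\CProd{i=1}{D}\Isom X_i$ commute. Therefore $[K,H]$ lies in the kernel of $KH\to\Aut_{\!0}\X$, which is finite by item (3) of Corollary \ref{Cor: pass to irreducible tremble free core}. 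The conclusion required for Property (wNbC) — that a finite index subgroup of $H$ commutes with $K$ — then follows once this kernel is shown to be trivial.

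The hard part, and the place I would spend the most care, is precisely this last step: promoting the relation $[K,H]\leq(\text{finite kernel})$ to genuine commutation of $K$ with a finite index subgroup of $H$. Example \ref{rem:nbcnotpossible} shows this promotion can fail outright when finite normal subgroups are present — there each element of $H$ centralizes $K$ only modulo the finite central subgroup $C$, and no finite index subgroup of $H$ centralizes $K$ — so the argument genuinely must exploit that, after the reduction, $\G$ has no nontrivial finite normal subgroup. The subtlety is that the tremble-free essential core is formed for $KH$ rather than for $\G$, so one must verify that the relevant finite kernel (equivalently, the trembling radical of the $K$-action on each factor it dominates) is normalized by all of $\G$, not merely by $KH$. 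This I would do using that $K\norm\G$ together with the $\G$-equivariance of the essential-core construction, so that these canonically attached finite subgroups are normal in $\G$; triviality of the amenable radical then forces them to vanish, giving $[K,H]=\{1\}$. With this in hand $H$ itself commutes with $K$, establishing (wNbC) and therefore (NbC).
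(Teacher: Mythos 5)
Your argument follows the paper's proof of this corollary essentially step for step: the reduction of (NbC) to (wNbC) via Theorem \ref{Thm: Acyl -> CAF} and Corollary \ref{Cor: Class closed finite kernel}, the passage to the tremble-free essential core of $KH$, the partition $\{1,\dots,D\}=I_K\sqcup I_H$ from Lemma \ref{Lem:norm com}, and the application of the lemma immediately preceding the corollary to trivialize $H$ on the factors indexed by $I_K$ are exactly the steps carried out in Section \ref{subsec:nbc}. The only place you go beyond the paper is the final promotion of $[K,H]\leq \ker(KH\to\Aut\X')$ to genuine commutation, which the paper treats as immediate; your resolution is correct, and the key point is precisely the one you isolate, namely to intersect with $K$ first: since $K\norm\G$ one has $[K,H]\leq K\cap\ker(\rho')$, and this intersection is the kernel of $K$ acting on its own limit sets $\L(K,X_i)$, $i\in I_K$, which are $\G$-invariant because $K$ (unlike $KH$, whose essential core and kernel need not be preserved by $\G$) is normal in $\G$, so that $K\cap\ker(\rho')$ is a finite normal subgroup of $\G$ and vanishes after the reduction to trivial amenable radical.
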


\subsection{Arithmetic Lattices and Tree Extensions}\label{Sect:tree ext}
 
The concept of a tree extension requires the development of a certain amount of background to define. We shall avoid this by not giving the general definition, but rather by explaining an example. We refer the reader to \cite[Section 4.1]{BaderFurmanSauer} for a more in depth discussion.

Recall that the hyperbolic plane $\mathbb H^2$ is the symmetric space for $\PSL_2\R$ and that, for $p$ a prime, the $p+1$ regular tree is the Bruhat-Tits tree $T_{p +1}$ for $\PSL_2\Q_p$ (see for example \cite{Morris} and \cite{Shalen}).  

However, while the isometry group of $\mathbb H^2$ is  $\PSL_2\R$, the same is not true of $\Isom T_{p+1}$. In fact, there are intermediate closed subgroups of infinite index $\PSL_2\Q_p \lneq H\lneq\Isom T_{p+1}$.   

Let $S\subset \N$ be a finite set of primes. And fix such an intermediate closed subgroup $H_p$ for each $p\in S$. Then the diagonal embedding $\PSL_2\Z[S^{-1}]\hookrightarrow\PSL_2\R\times\CProd{p\in S}{}\PSL_2\Q_p$ is an irreducible lattice embedding of the $S$-arithmetic lattice, as is $\PSL_2\Z[S^{-1}]\hookrightarrow\PSL_2\R\times\CProd{p\in S}{}H_p$. The latter product is called an \emph{ $S$-arithmetic tree extension} of the former product.

\section{The Theory is Semi-Simple}\label{Sect:Theory is SS}

At the beginning of this paper, we promised to establish a dictionary (see Table \ref{SS dictionary}) between the world of ($S$-) arithmetic semi-simple lattices and groups acting AU-acylindrically on $\X$ with general type factors. In this section, we shall fill in the details concerning this dictionary.

The fact that an AU-acylindrical action subsumes the type of action a lattice enjoys on its ambient group is established in Lemma \ref{Lem:acyl+loc comp implies unif proper} in complete generality.

Turning to the world of linear algebraic groups over local fields, there are some basic classes of groups therein:  parabolic, Borel, Cartan, central, and (semi-)simple. Together, these provide a general structure theory. Specifically, if $G$ is a linear algebraic group (over an algebraically closed field), then up to virtual isomorphism, $G=R(G)\rtimes \Prod{i=1}{D}G_i$, where $R(G)$ is the solvable radical and $G_i$ is simple for $i = 1,\dots, D$. (We refer the reader to \cite{Humphreys} for details.)

Fix $i$. If  $G_i$  is a rank-1 group then a parabolic subgroup is the stabilizer of some $\xi_i$ in the visual boundary $\partial_\sphericalangle X_i$ of $X_i$, which is the rank-1 symmetric space or Bruhat-Tits tree associated to $G_i$. 

Within this framework, a minimal parabolic subgroup is given by a choice of $\xi_i\in \partial_\sphericalangle X_i$ for each $i=1, \dots, D$, resulting in a maximal solvable subgroup. Furthermore, Cartan subgroups correspond to stabilizers of maximal flats in the product of the corresponding model geometries, and may be realized as intersecting ``opposite" minimal parabolics, i.e. $\stab(\xi, \xi')$ for $\xi_i \neq \xi_i'$, for $i=1, \dots, D$.  Further, parabolic subgroups correspond to a choice of $I\subset D$, and $\xi_i\in \partial_\sphericalangle X_i$ for $i \in I$. Finally, the center acts trivially on these spaces and their boundaries and therefore corresponds to our construction of the tremble-free essential core. This completes the explanation of the dictionary in Table \ref{SS dictionary}. 

We have seen in Theorem ~\ref{Thm: Acyl -> CAF} that a group admitting an  AU-acylindrical action on a product of $\delta$-hyperbolic spaces with general type factors has finite center  and more generally enjoys property (CAF). In Section ~\ref{sec:examples} we discussed how these include ($S$-arithmetic) lattices in semi-simple groups with rank-1 factors and hence we think of our class of groups as a generalization of these and the others discussed there. We shall devote the rest of this section to establishing that these groups admit a strongly canonical product decomposition. Moreover, this descends to a product decomposition of the (outer-)automorphism groups (See also \cite[Section 3.5]{BaderFurmanSauer}.) \ref{SS dictionary}

Our first goal is to prove the following result. 

\begin{theorem}\label{Thm: CanProdDecomp}
Let $\G\to \Aut \X$ be AU-acylindrical and with all factor actions of general type with (finite) amenable radical $\A$. If $\G$ is finitely generated  then there is a characteristic subgroup of finite index $\G'\norm \G/\A$ such that $\G'$ admits a strongly canonical product decomposition. Moreover, the collection of subgroups $\G'$ which are of minimal index in $\G/\A$ is finite.
\end{theorem}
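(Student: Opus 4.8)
The plan is to prove the existence statement by passing to the tremble-free essential core, organising the $D$ factor projections into canonical blocks, and splitting off the corresponding normal subgroups as direct factors; the ``moreover'' then follows softly from finite generation.

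\emph{Reductions.} First I would invoke Theorem \ref{Thm: Acyl -> CAF} to conclude that the amenable radical $\A$ is finite, so that $\G/\A$ is finitely generated and, by Corollary \ref{Cor: Class closed finite kernel}, still acts AU-acylindrically with general type factors on a finite product of $\delta$-hyperbolic spaces. Replacing $\G$ by $\G/\A$, I may assume the amenable radical is trivial. Applying Corollary \ref{Cor: pass to irreducible tremble free core} and passing to the factor-preserving finite-index subgroup $\G_0=\ker(\G\to\Sym_\X(D))$, I may assume $\G_0\leq\Aut_{\!0}\X'$ acts with each of its $D$ factors essential, general type, and tremble-free; since the kernel of this action is finite (Lemma \ref{Lem:AU-acyl has finite kernel}) and normal, triviality of the amenable radical forces it to be trivial, so $\G_0\hookrightarrow\Aut_{\!0}\X'$.

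\emph{Supports and the canonical partition.} By Lemma \ref{Lem: normal in total gen type is tremb/gen type}, applied factorwise, every normal subgroup $K\norm\G_0$ acts on each essential general type factor $X_i'$ either as a tremble---hence trivially, by tremble-freeness---or as general type; this assigns to $K$ a well-defined \emph{support} $\mathrm{supp}(K)\subseteq\{1,\dots,D\}$. For $I\subseteq\{1,\dots,D\}$ set $\G_0^I:=\ker\bigl(\G_0\to\CProd{i\notin I}{}\Isom X_i'\bigr)\norm\G_0$. Two normal subgroups with disjoint supports must commute: their commutator lies in the intersection and acts trivially on every factor (on each factor at least one of the two subgroups acts trivially), hence is trivial since the kernel is. I then take $\{I_1,\dots,I_F\}$ to be the \emph{finest} partition of $\{1,\dots,D\}$ for which $\G_0$ virtually splits as a product of subgroups supported on the blocks, and set $\G_j:=\G_0^{I_j}$; here $1\leq F\leq D$ by Lemma \ref{Lem: Bound on number of inf factors}.

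\emph{The decomposition.} The $\G_j$ are normal, pairwise commuting, and each infinite (nonempty support, as in the proof of Theorem \ref{Thm: Acyl -> CAF}). The crucial point is that $\G_1\cdots\G_F$ has finite index in $\G_0$: combining finite generation with the tremble-free essential structure controls the ``off-diagonal'' linking between blocks, so $\CProd{j=1}{F}\G_j\to\G_0$ has finite-index image and finite kernel, exhibiting a virtual direct product. By Lemma \ref{Lem:elimellfactors} each $\G_j$ then acts AU-acylindrically with general type factors on $\CProd{i\in I_j}{}X_i'$ (this is exactly Corollary \ref{cor:prodstrdecomp}), and minimality of the partition guarantees that no $\G_j$ splits further even up to virtual isomorphism, i.e. each factor is strongly irreducible. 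To make the output characteristic I would first replace $\G_0$ by its characteristic core (the intersection of its finitely many $\Aut(\G/\A)$-images, still of finite index); since the resulting block partition and the factors $\G_j$ are defined intrinsically from the group, any automorphism of $\G/\A$ permutes them, so $\G':=\G_1\cdots\G_F$ is a characteristic finite-index subgroup of $\G/\A$ carrying a strongly canonical product decomposition.

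\emph{Finiteness of minimal-index subgroups, and the main obstacle.} Because $\G/\A$ is finitely generated, M.\ Hall's theorem provides only finitely many subgroups of any fixed finite index. The subgroups $\G'$ of least index among those satisfying the conclusion all share a common index $m_0<\infty$, hence form a subset of the finite set of index-$m_0$ subgroups of $\G/\A$, and are therefore finite in number. The hard part of the argument is the middle one: proving that $\G_1\cdots\G_F$ is genuinely of \emph{finite index} in $\G_0$ and that the factors are genuinely strongly irreducible. Both amount to showing that the finest block partition really is realised by a virtual splitting---one must exclude ``twisted'' or diagonal linking across blocks---which is precisely where finite generation and the tremble-free, essential hypotheses do the real work, echoing Lemma \ref{Lem:norm com} and Corollary \ref{cor:prodstrdecomp}.
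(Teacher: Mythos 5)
Your reductions (finiteness of $\A$ via Theorem \ref{Thm: Acyl -> CAF}, Corollary \ref{Cor: Class closed finite kernel}, passing to the tremble-free essential core and to $\G_0$) match the paper exactly, and your support/commutation observations via Lemma \ref{Lem: normal in total gen type is tremb/gen type} and Lemma \ref{Lem:norm com} are correct. But there are two problems with the middle of the argument.

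First, you have mislocated the difficulty. You define $\{I_1,\dots,I_F\}$ as the finest partition for which $\G_0$ \emph{virtually splits} as a product of subgroups supported on the blocks, and then flag ``$\G_1\cdots\G_F$ has finite index'' as the hard unproven step. With your own definition this is nearly immediate: a virtual splitting $H_1\cdots H_F$ of finite index exists by hypothesis, each $H_j$ lies in the kernel $\G_0^{I_j}=\G_j$, so $\G_1\cdots\G_F\supseteq H_1\cdots H_F$ has finite index, and the pairwise intersections $\G_j\cap\G_k$ act trivially on every factor and hence vanish. There is no ``off-diagonal linking'' to exclude, and finite generation plays no role here. The paper avoids even this by simply defining $F$ as the maximal number of direct factors over all finite-index subgroups of $\G_0$ (bounded by $D$ via Lemma \ref{Lem: Bound on number of inf factors}), so that a decomposition with $F$ factors exists by fiat. (One does need to know a finest partition exists, i.e.\ that two realizable partitions have a realizable common refinement, which is itself a uniqueness-type statement you do not address.)

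Second, and more seriously, the actual content of ``strongly canonical'' --- that \emph{any} internal product decomposition of \emph{any} finite-index subgroup $\G''\leq\G'$ agrees, up to permutation, with $\G''=(\G_1\cap\G'')\cdots(\G_F\cap\G'')$ --- is never proved. Strong irreducibility of the factors (which you do get from minimality of the partition) is necessary but not sufficient: Krull--Schmidt fails for general infinite groups, so uniqueness must be extracted from the action. The paper's second half does exactly this, comparing an arbitrary decomposition $\G''=K_1\cdots K_F$ with $\G''=\G_1'\cdots\G_F'$ via the projections $\pi_{\J}$ onto sub-products and using strong irreducibility plus the pigeonhole principle to force $K_j=\G_j'$ after a permutation. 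Relatedly, your claim that $\G'$ is characteristic because the blocks are ``defined intrinsically from the group'' presupposes this uniqueness (the $\G_0^{I_j}$ are defined via the action, not the abstract group); the paper instead makes the subgroup characteristic by intersecting kernels of the finitely many homomorphisms to a fixed finite quotient, which is where finite generation is genuinely used. Your argument for the final finiteness statement is fine.
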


The above result yields a strong consequence for automorphism groups. Indeed, any automorphism of $\G'$ induces another product decomposition, which up to permutation must be the same, and hence descends to an automorphism of the factors. The possibility of a permutation is responsible for the finite index of the inclusions below. We note that in the particular case of products of right-angled Artin groups, similar results are known thanks to the work of Charney-Crisp-Vogtmann \cite[Section 3]{CharneyCrispVogtmann}. 

\begin{cor}\label{Cor: AU to Aut/Out}
Let $\G\to \Aut \X$ be a group acting essentially, AU-acylindrically and with general type factors. There exists a unique $F$, where $1\leq F\leq D$ and $\G'$ which is virtually isomorphic to $\G$ with strongly canonical product decomposition $\G'= \G_1\cdots \G_{F}$, such that the following are finite index inclusions $$ \Prod{i=1}{F}\Aut(\G_i)\norm \Aut(\G') \;\;\; \text{ and } \;\;\;  \Prod{i=1}{F}\Out(\G_i) \norm \Out(\G').$$
\end{cor}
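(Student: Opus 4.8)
The plan is to reduce the statement to the fact, supplied by Theorem \ref{Thm: CanProdDecomp}, that the product decomposition $\G'=\G_1\cdots \G_F$ is strongly canonical, i.e. unique up to permutation of the factors. Granting this, the proof becomes an essentially formal argument about automorphisms of an internal direct product. First I would invoke Theorem \ref{Thm: CanProdDecomp} to pass to $\G'$, virtually isomorphic to $\G$, with its decomposition into strongly irreducible (hence directly indecomposable) factors, and record the uniqueness of $F$ with $1\leq F\leq D$ from that theorem. Throughout I view $\G'=\Prod{i=1}{F}\G_i$ as an internal direct product.

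Next I would set up a permutation action of $\Aut(\G')$ on the factors. For $\f\in \Aut(\G')$, the subgroups $\f(\G_1), \dots, \f(\G_F)$ give another decomposition of $\G'$ as an internal direct product of strongly irreducible subgroups; by strong canonicity this must coincide, as an unordered set of subgroups, with $\{\G_1, \dots, \G_F\}$. Thus $\f$ determines a permutation $\s_\f$ of $\{1, \dots, F\}$ with $\f(\G_i)=\G_{\s_\f(i)}$, and since $\f$ restricts to an isomorphism $\G_i\cong \G_{\s_\f(i)}$, the permutation only moves mutually isomorphic factors. The assignment $\f\mapsto \s_\f$ is then a homomorphism $\Aut(\G')\to \Sym(F)$ landing in a finite group.

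The heart of the argument is to identify the kernel $K$ of $\f\mapsto\s_\f$ with $\Prod{i=1}{F}\Aut(\G_i)$: a factor-preserving automorphism restricts to each $\G_i$ and, because every element of the internal direct product factors uniquely as $g_1\cdots g_F$, it equals the assembled product of these restrictions; conversely any tuple of factor automorphisms assembles to an element of $K$. Hence $\Prod{i=1}{F}\Aut(\G_i)=K\norm \Aut(\G')$ has finite index, as $\Aut(\G')/K$ embeds in $\Sym(F)$. To descend to $\Out$, I would use that $Z(\G')=\Prod{i=1}{F}Z(\G_i)$ for a direct product, so that $\Inn(\G')\cong \Prod{i=1}{F}\Inn(\G_i)$; since conjugation by $g_1\cdots g_F$ acts on $\G_i$ as conjugation by $g_i$, every inner automorphism preserves each factor, giving $\Inn(\G')\leq K$. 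Therefore $\f\mapsto \s_\f$ factors through $\Out(\G')=\Aut(\G')/\Inn(\G')$ with kernel $K/\Inn(\G')\cong \Prod{i=1}{F}\Out(\G_i)$, again of finite index.

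The main obstacle is not in the corollary itself but in the input it requires: the entire permutation argument collapses unless the decomposition is genuinely rigid, meaning every automorphism carries the factor set to itself. This rigidity is exactly the strong canonicity established in Theorem \ref{Thm: CanProdDecomp}, and it is what the strong irreducibility hypothesis (irreducibility even up to virtual isomorphism) is designed to guarantee. The only genuinely delicate point remaining inside the corollary is verifying that factor-preserving automorphisms of the internal direct product really do split as products, which is where the uniqueness of the factorization $g=g_1\cdots g_F$ is used; the uniqueness of $F$ is then inherited directly from the theorem.
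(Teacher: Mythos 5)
Your proposal is correct and follows essentially the same route as the paper, which derives the corollary from Theorem \ref{Thm: CanProdDecomp} by observing that any automorphism of $\G'$ induces another product decomposition, which by strong canonicity must agree with the original up to a permutation of the factors, the permutation being responsible for the finite index. Your write-up simply fills in the details (the homomorphism to $\Sym(F)$, the identification of its kernel, and the descent to $\Out$ via $\Inn(\G')\cong \Prod{i=1}{F}\Inn(\G_i)$) that the paper leaves as a one-paragraph remark.
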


Recall that for a discrete countable group, the \emph{automorphism group}  $\Aut(\G)$ is the group of isomorphisms $\G\to \G$, where the group operation is function composition. Recall also that if $\g\in \G$ then the conjugation map $\g: g\mapsto \g g\g^{-1}$ is an isomorphism. This yields a homomorphism $\G\to \Aut (\G)$, whose kernel is $Z(\G)$ the \emph{center} of $\G$. The image of this homomorphism is denoted  by $\Inn(\G)$, and is called the \emph{inner automorphism group}. It is straightforward to verify that $\Inn(\G) \norm \Aut (\G)$. The quotient group is the \emph{outer automorphism group} and denoted by $\Out(\G)$. A subgroup $H\leq \G$ is said to be \emph{characteristic} if for every $\a\in \Aut(\G)$ we have that $\a(H) = H$.  We begin with the following definition. 

\begin{defn}\label{Defn: StrongCanonProductDecomp}
    An internal product decomposition of $\G= K_1\cdots K_F$ is said to be  \emph{canonical} if each factor is infinite and whenever  $\G = K'_1\cdots K'_{F'}$ is another product decomposition then $F=F'$ and up to permutation $K_i = K'_i$. It is said to be \emph{strongly canonical} if it is canonical and if for any $\G'\leq \G$ of finite index and $\f: \G'\to K'_1\cdots K'_{F'}$ with finite kernel, then $F'=F$ and up to permutation, $K_i'= \f(K_i\cap \G')$. 
\end{defn}

We note that a group that is irreducible (i.e. not isomorphic to a non-trivial direct product) and whose finite index subgroups are also irreducible admits itself as a strongly canonical product decomposition with one factor.

\begin{proof}[Proof of Theorem ~\ref{Thm: CanProdDecomp}] We begin by applying  Theorem ~\ref{Thm: Acyl -> CAF} to deduce that the amenable radical $\A$ is finite. Furthermore, we may apply Corollary ~\ref{Cor: Class closed finite kernel}, and assume that $\A$ is trivial. Without loss of generality, an application of Corollary ~\ref{Cor: pass to irreducible tremble free core}, allows us to deduce that the factor actions are tremble-free and essential.

Recall that if $\G$ is finitely generated then it has finitely many homomorphisms to any particular finite group. Hence, the intersection of these kernels is characteristic as performing an automorphism will not yield a new homomorphism to the chosen finite group (see for example \cite[Lemma 6.7]{FernosRelT}). Taking the finite group in question to be $\Sym_\X(D)$ we obtain the characteristic finite index subgroup $\G_0\norm \G$ that preserves the factors of $\X$. (The reader should note that this could be smaller than the kernel $\G\to \Sym_\X(D)$, which is what we usually refer to as $\G_0$.) 

 Since $\G_0$ and hence all its finite index subgroups have no non-trivial finite normal subgroups, we need only look at finite index subgroups of $\G_0$ for a strongly canonical product decomposition. Of course, these finite index subgroups also inherit all of the hypotheses. 
 
 We will denote the direct factors by $\G_j$. Now, since each $\G_j$ is infinite, and the action $\G\to \Aut \X$ has no kernel, for each $j=1, \dots, F$ there is a non-empty collection of factors $I_j$ on which $\G_j$ acts by a general type action (and the other $\G_j$'s are trembles and hence trivial, as in Corollary ~\ref{cor:prodstrdecomp}). So we may apply Corollary ~\ref{Lem: Bound on number of inf factors}  and guarantee  that the number of direct factors of a finite index subgroup of $\G_0$ is bounded above by $D$. Let $F$ be  the maximal number of factors in   a decomposition of a finite index subgroup of $\G_0$. This gives us a finite index subgroup $\G' = \G_1 \cdots \G_F$. We claim that $\G'\leq \G_0$ can be chosen such that

\begin{enumerate}[nolistsep]
\item $\G'=\G_1\cdots \G_F$;
\item $\G'$ is characteristic;
\item $\G'$ has minimal finite index in $\G_0$ among finite index subgroups satisfying properties (1) and (2).
\end{enumerate}

To justify that $\G'$ can be made characteristic, first note that if a finite index subgroup satisfied (1)  alone, then the intersection of all its $\G$-conjugates would again be finite index, normal, and satisfying (1). Then intersecting all possible kernels to the associated finite quotient as above, using the finite generation of $\G$ will produce a characteristic subgroup. The same argument also shows that there are finitely many characteristic subgroups of $\G$ of the same minimal index as $\G'$, and there are therefore finitely many subgroups satisfying these properties. 

Observe that since $F$ is maximal, and $\G'$ has trivial amenable radical, $\G_i$ is infinite and strongly irreducible, for each $j=1, \dots, F$. We now show that $\G'= \G_1 \cdots \G_F$ is a strongly canonical product decomposition. To this end, let $\G''\leq \G'$ be of finite index and let $\G''=K_1\cdots K_{F'}$ be an internal product decomposition. Since $F$ is maximal, and $\G''$ has trivial amenable radical we must have that $F'=F$, and $K_j$ are infinite and strongly irreducible for $j= 1,\dots, F$.

Let $\G'_j= \G_j\cap \G''$ and note that this yields another internal product decomposition of infinitely strongly irreducible groups $\G''= \G'_1\cdots\G'_F$.

We introduce the following notation: For nonempty $\J\subsetneq \{1, \dots, F\}$ let 
$\G'_\J\norm \G''$ (respectively $ K_\J\norm \G''$) be the product of the factors $\G'_j$, (respectively $K_j$) for $j\in \J$ and $\^\G'_j\norm \G''$ (respectively $ \^K_\J\norm \G''$) be the product of the remaining factors so that $\G''= \G'_\J\^\G'_\J=K_\J\^K_\J$ are  internal product decompositions. Also let $\pi_\J: \G''\to \G'_\J$, $\^\pi_\J: \G''\to \^\G'_\J$ be the natural projections. Note that for each $j= 1, \dots, F$ we have that 
$$\Cap{\J \mid j\notin\J}{}\ker(\pi_\J)= \G'_j.$$

 Let $\J= \{1\}$. 
Then $\pi_\J(K_1 \cdots \K_F)=\G'_\J$. Since $\G'_\J$ is strongly irreducible, we must have that all but exactly one of the factors $\{K_1, \dots, K_F\}$  is not in the kernel of $\pi_\J$ and all others are.  Up to permutation, we assume that $K_1\not\leq \ker(\pi_\J)$ and $$\^K_\J\leq  \ker(\pi_\J)= \^\G'_\J.$$

Furthermore, since there are the same number of factors in both product decompositions for $\G''$ by the pigeon-hole principal and the above argument, up to permutation, the same conclusion holds where $\J$ runs over all singletons in $\{1, \dots, F\}$. This means that for each $j=1, \dots, F$ we have that 

$$K_j\leq \Cap{j\notin \J}{}\ker(\pi_\J) = \G'_j.$$

Reversing the roles of the two product decompositions  we obtain that 
$\G'_j\leq K_j$, which concludes the proof.
\end{proof}

\begin{remark}\label{Rem: Out to MCG} We note that if $\G$ has trivial amenable radical (e.g. if it is torsion-free)  and does not permute factors (i.e. $\G\to \Aut_{\! 0}\X$) then the proof shows that $\G'=\G$ in Theorem ~\ref{Thm: CanProdDecomp} and Corollary ~\ref{Cor: AU to Aut/Out}. 
\end{remark}

\section{On a Conjecture by Sela}\label{sec:ripsandsela}

In this last section, we examine the implications of our work in the setting of Sela's recent  work. In a lecture in 2023, Sela spoke about ongoing  work developing Makanin-Razborov diagrams in higher rank (see \cite{SelaHR1Pub, SelaHR2}). These were used by Sela in the rank-1 setting to understand homomorphisms between groups, and in particular automorphisms of groups, leading to the following: 

\begin{theorem}\label{Thm:OutHyp1End}\cite{Sela1997,Levitt2005} Let $\G$ be a one-ended hyperbolic group. Then, $\Out(\G)$ contains a finite index subgroup $\Out'(\G)$ that fits into the following central short exact sequence, with $|F|<\8$:

$$1\to \Z^n\to \Out'(\G)\to \CProd{s\in F}{} \mathrm{PMCG(\Sigma_s)}\to 1. $$
    
\end{theorem}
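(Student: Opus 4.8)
The plan is to exploit the canonical JSJ decomposition of a one-ended hyperbolic group and to analyze how $\Out(\G)$ acts on it. First I would invoke the canonical splitting of $\G$ over virtually cyclic subgroups in the sense of Bowditch, Rips--Sela, and Sela: since $\G$ is one-ended and hyperbolic it admits a canonical decomposition as a finite graph of groups $\mathcal{G}$ whose edge groups are two-ended and whose vertex groups fall into three types --- virtually cyclic, \emph{quadratically hanging} (virtually surface, the $\Sigma_s$), and \emph{rigid}. The decisive feature is that this decomposition is \emph{canonical}: it is constructed intrinsically from $\G$ (equivalently, from the local cut-point structure of $\partial \G$) and is therefore preserved, up to isomorphism, by every automorphism of $\G$.

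Because the JSJ is canonical, $\Aut(\G)$ permutes the vertices and edges of the underlying finite graph of $\mathcal{G}$, and this yields a homomorphism from $\Out(\G)$ to the finite automorphism group of that finite graph. I would then take $\Out'(\G)$ to be the preimage of the finite-index subgroup fixing every vertex, every edge, and all relevant orientations; this is the desired finite-index subgroup. Every element of $\Out'(\G)$ is now represented by an automorphism preserving each vertex group and each edge group up to conjugacy.

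Step three is to analyze such decomposition-preserving automorphisms factor by factor. On a \emph{rigid} vertex group $V$, the restriction lands in the group of outer automorphisms of $V$ fixing the incident edge groups up to conjugacy; by Paulin's theorem together with the Rips machine (a group with infinite relative $\Out$ splits over a two-ended subgroup relative to its peripheral structure, contradicting rigidity) this relative outer automorphism group is finite, so after passing to a further finite-index subgroup it contributes nothing. On each quadratically hanging vertex the restriction is realized by a mapping class of the underlying surface respecting the boundary data, and passing to the orientation-preserving, puncture-fixing part produces exactly $\mathrm{PMCG}(\Sigma_s)$; this yields the surjection $\Out'(\G)\to \CProd{s\in F}{}\mathrm{PMCG}(\Sigma_s)$. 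The kernel consists of automorphisms that are inner on every vertex group, i.e. those generated by Dehn twists along the edges of $\mathcal{G}$. These twists commute with one another and, since we have passed to the pure (boundary-fixing) mapping class factors, they are central in $\Out'(\G)$, giving a central $\Z^n$ with $n$ governed by the number of edges supporting infinite-order twists. Assembling these observations produces the central extension
$$1\to \Z^n\to \Out'(\G)\to \CProd{s\in F}{}\mathrm{PMCG}(\Sigma_s)\to 1.$$

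The main obstacle is the finiteness of the relative outer automorphism groups of the rigid vertices: this is precisely where the heavy machinery (Bestvina--Paulin limiting actions on $\R$-trees followed by the Rips structure theorem, or equivalently Sela's rigidity results) is required, since rigidity must be converted into the assertion that a vertex group admits no nontrivial deformation of automorphisms relative to its edge groups. A secondary point demanding care is verifying that the twist subgroup is genuinely free abelian and central --- one must rule out hidden relations among twists along distinct edges and confirm that the mapping-class-group quotient acts trivially on the twist group once one has restricted to the pure, orientation-preserving part.
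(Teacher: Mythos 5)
The paper does not prove this statement: it is quoted as background from the cited references \cite{Sela1997,Levitt2005}, so there is no in-paper proof to compare against. Your outline is, however, an accurate reconstruction of the standard argument in those references: the canonical JSJ decomposition over two-ended subgroups, the finite-index subgroup of $\Out(\G)$ acting trivially on the underlying graph, finiteness of the relative outer automorphism groups of rigid vertices via Bestvina--Paulin limits and the Rips machine, the $\mathrm{PMCG}(\Sigma_s)$ contributions from the quadratically hanging vertices, and the free abelian group of Dehn twists as the kernel. You correctly flag the two genuinely delicate points: (i) converting rigidity into finiteness of the relative $\Out$ of rigid vertices, and (ii) centrality and torsion-freeness of the twist group, which in general requires passing to a further finite-index subgroup (edge groups are only virtually cyclic, so some twists have finite order, and the quotient can a priori act nontrivially on the twist group before this reduction). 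With those two points handled as in Levitt's treatment of automorphisms of graphs of groups, the argument is complete.
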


 In Sela's lecture, he attributed the following question to E. Rips as inspiration:

\begin{question}[Rips]
    What can we say about $\Aut(\pi_1(K))$ or $\Out(\pi_1(K))$ when $K$ is a  finite locally CAT(0) cube complex? 
\end{question}

Fioravanti has made significant progress in this direction by establishing necessary and sufficient conditions for when $\Out(\pi_1(K))$ is infinite \cite{FioravantiOut}. (See also \cite{CasalsKazachkov}; Casals-Hagen-Kazachkov have work in progress in a related direction.)

Since the fundamental groups of virtually special non-positively curved cube complexes are known to be HHGs, this led Sela to make a conjecture (which we have clarified via personal communication into the form below). Loosely, it states that the outer-automorphism group of a ``nice" HHG is very similar to the structure as in Theorem ~\ref{Thm:OutHyp1End}.

\begin{conj}[Sela]\label{Conj: Sela}
Let $\G$ be a colorable HHG. Suppose that either:

\begin{enumerate}
    \item The action of $\G$ on the quasitree of metric spaces (that can be associated with it using the construction of Bestvina-Bromberg-Fujiwara) is weakly acylindrical.
$$\text{or}$$
    \item The action of $\G$ on the projection complex (of Bestvina-Bromberg-Fujiwara) is weakly acylindrical and the construction of each domain (set) stabilizer of the corresponding domain is weakly acylindrical.
\end{enumerate}

Suppose further that the associated higher rank Makanin-Razborov diagram is single ended (i.e., contains no splittings along finite groups along it).

Then there exists a finite index subgroup $\Out'(\G) \leq  Out(\G)$ and a map $\eta:Out'(\G) \to B$, where $B$ is the direct product of mapping class groups of 2-orbifolds and outer automorphism
groups of finitely genereated virtually abelian groups, that appear in the higher rank JSJ decomposition of $G$. Furthermore, $\ker(\eta)$ is finitely generated virtually abelian.
\end{conj}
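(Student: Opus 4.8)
The plan is to reduce the statement to the rank-1 situation, where the structure of $\Out$ is governed by Theorem \ref{Thm:OutHyp1End} and Sela's Makanin--Razborov/JSJ framework, and then reassemble the pieces using the semi-simple machinery of the paper. Since $\G$ is an HHG none of whose eyries is a quasi-line, it acts acylindrically with general type factors on the associated finite product of $\delta$-hyperbolic spaces (see \cite{PetytSpriano}). First I would apply Theorem \ref{intro:selaconj} and Theorem \ref{intro:proddecomp} to pass, up to virtual isomorphism, to the strongly canonical product decomposition $\G'=\G_1\cdots \G_F$ into strongly irreducible factors; by construction each $\G_i$ acts acylindrically and with general type on a single $\delta$-hyperbolic factor, hence is acylindrically hyperbolic, and the amenable radical is finite by Theorem \ref{Thm: Acyl -> CAF}.

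Next I would transport the problem to the level of outer automorphisms. By Corollary \ref{Cor: AU to Aut/Out} the product $\prod_{i=1}^F \Out(\G_i)$ has finite index in $\Out(\G')$, and since $\G$ and $\G'$ differ only by a finite-index subgroup and a finite kernel, $\Out(\G)$ and $\Out(\G')$ are commensurable up to finite data. It therefore suffices to construct $\eta$ on a finite-index subgroup of $\prod_i \Out(\G_i)$ with the required target and kernel, and to pull it back along the inclusion to obtain $\Out'(\G)\leq \Out(\G)$.

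The third step is where the single-endedness hypothesis on the higher rank Makanin--Razborov diagram is used. For each strongly irreducible factor $\G_i$ I would invoke the rank-1 resolution: by Theorem \ref{Thm:OutHyp1End} (Sela--Levitt) together with the assumption that each irreducible factor satisfies Sela's conjecture, there is a finite-index $\Out'(\G_i)\leq \Out(\G_i)$ and a map $\eta_i\colon \Out'(\G_i)\to B_i$, where $B_i$ is a direct product of mapping class groups of $2$-orbifolds and outer automorphism groups of finitely generated virtually abelian groups arising from the rank-1 JSJ decomposition of $\G_i$, and where $\ker(\eta_i)$ is finitely generated virtually abelian (for a one-ended hyperbolic $\G_i$ this kernel is the central $\Z^{n_i}$ appearing in the sequence $1\to \Z^{n_i}\to \Out'(\G_i)\to \prod_{s} \mathrm{PMCG}(\Sigma_s)\to 1$). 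Setting $B=\prod_{i=1}^F B_i$ and $\eta=\prod_i \eta_i$, I observe that $B$ is again a direct product of mapping class groups of $2$-orbifolds and outer automorphism groups of finitely generated virtually abelian groups, that its factors are exactly the pieces of the higher rank JSJ decomposition of $\G$ (the union of the rank-1 JSJ pieces of the factors, by compatibility with the product decomposition), and that $\ker(\eta)=\prod_i \ker(\eta_i)$ is a finite direct product of finitely generated virtually abelian groups, hence finitely generated virtually abelian. This yields the asserted $\eta$.

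The main obstacle is precisely the third step: establishing the rank-1 conclusion for a single strongly irreducible factor. Steps one and two and the reassembly are exactly the content of the paper's partial resolution (Corollary \ref{Cor: Partial Resol Sela Conj}), which cleanly factors Sela's conjecture over the canonical product decomposition but does not establish the base case. Proving the $\Out$-structure for an individual strongly irreducible acylindrically hyperbolic factor requires the full higher rank Makanin--Razborov/JSJ machinery of Sela; the single-ended hypothesis is what rules out splittings over finite groups and forces the clean central-by-(mapping class) form. This is why only a conditional resolution is attainable here: the semi-simple theory reduces the conjecture to its strongly irreducible instances, while those instances remain governed by the still-developing rank-1 theory.
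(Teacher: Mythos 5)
You have correctly recognized that the statement is a conjecture which the paper does not prove outright: its actual contribution is the partial resolution, Corollary \ref{Cor: Partial Resol Sela Conj}, which reduces the conjecture to its strongly irreducible instances via the canonical product decomposition (Theorem \ref{Thm: CanProdDecomp}) and the finite-index inclusions of automorphism and outer automorphism groups (Corollary \ref{Cor: AU to Aut/Out}). Your steps one, two, and the reassembly of $\eta=\prod_i\eta_i$ follow exactly that route, and your closing paragraph accurately locates the irreducible base case as the part that remains open and is governed by Sela's still-developing Makanin--Razborov/JSJ machinery. As a conditional reduction, your proposal matches the paper's.

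One claim in your first step is, however, not justified and worth correcting. You assert that each strongly irreducible factor $\G_i$ ``acts acylindrically and with general type on a single $\delta$-hyperbolic factor, hence is acylindrically hyperbolic.'' The canonical decomposition only gives that $\G_i$ acts (AU-)acylindrically with general type factors on a \emph{sub-product} $\prod_{j\in I_i}X_j'$, which may have several factors (Corollary \ref{cor:prodstrdecomp}), and acylindricity of a product action does not descend to the projected factor actions -- that descent is precisely what the weak acylindricity hypotheses buy in Theorem \ref{Thm: Weak Acyl is Product}. The acylindrically hyperbolic groups in this story are the projections appearing in the subdirect product of Theorem \ref{intro:selaconj}, and identifying them with the canonical factors would require the subdirect product to be of finite index, which is exactly the paper's open Question \ref{Q: subdirect prod}; your proposal implicitly conflates the two decompositions. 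Relatedly, Theorem \ref{Thm:OutHyp1End} applies only to one-ended \emph{hyperbolic} groups, whereas strongly irreducible factors here (e.g.\ mapping class groups) are generally not hyperbolic, so it can serve only as motivation and not as a usable base case -- consistent with your conditional framing, but the distinction should be made explicit.
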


\begin{remark}\label{Rem: Colourable HHG}  
    By Theorem 3.1 \cite{HagenPetyt}, a colorable HHG quasi-isometrically embeds into a finite product of hyperbolic spaces, which one could call Property (Q$\X$). 
\end{remark}

 Sela has made progress towards resolving this conjecture by obtaining a short exact sequence with the quotient in the desired class, but whose kernel is not virtually abelian. 
 In a personal communication he elaborated on how he has succeeded in constructing a map $\nu: \Out'(\G) \to B$, where $\Out'(\G)\leq \Out(\G)$ is of finite index and $B$ is a direct product of groups in the desired class. But the 2-orbifolds obtained here are not the ones that appear in the higher rank JSJ decomposition. Instead, they can be obtained from the ones in the JSJ decomposition by attaching disks to their boundaries, and hence the mapping class groups are smaller. Further, the kernel of this map cannot be finitely generated virtually abelian in general \cite{SelaHR2}. However, in the special case of right-angled Artin groups, Charney and Vogtmann have foundational results that affirm the Conjecture ~\ref{Conj: Sela}; see \cite[Corollary 3.3 and Theorem 4.1]{CharneyVogtmann}.

\subsection{A Partial Resolution}
We give a partial resolution to Sela's conjecture in the form of Corollary ~\ref{Cor: Partial Resol Sela Conj}.  The context of Sela's work  in \cite{SelaHR1Pub, SelaHR2} allows for lineal factors, but we will restrict our attention to the case where the actions on the factors of $\X$ are of general type. Along the way, we also clarify the meaning of his weak and strong  notions of acylindricity (see Definitions ~\ref{Def: weak acyl} and ~\ref{def:Sela Strong Acyl}).  

\begin{theorem}\label{Thm: Strong acyl HHG product}
    Let $\G\to \Aut \X$ be strongly acylindrical with general type factors. Up to  virtual isomorphism, $\G=\CProd{i=1}{D}{}\G_i$ is the direct product of hyperbolic groups, where $D$ is the number of factors of $\X$.  
\end{theorem}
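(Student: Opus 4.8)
The plan is to bootstrap the canonical product decomposition of Theorem \ref{Thm: CanProdDecomp} using the extra input of strong acylindricity (Definition \ref{def:Sela Strong Acyl}), whose force beyond ordinary acylindricity is that the action is \emph{cobounded} on $\X$ while each factor (eyrie) action remains acylindrical on a \emph{proper} hyperbolic space. First I would normalize: pass to the finite-index factor-preserving subgroup so that $\G\to\Aut_{\!0}\X$, discard the amenable radical (finite by Theorem \ref{Thm: Acyl -> CAF}), and apply Corollary \ref{Cor: pass to irreducible tremble free core} to replace $\X$ by the tremble-free essential core $\X'=\CProd{i=1}{D}X_i'$, on which each factor action is essential, general type and tremble-free, the total kernel is finite, and local compactness of the factors is retained. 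Since each $X_i'$ is proper and the factor actions are acylindrical, Lemma \ref{Lem:acyl+loc comp implies unif proper} makes them proper; intersecting coarse stabilizers coordinatewise shows the product action is proper, and together with coboundedness Lemma \ref{Lem:prop cobound is acyl} shows the product action is acylindrical. All of these operations change $\G$ only up to virtual isomorphism and preserve strong acylindricity.

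With the hypotheses of Theorem \ref{Thm: CanProdDecomp} now in place, I would apply it to obtain, up to virtual isomorphism, an internal direct product decomposition $\G=\G_1\cdots\G_F$ into strongly irreducible factors with $1\leq F\leq D$. Lemma \ref{Lem:norm com} and Corollary \ref{cor:prodstrdecomp} furnish a partition $\{1,\dots,D\}=\overset{F}{\Sqcup{j=1}}I_j$, where $\G_j$ acts as general type exactly on the factors indexed by $I_j$ and trivially elsewhere. Coboundedness of $\G$ on $\X'$ descends to coboundedness of each $\G_j$ on $\CProd{i\in I_j}{}X_i'$, so each $\G_j$ acts properly and coboundedly, hence cocompactly (the spaces being proper), on a product of proper $\delta'$-hyperbolic spaces whose factor projections are all proper and of general type.

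The main obstacle is to show that $F=D$, i.e. that each $|I_j|=1$; this is precisely the product-rigidity statement that distinguishes the strong hypothesis from the weak one (Theorem \ref{Thm: Weak Acyl is Product}), which yields only a \emph{subdirect} product of \emph{acylindrically} hyperbolic groups. Suppose $|I_j|\geq 2$. Because every $X_i'$ is proper, each $\Isom X_i'$ is a locally compact second countable group, and the properness of the factor projections means (up to the finite total kernel) that $\G_j$ embeds as a \emph{discrete} subgroup of $\CProd{i\in I_j}{}\Isom X_i'$ with discrete image in each coordinate, while cocompactness of the product action makes it a cocompact lattice therein. A cocompact lattice in a product of locally compact groups all of whose coordinate projections are discrete is reducible, so $\G_j$ is virtually a nontrivial direct product, contradicting its strong irreducibility. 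Hence $|I_j|=1$ for every $j$ and $F=D$. (One could alternatively produce a regular element of $\G_j$ via Proposition \ref{Prop:Regular Exist} and contradict strong irreducibility through the structure of its elementary subgroup, but the lattice-reducibility route is cleanest, and it is exactly here that local compactness and coboundedness are genuinely used.)

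Finally, with $F=D$ each $\G_i$ acts properly and cocompactly on the single proper geodesic $\delta'$-hyperbolic space $X_i'$, so the Milnor--\v{S}varc lemma identifies $\G_i$ as a hyperbolic group quasi-isometric to $X_i'$. Since the decomposition of Theorem \ref{Thm: CanProdDecomp} is an internal direct product, $\G$ is virtually isomorphic to $\CProd{i=1}{D}\G_i$, a direct product of $D$ hyperbolic groups, as claimed.
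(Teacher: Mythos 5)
Your argument is correct, but it takes a genuinely different and considerably heavier route than the paper's. The paper's proof never invokes the canonical product decomposition: after the same normalization (restrict to the factor-preserving subgroup and pass to the tremble-free essential core, so that weak acylindricity of each factor becomes honest acylindricity modulo a finite kernel), it simply sets $\G_i$ to be the image of the projection $\G\to\Isom X_i$, uses local compactness of $X_i$ together with Lemma \ref{Lem:acyl+loc comp implies unif proper} and coboundedness to conclude that each $\G_i$ acts uniformly properly and cocompactly on $X_i$ and is therefore hyperbolic, and then finishes in one line: the inclusion $\G\leq \CProd{i=1}{D}\G_i$ has finite index because both groups act properly and cocompactly on $\X$. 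Your detour through Theorem \ref{Thm: CanProdDecomp} and the rigidity of cocompact lattices with discrete coordinate projections is valid --- the lattice fact you quote is standard (though you should cite or prove it), and the finite generation of $\G$ required by Theorem \ref{Thm: CanProdDecomp} does hold here, via Milnor--\v{S}varc applied to the proper cocompact action on $\X$ --- but when one unwinds why a cocompact lattice with discrete projections is virtually a product, the essential step is exactly the paper's observation: the product of the (discrete) projections acts properly and cocompactly on the same space, so the lattice has finite index in it. What your approach buys is the extra identification $F=D$, matching the hyperbolic factors with the strongly irreducible factors of the canonical decomposition; what it costs is the import of Theorem \ref{Thm: CanProdDecomp} and an external lattice-theoretic input that the paper's direct argument makes unnecessary.
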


We will require the following definition for the next results. 
\begin{defn}
    Given groups $\G_1,\dots, \G_D$, a subdirect product of $\G_1$ and $\G_2$ is any subgroup $\G \leq \Prod{i=1}{D}\G_i$ which maps surjectively onto each factor. 
    
    A subdirect product is called \emph{full} if $\G \cap \G_i$ is non-trivial for $i =1,\dots, D$. 
\end{defn}

\begin{theorem}\label{Thm: Weak Acyl is Product}
    Let $\G\to \Aut \X$ be acylindrical and cobounded with general type factors  such that the projection of $\G_0\to \Isom X_i$ is  weakly acylindrical for all $i=1, \dots, D$. Then $\G$ is virtually isomorphic to a subdirect product of $\CProd{i=1}{D}{}\G_i$ where each $\G_i$ is acylindrically hyperbolic and $D$ is the number of factors of $\X$. 
\end{theorem}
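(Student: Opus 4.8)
The plan is to exhibit $\G$, up to virtual isomorphism, as a subdirect product of the images of its factor projections, and then to certify each such image as acylindrically hyperbolic using the weak acylindricity of the factor action. First I would reduce to the factor-preserving case: since $\G/\G_0$ injects into the finite group $\Sym_\X(D)$, the subgroup $\G_0\leq\G$ has finite index, so $\G$ is virtually isomorphic to $\G_0$ and it suffices to treat $\G_0$. Write $\rho_i\colon \G_0\to \Isom X_i$ for the factor projections and set $\G_i:=\rho_i(\G_0)$. The homomorphism $\Phi=(\rho_1,\dots,\rho_D)\colon \G_0\to \CProd{i=1}{D}\G_i$ is by construction surjective onto each factor, so its image is a subdirect product. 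Its kernel is exactly the kernel of $\G_0\to \Aut_{\!0}\X\hookrightarrow \Isom\X$; since the action $\G_0\to\Isom\X$ is AU-acylindrical and $\X$ is unbounded (each $X_i$ is unbounded, being of general type), Lemma \ref{Lem:AU-acyl has finite kernel} shows $\ker\Phi$ is finite. Thus $\G_0$ is virtually isomorphic, via the finite-kernel quotient $\Phi$, to the subdirect product $\Phi(\G_0)\leq\CProd{i=1}{D}\G_i$.

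The heart of the argument, and the step I expect to be the main obstacle, is showing that each $\G_i$ is acylindrically hyperbolic. Fix $i$. The action $\rho_i$ is of general type by hypothesis, is cobounded (a projection of the cobounded product action is cobounded: in the $\ell^\infty$ metric $\G_0\cdot\hood_R(x)=\X$ descends to $\G_i\cdot\hood_R(x_i)=X_i$), and is weakly acylindrical by hypothesis. Since the action is of general type, $\G_i$ contains independent loxodromics and so is non-elementary; a regular element furnished by Proposition \ref{Prop:Regular Exist} projects to a loxodromic $g\in\G_i$. The goal is to verify that $g$ is a WPD element for $\G_i\curvearrowright X_i$, whence Osin's characterization \cite{Acylhyp} yields that $\G_i$ is acylindrically hyperbolic. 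Concretely, for $\e>0$ and a basepoint $x$ on a quasi-axis of $g$, one must bound the coarse pair-stabilizers $\cs{\e}(x,g^N x)\cap\G_i$ for $N$ large.

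The difficulty is that weak acylindricity directly controls only the \emph{exact} pair-stabilizers $\fix(x)\cap\fix(y)$, whereas WPD requires control of \emph{coarse} pair-stabilizers; the essential point is therefore to upgrade exact control to coarse control. I would do this by using coboundedness to replace each coarsely fixed point by a genuinely fixed nearby configuration on the orbit, together with the fact that stabilizers of axis endpoints are elementary (so that an element coarsely preserving a long segment of the axis is, up to the elementary part, an exact pair-stabilizer). This reduces the count of coarse stabilizers to the finite bound supplied by weak acylindricity. This passage from weak acylindricity to the WPD conclusion is exactly where the precise content of Sela's weak acylindricity (Definition \ref{Def: weak acyl}) enters and is the technical crux of the proof.

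Finally, assembling the pieces: $\G$ is virtually isomorphic to $\G_0$, which is virtually isomorphic through $\Phi$ to the subdirect product $\Phi(\G_0)\leq\CProd{i=1}{D}\G_i$, with each $\G_i$ acylindrically hyperbolic by the previous paragraphs and $D$ equal to the number of factors of $\X$. Since virtual isomorphism is an equivalence relation, $\G$ is virtually isomorphic to this subdirect product, which is precisely the asserted conclusion.
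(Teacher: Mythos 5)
Your reduction to $\G_0$ and the construction of the finite-kernel map $\Phi$ onto the subdirect product $\Phi(\G_0)\leq\CProd{i=1}{D}\G_i$ are fine and match the paper's framing. The gap is in the step you yourself flag as the crux, and it stems from a misreading of Definition \ref{Def: weak acyl}. Weak acylindricity does \emph{not} merely control exact pair-stabilizers $\fix(x)\cap\fix(y)$: it already bounds the \emph{coarse} pair-stabilizer $\cs{\e}(x,y)$ by $N$ cosets $g_j\cdot\{u\in\G: d(uz,z)<r \ \forall z\}$, i.e.\ it is acylindricity modulo the set of uniformly bounded-displacement isometries (trembles). So the ``upgrade from exact to coarse control'' you propose to carry out is not the obstacle. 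The real obstacle, which your sketch never addresses, is the tremble set itself: weak acylindricity places no bound on $\{u: d(uz,z)<r\ \forall z\}\cap\G_i$, and acylindricity of the product action does not force this set to be finite (an element can tremble on $X_i$ while acting loxodromically on $X_j$, so it is invisible to coarse stabilizers of far-apart pairs in $\X$). If $\G_i$ contains infinitely many such isometries of $X_i$, then for every $\e\geq r$ and every pair $x, g^Nx$ on your axis the coarse pair-stabilizer in $\G_i$ is infinite, so \emph{no} element of $\G_i$ is WPD for the action on $X_i$, and your plan of verifying WPD for the projected regular element on $X_i$ fails as stated.

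What is actually needed is to quotient out the trembles before invoking anything: the paper first passes to the tremble-free essential cores via Corollary \ref{Cor: pass to irreducible tremble free core} (legitimate up to virtual isomorphism since the kernel there is finite), and then Corollary \ref{Cor: Sela weakly acyl iff acyl mod trembles} converts weak acylindricity of the factor action into genuine acylindricity of $\G_i$ on the essential core $X_i'$. At that point $\G_i$ acts acylindrically and with general type on a hyperbolic space and is acylindrically hyperbolic by definition --- no WPD element or appeal to Osin's characterization is required. Your argument could be repaired along these lines, but as written it both misidentifies the technical difficulty and omits the one reduction without which the conclusion about $\G_i$ can be false for the action on $X_i$ itself.
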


\begin{cor}\label{Cor: Weak Acyl HHG Product}
    Let $\G$ be an HHG with no eyrie that is a quasi-line and whose eyrie stabilizers act weakly acylindrically. Then $\G$ is virtually isomorphic to a  subdirect product of $D$-many acylindrically hyperbolic groups, where $D$ is the number of eyries.  
\end{cor}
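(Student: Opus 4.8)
The plan is to recognise this statement as a direct application of Theorem \ref{Thm: Weak Acyl is Product}, so that essentially all of the work lies in setting up the correct dictionary between the hierarchically hyperbolic data of $\G$ and an action on a finite product of $\delta$-hyperbolic spaces. First I would invoke the HHG structure recalled in Section \ref{sec:examples}(2): an HHG has only finitely many eyries $U_1, \dots, U_D$, and to each eyrie $U_i$ there is an associated $\delta$-hyperbolic space $X_i := \mathcal{C}U_i$. Since $\G$ permutes the finitely many eyries, this furnishes a homomorphism $\G \to \Aut \X$ with $\X := \CProd{i=1}{D}X_i$, whose factor-preserving subgroup $\G_0$ is the (finite index) subgroup stabilising each eyrie. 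This puts us in precisely the ambient setting of Theorem \ref{Thm: Weak Acyl is Product}.

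Next I would verify each hypothesis of that theorem for the action $\G \to \Aut \X$. That the diagonal action is acylindrical is \cite[Remark 4.9]{PetytSpriano}, and coboundedness of the action of $\G$ on the product of its eyrie hyperbolic spaces follows from the HHG structure (as recorded in Remark \ref{Rem:Eyries}). The assumption that no eyrie is a quasi-line guarantees, as observed in Section \ref{sec:examples}(2), that each factor action is of general type. It remains to match the weak acylindricity hypothesis: the projection of $\G_0$ to $\Isom X_i$ is nothing other than the restriction to $\G_0$ of the action of the eyrie stabiliser $\stab_\G(U_i)$ on $X_i$. Since weak acylindricity is acylindricity with $\e = 0$ and therefore passes to the finite index subgroup $\G_0 \leq \stab_\G(U_i)$, the hypothesis that the eyrie stabilisers act weakly acylindrically yields exactly that each $\G_0 \to \Isom X_i$ is weakly acylindrical.

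With all the hypotheses of Theorem \ref{Thm: Weak Acyl is Product} now in place for $\G \to \Aut \X$, that theorem applies directly to give that $\G$ is virtually isomorphic to a subdirect product of $\CProd{i=1}{D}\G_i$, where each $\G_i$ is acylindrically hyperbolic and $D$ is the number of factors of $\X$, which is by construction the number of eyries. This is the assertion of the corollary.

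The main obstacle I anticipate is not the invocation of Theorem \ref{Thm: Weak Acyl is Product} but the careful verification of the dictionary connecting the two frameworks. In particular, one must confirm that the diagonal HHG action on $\CProd{i=1}{D}X_i$ is genuinely \emph{cobounded}, and not merely acylindrical, since coboundedness is an input (not an output) of the theorem; this is the delicate structural point, and the natural place to resolve it is through the analysis of eyries in Remark \ref{Rem:Eyries}. One must also be attentive to the distinction between the eyrie \emph{stabiliser} $\stab_\G(U_i)$, on which the weak acylindricity hypothesis is phrased, and the factor-preserving subgroup $\G_0 = \Cap{i=1}{D}\stab_\G(U_i)$ appearing in the theorem; tracking the weak acylindricity hypothesis through this finite index reduction is routine but must be done explicitly. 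Once these identifications are pinned down, the corollary follows immediately.
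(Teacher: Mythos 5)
Your proposal is correct and follows essentially the same route the paper intends: the corollary is deduced from Theorem \ref{Thm: Weak Acyl is Product} by using Remark \ref{Rem:Eyries} (i.e.\ \cite[Remark 4.9]{PetytSpriano} plus the partial realisation axiom) to obtain an acylindrical, cobounded action on the product of the eyrie hyperbolic spaces with general type factors, and by restricting the weak acylindricity of the eyrie stabilizers to the finite index factor-preserving subgroup $\G_0$. One small correction: the relevant notion here is Sela's weak acylindricity (Definition \ref{Def: weak acyl}, equivalently acylindricity modulo trembles as in Corollary \ref{Cor: Sela weakly acyl iff acyl mod trembles}), not Genevois' ``$\e=0$'' version as your parenthetical suggests, though your actual point---that the condition passes from $\stab_\G(U_i)$ to the finite index subgroup $\G_0$---remains valid for Sela's definition.
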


\begin{remark}\label{Rem:Eyries}
    Key to passing from a general HHG to a group in our setting is \cite[Remark 4.9]{PetytSpriano} which states that an HHG acts \emph{acylindrically and coboundedly} on the product of the maximal unbounded domains. A maximal unbounded domain is called an \emph{eyrie}. 

    In a personal communication, Petyt and Spriano elaborated on their remark: to prove that an HHG acts acylindrically on the product of eyries, one should replace instances of the maximal domain in the HHG strucutre with the product of the eyries in the proof of \cite[Theorem K]{BHS1}. For coboundedness, without loss of generality, we may assume that $\G$ is preserving the factors.  Suppose we have eyries $U_1,\dots,U_D$, and projections $p_i$ to $U_i$. Let $x,y\in \Prod{i=1}{D} U_i$. The partial realisation axiom says that there exist $g,h \in \G$ such that $p_i(g)$ is close to  $x_i$ and $p_i(h)$ is close to $y_i$ for all $i= 1, \dots, D$. 
    Then $hg^{-1}$ will send $x$ close to $y$. 
\end{remark} 

Applying Corollary ~\ref{Cor: AU to Aut/Out} we immediately deduce the following and note that the colorability asumption has not been used:

\begin{cor}\label{Cor: Partial Resol Sela Conj}
    An HHG none of whose eyries are quasi-lines satisfies the conclusion of Sela's Conjecture if and only if the irreducible factors in its canonical product decompositions satisfy Sela's Conjecture. 
\end{cor}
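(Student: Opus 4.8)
The plan is to deduce the corollary directly from Corollary \ref{Cor: AU to Aut/Out}, treating the conclusion of Sela's Conjecture as a purely group-theoretic property of the outer automorphism group that is inherited across finite-index subgroups and finite direct products. First I would place the HHG inside the framework of this paper. By Remark \ref{Rem:Eyries} (that is, \cite[Remark 4.9]{PetytSpriano}) an HHG $\G$ acts acylindrically and coboundedly on the product $\X$ of its eyries. Since no eyrie is a quasi-line, each factor action is non-elementary; passing to the tremble-free essential core via Corollary \ref{Cor: pass to irreducible tremble free core} we may assume each factor action is essential and of general type. Thus $\G\to\Aut\X$ is AU-acylindrical, essential, with general type factors, so Corollary \ref{Cor: AU to Aut/Out} applies: up to virtual isomorphism $\G=\G_1\cdots\G_F$, where the $\G_i$ are exactly the strongly irreducible factors of the canonical product decomposition, and
$$\Prod{i=1}{F}\Out(\G_i)\;\norm\;\Out(\G)$$
is an inclusion of finite index. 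I emphasize that this step uses only the acylindrical cobounded action on the product of eyries, not colorability of the HHG; this is the reason the colorability hypothesis of Conjecture \ref{Conj: Sela} may be dropped. Because Corollary \ref{Cor: AU to Aut/Out} furnishes a genuine finite-index inclusion of the \emph{actual} outer automorphism groups, I never need to transport structure across a virtual isomorphism of $\G$ (under which $\Out$ behaves badly); I only ever compare commensurable $\Out$-groups.

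Next I would record two formal stability properties of ``satisfying the conclusion of Sela's Conjecture'', i.e.\ of admitting a finite-index subgroup $\Out'$ together with a homomorphism $\eta\colon\Out'\to B$, where $B$ is a direct product of mapping class groups of $2$-orbifolds and outer automorphism groups of finitely generated virtually abelian groups, and $\ker\eta$ is finitely generated virtually abelian. \emph{Commensurability invariance:} the property passes to any finite-index subgroup $H$ (restrict $\eta$ to $\Out'\cap H$, noting a subgroup of a finitely generated virtually abelian group is again finitely generated virtually abelian), and conversely a group has the property as soon as one of its finite-index subgroups does (a finite-index subgroup of a finite-index subgroup is finite index). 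Applied to the display above, this yields that $\Out(\G)$ has the structure if and only if $\Prod{i=1}{F}\Out(\G_i)$ does. \emph{Multiplicativity:} the product $\Prod{i=1}{F}\Out(\G_i)$ has the structure if and only if each $\Out(\G_i)$ does. The forward direction is clean: given data $\eta_i\colon\Out_i'\to B_i$ for each factor, set $\Out'=\Prod{i=1}{F}\Out_i'$ and $\eta=\Prod{i=1}{F}\eta_i$ mapping into $\Prod{i=1}{F}B_i=:B$; the target class is closed under finite products and $\ker\eta=\Prod{i=1}{F}\ker\eta_i$ is a finite product of finitely generated virtually abelian groups, hence finitely generated virtually abelian. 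Chaining commensurability invariance and multiplicativity, together with the observation that for a strongly irreducible $\G_i$ the canonical decomposition has one factor so that ``$\G_i$ satisfies Sela's Conjecture'' is precisely the statement for $\Out(\G_i)$, gives the asserted equivalence.

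The main obstacle is the backward direction of multiplicativity: showing that if $\Prod{i=1}{F}\Out(\G_i)$ carries the Sela structure then so does each factor $\Out(\G_i)$. Here I would argue as follows. Given $\Out'\le\Prod{i=1}{F}\Out(\G_i)$ of finite index and $\eta\colon\Out'\to B$ with finitely generated virtually abelian kernel, one checks that $K_i:=\Out'\cap\Out(\G_i)$ is of finite index in $\Out(\G_i)$ and that $\Prod{i=1}{F}K_i\le\Out'$ is of finite index in the product, so $\eta$ restricts to $\Prod{i=1}{F}K_i$. The delicate point is that the image $\eta(K_i)\le B$ need not, a priori, itself be a product of mapping class groups and virtually abelian $\Out$-groups. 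To resolve this I would decompose $B=\Prod{\ell}{}B_\ell$ into its factors of the two prescribed types, project $\eta$ onto each $B_\ell$, and use the canonicality and uniqueness of the product decomposition (Theorem \ref{Thm: CanProdDecomp}) to match the factors $B_\ell$ with the factors $\Out(\G_i)$; this should exhibit, up to finite index and a finitely generated virtually abelian kernel, the restriction $\eta|_{K_i}$ as a map $K_i\to B_i$ with $B_i$ in the target class. Once this bookkeeping is carried out, the equivalence, and with it the partial resolution of Sela's Conjecture, follows formally.
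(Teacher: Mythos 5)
Your proposal is correct and takes essentially the same route as the paper, whose entire proof is the single observation that the statement follows from Corollary \ref{Cor: AU to Aut/Out} (via the acylindrical cobounded action on the product of eyries, with colorability unused); you simply make explicit the routine facts that the Sela-type structure on $\Out$ is invariant under commensurability of $\Out$-groups and under finite direct products. Two small remarks: the equivalence you prove is between $\Out(\G')$ and $\Prod{i=1}{F}\Out(\G_i)$ for the characteristic subgroup $\G'$ of Corollary \ref{Cor: AU to Aut/Out}, so the corollary must be read ``up to virtual isomorphism'' of the HHG as in the introduction's statement (your claim of never crossing a virtual isomorphism is only true in that reading); and the backward direction of multiplicativity is simpler than you suggest, since the conjectured map $\eta$ need not be surjective, so restricting $\eta$ to $K_i=\Out'\cap\Out(\G_i)$ with the same target $B$ already suffices (kernels of subgroups of finitely generated virtually abelian groups remain finitely generated virtually abelian), and no factor-matching via Theorem \ref{Thm: CanProdDecomp} is needed.
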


\subsection{On Sela's Weak and Strong Acylindricities}

This section is dedicated to reframing Sela's notions of weak and strong acylindricity within our context. As discussed in the introduction, we highlight to the reader that this notion is distinct from other notions of weak acylindricity such as those found in \cite{Hamenstaedt} and \cite{Genevois}. 
\begin{defn}\cite[Definition 2.1]{SelaHR1Pub}\label{Def: weak acyl}
    The action $\G\to \Isom Y$ is \emph{weakly acylindrical} if there exists $r>0$ such that for every $\e>0$ there exist $R, N>0$, so that for every $x,y \in Y$ with $d(x,y)>R$ there exists at most $N$
 elements $g_1, \dots, g_k \in \G$ ($k\leq N$), so that if $g\in \G$ with $d(gx,x)<\e$ and $d(gy,y)<\e$, then $g= g_ju$ for some $1\leq j \leq k,$ and some $u \in \G$ such that  $d(uz, z)<r$ for every $z\in Y$. 
 \end{defn}

Our first contribution here is to simplify this via the use of trembles in the context of $\delta$-hyperbolic spaces. 

Recall that given an essential general type action $\rho: \G\to \Isom X$ the trembling radical is $\T_\rho(\G) = \ker (\G\to \Homeo (\partial X))$ and moreover, the essential general type action is called tremble-free if $\T_\rho(\G) = \ker (\rho)$.

 \begin{defn}   
  We shall say an  action $\rho:\G\to \Isom X$ is 
  \begin{itemize}
      \item \emph{acylindrical modulo the kernel} if for every $\e>0$ there exist $R, N>0$, so that for every $x,y \in X$ with $d(x,y)>R$ we have that the cardinality of the cosets
   $$|\{g\ker (\rho)\in \G/\ker (\rho): d(gx, x)\leq \e; d(gy, y)\leq \e\}|\leq N.$$
      \item \emph{acylindrical modulo trembles} if for every $\e>0$ there exist $R, N>0$, so that for every $x,y \in X$ with $d(x,y)>R$ we have that the cardinality of the cosets
   $$|\{g\T_\rho(\G)\in \G/\T_\rho(\G): d(gx, x)\leq \e; d(gy, y)\leq \e\}|\leq N.$$
  \end{itemize}
  
 \end{defn}

The following is immediate:

 \begin{cor}\label{Cor: Sela weakly acyl iff acyl mod trembles}
     An essential general type action $\rho:\G\to \Isom X$ is weakly acylindrical if and only if it is acylindrical modulo trembles on the essential core $X'$. In particular,  $\rho':\G/\T(\rho(\G))\to \Isom X'$ is acylindrical.
 \end{cor}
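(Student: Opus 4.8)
The plan is to show that Sela's ``tremble part'' in Definition \ref{Def: weak acyl} is exactly the trembling radical, after which both directions reduce to coset counting. Since $\rho$ is essential and of general type, the essential core $\mathcal{L}_\rho(X)$ is coarsely dense in $X$ (as in the proof of Corollary \ref{Cor: FI-ess}), so by Lemma \ref{lem:tremblefreerk1} we have $\partial X=\partial\mathcal{L}_\rho(X)=\L(\rho(\G))$. Consequently the boundary kernel $\T_\rho(\G)=\ker(\G\to\Homeo(\partial X))$ agrees with the kernel $\T$ of Lemma \ref{lem:tremblefreerk1}, and $X'$ is its associated tremble-free essential core.

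First I would pin down $\T_\rho(\G)$ metrically, which is the heart of the matter. Being normal it is commensurated, and it cannot be of general type since its elements fix $\partial X$ pointwise while $|\partial X|=\infty$; hence by Lemma \ref{Lem: normal in total gen type is tremb/gen type} it is elliptic and, being normal, acts as a tremble. Thus there is a single constant $B\geq 0$, independent of all other data, with $d(uz,z)\leq B$ for every $u\in\T_\rho(\G)$ and every $z\in X$. Conversely, if $u\in\G$ satisfies $\sup_z d(uz,z)\leq r<\infty$, then for any $\xi\in\partial X$ and any quasigeodesic ray $\gamma\to\xi$ the translate $u\gamma$ stays within Hausdorff distance $r$ of $\gamma$ and so converges to the same point; hence $u$ fixes $\partial X$ pointwise and $u\in\T_\rho(\G)$. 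Combining the two inclusions, for every $r\geq B$ the set $\{u\in\G: d(uz,z)\leq r \text{ for all } z\in X\}$ equals $\T_\rho(\G)$.

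Granting this, both implications are bookkeeping. For weak acylindricity implies acylindricity modulo trembles: after enlarging the constant $r$ of Definition \ref{Def: weak acyl} so that $r\geq B$, every element $u$ occurring there lies in $\T_\rho(\G)$, so the $k\leq N$ elements $g_1,\dots,g_k$ represent at most $N$ cosets of $\T_\rho(\G)$, which is precisely acylindricity modulo trembles. For the converse I would take $r=B$, noting that $B$ does not depend on $\e$ and so respects the quantifier order in Definition \ref{Def: weak acyl}: given the at most $N$ cosets supplied by acylindricity modulo trembles, pick representatives $g_1,\dots,g_k$; any solution $g$ of $d(gx,x)\leq\e$, $d(gy,y)\leq\e$ then factors as $g=g_ju$ with $u\in\T_\rho(\G)$, whence $d(uz,z)\leq B=r$ for all $z$. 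The harmless mismatch between Sela's strict inequalities and the non-strict ones is absorbed by replacing $\e$ with $2\e$.

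Finally, the ``in particular'' follows by transporting to $X'$. The $\G$-equivariant quasi-isometry $\mathcal{L}_\rho(X)\to X'$ of Lemma \ref{lem:tremblefreerk1} carries both coarse conditions across (with adjusted constants) and intertwines the two boundary actions, so the trembling radical is unchanged. On $X'$, however, $\T_\rho(\G)=\ker(\G\to\Isom X')$ acts trivially, so $d(gx',x')$ depends only on the coset $g\T_\rho(\G)$; hence acylindricity modulo trembles on $X'$ is literally the acylindricity of the faithful quotient action $\rho':\G/\T_\rho(\G)\to\Isom X'$. The single genuine obstacle is the metric identification of $\T_\rho(\G)$ in the second paragraph; everything else is matching constants.
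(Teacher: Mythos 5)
Your proof is correct and supplies exactly the details the paper leaves implicit (the paper states this corollary without proof, introducing it only with ``The following is immediate''). The key step---identifying Sela's uniformly-bounded-displacement elements $u$ with the trembling radical $\T_\rho(\G)$, using Lemma \ref{Lem: normal in total gen type is tremb/gen type} to obtain a single uniform bound $B$ and the fellow-travelling argument for the reverse inclusion, with the quantifier order respected because $B$ is independent of $\e$---is precisely the mechanism the surrounding results (e.g.\ Corollary \ref{Cor:LC Tremble Tame} and Lemma \ref{lem:tremblefreerk1}) are built on, so this is the intended argument.
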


We note that the following can be found under the name of \emph{strongly acylindrical} in the pre-print \cite{SelaHR1} and as weakly acylindrical in the published version \cite{SelaHR1Pub}, though there is a difference in the case of factors that are quasi-lines, which we don't consider here. (This is why we have included both citations.) As weak acylindricity is already defined for a metric space as above, and we will not consider the case of quasi-lines, we have decided to keep the term ``strongly acylindrical". Moreover, Theorem ~\ref{Thm: Strong acyl HHG product} shows that it is indeed a strong form of acylindricity. 

Recall our notation that for an action $\G\to \Aut \X$, we denote by $\G_0$ the finite index subgroup whose image is in $\Aut_{\!0} \X$.

\begin{defn}\label{def:Sela Strong Acyl}
    An action  $\G\to \Aut \X$ is said to be \emph{strongly acylindrical} if it is proper cocompact and for each $i\in \{1, \dots, D\}$, the action of $\G_0$ on $X_i$ is weakly acylindrical.
\end{defn}

\begin{proof}[Proof of Theorem ~\ref{Thm: Strong acyl HHG product}]

Let $\G\to \Aut \X$ be strongly acylindrical. Without loss of generality, we may assume that $\G$ does not permute factors. Moreover, up to taking the quotient by a finite kernel since the action is proper-cocompact (and hence acylindrical), we may assume by Corollary ~\ref{Cor: pass to irreducible tremble free core} that $\G\leq \Aut_{\!0} \X$ is strongly acylindrical with essential, general type and tremble-free factors. 

This means that the action is  proper cocompact, with factor actions  cobounded and acylindrical modulo the corresponding trembles. Note that cobounded general type actions are necessarily essential. Also note that since $\G$ is proper co-compact in $\Prod{i=1}{D} \Isom X_i$ it follows that $X_i$ is locally compact. 

Let $\G_i\leq \Isom X_i$ be the image of the projection.  
Since $X_i$ is locally compact, by Lemma ~\ref{Lem:acyl+loc comp implies unif proper}, $\G_i$ is  in fact acting uniformly properly, coboundedly and is therefore, a hyperbolic group.  We have  established that $\G\leq \CProd{i=1}{D}\G_i$ where each factor is a hyperbolic group. This must be a finite index inclusion since both actions of $\G$ and $\CProd{i=1}{D}\G_i$ are proper cocompact on $\X$.
\end{proof}

\begin{proof}[Proof of Theorem ~\ref{Thm: Weak Acyl is Product}]

Without loss of generality, we may assume that $\G$ does not permute factors. Moreover, by Corollary ~\ref{Cor: pass to irreducible tremble free core}, we may assume that $\G\leq \Aut_{\!0} \X$ is acylindrical with essential, general type and tremble-free factors. 

Let $\G_i$ be the projection of $\G\to \Isom X_i$. By Corollary ~\ref{Cor: Sela weakly acyl iff acyl mod trembles}, the action of $\G_i$ is acylindrical and general type, and therefore, $\G_i$ is acylindrically hyperbolic. Therefore $\G\leq \Prod{i=1}{D}\G_i$. Since $\G$ is surjective onto each $\G_i$, this means $\G$ is a subdirect product of $\Prod{i=1}{D}\G_i$.
\end{proof}

\bibliographystyle{alpha}
\bibliography{AHR}

\end{document}